\def\ps@pprintTitle{%
   \let\@oddhead\@empty
   \let\@evenhead\@empty
   \let\@oddfoot\@empty
   \let\@evenfoot\@oddfoot
}
\newtheorem{teor}{Theorem}[section]
\newtheorem*{teor*}{Theorem}
\newtheorem{lemma}[teor]{Lemma}
\newtheorem{prop}[teor]{Proposition}
\newtheorem{corol}[teor]{Corollary}
\theoremstyle{definition}
\newtheorem{defin}[teor]{Definition}
\theoremstyle{remark}
\newtheorem{rmk}[teor]{Remark}
\newtheorem*{ex}{Example}
\newcommand{\Z}{\mathbb{Z}}
\newcommand{\N}{\mathbb{N}}
\newcommand{\C}{\mathbb{C}}
\newcommand{\End}{\mathrm{End}}
\newcommand{\Aut}{\mathrm{Aut}}
\newcommand{\Mod}{\mathrm{Mod}}
\newcommand{\Hom}{\mathrm{Hom}}
\newcommand{\res}{\mathrm{res}}
\newcommand{\ind}{\mathrm{ind}}
\newcommand{\inv}{\mathrm{inv}}
\newcommand{\infl}{\mathrm{infl}}
\newcommand{\ent}{\mathcal{O}}
\newcommand{\bs}{\backslash}
\newcommand{\frack}{\mathfrak{k}}
\newcommand{\M}{\mathcal{M}}
\renewcommand{\P}{\mathcal{P}}
\newcommand{\U}{\mathcal{U}}
\newcommand{\V}{\mathcal{V}}
\numberwithin{equation}{section}
\begin{document}

\begin{frontmatter}
\title{Blocks of the category of smooth $\ell$-modular representations of 
$GL(n,F)$ and its inner forms: reduction to level-$0$}
\author{Gianmarco Chinello}
\ead{gianmarco.chinello[at]unimib.it}
\address{Universit\`a degli Studi di Milano-Bicocca\\ Dipartimento di Matematica e Applicazioni\\ via Cozzi 55, 20125 Milano (Italy)}

\begin{abstract}
\noindent 
Let $G$ be an inner form of a general linear group over a non-archimedean locally compact field of residue characteristic $p$, let $R$ be an algebraically closed field of characteristic different from $p$ and let $\mathscr{R}_R(G)$ be the 
category of smooth representations of $G$ over $R$.
In this paper, we prove that a block (indecomposable summand) of $\mathscr{R}_R(G)$ is equivalent to a level-$0$ block (a block in which every object has non-zero invariant vectors for the pro-$p$-radical of a maximal compact open subgroup) of $\mathscr{R}_R(G')$, where $G'$ is a direct product of groups of the same type of $G$.
\end{abstract}

\begin{keyword}
Blocks \sep Type theory \sep Semisimple types \sep Equivalence of categories \sep Hecke algebras  \sep Modular representations of p-adic reductive groups \sep Level-$0$ representations.

\MSC[2010]{20C20, 22E50}
\end{keyword}
\end{frontmatter}

\section*{Introduction}
Let $F$ be a non-archimedean locally compact field of residue characteristic $p$ and let $D$ be a central division algebra of finite dimension over $F$ whose reduced degree is denoted by $d$.
Given $m\in \N^*$, we consider the group $G=GL_{m}(D)$ which is an inner form of $GL_{md}(F)$. 
Let $R$ be an algebraically closed field of characteristic $\ell\neq p$ and let $\mathscr{R}_R(G)$ be the category of smooth representations of $G$ over $R$, that are called $\ell$-modular when $\ell$ is positive.
In this paper, we are interested on the Bernstein decomposition of $\mathscr{R}_R(G)$ (see \cite{SeSt1} or \cite{Vig1} for $d=1$) that is its decomposition as a direct sum of full indecomposable subcategories, called \emph{blocks}.
Actually a full understanding of blocks of $\mathscr{R}_R(G)$ is equivalent to a full understanding of the whole category.

\smallskip
The main purpose of this paper is to find an equivalence of categories between any block of $\mathscr{R}_R(G)$ and a level-$0$ block of $\mathscr{R}_R(G')$ where $G'$ is a suitable direct product of inner forms of general linear groups over finite extensions of $F$. 
We recall that a level-$0$ block of $\mathscr{R}_R(G')$ is a block in which every object has non-zero invariant vectors for the pro-$p$-radical of a maximal compact open subgroup of $G'$.
This result is an important step in the attempt to describe blocks of $\mathscr{R}_R(G)$ because it reduces the problem to the description of level-$0$ blocks.

\smallskip
In the case of complex representations, Bernstein \cite{Bern} found a block decomposition of $\mathscr{R}_\C(G)$ indexed by pairs $(M,\sigma)$ where $M$ is a Levi subgroup of $G$ and $\sigma$ is an irreducible cuspidal representation of $M$, up to a certain equivalence relation called \emph{inertial equivalence}. 
In particular an irreducible representation $\pi$ of $G$ is in the block associated to the inertial class of $(M,\sigma)$ if its cuspidal support is in this class.
In \cite{BK2}, Bushnell and Kutzko introduce a method to descibe the blocks of $\mathscr{R}_\C(G)$: the \emph{theory of type}. This method consists in 
associating at every block of $\mathscr{R}_\C(G)$ a pair $(J,\lambda)$, called type, where $J$ is a compact open subgroup of $G$ and $\lambda$ is an irreducible representation of $J$, such that 
the simple objects of the block are the irreducible 
subquotients of the compactly induced representation $\ind_J^G(\lambda)$. 
In this case the block is equivalent to the category of modules over the $\C$-algebra $\mathscr{H}_\C(G,\lambda)$ of $G$-endomorphisms of $\ind_J^G(\lambda)$.
In \cite{SeSt3} (see \cite{BK3} for $d=1$) S\'echerre and Stevens describe explicitly this algebra as a tensor product of algebras of type A.

\smallskip
In the case of $\ell$-modular representations, in \cite{SeSt1} S\'echerre and Stevens (see \cite{Vig1} for $d=1$) found a block decomposition of $\mathscr{R}_R(G)$ indexed by inertial classes of pairs $(M,\sigma)$ where $M$ is a Levi subgroup of $G$ and $\sigma$ is an irreducible supercuspidal representation of $M$. 
In particular an irreducible representation $\pi$ of $G$ is in the block associated to the inertial class of $(M,\sigma)$ if its supercuspidal support is in this class.
We recall that the notions of cuspidal and supercuspidal representation are not equivalent as in complex case; however, in \cite{MS2} Minguez and S\'echerre prove the uniqueness of supercuspidal support, up to conjugation, for every irreducible representation of $G$.
We remark that to obtain the block decomposition of $\mathscr{R}_R(G)$, S\'echerre and Stevens do not use the same method as Bernstein, but they rely, like us in this paper, on the theory of semisimple types developed in \cite{SeSt3} (see \cite{BK3} for $d=1$).
Actually, they associate at every block of $\mathscr{R}_R(G)$ a pair $(\mathbf{J}, \bm\lambda)$, called \emph{semisimple supertype}. 
Unfortunately the construction of the equivalence, as in complex case, between the block and the category of modules over $\mathscr{H}_R(G,\bm\lambda)$ does not hold and one of the problems that occurs is that the pro-order of $\mathbf{J}$ can be divisible by $\ell$. 
Some partial results on descriptions of algebras which are Morita equivalent to blocks of $\mathscr{R}_R(GL_n(F))$ are given by Dat 
\cite{Dat3}, Helm \cite{Helm} and Guiraud \cite{Gui}.

\smallskip
The idea of this paper is the following.
We fix a block $\mathscr{R}(\mathbf{J},\bm\lambda)$ of $\mathscr{R}_R(G)$ associated to the semisimple supertype $(\mathbf{J}, \bm\lambda)$ and, as in \cite{SeSt1}, we can associate to it 
a compact open subgroup $\mathbf{J}_{max}$ of $G$, its pro-$p$-radical $\mathbf{J}^1_{max}$ and 
an irreducible representation $\bm\eta_{max}$ of $\mathbf{J}^1_{max}$. 
We remark that we can extend, not in a unique way, $\bm\eta_{max}$ to an irreducible representation of $\mathbf{J}_{max}$. 
Thus, we denote $\mathscr{R}(G,\bm\eta_{max})$ the direct sum of blocks of $\mathscr{R}_R(G)$ associated to $(\mathbf{J}^1_{max},\bm\eta_{max})$ and we consider the functor $$\mathbf{M}_{\bm\eta_{max}}=\Hom_G(\ind_{\mathbf{J}^1_{max}}^G\bm\eta_{max}, -):\mathscr{R}(G,\bm\eta_{max})\longrightarrow \Mod-\mathscr{H}_R(G,\bm\eta_{max})$$
where $\mathscr{H}_R(G,\bm\eta_{max})\cong\End_G\big(\ind_{\mathbf{J}^1_{max}}^G(\bm\eta_{max})\big)$. 
Using the fact that $\bm\eta_{max}$ is a projective representation, since $\mathbf{J}^1_{max}$ is a pro-$p$-group, we prove that $\mathbf{M}_{\bm\eta_{max}}$ is an equivalence of categories (theorem \ref{thm:equivalenceM}). 
This result generalizes corollary 3.3 of \cite{Chin1} where $\bm\eta_{max}$ is a trivial character.
We can also associate to $(\mathbf{J}, \bm\lambda)$ a Levi subgroup $L$ of $G$ and a group $B_L^{\times}$, which is a direct product of inner forms of general linear groups over finite extensions of $F$ and which we have denoted $G'$ above. 
If $K_L$ is a maximal compact open subgroup of $B_L^{\times}$ and $K_L^1$ is its pro-$p$-radical then $K_L/K_L^1\cong \mathbf{J}_{max}/\mathbf{J}^1_{max}=\mathscr{G}$ is a direct product of finite general linear groups.
Actually, in \cite{Chin1} is proved that the $K_L^1$-inviariant functor $\inv_{K_L^1}$ is an equivalence of categories between the level-$0$ subcategory $\mathscr{R}(B_L^{\times},K_L^1)$ of $\mathscr{R}_R(B_L^{\times})$, which is the direct sum of its level-$0$ blocks, and the category of modules over the algebra $\mathscr{H}_R(B_L^{\times},K_L^1)\cong\End_{B_L^{\times}}\big(\ind_{K^1_L}^{B_L^{\times}}1_{K^1_L}\big)$.
Now, thanks to the explicit presentation by generators and relations of $\mathscr{H}_R(B_L^{\times},K_L^1)$, presented in \cite{Chin1}, in this paper we  construct a homomorphism 
$\Theta_{\gamma,\bm\kappa_{max}}:\mathscr{H}_R(B_L^{\times},K_L^1)\longrightarrow\mathscr{H}_R(G,\bm\eta_{max})$, depending on the choice of the extension $\bm\kappa_{max}$ of $\bm\eta_{max}$ to $\mathbf{J}_{max}$ and on the choice of an intertwining element $\gamma$ of $\bm\eta_{max}$, finding elements in $\mathscr{H}_R(G,\bm\eta_{max})$ satisfying all relations defining $\mathscr{H}_R(B_L^{\times},K_L^1)$. 
Moreover, using some properties of $\bm\eta_{max}$, we prove that this homomorphism is actually an isomorphism. 
We remark that finding this isomorphism is one of the most difficult results obtained in this article and the proof in the case $L=G$ takes about half of the paper (section \ref{sec:isom}).
This also complete the results contained in the Phd thesis \cite{Chin} of the author because in it the construction of this isomorphism depends on a conjecture (see section 3.4 of \cite{Chin}). 
In this way we obtain an equivalence of categories 
$\mathbf{F}_{\gamma,\bm\kappa_{max}}:\mathscr{R}(G,\bm\eta_{max})\longrightarrow \mathscr{R}(B_L^{\times},K_L^1)$
such that the following diagram commutes
\begin{equation*}
\xymatrix{
\mathscr{R}(G,\bm\eta_{max})
\ar[rr]^{\mathbf{F}_{\gamma,\bm\kappa_{max}}}
\ar[d]_{\mathbf{M}_{\bm\eta_{max}}}^{\rotatebox{270}{$\simeq$}}
& &\mathscr{R}(B_L^{\times},K_L^1)
\ar[d]^{\inv_{K^1_L}}_{\rotatebox{90}{$\simeq$}}
\\
\Mod-\mathscr{H}_R(G,\bm\eta_{max})
\ar[rr]^{\Theta_{\gamma,\bm\kappa_{max}}^*}
& &\Mod-\mathscr{H}_R(B_L^{\times},K_L^1).
}
\end{equation*}
Then we obtain
\begin{equation*}
\mathbf{F}_{\gamma,\bm\kappa_{max}}(\pi,V)=\mathbf{M}_{\bm\eta_{max}}(\pi,V)\otimes_{\mathscr{H}_R(B^{\times}_L,K^1_L)}\ind_{K^1_L}^{B_L^{\times}}(1_{K^1_L})
\end{equation*}
for every $(\pi,V)$ in $\mathscr{R}(G,\bm\eta_{max})$, 
where the action of $\mathscr{H}_R(B^{\times}_L,K^1_L)$ on $\mathbf{M}_{\bm\eta_{max}}(\pi,V)$ depends on $\Theta_{\gamma,\bm\kappa_{max}}$.
Hence, $\mathbf{F}_{\gamma,\bm\kappa_{max}}$ induces an equivalence of categories between the block $\mathscr{R}(\mathbf{J},\bm\lambda)$ and  a level-$0$ block of $\mathscr{R}_R(B_L^{\times})$.
To understand this correspondence we need to use the functor $$\textbf{\textsf{K}}_{\bm\kappa_{max}}:\mathscr{R}(G,\bm\eta_{max})\longrightarrow \mathscr{R}_R(\mathbf{J}_{max}/\mathbf{J}^1_{max})=\mathscr{R}_R(\mathscr{G})$$
where $\mathbf{J}_{max}$ acts on $\textbf{\textsf{K}}_{\bm\kappa_{max}}(\pi)=\Hom_{\mathbf{J}^1_{max}}(\bm\eta_{max},\pi)$ 
by $x.\varphi=\pi(x)\circ \varphi\circ\bm\kappa_{max}(x)^{-1}$ for every representation $\pi$ of $G$, $\varphi\in \Hom_{\mathbf{J}^1_{max}}(\bm\eta_{max},\pi)$ and $x\in\mathbf{J}_{max}$. 
This functor is strongly used in \cite{SeSt1} to define $\mathscr{R}(\mathbf{J},\bm\lambda)$ and to prove the Bernstein decomposition of $\mathscr{R}_R(G)$.
We also consider the functor 
$\textbf{\textsf{K}}_{K_L}:\mathscr{R}(B^{\times}_L,K^1_L)\rightarrow \mathscr{R}_R(K_L/K_L^1)=\mathscr{R}_R(\mathscr{G})$
given by $\textbf{\textsf{K}}_{K_L}(Z)=Z^{K^1_L}$ for every representation $(\varrho,Z)$ of $B^{\times}_L$
where $x\in K_L$ acts on $z\in Z^{K^1_L}$ by $x.z=\varrho(x)z$.
Then the functors $\textbf{\textsf{K}}_{K_L}\circ \mathbf{F}_{\gamma,\bm\kappa_{max}}$ and $\textbf{\textsf{K}}_{\bm\kappa_{max}}$ are naturally isomorphic (proposition \ref{thm:naturalisom}) and so $\mathscr{R}(\mathbf{J},\bm\lambda)$ is equivalent to the level-$0$ block $\mathscr{B}$ of $\mathscr{R}_R(B_L^{\times})$ such that 
$\textbf{\textsf{K}}_{\bm\kappa_{max}}(\mathscr{R}(\mathbf{J},\bm\lambda))=\textbf{\textsf{K}}_{K_L}(\mathscr{B})$.
More precisely, if $\mathbf{J}^1$ is the pro-$p$-radical of $\mathbf{J}$, then $\mathbf{J}/\mathbf{J}^1=\mathscr{M}$ is a Levi subgroup of $\mathscr{G}$ and the choice of $\bm\kappa_{max}$ defines a decomposition $\bm\lambda=\bm\kappa\otimes\bm\sigma$ where $\bm\kappa$ is an irreducible representation of $\mathbf{J}$ and $\bm\sigma$ is a cuspidal representation of $\mathscr{M}$ viewed as an irreducible representation of $\mathbf{J}$ trivial on $\mathbf{J}^1$. 
If we can consider the pair $(\mathscr{M},\bm\sigma)$ up to the equivalence relation given in definition 1.14 of \cite{SeSt1}, then a representation $(\varrho,Z)$ of $B_L^{\times}$ is in $\mathscr{B}$ if it is generated by the maximal subspace of $Z^{K^1_L}$ such that every irreducible subquotient has supercuspidal support in the class of $(\mathscr{M},\bm\sigma)$.

\smallskip
One question we do not address in this paper is the structure of level-$0$ blocks of $\mathscr{R}_R(B_L^{\times})$ when the characteristic of $R$ is positive. 
Thanks to results of \cite{Chin1} we know that there is a correspondence between these  blocks and the set $\mathscr{E}$ of primitive central idempotents of $\mathscr{H}_R(B_L^{\times},K^1_L)$, which are descibed in sections 2.5 and 2.6 of \cite{Chin}. 
Hence, one possibility for understanding level-$0$ blocks of $\mathscr{R}_R(B_L^{\times})$ is to describe the algebras $e\mathscr{H}_R(B_L^{\times},K^1_L)$ with $e\in\mathscr{E}$.
On the other hand, we recall that in \cite{Dat4} Dat proves that every level-$0$ block of $\mathscr{R}_R(GL_n(F))$ is equivalent to the unipotent block of $\mathscr{R}_R(G'')$ where $G''$ is a suitable product of general linear groups over non-archimedean locally compact fields. 
Hence, putting together the result of Dat and results of this article, we obtain a method to reduce the description of any block of $\mathscr{R}_R(GL_n(F))$ to that of an unipotent block.
Unfortunately the description of the unipotent block of $\mathscr{R}_R(GL_n(F))$, or of $\mathscr{R}_R(G)$, is nowadays an hard question and it has no answer yet.

\smallskip
We now give a brief summary of the contents of each section of this paper. 
In section \ref{sec:preliminaries} we present general results on the convolution Hecke algebras $\mathscr{H}_R(\mathtt{G},\sigma)$ where $\mathtt{G}$ is generic locally profinite group and $\sigma$ a representation of an open subgroup $\mathtt{H}$ of $\mathtt{G}$. 
We see that if $\sigma$ is finitely generated then $\mathscr{H}_R(\mathtt{G},\sigma)$ is isomorphic to the endomorphism algebra of $\ind_{\mathtt{H}}^{\mathtt{G}}\sigma$.
We also define two subcategories of $\mathscr{R}_R(\mathtt{G})$ and we prove that, when they coincide, they are equivalent to the category of modules over $\mathscr{H}_R(\mathtt{G},\sigma)$.
In section \ref{sec:maximaltypes} we introduce the theory of maximal simple types; in particular we consider the Heisenberg representation $\eta$ associated to a simple character (see paragraph \ref{subsec:Heisenberg}) and we define the groups $B^{\times}=B_G^{\times}$ and $K^1=K^1_G$.
In section \ref{sec:isom} we prove that the algebras $\mathscr{H}_R(G,\eta)$ and $\mathscr{H}_R(B^{\times},K^1)$ are isomorphic. 
In section \ref{sec:semisimpletypes} we introduce the theory of semisimple types, we define the representation $\bm\eta_{max}$ and the group $B_L^{\times}$ and we prove that the algebras $\mathscr{H}_R(B_L^{\times},K_L^1)$ and $\mathscr{H}_R(G,\bm\eta_{max})$ are isomorphic.
Finally, in section \ref{sec:categoryequivalence} we
prove that $\mathbf{M}_{\bm\eta_{max}}$ and $\mathbf{F}_{\gamma,\bm\kappa_{max}}$ are equivalences of categories, we describe the correspondence between blocks of $\mathscr{R}(G,\bm\eta_{max})$ and of $\mathscr{R}(B_L^{\times},K_L^1)$ and we investigate on the dependence of these results on the choice of the extension of $\bm\eta_{max}$ to $\mathbf{J}_{max}$. 

\section{Preliminaries}\label{sec:preliminaries}
This section is written in much more generality than the remainder of this paper. We present general results  for a generic locally profinite group.

\smallskip
Let $\mathtt{G}$ be a locally profinite group (i.e. a locally compact and totally disconnected topological group) and let $R$ be a unitary commutative ring. We recall that a representation $(\pi,V)$ of $\mathtt{G}$ over $R$ is smooth if for every $v\in V$ the stabilizer $\{g\in\mathtt{G}\,|\, \pi(g)v=v\}$  is an open subgroup of $\mathtt{G}$. 
We denote $\mathscr{R}_R(\mathtt{G})$ the (abelian) category of smooth representations of $\mathtt{G}$ over $R$.
From now on all representations are considered smooth.

\subsection{Hecke algebras for a locally profinite group}\label{subsec:Hecke}

In this paragraph we introduce an algebra associated to a representation $\sigma$ of a subgroup of $\mathtt{G}$ and we prove that it is isomorphic to the endomorphism algebra of the compact induction of $\sigma$. 
This definition generalizes those in section 1 of \cite{Chin1} that corresponds to the case in which $\sigma$ is trivial.

\smallskip
Let $\mathtt{H}$ be an open subgroup of $\mathtt{G}$ such that every $\mathtt{H}$-double coset is a finite union of left $\mathtt{H}$-cosets (or equivalently $\mathtt{H}\cap g\mathtt{H}g^{-1}$ is of finite index in $\mathtt{H}$ for every $g\in \mathtt{G}$) and let $(\sigma, V_\sigma)$ be a smooth representation of $\mathtt{H}$ over $R$. 

\begin{defin}
Let $\mathscr{H}_R(\mathtt{G},\sigma)$ be the $R$-algebra of functions $\Phi:\mathtt{G}\rightarrow \End_R(V_\sigma)$ such that 
$\Phi(hgh')=\sigma(h)\circ\Phi(g)\circ\sigma(h')$ for every $h,h'\in \mathtt{H}$ and $g\in \mathtt{G}$ and whose supports are a finite union of $\mathtt{H}$-double cosets, 
endowed with convolution product
\begin{equation}\label{eq:convolution}
(\Phi_1*\Phi_2)(g)=\sum_{x}\Phi_1(x)\Phi_2(x^{-1}g)
\end{equation}
where $x$ describes a system of representatives of  
$\mathtt{G}/\mathtt{H}$ in $\mathtt{G}$. 
This algebra is unitary and the identity element is 
$\sigma$ seen as a function on $\mathtt{G}$ with support equal to 
$\mathtt{H}$.
To simplify the notation, from now on we denote 
$\Phi_1\Phi_1=\Phi_1*\Phi_2$ for all $\Phi_1,\Phi_2\in\mathscr{H}_R(\mathtt{G},\sigma)$.
\end{defin}

We observe that the sum in (\ref{eq:convolution}) is finite since the support of $\Phi_1$ is a finite union of $\mathtt{H}$-double cosets and 
by hypothesis, every $\mathtt{H}$-double coset is a finite union of left $\mathtt{H}$-cosets. 
Moreover, (\ref{eq:convolution}) is well-defined because for every $h\in\mathtt{H}$ and $x,g\in\mathtt{G}$ we have
$\Phi_1(xh)\Phi_2((xh)^{-1}g)=
\Phi_1(x)\circ\sigma(h)\circ\sigma(h^{-1})\circ\Phi_2(x^{-1}g)=\Phi_1(x)\circ\Phi_2(x^{-1}g).$

\smallskip
For every $g\in\mathtt{G}$ we denote by
$\mathscr{H}_R(\mathtt{G},\sigma)_{\mathtt{H}g\mathtt{H}}$
the submodule of $\mathscr{H}_R(\mathtt{G},\sigma)$ of functions with support in $\mathtt{H}g\mathtt{H}$.
If $g_1,g_2\in\mathtt{G}$, $\Phi_1\in \mathscr{H}_R(\mathtt{G},\sigma)_{\mathtt{H}g_1\mathtt{H}}$ and 
$\Phi_2\in \mathscr{H}_R(\mathtt{G},\sigma)_{\mathtt{H}g_2\mathtt{H}}$ then the support of $\Phi_1\Phi_2$ is in 
$\mathtt{H}g_1\mathtt{H}g_2\mathtt{H}$ and the support of $x\mapsto \Phi_1(x)\Phi_2(x^{-1}g)$ is in $\mathtt{H}g_1\mathtt{H}\cap g\mathtt{H}g_2^{-1}\mathtt{H}$.
\begin{rmk}\label{rmk:normal}
If $g_1$ or $g_2$ normalizes $\mathtt{H}$ then the support of $\Phi_1\Phi_2$ is in $\mathtt{H}g_1g_2\mathtt{H}$ and the support of $x\mapsto \Phi_1(x)\Phi_2(x^{-1}g_1g_2)$ is in $g_1\mathtt{H}$.
Hence, we obtain 
$(\Phi_1\Phi_2)(g_1g_2)=\Phi_1(g_1)\circ\Phi_2(g_2)$.
\end{rmk}

For every $g\in \mathtt{G}$ we denote $\mathtt{H}^g=g^{-1}\mathtt{H}g$ and 
$(\sigma^g,V_\sigma)$ the representation of $\mathtt{H}^g$ given by $\sigma^g(x)=\sigma(gxg^{-1})$ for every 
$x\in \mathtt{H}^g$. 
We denote $I_g(\sigma)$ the $R$-module 
$\Hom_{\mathtt{H}\cap \mathtt{H}^g}(\sigma,\sigma^g)$ and  $I_\mathtt{G}(\sigma)$ the set, called \emph{intertwining} of $\sigma$ in $\mathtt{G}$, of $g\in \mathtt{G}$ such that $I_g(\sigma)\neq 0$. 
For every $g\in I_\mathtt{G}(\sigma)$ the map 
$\Phi\mapsto\Phi(g)$ is an isomorphism of $R$-modules between $\mathscr{H}_R(\mathtt{G},\sigma)_{\mathtt{H}g\mathtt{H}}$ and $I_g(\sigma)$ and so $g\in \mathtt{G}$ intertwines $\sigma$ if and only if there exists an element $\Phi\in\mathscr{H}_R(\mathtt{G},\sigma)$ such that $\Phi(g)\neq 0$.

\smallskip
Let $\ind_{\mathtt{H}}^{\mathtt{G}}(\sigma)$ be the compact induced representation of $\sigma$ to $\mathtt{G}$. 
It is the $R$-module of functions 
$f:\mathtt{G}\rightarrow V_\sigma$, compactly supported modulo $\mathtt{H}$, such that $f(hg)=\sigma(h)f(g)$ for every $h\in\mathtt{H}$ and $g\in\mathtt{G}$ endowed with the action of $\mathtt{G}$ defined by $x.f:g\mapsto f(gx)$ for every $x,g\in \mathtt{G}$ and $f\in \ind_{\mathtt{H}}^{\mathtt{G}}(\sigma)$. 
We remark that, since $\mathtt{H}$ is open, by I.5.2(b) of \cite{Vig2} it is a  smooth representation of $\mathtt{G}$. 
For every $v\in V_\sigma$ let $i_v\in\ind_{\mathtt{H}}^{\mathtt{G}}(\sigma)$ with support in $\mathtt{H}$ defined by $i_v(h)=\sigma(h)v$ for every $h\in \mathtt{H}$. Then for every $x\in \mathtt{G}$ the function $x^{-1}.i_v$ has support $\mathtt{H}x$ and takes the value $v$ on $x$. Hence, for every $f\in \ind_{\mathtt{H}}^{\mathtt{G}}(\sigma)$ we have 
\begin{equation}\label{eq:induced}
f=\sum_{x\in\mathtt{H}\bs\mathtt{G}}x^{-1}.i_{f(x)}
\end{equation}
and so the image $i_{V_\sigma}$ of $v\mapsto i_v$  generates $\ind_\mathtt{H}^\mathtt{G}(\sigma)$ as representation of $\mathtt{G}$.

\smallskip
Frobenius reciprocity (I.5.7 of \cite{Vig2}) states that the map 
$\Hom_{\mathtt{H}}(\sigma,V)\rightarrow \Hom_{\mathtt{G}}(\ind_\mathtt{H}^\mathtt{G}(\sigma),V)$ given by $\phi\mapsto\psi$ where
$\phi(v)=\psi(i_v)$ for every $v\in V_{\sigma}$ is an isomorphism of $R$-modules.

\begin{lemma}\label{lemma:isomalgebras}
If $V_\sigma$ is a finitely generated $R$-module, the map $\xi:\mathscr{H}_R(\mathtt{G},\sigma) \rightarrow  \End_{\mathtt{G}}(\ind_\mathtt{H}^\mathtt{G}(\sigma))$ given by
\begin{equation*}
\xi(\Phi)(f)(g)=(\Phi*f)(g)=\sum_{x\in \mathtt{G}/\mathtt{H}}\Phi(x)f(x^{-1}g)
\end{equation*}
for every $\Phi\in\mathscr{H}_R(\mathtt{G},\sigma)$, $f\in\ind_\mathtt{H}^\mathtt{G}(\sigma)$ and $g\in\mathtt{G}$ is an $R$-algebra isomorphism whose inverse is given by $\xi^{-1}(\vartheta)(g)(v)=\vartheta(i_v)(g)$ 
for every $\vartheta\in\End_{\mathtt{G}}\left(\ind_\mathtt{H}^\mathtt{G}(\sigma)\right)$, $g\in\mathtt{G}$ and $v\in V_\sigma$.
\end{lemma}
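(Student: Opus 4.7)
The plan is to carry out the proof in three stages: verifying $\xi$ is a well-defined algebra homomorphism, verifying the proposed formula for $\xi^{-1}$ really lands in $\mathscr{H}_R(\mathtt{G},\sigma)$ (this is where the finite generation hypothesis enters), and checking that $\xi$ and $\xi^{-1}$ are mutual inverses.

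For the first stage, I would check that for $\Phi\in\mathscr{H}_R(\mathtt{G},\sigma)$ and $f\in\ind_\mathtt{H}^\mathtt{G}(\sigma)$, the function $\xi(\Phi)(f):g\mapsto\sum_{x\in\mathtt{G}/\mathtt{H}}\Phi(x)f(x^{-1}g)$ is well-defined (the sum is finite because $f$ has compact support mod $\mathtt{H}$), transforms correctly under left multiplication by $\mathtt{H}$ (substitute $x\mapsto hx$ and use $\Phi(hx)=\sigma(h)\Phi(x)$), and has support contained in $\mathrm{supp}(\Phi)\cdot\mathrm{supp}(f)$, hence compact mod $\mathtt{H}$. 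Right $\mathtt{G}$-equivariance of $\xi(\Phi)$ follows from a change of variable on $g$. That $\xi$ preserves multiplication is a routine unwinding of the convolution product: expand $\xi(\Phi_1\Phi_2)(f)(g)$, use the definition of $\Phi_1*\Phi_2$, swap the two sums and reindex to get $\xi(\Phi_1)\bigl(\xi(\Phi_2)(f)\bigr)(g)$. Linearity and unitality (the unit $\sigma$ corresponds to the identity endomorphism) are immediate.

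For the second stage, fix $\vartheta\in\End_\mathtt{G}\bigl(\ind_\mathtt{H}^\mathtt{G}(\sigma)\bigr)$ and set $\Psi(g)(v)=\vartheta(i_v)(g)$. $R$-linearity in $v$ is inherited from $v\mapsto i_v$ and from $\vartheta$. Left $\mathtt{H}$-equivariance, $\Psi(hg)=\sigma(h)\circ\Psi(g)$, comes from $\vartheta(i_v)\in\ind_\mathtt{H}^\mathtt{G}(\sigma)$. The key identity $i_{\sigma(h)v}=h.i_v$ (both sides are functions supported in $\mathtt{H}$ taking value $\sigma(xh)v$ at $x\in\mathtt{H}$) combined with $\mathtt{G}$-equivariance of $\vartheta$ yields right $\mathtt{H}$-equivariance $\Psi(gh)=\Psi(g)\circ\sigma(h)$. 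The delicate point is the support condition: since $V_\sigma$ is finitely generated, pick generators $v_1,\dots,v_n$; each $\vartheta(i_{v_j})$ lies in $\ind_\mathtt{H}^\mathtt{G}(\sigma)$ hence has support a finite union of left $\mathtt{H}$-cosets, which, by the bi-$\mathtt{H}$-equivariance already proved, can be enlarged to a finite union of double cosets. Since $v\mapsto i_v$ is $R$-linear and $\vartheta$ is additive, $\Psi(g)=0$ whenever all $\vartheta(i_{v_j})(g)=0$, so $\mathrm{supp}(\Psi)\subset\bigcup_j\mathrm{supp}\bigl(\vartheta(i_{v_j})\bigr)$, a finite union of double cosets.

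Finally I would verify the two compositions are the identity by evaluating on the generating set $\{i_v:v\in V_\sigma\}$, which suffices because both sides are $\mathtt{G}$-equivariant and $\ind_\mathtt{H}^\mathtt{G}(\sigma)$ is generated by $i_{V_\sigma}$ via (\ref{eq:induced}). Computing $\xi(\xi^{-1}(\vartheta))(i_v)(g)$, only the $\mathtt{H}$-coset $g\mathtt{H}$ contributes; writing $g=x_gh$ and using $\xi^{-1}(\vartheta)(x_g)(\sigma(h)v)=\vartheta(i_{\sigma(h)v})(x_g)=\vartheta(h.i_v)(x_g)=\vartheta(i_v)(g)$ collapses the sum to $\vartheta(i_v)(g)$. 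In the other direction, $\xi^{-1}(\xi(\Phi))(g)(v)=\xi(\Phi)(i_v)(g)=\sum_x\Phi(x)i_v(x^{-1}g)$, and again only $x\in g\mathtt{H}$ contributes: choosing $x=g$ gives $\Phi(g)(v)$. The main technical obstacle, as indicated above, is the support verification in the second stage; the rest reduces to careful bookkeeping with cosets and the intertwining formula $i_{\sigma(h)v}=h.i_v$.
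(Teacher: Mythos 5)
Your proof is correct: the paper itself gives no argument for this lemma, deferring to I.8.5--6 of Vign\'eras, and your direct verification is exactly the standard argument behind that citation. In particular you correctly isolate the only non-routine point, namely that the finite generation of $V_\sigma$ is what guarantees the support of $\xi^{-1}(\vartheta)$, which is a union of double cosets by bi-$\mathtt{H}$-equivariance, is contained in the finitely many cosets supporting the $\vartheta(i_{v_j})$ and hence is a \emph{finite} union of double cosets; the remaining checks (well-definedness of the convolution action, $i_{\sigma(h)v}=h.i_v$, and evaluation on the generators $i_v$ to see the two maps are mutually inverse) are carried out as one would expect.
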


\begin{proof}
See I.8.5-6 of \cite{Vig2}.
\end{proof}

\subsection{The categories $\mathscr{R}_{\sigma}(\mathtt{G})$ and $\mathscr{R}(\mathtt{G},\sigma)$}\label{subsec:categories}
In this paragraph we associate to an irreducible projective representation of a compact open subgroup of $\mathtt{G}$ two subcategories of $\mathscr{R}_R(\mathtt{G})$.

\smallskip
Let $\mathtt{K}$ be a compact open subgroup of $\mathtt{G}$ and $(\sigma,V_{\sigma})$ be an irreducible projective representation of $\mathtt{K}$ such that $V_\sigma$ is a finitely generated $R$-module. 
Then $\rho=\ind_{\mathtt{K}}^\mathtt{G}(\sigma)$ is a projective representation of $\mathtt{G}$ by I.5.9(d) of \cite{Vig2} and so the functor  
$$\mathbf{M}_\sigma=\Hom_\mathtt{G}(\rho,-):\mathscr{R}_R(\mathtt{G})\rightarrow \Mod-\mathscr{H}_R(\mathtt{G},\sigma)$$
is exact.
We remark that for every representation $(\pi,V)$ of $\mathtt{G}$ the right action of $\Phi\in\mathscr{H}_R(\mathtt{G},\sigma)$ on $\varphi\in\Hom_\mathtt{G}(\rho,V)$ is given by $\varphi.\Phi=\varphi\circ\xi(\Phi)$ where $\xi$ is the isomorphism of lemma \ref{lemma:isomalgebras}. 
Moreover if $V_1$ and $V_2$ are representations of $\mathtt{G}$ and $\mathfrak{\epsilon}\in\Hom_\mathtt{G}(V_1,V_2)$ then $\mathbf{M}_\sigma(\phi)$ maps $\varphi$ to $\phi\circ\varphi$ for every $\varphi\in \Hom_\mathtt{G}(\rho,V_1)$.

\begin{defin}
Let $\mathscr{R}_{\sigma}(\mathtt{G})$ be the full subcategory of $\mathscr{R}_R(\mathtt{G})$ whose objects are representations $V$ such that $\mathbf{M}_\sigma(V')\neq 0$ for every irreducible subquotient $V'$ of $V$. 
\end{defin}

For every representation $V$ of $\mathtt{G}$ we denote
$V^{\sigma}=\sum_{\phi\in\Hom_{\mathtt{K}}(\sigma,V)}\phi(\sigma)$ which is a subrepresentation of the restriction of $V$ to $\mathtt{K}$. We denote by $V[\sigma]$ the representation of $\mathtt{G}$ generated by $V^{\sigma}$.
If $\sigma$ is the trivial character of $\mathtt{K}$ then
$V^{\sigma}=V^{\mathtt{K}}=\{v\in V\,|\,\pi(k)v=v \text{ pour tout }k\in\mathtt{K}\}$ 
is the set of $\mathtt{K}$-invariant vectors of $V$. 

\begin{prop}\label{prop:functorM}
For every representation $V$ of $\mathtt{G}$ we have $V[\sigma]=\sum_{\psi\in \mathbf{M}_\sigma(V)}\psi(\rho)$ and so $\mathbf{M}_\sigma(V)=\mathbf{M}_\sigma(V[\sigma]).$
Moreover, if $W$ is a subrepresentation of $V$ then $\mathbf{M}_\sigma(W)=\mathbf{M}_\sigma(V)$ if and only if $W[\sigma]=V[\sigma]$.
\end{prop}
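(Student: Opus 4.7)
The plan is to use Frobenius reciprocity together with the formula (\ref{eq:induced}) to translate each step between the two sides of the equality.

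First, I would unpack $\psi(\rho)$ for a fixed $\psi\in\mathbf{M}_\sigma(V)=\Hom_\mathtt{G}(\rho,V)$. By Frobenius reciprocity there is a unique $\phi\in\Hom_\mathtt{K}(\sigma,V)$ with $\phi(v)=\psi(i_v)$ for every $v\in V_\sigma$. Formula (\ref{eq:induced}) shows that $\rho$ is generated as a $\mathtt{G}$-representation by $i_{V_\sigma}$, so $\psi(\rho)$ is the $\mathtt{G}$-subrepresentation of $V$ generated by $\psi(i_{V_\sigma})=\phi(V_\sigma)=\phi(\sigma)$. Summing over $\psi$ (equivalently, by Frobenius reciprocity, over $\phi$), I obtain that $\sum_{\psi}\psi(\rho)$ is the $\mathtt{G}$-subrepresentation generated by $\sum_{\phi}\phi(\sigma)=V^\sigma$, which is by definition $V[\sigma]$. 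This gives the first claim.

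For the equality $\mathbf{M}_\sigma(V)=\mathbf{M}_\sigma(V[\sigma])$, I would observe that the inclusion $V[\sigma]\hookrightarrow V$ induces an injection $\mathbf{M}_\sigma(V[\sigma])\hookrightarrow \mathbf{M}_\sigma(V)$ by post-composition. The first part shows that every $\psi\in\mathbf{M}_\sigma(V)$ satisfies $\psi(\rho)\subseteq V[\sigma]$, so $\psi$ factors (uniquely, since the inclusion is a monomorphism) through $V[\sigma]$; hence this injection is an isomorphism, and I identify $\mathbf{M}_\sigma(V)$ with $\mathbf{M}_\sigma(V[\sigma])$.

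For the last assertion with $W\subseteq V$, the implication $W[\sigma]=V[\sigma]\Rightarrow\mathbf{M}_\sigma(W)=\mathbf{M}_\sigma(V)$ is immediate from the identifications just established: $\mathbf{M}_\sigma(W)=\mathbf{M}_\sigma(W[\sigma])=\mathbf{M}_\sigma(V[\sigma])=\mathbf{M}_\sigma(V)$. For the converse, assume $\mathbf{M}_\sigma(W)=\mathbf{M}_\sigma(V)$ (as submodules of $\mathbf{M}_\sigma(V)$ via the inclusion). Then every $\psi\in\mathbf{M}_\sigma(V)$ has image inside $W$, so by the first part $V[\sigma]=\sum_\psi\psi(\rho)\subseteq W$; in particular $V^\sigma\subseteq W$. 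Since any $\phi\in\Hom_\mathtt{K}(\sigma,V)$ has image in $V^\sigma\subseteq W$, it corresponds to an element of $\Hom_\mathtt{K}(\sigma,W)$, so $V^\sigma\subseteq W^\sigma$; the reverse inclusion is obvious, giving $V^\sigma=W^\sigma$ and therefore $V[\sigma]=W[\sigma]$.

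The only subtle point is the careful bookkeeping for identifying $\mathbf{M}_\sigma$ applied to $W$ or to $V[\sigma]$ as a submodule of $\mathbf{M}_\sigma(V)$; this needs the exactness of $\mathbf{M}_\sigma$ (via projectivity of $\rho$) to ensure that inclusions of subrepresentations are sent to inclusions of Hom-modules. Once this is noted, everything else is a direct application of Frobenius reciprocity and the first part of the statement.
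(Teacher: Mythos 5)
Your proof is correct and follows essentially the same route as the paper: Frobenius reciprocity plus formula (\ref{eq:induced}) to identify $\sum_{\psi}\psi(\rho)$ with the subrepresentation generated by $V^\sigma$, then applying this to $V[\sigma]$ and to $W$ for the remaining claims. The only quibble is your closing remark: preservation of inclusions by $\mathbf{M}_\sigma$ needs only left exactness of $\Hom_{\mathtt{G}}(\rho,-)$, not projectivity of $\rho$.
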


\begin{proof}
By Frobenius reciprocity we have 
$\Hom_{\mathtt{K}}(\sigma,V)\cong \mathbf{M}_\sigma(V)$ and so using (\ref{eq:induced}) we obtain 
\[
V[\sigma]=
\sum_{g\in\mathtt{G}}\pi(g)\sum_{\psi\in \mathbf{M}_\sigma(V)}\psi(i_{V_{\sigma}})
= \sum_{\psi\in \mathbf{M}_\sigma(V)}\psi\left( \sum_{g\in\mathtt{G}}g. i_{V_{\sigma}}\right)
=\sum_{\psi\in \mathbf{M}_\sigma(V)}\psi(\rho)
\]
that implies $\mathbf{M}_\sigma(V)=\mathbf{M}_\sigma(V[\sigma])$. 
Furthermore, if $W[\sigma]=V[\sigma]$ then $\mathbf{M}_\sigma(W)= \mathbf{M}_\sigma(V)$ and if
$\mathbf{M}_\sigma(W)=\mathbf{M}_\sigma(V)$ then 
$W[\sigma]=\sum_{\psi\in \mathbf{M}_\sigma(W)}\psi(\rho)=\sum_{\psi\in \mathbf{M}_\sigma(V)}\psi(\rho)=V[\sigma]$.
\end{proof}

\begin{defin}
Let $\mathscr{R}(\mathtt{G},\sigma)$ be the full subcategory of $\mathscr{R}_R(\mathtt{G})$ whose objects are representations $V$ such that $V=V[\sigma]$. 
If $\sigma$ is the trivial character of $\mathtt{K}$ we denote $\mathscr{R}(\mathtt{G},\mathtt{K})$ the subcategory of representations $V$ generated by $V^{\mathtt{K}}$. 
\end{defin}

\begin{prop}\label{prop:TFAE}
Let $V$ be a representation of $\mathtt{G}$. The following conditions are equivalent:
\begin{enumerate}[(i)]
\item for every irreducible subquotient $U$ of $V$ we have $\mathbf{M}_\sigma(U)\neq 0$;
\item for every non-zero subquotient $W$ of $V$ we have $\mathbf{M}_\sigma(W)\neq 0$;
\item for every subquotient $Z$ of $V$ we have $Z=Z[\sigma]$;
\item for every subrepresentation $Z$ of $V$ we have $Z=Z[\sigma]$.
\end{enumerate}
\end{prop}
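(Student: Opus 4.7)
The plan is to prove the cycle (iv) $\Rightarrow$ (iii) $\Rightarrow$ (ii) $\Rightarrow$ (i) $\Rightarrow$ (iv). Three of these implications will be formal consequences of proposition \ref{prop:functorM} and the exactness of $\mathbf{M}_\sigma$ (which comes from the projectivity of $\rho$); the main step is (i) $\Rightarrow$ (iv), for which the projectivity of $\sigma$ as a $\mathtt{K}$-representation will be essential.

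For (iv) $\Rightarrow$ (iii), I would write a subquotient $Z$ of $V$ as $W'/W''$ with $W''\subseteq W'$ subrepresentations of $V$, and use the fact that the canonical projection $\pi:W'\to Z$ sends $W'^\sigma$ into $Z^\sigma$, and hence $W'[\sigma]$ into $Z[\sigma]$; since $W'=W'[\sigma]$ by (iv) and $\pi$ is surjective, this gives $Z=\pi(W'[\sigma])\subseteq Z[\sigma]$. For (iii) $\Rightarrow$ (ii), if $W$ is a non-zero subquotient then by (iii) combined with proposition \ref{prop:functorM} we have $W=W[\sigma]=\sum_{\psi\in\mathbf{M}_\sigma(W)}\psi(\rho)$, which would vanish if $\mathbf{M}_\sigma(W)=0$, forcing $\mathbf{M}_\sigma(W)\neq 0$. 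The implication (ii) $\Rightarrow$ (i) is immediate, as irreducible subquotients are non-zero subquotients.

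For the main step (i) $\Rightarrow$ (iv), I will argue by contradiction. Let $Z\subseteq V$ be a subrepresentation and suppose $Z\neq Z[\sigma]$, so that $\bar Z = Z/Z[\sigma]$ is a non-zero subquotient of $V$. Any non-zero smooth representation contains an irreducible subquotient (take a non-zero cyclic subrepresentation and apply Zorn's lemma to extract a maximal proper subrepresentation), so $\bar Z$ admits an irreducible subquotient $U$, which is then also an irreducible subquotient of $V$. Hypothesis (i) together with Frobenius reciprocity yields $\Hom_{\mathtt{K}}(\sigma,U)\neq 0$. Using the projectivity of $\sigma$ in $\mathscr{R}_R(\mathtt{K})$, I will lift a non-zero map $\sigma\to U$ first along the surjection presenting $U$ as a quotient of a subrepresentation of $\bar Z$ (producing a non-zero map $\sigma\to\bar Z$), and then along $Z\to\bar Z$ (producing a non-zero map $\phi:\sigma\to Z$ whose composition with $Z\to\bar Z$ is non-zero by construction). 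However, by the definition of $Z^\sigma$ we have $\phi(\sigma)\subseteq Z^\sigma\subseteq Z[\sigma]$, so this composition must vanish — a contradiction. The hard part will be organizing the double lifting cleanly and ensuring at the outset that $\bar Z$ has an irreducible subquotient; the rest is purely formal.
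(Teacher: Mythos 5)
Your proof is correct, but it decomposes the equivalence in the opposite direction from the paper. The paper proves (i)$\Rightarrow$(ii)$\Rightarrow$(iii), treats (iii)$\Rightarrow$(iv) as immediate, and closes the cycle with (iv)$\Rightarrow$(i); its only non-formal step is (iv)$\Rightarrow$(i), handled by writing $U=Z_2/Z_1$ with $Z_1\subsetneq Z_2$ subrepresentations of $V$ and invoking the ``moreover'' clause of proposition \ref{prop:functorM}: $Z_1=Z_1[\sigma]\neq Z_2=Z_2[\sigma]$ forces $\mathbf{M}_\sigma(Z_1)\neq\mathbf{M}_\sigma(Z_2)$, whence $\mathbf{M}_\sigma(U)\neq 0$ by exactness of $\mathbf{M}_\sigma$. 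You instead run the cycle (iv)$\Rightarrow$(iii)$\Rightarrow$(ii)$\Rightarrow$(i)$\Rightarrow$(iv), making (ii)$\Rightarrow$(i) trivial and concentrating the work in (i)$\Rightarrow$(iv), which you settle by a double lifting using the projectivity of $\sigma$ in $\mathscr{R}_R(\mathtt{K})$: a non-zero element of $\Hom_{\mathtt{K}}(\sigma,U)$ (Frobenius reciprocity) lifts through the surjection from a subrepresentation of $Z/Z[\sigma]$ onto $U$, then through $Z\twoheadrightarrow Z/Z[\sigma]$, to a $\mathtt{K}$-map $\sigma\rightarrow Z$ whose image lies in $Z^{\sigma}\subseteq Z[\sigma]$, a contradiction; this is sound, and your (iv)$\Rightarrow$(iii) and (iii)$\Rightarrow$(ii) steps are formal consequences of proposition \ref{prop:functorM}, mirroring the paper's (ii)$\Rightarrow$(iii) computation. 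The two routes use the same ingredients---proposition \ref{prop:functorM}, Frobenius reciprocity, exactness coming from projectivity, and the Zorn-type existence of irreducible subquotients (the paper needs the latter in its (i)$\Rightarrow$(ii), you in your (i)$\Rightarrow$(iv))---so neither is more economical; yours makes the role of projectivity of $\sigma$ over $\mathtt{K}$ explicit at the level of representations, while the paper channels everything through the functor $\mathbf{M}_\sigma$, which keeps its write-up slightly shorter (indeed your contradiction could be compressed by noting that $\mathbf{M}_\sigma(Z)=\mathbf{M}_\sigma(Z[\sigma])$ and exactness give $\mathbf{M}_\sigma(Z/Z[\sigma])=0$ directly, replacing the second lift).
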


\begin{proof} $(i)\Rightarrow (ii)$: let $W$ be a non-zero subquotient of $V$ and $W_1\subset W_2$ two subrepresentations of $W$ such that $U=W_2/W_1$ is irreducible. By (i) we have $\mathbf{M}_\sigma(U)\neq 0$ which implies $\mathbf{M}_\sigma(W_2)\neq 0$ and so $\mathbf{M}_\sigma(W)\neq 0$.
$(ii)\Rightarrow(iii)$: let $Z$ be a subquotient of $V$. 
By proposition \ref{prop:functorM} we have $\mathbf{M}_\sigma(Z)=\mathbf{M}_\sigma(Z[\sigma])$ and so $\mathbf{M}_\sigma(Z/Z[\sigma])=0$. Hence, by (ii) we obtain $Z=Z[\sigma]$. $(iv)\Rightarrow(i)$: let $U$ be an irreducible subquotient of $V$ and $Z_1\subsetneq Z_2$ be two subrepresentations of $V$ such that $U=Z_2/Z_1$. 
By (iv) we have $Z_1[\sigma]=Z_1\neq Z_2=Z_2[\sigma]$ and by proposition \ref{prop:functorM} we have $\mathbf{M}_\sigma(Z_1)\neq \mathbf{M}_\sigma(Z_2)$. Hence, we obtain  $\mathbf{M}_\sigma(U)\neq 0$.
\end{proof}

\begin{rmk}\label{rmk:categoryinclusion}
Proposition \ref{prop:TFAE} implies that $\mathscr{R}_{\sigma}(\mathtt{G})$ is contained in $\mathscr{R}(\mathtt{G},\sigma).$
\end{rmk}

\subsection{Equivalence of categories}
In this paragraph we suppose that there exists a compact open subgroup $\mathtt{K}_0$ of $\mathtt{G}$ whose pro-order is invertible in $R^\times$ and we consider the Haar measure $\mathtt{dg}$ of $\mathtt{G}$ with values in $R$ such that $\int_\mathtt{K_0}\mathtt{dg}=1$ (see I.2 of \cite{Vig2}). We prove that if the two categories introduced in paragraph \ref{subsec:categories} are equal then they are equivalent to the category of modules over the algebra introduced in paragraph \ref{subsec:Hecke}.

\smallskip
The \emph{global Hecke algebra} $\mathscr{H}_R(\mathtt{G})$ of $\mathtt{G}$ is the $R$-algebra of locally constant and compactly supported functions $f:\mathtt{G}\rightarrow R$ endowed with convolution product given by $(f_1*f_2)(x)=\int_{\mathtt{G}} f_1(g)f_2(g^{-1}x)\mathtt{dg}$
for every $f_1,f_2\in\mathscr{H}_R(\mathtt{G})$ and $x\in \mathtt{G}$ (see ... of \cite{Vig2}).
In general $\mathscr{H}_R(\mathtt{G})$ is not unitary but it has enough idempotents by I.3.2 of \cite{Vig2}. The categories $\mathscr{R}_R(\mathtt{G})$ and $\mathscr{H}_R(\mathtt{G})-\Mod$ are equivalent by I.4.4 of \cite{Vig2} and we have
$\ind_\mathtt{H}^\mathtt{G}(\tau)=\mathscr{H}_R(\mathtt{G})\otimes_{\mathscr{H}_R(\mathtt{H})} V_\tau$ for every representation $(\tau,V_\tau)$ of an open subgroup $\mathtt{H}$ of $\mathtt{G}$ by I.5.2 of \cite{Vig2}.

\smallskip
Let $\mathtt{K}$ be a compact open subgroup of $\mathtt{G}$, let $(\sigma,V_{\sigma})$ be an irreducible projective representation of $\mathtt{K}$ as in paragraph \ref{subsec:categories} and let $\rho=\ind_{\mathtt{K}}^{\mathtt{G}}(\sigma)$. 
Since $V_\sigma$ is a simple projective module over the unitary algebra $\mathscr{H}_R(\mathtt{K})$, it is isomorphic to a direct summand of $\mathscr{H}_R(\mathtt{K})$ itself because any non-zero map $\mathscr{H}_R(\mathtt{K})\rightarrow V_{\sigma}$ is surjective and splits. 
Then it is isomorphic to a minimal ideal of $\mathscr{H}_R(\mathtt{K})$ and so there exists an idempotent $e$ of $\mathscr{H}_R(\mathtt{K})$ such that  $V_{\sigma}=\mathscr{H}_R(\mathtt{K})e$. 
Hence, we obtain $\rho=\mathscr{H}_R(\mathtt{G})e$  because the map $\sum_i (f_i\otimes h_ie)\mapsto \big(\sum_if_ih_i\big)e$ is an isomorphism of $\mathscr{H}_R(\mathtt{G})$-modules between $\mathscr{H}_R(\mathtt{G})\otimes_{\mathscr{H}_R(\mathtt{K})}\mathscr{H}_R(\mathtt{K})e$ and $\mathscr{H}_R(\mathtt{G})e$ whose inverse is $fe\mapsto fe\otimes e$.

\smallskip
The algebra $\mathscr{H}_R(\mathtt{G},\sigma)$ is isomorphic to $\End_{\mathtt{G}}(\rho)\cong\End_{\mathscr{H}_R(\mathtt{G})}(\mathscr{H}_R(\mathtt{G})e)$ by lemma \ref{lemma:isomalgebras} and the map $e\mathscr{H}_R(\mathtt{G})e\rightarrow \big(\End_{\mathscr{H}_R(\mathtt{G})}(\mathscr{H}_R(\mathtt{G})e)\big)^{op}$ which maps $efe\in e\mathscr{H}_R(\mathtt{G})e$ to the endomorphism $f'e\mapsto f'efe$ of $\mathscr{H}_R(\mathtt{G})e$ is an algebra isomorphism whose inverse is $\varphi\mapsto \varphi(e)$. Then we have $\mathscr{H}_R(\mathtt{G},\sigma)^{op}\cong e\mathscr{H}_R(\mathtt{G})e$ and so the categories $e\mathscr{H}_R(\mathtt{G})e-\Mod$ and $\Mod-\mathscr{H}_R(\mathtt{G},\sigma)$ are equivalent.

\begin{teor}\label{thm:equivalence}
If $\mathscr{R}_{\sigma}(\mathtt{G})=\mathscr{R}(\mathtt{G},\sigma)$ then $V\mapsto \mathbf{M}_\sigma(V)$ is an equivalence of categories  between $\mathscr{R}(\mathtt{G},\sigma)$ and $\Mod-\mathscr{H}_R(\mathtt{G},\sigma)$ whose quasi-inverse is $W\mapsto W\otimes_{\mathscr{H}_R(\mathtt{G},\sigma)}\rho$.
\end{teor}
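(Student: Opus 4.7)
The plan is to establish that $\mathbf{M}_\sigma$ and $-\otimes_{\mathscr{H}_R(\mathtt{G},\sigma)}\rho$ form a pair of quasi-inverse equivalences. The tensor--Hom adjunction applied to the bimodule $\rho$ makes $-\otimes_{\mathscr{H}_R(\mathtt{G},\sigma)}\rho$ left adjoint to $\mathbf{M}_\sigma$; denote the unit by $\eta_W$ and the counit by $\varepsilon_V$. It then suffices to prove that $\eta_W$ is an isomorphism for every right $\mathscr{H}_R(\mathtt{G},\sigma)$-module $W$, and that $\varepsilon_V$ is an isomorphism for every $V\in\mathscr{R}(\mathtt{G},\sigma)$; standard category theory then delivers the equivalence.

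For the unit, when $W=\mathscr{H}_R(\mathtt{G},\sigma)$ the map $\eta_W$ is, under the identification of lemma \ref{lemma:isomalgebras}, the identity of $\mathscr{H}_R(\mathtt{G},\sigma)\cong\mathbf{M}_\sigma(\rho)$, and by additivity the same holds for every free module. For a general $W$, pick a free presentation $\mathscr{H}_R(\mathtt{G},\sigma)^{(J)}\to\mathscr{H}_R(\mathtt{G},\sigma)^{(I)}\to W\to 0$. Since $\rho$ is projective (as recalled at the beginning of paragraph \ref{subsec:categories}), $\mathbf{M}_\sigma$ is exact, and since $\rho$ is finitely generated as a $\mathtt{G}$-representation (being cyclic, generated by $i_v$ for any non-zero $v\in V_\sigma$, as $\sigma$ is irreducible and $V_\sigma$ is finitely generated over $R$), $\mathbf{M}_\sigma$ also commutes with arbitrary direct sums; a five-lemma argument on the presentation then gives that $\eta_W$ is an isomorphism.

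For the counit, fix $V\in\mathscr{R}(\mathtt{G},\sigma)$ and set $Q=\mathbf{M}_\sigma(V)\otimes_{\mathscr{H}_R(\mathtt{G},\sigma)}\rho$ and $K=\ker\varepsilon_V$. Surjectivity of $\varepsilon_V$ is immediate: its image equals $\sum_{\psi\in\mathbf{M}_\sigma(V)}\psi(\rho)=V[\sigma]=V$ by proposition \ref{prop:functorM} and the hypothesis $V=V[\sigma]$. For injectivity, I first observe that $Q$ itself lies in $\mathscr{R}(\mathtt{G},\sigma)$: the elements $\psi\otimes i_v$ with $\psi\in\mathbf{M}_\sigma(V)$ and $v\in V_\sigma$ generate $Q$ as a $\mathtt{G}$-representation (since $\rho$ is generated by $i_{V_\sigma}$), and for each fixed $\psi$ the map $v\mapsto\psi\otimes i_v$ is $\mathtt{K}$-equivariant because $k\cdot i_v=i_{\sigma(k)v}$, so these generators belong to $Q^\sigma$; therefore $Q=Q[\sigma]$.

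The final step combines these observations. The triangle identity $\mathbf{M}_\sigma(\varepsilon_V)\circ\eta_{\mathbf{M}_\sigma(V)}=\mathrm{id}$ together with the fact that $\eta_{\mathbf{M}_\sigma(V)}$ is an isomorphism forces $\mathbf{M}_\sigma(\varepsilon_V)$ to be an isomorphism; exactness of $\mathbf{M}_\sigma$ applied to $0\to K\to Q\to V\to 0$ then yields $\mathbf{M}_\sigma(K)=0$. Since $K$ is a subrepresentation of $Q\in\mathscr{R}(\mathtt{G},\sigma)$, the hypothesis $\mathscr{R}_\sigma(\mathtt{G})=\mathscr{R}(\mathtt{G},\sigma)$ combined with proposition \ref{prop:TFAE}(ii) forces $K=0$. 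The main obstacle is precisely this last deduction: translating vanishing of $\mathbf{M}_\sigma(K)$ into vanishing of $K$ is the essential use of the hypothesis, which via proposition \ref{prop:TFAE} lifts the irreducible-subquotient condition defining $\mathscr{R}_\sigma(\mathtt{G})$ into the subobject vanishing statement that the proof requires.
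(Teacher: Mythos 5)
Your proof is correct, but it follows a genuinely different route from the paper. The paper's own proof is essentially a citation: having realized $V_\sigma$ as $\mathscr{H}_R(\mathtt{K})e$ for an idempotent $e$ of the global Hecke algebra, identified $\rho\cong\mathscr{H}_R(\mathtt{G})e$ and $\mathscr{H}_R(\mathtt{G},\sigma)^{op}\cong e\mathscr{H}_R(\mathtt{G})e$, it invokes the theorem \emph{\'equivalence de cat\'egories} of I.6.6 of Vign\'eras for the functor $V\mapsto eV$, using the hypothesis $\mathscr{R}_{\sigma}(\mathtt{G})=\mathscr{R}(\mathtt{G},\sigma)$ only to verify the hypotheses of that theorem; this is why the surrounding paragraph assumes a compact open subgroup of pro-order invertible in $R$ and works with $\mathscr{H}_R(\mathtt{G})$. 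You instead give a self-contained argument via the tensor--Hom adjunction: the unit is an isomorphism because $\mathbf{M}_\sigma$ is exact ($\rho$ projective) and commutes with arbitrary direct sums ($\rho$ cyclic, since $\sigma$ is irreducible and $k.i_v=i_{\sigma(k)v}$), reducing to the case $W=\mathscr{H}_R(\mathtt{G},\sigma)$ handled by lemma \ref{lemma:isomalgebras}; the counit is surjective by proposition \ref{prop:functorM} and $V=V[\sigma]$, and injective because $\mathbf{M}_\sigma(\ker\varepsilon_V)=0$ while $\ker\varepsilon_V$ sits inside $Q=\mathbf{M}_\sigma(V)\otimes_{\mathscr{H}_R(\mathtt{G},\sigma)}\rho$, which you show lies in $\mathscr{R}(\mathtt{G},\sigma)$, so the hypothesis together with proposition \ref{prop:TFAE}(ii) kills it. In effect you reprove Vign\'eras's criterion in this special case, which buys independence from the global Hecke algebra and the idempotent realization (and from the invertible pro-order assumption, beyond what is already packed into the projectivity of $\sigma$), at the cost of length; the paper's route is shorter but external. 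One small point you should make explicit: for $W\mapsto W\otimes_{\mathscr{H}_R(\mathtt{G},\sigma)}\rho$ to be a quasi-inverse it must land in $\mathscr{R}(\mathtt{G},\sigma)$ for \emph{every} module $W$, not just for $W=\mathbf{M}_\sigma(V)$; your argument that $Q=Q[\sigma]$ (generators $\psi\otimes i_v$ lie in $Q^{\sigma}$) uses nothing about $\mathbf{M}_\sigma(V)$ and applies verbatim to any $W$, so this is a one-line addition rather than a gap.
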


\begin{proof}
We take $A=\mathscr{H}_R(\mathtt{G})$ and $\mathscr{H}_R(\mathtt{G})e=\rho$ in I.6.6 of \cite{Vig2}. 
Since $\mathscr{H}_R(\mathtt{G},\sigma)^{op}\cong e\mathscr{H}_R(\mathtt{G})e$, left actions of $e\mathscr{H}_R(\mathtt{G})e$ become right actions of $\mathscr{H}_R(\mathtt{G},\sigma)$.
The functor $V\mapsto eV$ of \cite{Vig2}
from $\mathscr{H}_R(\mathtt{G})-\Mod$ to $e\mathscr{H}_R(\mathtt{G})e-\Mod$ becomes the functor 
$V\mapsto \Hom_{\mathscr{H}_R(\mathtt{G})}(\mathscr{H}_R(\mathtt{G})e,V)$ and so the functor $\mathbf{M}_\sigma$. 
Hypothesis of theorem \emph{"\'equivalence de cat\'egories"} in I.6.6 of \cite{Vig2} are satisfied by the condition 
$\mathscr{R}_{\sigma}(\mathtt{G})=\mathscr{R}(\mathtt{G},\sigma)$ and so we obtain the result. 
\end{proof}

\section{Maximal simple types}\label{sec:maximaltypes}
In this section we introduce the theory of simple types  of an inner form of a general linear group over a non-archimedean locally compact field in the case of modular representations.
We refer to sections 2.1-5 of \cite{MS} for more details. 

\smallskip
Let $p$ be a prime number. 
Let $F$ be a non-archimedean locally compact field of residue characteristic $p$ and let $D$ be a central division algebra of finite dimension over $F$ whose reduced degree is denoted by $d$. 
Given a positive integer $m$, we consider the ring $A=M_m(D)$ and the group $G=GL_m(D)$ which is an inner form of $GL_{md}(F)$. 
Let $R$ be an algebraically closed field of characteristic different from $p$. 

\smallskip
Let $\Lambda$ be an $\ent_D$-lattice sequence of $V=D^m$. 
It defines a hereditary $\ent_F$-order 
$\mathfrak{A}=\mathfrak{A}(\Lambda)$ of $A$ whose radical is denoted by $\mathfrak{P}$, a compact open subgroup $U(\Lambda)=U_0(\Lambda)=\mathfrak{A}(\Lambda)^{\times}$ of $G$ and a filtration $U_k(\Lambda)=1+\mathfrak{P}^k$ with $k\geq 1$ of 
$U(\Lambda)$ (see section 1 of \cite{SecI}).
Let $[\Lambda,n,0,\beta]$ be a simple stratum of $A$ (see for instance section 1.6 of \cite{SeSt2}).
Then $\beta\in A$ and the $F$-subalgebra $F[\beta]$ of $A$ generated by $\beta$ is a field denoted by $E$. The centralizer $B$ of $E$ in $A$ is a simple central $E$-algebra and $\mathfrak{B}=\mathfrak{A}\cap B$ is a hereditary $\ent_E$-order of $B$ whose radical is 
$\mathfrak{Q}=\mathfrak{P}\cap B$.

\smallskip
As in paragraphs 1.2 and 1.3 of \cite{SecIII} we can choose 
a simple right $E\otimes_F D$-module $N$
such that the functor $V\mapsto \Hom_{E\otimes_F D}(N,V)$ defines a Morita equivalence between the category of modules over $E\otimes_F D$ and the category of vector spaces over $D'=\End_{E\otimes_F D}(N)^{op}$ which is a central division
algebra over $E$.
We denote $A(E)=\End_D(N)$ which is a central simple $F$-algebra.
If $d'$ is the reduced degree of $D'$ over $E$ and 
$m'$ is the dimension of 
$V'=\Hom_{E\otimes_F D}(N,V)$ over $D'$, then we have $m'd'=md/ [E:F]$. 
Fixing a basis of $V'$ over $D'$ we obtain, via the Morita equivalence above, an isomorphism $N^{m'}\cong V$ of $E\otimes_F D$-modules. 
If for every $i\in \{1,\dots,m'\}$ we denote by $V^i$ the image of the $i$-th copy of $N$ by this isomorphism, we obtain a decomposition 
$V=V^1\oplus\cdots\oplus V^{m'}$
into simple $E\otimes_F D$-submodules. 
By section 1.5 of \cite{SecIII} we can choose a basis $\mathscr{B}$ of $V'$ over $D'$ so that $\Lambda$ decomposes into the direct sum of the $\Lambda^i=\Lambda\cap V^i$ for $i\in\{1,\dots,m'\}$. 
For every $i\in \{1,\dots,m'\}$, let 
$\mathrm{e}_i:V\rightarrow V^i$ be the projection on $V^i$ with kernel $\bigoplus_{j\neq i}V^j$.
In accordance with paragraph 2.3.1 of \cite{SecI} (see also \cite{BH}) the family of idempotents 
$\mathrm{e}=(\mathrm{e}_1,\dots,\mathrm{e}_{m'})$ is a decomposition conforms to $\Lambda$ over $E$.

\smallskip
By paragraphs 1.4.8 and 1.5.2 of \cite{SecIII} there exists a unique hereditary order $\mathfrak{A}(E)$ normalized by $E^{\times}$ in $A(E)$ whose radical is denoted by $\mathfrak{P}(E)$. 
For every $i\in\{1,\dots,m'\}$ we have an isomorphism $\End_D(V^i)\cong A(E)$ of 
$F$-algebras which induces an isomorphism of $\ent_F$-algebras between the hereditary orders $\mathfrak{A}(\Lambda^i)$ and $\mathfrak{A}(E)$.
Moreover, to the choice of the basis $\mathscr{B}$ corresponds the isomorphisms 
$M_{m'}(D')\cong B$ of $E$-algebras and 
$M_{m'}(A(E))\cong A$ of $F$-algebras.

\begin{rmk}\label{rmk:maximal}
If $U(\Lambda)\cap B^{\times}$ is a maximal compact open subgroup of $B^{\times}$, these isomorphisms induce
an isomorphism $\mathfrak{B}\cong M_{m'}(\ent_{D'})$ 
of $\ent_E$-algebras and, by lemma 1.6 of \cite{SecII},
two isomorphisms $\mathfrak{A}\cong M_{m'}(\mathfrak{A}(E))$ and $\mathfrak{P}\cong M_{m'}(\mathfrak{P}(E))$ of 
$\ent_F$-algebras. 
\end{rmk}

We can associate to $[\Lambda,n,0,\beta]$ two compact open subgroups $J=J(\beta,\Lambda)$, $H=H(\beta,\Lambda)$ of  $U(\Lambda)$ (see 2.4 of \cite{SeSt2}). 
For every integer $k\geq 1$ we denote 
$J^k=J^k(\beta,\Lambda)=J(\beta,\Lambda)\cap U_k(\Lambda)$ and
$H^k=H^k(\beta,\Lambda)=H(\beta,\Lambda)\cap U_k(\Lambda)$ which are pro-$p$-groups. In particular $J^1$ and $H^1$ are normal pro-$p$-subgroups of $J$ and the quotient 
$J^1/H^1$ is a finite abelian $p$-group.
\begin{rmk}\label{rmk:isomJB}
We have $J=(U(\Lambda)\cap B^{\times})J^1$ and this induce a canonical group isomorphism 
$J/J^1\cong (U(\Lambda)\cap B^{\times})/(U_1(\Lambda)\cap B^{\times})$ (see paragraph 2.3 of \cite{MS}). 
It allows us to associate canonically and bijectively a representation of $J$ trivial on $J^1$ to a representation of $U(\Lambda)\cap B^{\times}$ trivial on $U_1(\Lambda)\cap B^{\times}$.
\end{rmk}

\subsection{Simple characters, Heisenberg representation and $\beta$-extensions} \label{subsec:Heisenberg}
Let $[\Lambda,n,0,\beta]$ be a simple stratum of $A$. 
We denote by $\mathscr{C}_R(\Lambda,0,\beta)$ the set of 
\emph{simple $R$-characters} (see paragraph 2.2 of \cite{MS} and \cite{SecI}) that is a finite set of $R$-characters of $H^1$ which depends on the choice of an additive $R$-character of $F$.
If $\widetilde m\in \N^*$ and 
$[\widetilde\Lambda,\widetilde n,0,\widetilde\beta]$ is a simple stratum of $M_{\widetilde{m}}(D)$ such that there exists an isomorphism of $F$-algebras 
$\nu:F[\beta]\rightarrow F[\widetilde\beta]$ with $\nu(\beta)=\widetilde\beta$, then there exists a bijection $\mathscr{C}_R(\Lambda,0,\beta)\rightarrow \mathscr{C}_R(\widetilde\Lambda,0,\widetilde\beta)$
canonically associated to $\nu$, called \emph{transfer map}.
There also exists an equivalence relation, called \emph{endo-equivalence}, among  
simple characters in $\mathscr{C}_R(\Lambda,0,\beta)$ (see \cite{BSS}) whose 
equivalence classes are called \emph{endo-classes}.

\smallskip
Let $\theta\in\mathscr{C}_R(\Lambda,0,\beta)$. 
By proposition 2.1 of \cite{MS} there exists a finite dimensional irreducible representation $\eta$ of $J^1$, unique up to  isomorphism, whose restriction to $H^1$ contains $\theta$.
It is called \emph{Heisenberg representation} associated to $\theta$. 
The intertwining of $\eta$ is $I_G(\eta)=J^1B^{\times}J^1=JB^{\times}J$ and for every $y\in B^{\times}$ the $R$-vector space $I_y(\eta)=\Hom_{J^1\cap (J^1)^y}(\eta,\eta^y)$ has dimension $1$.  

\smallskip
A $\beta$\emph{-extension} of $\eta$ (or of $\theta$) is an irreducible representation $\kappa$ of $J$ extending $\eta$ such that $I_G(\kappa)=JB^{\times}J$. 
By proposition 2.4 of \cite{MS}, every simple character $\theta\in\mathscr{C}_R(\Lambda,0,\beta)$ admits a $\beta$-extension
and by formula (2.2) of \cite{MS} the set 
of $\beta$-extensions of $\theta$ is equal to 
$$\mathcal{B}(\theta)=\{\kappa\otimes(\chi\circ N_{B/E})\,|\, \chi \text{ character of }\ent_{E}^{\times} \text{ trivial on } 1+\wp_E\}$$
where $N_{B/E}$ is the reduced norm of $B$ over $E$ and 
$\chi\circ N_{B/E}$ is seen as a character of $J$ trivial on $J^1$ thanks to remark \ref{rmk:isomJB}. 
We observe that for every $\kappa\in\mathcal{B}(\theta)$
and every $y\in B^\times$, the $R$-vector space  $I_y(\kappa)$ has dimension $1$ because it is non-zero and it is contained in $I_y(\eta)$.

\subsection{Maximal simple types}
Let $[\Lambda,n,0,\beta]$ be a simple stratum of $A$ such that $U(\Lambda)\cap B^{\times}$ is a maximal compact open subgroup of $B^{\times}$. 
By remarks \ref{rmk:maximal} and \ref{rmk:isomJB}, there exists a group isomorphism 
$J/J^1\cong GL_{m'}(\frack_{D'})$, 
which depends on the choice of $\mathscr{B}$.

\smallskip
A \emph{maximal simple type} of $G$ associated to 
$[\Lambda,n,0,\beta]$ is a pair $(J,\lambda)$ where $\lambda$ is an irreducible representation of $J$ of the form $\lambda=\kappa\otimes\sigma$ where $\kappa\in\mathcal{B}(\theta)$ with 
$\theta\in\mathscr{C}_R(\Lambda,0,\beta)$
and $\sigma$ is a cuspidal representation of 
$GL_{m'}(\frack_{D'})$ identified to an irreducible representation of $J$ trivial on $J^1$.
If $\sigma$ is a supercuspidal representation of 
$GL_{m'}(\frack_{D'})$ then $(J,\lambda)$ is called maximal simple \emph{supertype}.

\begin{rmk}
The choice of a $\beta$-extension $\kappa\in\mathcal{B}(\theta)$ determines the decomposition $\lambda=\kappa\otimes\sigma$.
If we choose another $\beta$-extension 
$\kappa'=\kappa\otimes(\chi\circ N_{B/E})\in\mathcal{B}(\theta)$ we obtain the decomposition 
$\lambda=\kappa'\otimes \sigma'$ where $\sigma'=\sigma\otimes(\chi^{-1}\circ N_{B/E})$.
\end{rmk}

\subsection{Covers} 
Let $\M$ be a Levi subgroup of $G$, $\P$ be a parabolic subgroup of $G$ with Levi component $\M$ and unipotent radical $\U$ and let $\U^-$ be the unipotent subgroup opposed to $\U$. 
We say that a compact open subgroup $K$ of $G$ is \emph{decomposed with respect to} $(\M,\P)$ if 
$K=(K\cap\U^-)(K\cap \M)(K\cap\U)$
and every element $k\in K$ decomposes uniquely as $k=k_1k_2k_3$ with $k_1\in K\cap\U^-$, $k_2\in K\cap \M$ and $k_3\in K\cap\U$. 
Furthermore, if $\pi$ is a representation of $K$ we say that the pair $(K, \pi)$ is \emph{decomposed with respect to} $(\M,\P)$ if $K$ is decomposed with respect to 
$(\M,\P)$ and if $K\cap\U$ and $K\cap\U^-$ are in the kernel of $\pi$.

\smallskip
Let $\M$ be a Levi subgroup of $G$. Let $K$ and $K_\M$ be two compact open subgroups of $G$ and $\M$ respectively and let $\varrho$ and $\varrho_\M$ be two irreducible representations of $K$ and $K_\M$ respectively.
We say that the pair $(K,\varrho)$ is \emph{decomposed above} 
$(K_\M,\varrho_\M)$ if 
$(K,\varrho)$ is decomposed with respect to $(\M,\P)$ for every parabolic subgroup $\P$ with Levi component $\M$, if 
$K\cap\M=K_\M$ and if the restriction of $\varrho$ to $K_\M$ is equal to $\varrho_\M$.
A pair $(K,\varrho)$ is a \emph{cover} of $(K_M,\varrho_M)$ 
if it is decomposed above $(K_M,\varrho_M)$ and it satisfies condition (0.3) of \cite{Blo}. 
For more details see \cite{Blo,Vig1}.

\section{The isomorphisms $\mathscr{H}_R(G,\eta)\cong\mathscr{H}_R(B^{\times},U_1(\Lambda)\cap B^{\times})$}\label{sec:isom}

Using notations of section \ref{sec:maximaltypes}, let $[\Lambda,n,0,\beta]$ be a simple stratum of $A$ 
such that $U(\Lambda)\cap B^{\times}$ is a maximal compact open subgroup of $B^{\times}$. 
Let $\theta\in\mathscr{C}_R(\Lambda,0,\beta)$ and let $\eta$ be the Heisenberg representation associated to $\theta$. 
In this section we want to prove that the algebras $\mathscr{H}_R(G,\eta)$ and 
$\mathscr{H}_R(B^{\times},U_1(\Lambda)\cap B^{\times})$ 
are isomorphic (theorem \ref{thm:isomHecke2}). 

\smallskip
Thanks to section \ref{sec:maximaltypes}, from now on we identify $A$ with 
$M_{m'}(A(E))$, $G$ with $GL_{m'}(A(E))$,
$U(\Lambda)$ with 
$GL_{m'}(\mathfrak{A}(E))$, $U_1(\Lambda)$ with 
$\mathbb{I}_{m'}+M_{m'}(\mathfrak{P}(E))$,  $B^{\times}$ with $GL_{m'}(D')$, $K_B=U(\Lambda)\cap B^{\times}$ with $GL_{m'}(\ent_{D'})$ and $K_B^1=U_1(\Lambda)\cap B^{\times}$ with $\mathbb{I}_{m'}+M_{m'}(\wp_{D'})$. 
By section 2.4 of \cite{Chin1} we know a presentation by generators and relations of the algebra 
$\mathscr{H}_R(B^{\times},K^1_B)\cong\mathscr{H}_\Z(B^{\times},K^1_B)\otimes_\Z R$.
Using this presentation we want to find an isomorphism between 
$\mathscr{H}_R(B^{\times},K^1_B)$ and 
$\mathscr{H}_R(G,\eta)$.

\subsection{Root system of $GL_{m'}$}
In this paragraph we recall some notations and results on the root system of $\mathbf{GL}_{m'}$ contained in section 2.1 of \cite{Chin1}.

\smallskip
We denote by $\bm\Phi=\{\alpha_{ij}\;|\;1\leq i\neq j\leq m'\}$ the set of roots of $\mathbf{GL}_{m'}$ relative to torus of diagonal matrices.
Let $\bm\Phi^+=\{\alpha_{ij}\;|\;1\leq i<j\leq m'\}$,
$\bm\Phi^-=-\bm\Phi^+=\{\alpha_{ij}\;|\;1\leq j<i\leq m'\}$ and 
$\Sigma=\{\alpha_{i,i+1}\;|\;1\leq i\leq m'-1\}$ 
be, respectively, the sets of positive, negative and simple roots relative to Borel subgroup of upper triangular matrices. 
For every $\alpha=\alpha_{i,i+1}\in\Sigma$ we write $s_{\alpha}$ or $s_i$ for the transposition $(i,i+1)$. 
Let $W$ be the group generated by the $s_i$ which is the group of permutations of $m'$ elements and so the Weyl group of 
$\mathbf{GL}_{m'}$.
Let $\ell:W\rightarrow \N$ be the length function of $W$ relative to $s_1,\dots,s_{m'-1}$.
The group $W$ acts on $\bm\Phi$ by $w\alpha_{ij}=\alpha_{w(i)w(j)}$ and 
for every $w\in W$ and $\alpha\in\Sigma$ 
we have (see (2.2) of \cite{Chin1})
\begin{equation}
\ell(ws_\alpha)=\left\{
\begin{array}{ll}
\ell(w)+1 &\text{ if } w\alpha\in\bm\Phi^+\\
\ell(w)-1 &\text{ if } w\alpha\in\bm\Phi^-.
\end{array}
\right.
\end{equation}

\begin{rmk}\label{rmk:length}
By proposition 2.2 of \cite{Chin1} we have $\ell(w)=|\bm\Phi^+\cap w\bm\Phi^-|=|\bm\Phi^-\cap w\bm\Phi^+|$.
\end{rmk}

For every $P\subset\Sigma$ we denote by $\bm\Phi_P^+$ the set of positive roots generated by $P$, 
$\bm\Phi_P^-=-\bm\Phi_P^+$,  
$\mathbf{\Psi}_P^+=\bm\Phi^+\setminus\bm\Phi_P^+$ and
$\mathbf{\Psi}_P^-=-\mathbf{\Psi}_P^+$.
We denote by $W_P$ the subgroup of $W$ generated by the $s_{\alpha}$ with $\alpha\in P$ and by $\widehat P$ the complement of $P$ in $\Sigma$. We abbreviate $\widehat\alpha=\widehat{\{\alpha\}}$. 

\begin{ex}
If $\alpha=\alpha_{i,i+1}$ then $\widehat\alpha=
\{\alpha_{j,j+1}\in\Sigma\,|\,j\neq i\}$, $\bm\Psi_{\widehat\alpha}^+=\{\alpha_{hk}\in\bm\Phi^+\,|\,
1\leq h\leq i < k\leq m \}$ and
$\bm\Phi_{\widehat\alpha}^+=\{\alpha_{hk}\in\bm\Phi^+\,|\,
1\leq h<k\leq i \text{ or }i+1\leq h<k\leq m\}$.
\end{ex}

\begin{prop}\label{prop:minlength}
Let $P\subset\Sigma$ and $w$ be an element of minimal length in $wW_P\in W/W_P$.  Then $w\alpha\in\bm\Phi^+$ for every $\alpha\in \bm\Phi_P^+$ and for every $w'\in W_P$ we have 
$\ell(ww')=\ell(w)+\ell(w')$. 
\end{prop}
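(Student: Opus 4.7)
The plan is to deduce both statements from the length formula in Remark \ref{rmk:length} and the sign rule for $\ell(ws_\alpha)$ displayed in the excerpt.

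First I would show $w\alpha\in\bm\Phi^+$ for every $\alpha\in\bm\Phi_P^+$. For a simple $\alpha\in P$ the element $ws_\alpha$ lies in $wW_P$, so minimality of $w$ forces $\ell(ws_\alpha)\geq\ell(w)$, and the displayed sign rule then gives $w\alpha\in\bm\Phi^+$. For a general $\beta\in\bm\Phi_P^+$ I would write $\beta=\sum_{\alpha\in P}c_\alpha\alpha$ with $c_\alpha\in\N$, not all zero; then $w\beta=\sum_{\alpha\in P}c_\alpha(w\alpha)$ is a non-trivial non-negative integer combination of positive roots, so its expansion in $\Sigma$ has non-negative coefficients. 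Since $w\beta$ is itself a root, and a negative root has non-positive coefficients in $\Sigma$, we conclude $w\beta\in\bm\Phi^+$.

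For the length identity I would apply Remark \ref{rmk:length} to write $\ell(ww')=|\bm\Phi^+\cap ww'\bm\Phi^-|$ and split $\bm\Phi^-=\bm\Phi_P^-\sqcup\mathbf{\Psi}_P^-$. The key auxiliary fact I would verify is that $W_P$ preserves $\mathbf{\Psi}_P^\pm$: a simple reflection $s_\alpha$ with $\alpha\in P$ permutes $\bm\Phi^+\setminus\{\alpha\}$ and maps $\mathbf{\Psi}_P^+$ into $\bm\Phi\setminus\bm\Phi_P=\mathbf{\Psi}_P^+\cup\mathbf{\Psi}_P^-$, so a cardinality argument gives $s_\alpha\mathbf{\Psi}_P^+=\mathbf{\Psi}_P^+$, and iteration yields the stability for all of $W_P$. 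Combined with the first assertion (which forces $\bm\Phi^+\cap w\bm\Phi_P=w\bm\Phi_P^+$) this produces the disjoint decomposition
\begin{equation*}
\bm\Phi^+\cap ww'\bm\Phi^-=w\bigl(\bm\Phi_P^+\cap w'\bm\Phi_P^-\bigr)\sqcup\bigl(\bm\Phi^+\cap w\mathbf{\Psi}_P^-\bigr).
\end{equation*}
The first summand has cardinality equal to the length of $w'$ computed inside the Coxeter system $(W_P,\{s_\alpha:\alpha\in P\})$, which coincides with $\ell(w')$ in $W$ because the $\mathbf{\Psi}_P$-contribution to $\ell(w')$ vanishes by the stability just proved; specialising to $w'=1$ identifies the second summand with a set of size $\ell(w)$. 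Adding the two contributions gives $\ell(ww')=\ell(w)+\ell(w')$. The one delicate step, which I would spell out carefully, is the $W_P$-stability of $\mathbf{\Psi}_P^\pm$ together with the resulting agreement of the length of $w'\in W_P$ inside $W_P$ with its length inside $W$; everything else is a straightforward bookkeeping of positive and negative roots under the bijections $w$ and $w'$.
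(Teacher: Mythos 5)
Your argument is correct. Note that the paper itself does not prove this proposition: it simply cites Proposition 2.4 and Lemma 2.5 of \cite{Chin1}, so your write-up supplies a self-contained proof where the paper defers to a reference. What you give is the standard Coxeter-theoretic argument, and every step checks out: minimality of $w$ plus the sign rule handles simple roots of $P$; positivity of coefficients extends this to all of $\bm\Phi_P^+$; the stability $s_\alpha\mathbf{\Psi}_P^+=\mathbf{\Psi}_P^+$ for $\alpha\in P$ (from $s_\alpha$ permuting $\bm\Phi^+\setminus\{\alpha\}$ and preserving $\bm\Phi_P$) is exactly the right auxiliary fact, and it both kills the $\mathbf{\Psi}_P$-contribution to $\ell(w')$ and makes the set $\bm\Phi^+\cap w\mathbf{\Psi}_P^-$ independent of $w'$, so that specialising to $w'=1$ identifies its cardinality with $\ell(w)$ (using that $\bm\Phi^+\cap w\bm\Phi_P^-=\emptyset$ by the first assertion). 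The resulting disjoint decomposition of $\bm\Phi^+\cap ww'\bm\Phi^-$ then gives the additivity of lengths via Remark \ref{rmk:length}. One could streamline the last step by observing directly that $\ell(w')=|\bm\Phi^+\cap w'\bm\Phi^-|=|\bm\Phi_P^+\cap w'\bm\Phi_P^-|$ (since the $\mathbf{\Psi}_P$-part vanishes), avoiding any explicit appeal to the length function of the subsystem $(W_P,\{s_\alpha:\alpha\in P\})$, but your formulation is accurate as stated.
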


\begin{proof}
Proposition 2.4 and lemma 2.5 of \cite{Chin1}.
\end{proof}

\noindent
Proposition \ref{prop:minlength} implies that in each class of $W/W_P$ with $P\subset \Sigma$, there exists a unique element of minimal length and the same holds in each class of $W_P\bs W$.

\smallskip
If $\varpi$ is an uniformizer of $\ent_{D'}$ we identify 
$\tau_{i}=\left(\begin{smallmatrix} \mathbb{I}_i &0\\ 0&\varpi\mathbb{I}_{m'-i}\end{smallmatrix}\right)$ 
with $i\in\{0,\dots,m'\}$, defined in section 2.2 of \cite{Chin1}, to elements of $B^{\times}$ and then of $G$. 
For every $\alpha=\alpha_{i,i+1}\in \Sigma$ we write 
$\tau_\alpha=\tau_i$.
Let $\bm\Delta$ (resp. $\widehat{\bm\Delta}$) be the commutative monoid (resp. group) generated by $\tau_\alpha$ with $\alpha\in\Sigma$.
Then we can write every element $\tau$ of $\bm\Delta$ 
uniquely as $\tau=\prod_{\alpha\in\Sigma}\tau_\alpha^{i_\alpha}$ with $i_\alpha$ in $\N$ and uniquely as $\tau=\mathrm{diag}(1,\varpi^{a_1},\dots,\varpi^{a_{m-1}})$ with $0\leq a_1\leq\cdots\leq a_{m-1}$. 
In this case we denote $P(\tau)=\{\alpha\in\Sigma\,|\,i_{\alpha}=0\}$ and if $P\subset \{0,\dots,m\}$ or if $P\subset\Sigma$ we write $\tau_P$ in place of $\prod_{x\in P}\tau_x$. We remark that if $P\subset\Sigma$ then $P(\tau_P)=\widehat{P}$.

\subsection{The representation $\eta_\P$}
Let $\M=A(E)^{\times}\times\cdots\times A(E)^{\times}$ ($m'$ copies) which is a Levi subgroup of $G$ and let 
$\P$ be the parabolic subgroup of $G$ of upper triangular matrices with Levi component $\M$ and unipotent radical $\U$. Let $\P^-$ be the opposite parabolic subgroup to $\P$ and  $\U^-$ its unipotent radical.

\smallskip
We denote $U=K_B\cap \U$, $M=K_B\cap\M$ and $I_B=K^1_BMU$. Then $U$ is the group of unipotent upper triangular matrices with coefficients in $\ent_{D'}$, $M$ is the group of diagonal matrices with coefficients in $\ent_{D'}^{\times}$ and $I_B$ is the standard Iwahori subgroup of $K_B$. 

\smallskip
We denote by $\widetilde W$ the group $W\ltimes \widehat{\bm\Delta}$ of monomial matrices with coefficients in $\varpi^\Z$ which is called \emph{extended affine Weyl group of $B^{\times}$}.
We recall that $B^\times=I_B\widetilde W I_B$ and actually it is the disjoint union of $I_B\widetilde w I_B$ with $\widetilde w\in \widetilde W$.

\begin{rmk}\label{rmk:decomposed}
By proposition 2.16 of \cite{SecII}, which works for every decomposition $\mathrm{e}$ conforms to $\Lambda$ over $E$ and not necessarily subordinate to $\mathfrak{B}$, the groups $J^1$ and $H^1$ are decomposed with respect to $(\M,\P)$. 
Moreover, if $\M'=\prod_{i=1}^{r}GL_{m'_i}(A(E))$ with $\sum_{i=1}^rm'_i=m'$ is a standard Levi subgroup of $G$ containing $\M$ and $\P'$ is the upper standard parabolic subgroup of $G$ with Levi component $\M'$, then $J^1$ and $H^1$ are decomposed with respect to $(\M',\P')$.
\end{rmk}

Let $\mathfrak{J}^1=\mathfrak{J}^1(\beta,\Lambda)$ and $\mathfrak{H}^1=\mathfrak{H}^1(\beta,\Lambda)$ be the $\ent_{F}$-lattices of $A$ such that 
 $J^1=1+\mathfrak{J}^1$ and $H^1=1+\mathfrak{H}^1$ (see section 3.3 of \cite{SecI} or chapter 3 of \cite{BK1}). Then 
they are  
$(\mathfrak{B},\mathfrak{B})$-bimodules and  
 we have $\varpi\mathfrak{J}^1\subset\mathfrak{H}^1\subset\mathfrak{J}^1\subset M_{m'}(\mathfrak{P}(E))$.

\smallskip
Since $V^i\cong N$ for every 
$i\in\{1,\dots,m'\}$, we can identify every $\Lambda^i$ to a lattice sequence $\Lambda_0$ of $N$ with the same period of 
$\Lambda$, every $e^i\beta$ to an element $\beta_0\in A(E)$ and 
$\mathfrak{A}(\Lambda_0)$ to $\mathfrak{A}(E)$. By proposition 2.28 of \cite{SecI} the stratum 
$[\Lambda_0,n,0,\beta_0]$ of $A(E)$ is simple and critical exponents $k_0(\beta,\Lambda)$ and $k_0(\beta_0,\Lambda_0)$ are equal (for a definition of critical exponent see section 2.1 of \cite{SecI}). This implies that $\beta$ is minimal (i.e. $-k_0(\beta,\Lambda)=n$) if and only if $\beta_0$ is minimal. 
We denote 
$\mathfrak{J}^1_0=\mathfrak{J}^1(\beta_0,\Lambda_0)$, 
$\mathfrak{H}^1_0=\mathfrak{H}^1(\beta_0,\Lambda_0)$, 
$J^1_0=J^1(\beta_0,\Lambda_0)=1+\mathfrak{J}^1_0$ and 
$H^1_0=H^1(\beta_0,\Lambda_0)=1+\mathfrak{H}^1_0$. 

\begin{prop}\label{prop:structureJ1H1}
We have $\mathfrak{J}^1=M_{m'}(\mathfrak{J}^1_0)$ and 
$\mathfrak{H}^1=M_{m'}(\mathfrak{H}^1_0)$.
\end{prop}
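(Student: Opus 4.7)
The plan is to exploit the $(\mathfrak{B},\mathfrak{B})$-bimodule structure of the lattices $\mathfrak{J}^1$ and $\mathfrak{H}^1$---a standard feature of the Bushnell--Kutzko--S\'echerre construction---combined with the matrix presentation $\mathfrak{B}\cong M_{m'}(\ent_{D'})$ of remark \ref{rmk:maximal}. Under this identification the standard matrix units $E_{ij}\in M_{m'}(\ent_{D'})$ lie in $\mathfrak{B}$; in particular the idempotents $\mathrm{e}_i=E_{ii}$ do, and $\sum_{i=1}^{m'}\mathrm{e}_i=\mathbb{I}$. For any $x\in\mathfrak{J}^1$ the bimodule property gives $\mathrm{e}_i x \mathrm{e}_j\in\mathfrak{J}^1$ for all $i,j$, and combined with $x=\sum_{i,j}\mathrm{e}_i x \mathrm{e}_j$ this yields the block decomposition
\[
\mathfrak{J}^1=\bigoplus_{i,j=1}^{m'}\mathrm{e}_i\mathfrak{J}^1\mathrm{e}_j,
\]
and analogously for $\mathfrak{H}^1$. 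Alternatively, this decomposition can be read off from remark \ref{rmk:decomposed} applied to $(\M,\P)$, together with the obvious splittings along the direct factors of $\M$, $\U$ and $\U^-$.

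Next, I would compare the various blocks using the matrix units. For any indices $i,j,k,l$, left multiplication by $E_{ki}\in\mathfrak{B}$ and right multiplication by $E_{jl}\in\mathfrak{B}$ carry $\mathrm{e}_i\mathfrak{J}^1\mathrm{e}_j$ into $\mathrm{e}_k\mathfrak{J}^1\mathrm{e}_l$; after identifying each $\mathrm{e}_iA\mathrm{e}_j$ with $A(E)$ via the fixed isomorphisms $V^i\cong N$, this transport is the identity on the underlying entry in $A(E)$. The operation $y\mapsto E_{ik}yE_{lj}$ provides the reverse inclusion, so every block $\mathrm{e}_i\mathfrak{J}^1\mathrm{e}_j$ corresponds to the same $(\ent_{D'},\ent_{D'})$-sub-bimodule $X\subseteq A(E)$, whence $\mathfrak{J}^1=M_{m'}(X)$; similarly $\mathfrak{H}^1=M_{m'}(Y)$ for a second such sub-bimodule $Y\subseteq A(E)$.

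Third, I would identify $X=\mathfrak{J}^1_0$ (and $Y=\mathfrak{H}^1_0$) by restricting to the diagonal block $(1,1)$: we have $X=\mathrm{e}_1\mathfrak{J}^1\mathrm{e}_1$ read inside $\mathrm{e}_1 A \mathrm{e}_1\cong A(E)$. By the restriction formula for the Bushnell--Kutzko--S\'echerre lattice along a decomposition conforms to $\Lambda$ over $E$, established in \cite{SecI,SecII}, this equals $\mathfrak{J}^1(\mathrm{e}_1\beta,\Lambda^1)$; under the identifications $\Lambda^1\leftrightarrow\Lambda_0$ and $\mathrm{e}_1\beta\leftrightarrow\beta_0$ this is precisely $\mathfrak{J}^1_0$. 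The same argument applied to $\mathfrak{H}^1$ completes the proof.

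The main obstacle is the restriction identity $\mathrm{e}_1\mathfrak{J}^1(\beta,\Lambda)\mathrm{e}_1=\mathfrak{J}^1(\mathrm{e}_1\beta,\Lambda^1)$ (and its analogue for $\mathfrak{H}^1$) used in the last step. It has to be pushed through the inductive construction of these lattices from successive approximations of $[\Lambda,n,0,\beta]$ by simpler strata, the key point being that each derived stratum remains simple after restriction to $V^1$ when $\mathrm{e}$ is conforms to $\Lambda$ over $E$; the equality of critical exponents $k_0(\beta,\Lambda)=k_0(\beta_0,\Lambda_0)$ already cited from proposition 2.28 of \cite{SecI} is an essential input to that induction.
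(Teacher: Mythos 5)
Your first two steps are sound but carry little of the weight: the idempotents $\mathrm{e}_i$ and the matrix units of $M_{m'}(\ent_{D'})\cong\mathfrak{B}$ do lie in $\mathfrak{B}$, so the bimodule property gives the block decomposition and the transport between blocks, reducing everything to the single identity $\mathrm{e}_1\mathfrak{J}^1\mathrm{e}_1=\mathfrak{J}^1_0$ (the paper simply computes all blocks $\mathrm{e}^i\mathfrak{J}^1\mathrm{e}^j$ at once inside its induction, so this reduction is a cosmetic reorganization). The genuine gap is in your third step: that identity is not a quotable ``restriction formula'' from \cite{SecI,SecII}. The results you would want there are stated for decompositions subordinate to $\mathfrak{B}$, whereas here $\mathrm{e}$ is conforms to $\Lambda$ over $E$ but not subordinate -- since $\mathfrak{B}$ is maximal, the only subordinate decomposition is the trivial one, as the paper explicitly points out -- and establishing the identity in exactly this situation is the entire content of the proposition; it cannot be discharged by citation, and indeed you concede in your last paragraph that it remains to be proved.

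Moreover, the induction you sketch there points in the wrong direction. You propose to restrict the derived strata of $[\Lambda,n,0,\beta]$ to $V^1$; but for an arbitrary simple stratum $[\Lambda,n,q,\gamma]$ equivalent to $[\Lambda,n,q,\beta]$ the element $\gamma$ need not respect the decomposition $V=V^1\oplus\cdots\oplus V^{m'}$ at all, so $\mathrm{e}_1\gamma\mathrm{e}_1$ gives no control on $\mathrm{e}_1\mathfrak{J}^s(\gamma,\Lambda)\mathrm{e}_1$, and the recursion $\mathfrak{J}^k(\beta,\Lambda)=\mathfrak{Q}^k+\mathfrak{J}^s(\gamma,\Lambda)$ cannot be compared block-by-block with $\mathfrak{J}^k(\beta_0,\Lambda_0)=\wp_{D'}^k+\mathfrak{J}^s(\gamma_0,\Lambda_0)$. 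The argument has to run the other way: choose a simple stratum $[\Lambda_0,n,q,\gamma_0]$ equivalent to $[\Lambda_0,n,q,\beta_0]$ on $N$ and lift $\gamma_0$ diagonally to $\gamma\in A$; the nontrivial input, proposition 1.20 of \cite{SeSt2} (see also the proof of theorem 2.2 of \cite{SecIII}), guarantees that $[\Lambda,n,q,\gamma]$ is then a simple stratum equivalent to $[\Lambda,n,q,\beta]$, so the inductive hypothesis applies to the pair $(\gamma,\gamma_0)$, with the base case of minimal strata handled by $\mathfrak{Q}=M_{m'}(\wp_{D'})$ and $\mathfrak{P}=M_{m'}(\mathfrak{P}(E))$. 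The equality of critical exponents from proposition 2.28 of \cite{SecI} only ensures the two inductions run in parallel; without the compatible diagonal choice of approximating strata your outline does not close, and this is precisely the step the paper's proof supplies.
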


\begin{proof}
We prove the result only for $\mathfrak{J}^1$ since the case of 
$\mathfrak{H}^1$ is similar.
We have to prove that for every $i,j\in\{1,\dots,m'\}$ we have 
$e^i\mathfrak{J}^1e^j=\mathfrak{J}^1_0$.
We need to recall the definition of $\mathfrak{J}(\beta,\Lambda)=\mathfrak{J}^0(\beta,\Lambda)$ and of
$\mathfrak{J}^k(\beta,\Lambda)$ with $k\geq 1$. 
By proposition 3.42 of \cite{SecI} if we denote $q=-k_0(\beta,\Lambda)$ and 
$s=[(q+1)/2]$ (where $[x]$ denotes the integer part of $x\in\mathbb{Q}$) we have
$\mathfrak{J}(\beta,\Lambda)=\mathfrak{B}+\mathfrak{P}^{s}$ if $\beta$ is minimal and 
$\mathfrak{J}(\beta,\Lambda)=\mathfrak{B}+\mathfrak{J}^{s}(\gamma,\Lambda)$ if $[\Lambda,n,q,\gamma]$ is a simple stratum equivalent to $[\Lambda,n,q,\beta]$. 
Then, if $\beta$ is minimal,
$\mathfrak{J}^k(\beta,\Lambda)=\mathfrak{J}(\beta,\Lambda)\cap \mathfrak{P}^k$ is equal to 
$\mathfrak{Q}^k+\mathfrak{P}^s$ if $0\leq k\leq s-1$ and to
$\mathfrak{P}^k$ if $k\geq s$. 
Otherwise, if $[\Lambda,n,q,\gamma]$ is a simple stratum equivalent to $[\Lambda,n,q,\beta]$, $\mathfrak{J}^k(\beta,\Lambda)$ is equal to  
$\mathfrak{Q}^k+\mathfrak{J}^s(\gamma,\Lambda)$ if $0\leq k\leq s-1$ and to
$\mathfrak{J}^k(\gamma,\Lambda)$ if $k\geq s$.
Similarly we obtain that if $\beta_0$ is minimal then
$\mathfrak{J}^k(\beta_0,\Lambda_0)$ is equal to 
$\wp_{D'}^k+\mathfrak{P}(E)^s$ if $0\leq k\leq s-1$ and to
$\mathfrak{P}(E)^k$ if $k\geq s$. 
Otherwise, if $[\Lambda_0,n,q,\gamma_0]$ is a simple stratum equivalent to $[\Lambda_0,n,q,\beta_0]$, $\mathfrak{J}^k(\beta_0,\Lambda_0)$ is equal to  
$\wp_{D'}^k+\mathfrak{J}^s(\gamma_0,\Lambda_0)$ if $k\leq s-1$ and to
$\mathfrak{J}^k(\gamma_0,\Lambda_0)$ if $k\geq s$.
We prove that $e^i\mathfrak{J}^k(\beta,\Lambda)e^j=\mathfrak{J}^k(\beta_0,\Lambda_0)$ for every $k\geq 0$ by induction on $q$.
If $q=n$ and so if $\beta$ and $\beta_0$ are minimal, since 
$\mathfrak{Q}=M_{m'}(\wp_{D'})$ and $\mathfrak{P}=M_{m'}(\mathfrak{P}(E))$ we have 
$e^i\mathfrak{Q}^k e^j=\wp_{D'}^k$ and 
$e^i\mathfrak{P}^k e^j=\mathfrak{P}(E)^k$ for every $k$ and so  $e^i\mathfrak{J}^k(\beta,\Lambda)e^j=\mathfrak{J}^k(\beta_0,\Lambda_0)$ for every $k\geq 0$.
Now if $q<n$ and so if $\beta$ and $\beta_0$ are not minimal, by proposition 1.20 of \cite{SeSt2} (see also the proof of theorem 2.2 of \cite{SecIII}) we can choose a simple stratum 
$[\Lambda_0,n,q,\gamma_0]$ equivalent to 
$[\Lambda_0,n,q,\beta_0]$ such that if $\gamma$ is the image of $\gamma_0$ by the diagonal embedding $A(E)\rightarrow A$ then 
$[\Lambda,n,q,\gamma]$ ia a simple stratum equivalent to 
$[\Lambda,n,q,\beta]$. By inductive hypothesis we have 
$e^i\mathfrak{J}^k(\gamma,\Lambda)e^j=\mathfrak{J}^k(\gamma_0,\Lambda_0)$ for every $k\geq 0$ and then we obtain 
$e^i\mathfrak{J}^k(\beta,\Lambda)e^j=\mathfrak{J}^k(\beta_0,\Lambda_0)$.
\end{proof}

Let $\theta_0$ be the transfer of $\theta$ to $\mathscr{C}_R(\Lambda_0,0,\beta)$. 
Since $H^1$ is a pro-$p$-group, proceeding as in proposition 2.16 of \cite{SecII}, the pair $(H^1,\theta)$ is decomposed with respect to $(\M,\P)$ and the restriction of $\theta$ to $H^1\cap \M=H^1_0\times\dots\times H^1_0$ is $\theta_0^{\otimes m'}$. We remark that in general $(J^1,\eta)$ is not decomposed with respect to $(\M,\P)$.
We denote by $\eta_0$ the Heisenberg representation of $\theta_0$ and we can consider the irreducible representation 
$\eta_\M=\eta_0^{\otimes m'}$ of 
$J^1_\M=J^1\cap\M=J^1_0\times \cdots\times J^1_0$.

\smallskip
We denote $J^1_\P=(J^1\cap \P)H^1$ and 
$H^1_\P=(J^1\cap \U)H^1$ which are subgroups of $J^1$.
They are normal in $J^1$ because $H^1$ contains the derived group of $J^1$.
Moreover, $J\cap \P$ normalizes $J^1_\P$ because $H^1$ is normal in $J$ and $J^1\cap \P$ is normal in $J\cap \P$. Then $J^1_\P$ is normal in $J^1(J\cap \P)$. 

\begin{rmk}\label{rmk:decomposed2}
Taking into account remark 5.7 of \cite{SeSt2}, proposition 5.3 of \cite{SeSt2} states that $J^1_\P$ and 
$H^1_\P$ are decomposed with respect to 
$(\M,\P)$ and so we have
$J^1_\P=(H^1\cap\U^-)J^1_\M(J^1\cap\U)$ and $
H^1_\P=(H^1\cap\U^-)(H^1\cap \M)(J^1\cap\U).$
Moreover, if if $\M'=\prod_{i=1}^{r}GL_{m'_i}(A(E))$ with $\sum_{i=1}^rm'_i=m'$ is a standard Levi subgroup of $G$ containing $\M$ and $\P'$ is the upper standard parabolic subgroup of $G$ with Levi component $\M'$, then $J^1_\P$ and $H^1_\P$ are decomposed with respect to $(\M',\P')$.
\end{rmk}

Let $\theta_\P$ be the character of $H^1_\P$ defined by $\theta_\P(uh)=\theta(h)$ for every $u\in J^1\cap\U$ and every $h\in H^1$. 
Since $J^1$ is a pro-$p$-group, proceeding as in Proposition 5.5 of \cite{SeSt2} we can construct an irreducible representation $\eta_\P$ of $J^1_\P$, unique up to isomorphism, whose restriction to $H^1_\P$ contains $\theta_\P$. 
Actually it is the natural representation of $J^1_\P$ on the $J^1\cap\U$-invariants of $\eta$. 
Furthermore, $\ind_{J^1_\P}^{J^1}(\eta_\P)$ is isomorphic to $\eta$, $I_G(\eta_\P)=J^1_\P B^{\times}J^1_\P$ and for every $y\in B^{\times}$ we have 
$\dim_R(I_y(\eta_\P))=1$.
We remark that $(J^1_\P,\eta_\P)$ is decomposed with respect to $(\M,\P)$ and the restriction of $\eta_P$ to $J^1_\M$ is $\eta_\M$. We denote by $V_\M$ the $R$-vector space of $\eta_\M$ and $\eta_\P$. 

\smallskip
Since $\ind_{J^1_\P}^{J^1}(\eta_\P)$ is isomorphic to $\eta$, 
we can identify the $R$-vector space $V_\eta$ of $\eta$ with the vector space of function $\varphi:J^1\rightarrow V_\M$ such that $\varphi(xj)=\eta_\P(x)\varphi(j)$ for every $x\in J^1_\P$ and $j\in J^1$. In this case $\eta(j)\varphi:x\mapsto \varphi(xj)$. 
By Mackey formula, $V_\M$ is a direct factor of $V_\eta$ and we can identify the subspace of function $\varphi\in V_\eta$ with support in $J^1_\P$ with it. This identification is given by $\varphi\mapsto \varphi(1)$ whose inverse is 
$v\mapsto \varphi_v$ where the support of 
$\varphi_v$ is $J^1_\P$ and $\varphi_v(1)=v$.
Let $\mathsf{p}:V_\eta\rightarrow V_\M$ be the canonical projection,  i.e. the restriction of a function in $V_\eta$ to $J^1_\P$, and let $\iota:V_\M\rightarrow V_\eta$ be the inclusion. 

\begin{rmk}
In general we can not define a representation $\kappa_P$ of $J_P=(J\cap P)H^1$ as in section 2.3 of \cite{SecII} 
or in section 5.5 of \cite{SeSt2}, 
because $\mathrm{e}$ is a decomposition conforms to $\Lambda$ over $E$ but it is not subordinate to $\mathfrak{B}$. In our case ($\mathfrak{B}$ maximal) the only decomposition conforms to $\Lambda$ over $E$ and subordinate to $\mathfrak{B}$ is the trivial one.
\end{rmk}

\begin{lemma} \label{lemma:etaP}
\mbox{}
\begin{enumerate}
	\item For every $j\in J^1_\P$ we have $\eta(j)\circ\iota=\iota\circ\eta_\P(j)$ and $\mathsf{p}\circ\eta(j)=\eta_\P(j)\circ\mathsf{p}$.
	\item For every $j\in J^1$ we have $\mathsf{p}\circ\eta(j)\circ\iota=\left\{
\begin{array}{ll}
\eta_{\P}(j)& \text{if } j\in J^1_\P\\
0 & \text{otherwise}
\end{array}
\right.
$
	\item $\sum_{j\in J^1/J^1_\P}\eta(j)\circ\iota\circ\mathsf{p}\circ\eta(j^{-1})$ is the identity of $\End_R(V_\M)$.
\end{enumerate}
\end{lemma}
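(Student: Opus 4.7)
The plan is to work throughout in the explicit induced-representation model already fixed in the paragraph preceding the lemma: $V_\eta=\ind_{J^1_\P}^{J^1}(\eta_\P)$ is realized as the space of functions $\varphi:J^1\to V_\M$ satisfying $\varphi(xj)=\eta_\P(x)\varphi(j)$ for $x\in J^1_\P$, with action $(\eta(j)\varphi)(x)=\varphi(xj)$; the inclusion is $\iota(v)=\varphi_v$, where $\varphi_v$ is supported on $J^1_\P$ with $\varphi_v(x)=\eta_\P(x)v$ there; and the projection is $\mathsf{p}(\varphi)=\varphi(1)$. In this model, parts 1 and 2 reduce to routine unwinding and only part 3 requires a small additional verification.

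For part 1, fix $j\in J^1_\P$. Then $(\eta(j)\iota(v))(x)=\varphi_v(xj)$, which vanishes off $J^1_\P$ (since $xj\in J^1_\P$ iff $x\in J^1_\P$) and equals $\eta_\P(xj)v=\eta_\P(x)\eta_\P(j)v$ on $J^1_\P$; this identifies $\eta(j)\iota(v)$ with $\iota(\eta_\P(j)v)$. The second identity $\mathsf{p}\circ\eta(j)=\eta_\P(j)\circ\mathsf{p}$ drops out by evaluating at $1$: $(\eta(j)\varphi)(1)=\varphi(j)=\eta_\P(j)\varphi(1)$. Part 2 follows immediately from the same unwinding, since $\mathsf{p}(\eta(j)\iota(v))=\varphi_v(j)$, which by definition equals $\eta_\P(j)v$ if $j\in J^1_\P$ and $0$ otherwise.

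For part 3, I would first check that the operator $\eta(j)\circ\iota\circ\mathsf{p}\circ\eta(j^{-1})$ depends only on the class $jJ^1_\P$. Replacing $j$ by $jj_0$ with $j_0\in J^1_\P$ and inserting $\eta(j_0)\circ\iota=\iota\circ\eta_\P(j_0)$ and $\mathsf{p}\circ\eta(j_0^{-1})=\eta_\P(j_0^{-1})\circ\mathsf{p}$ from part 1 gives back the original operator, so the sum over $J^1/J^1_\P$ is well defined. The central computation is then: for $\varphi\in V_\eta$ and $y\in J^1$, the value
$\bigl(\eta(j)\iota\mathsf{p}\eta(j^{-1})\varphi\bigr)(y)=\varphi_{\varphi(j^{-1})}(yj)$
vanishes unless $yj\in J^1_\P$, i.e.\ unless the coset $jJ^1_\P$ meets $y^{-1}J^1_\P$. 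Exactly one coset of $J^1/J^1_\P$ does; choosing its representative to be $j=y^{-1}$, which is legitimate by the coset-invariance just checked, the unique surviving term equals $\eta_\P(1)\varphi(y)=\varphi(y)$. Hence the sum acts as the identity of $V_\eta$.

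There is no real obstacle: the whole argument is direct evaluation in the induced model, and the only point demanding even a moment of care is the coset-invariance verification, whose role is precisely to permit the convenient choice $j=y^{-1}$ in the final step. (I read the target ``$\End_R(V_\M)$'' in the statement as $\End_R(V_\eta)$, matching the domain and codomain of the composition.)
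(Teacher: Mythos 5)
Your proof is correct and follows essentially the same route as the paper's: parts 1 and 2 by direct unwinding in the induced model $V_\eta=\ind_{J^1_\P}^{J^1}(\eta_\P)$, and part 3 by observing that, for a given evaluation point, exactly one term of the sum (over cosets, whose well-definedness you rightly check via part 1) survives and reproduces the value of $\varphi$. Your reading of ``$\End_R(V_\M)$'' as $\End_R(V_\eta)$ is also the intended one, consistent with the paper's own argument, which shows the sum is the identity on $V_\eta$.
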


\begin{proof}
To prove the first point, let $\varphi_v\in V_\M$ and $\varphi\in V_\eta$. 
Then $\eta(j)(\iota(\varphi_v))(1)=\varphi_v(j)=\eta_P(j)v$ and $\mathsf{p}(\eta(j)(\varphi))(1)=\varphi(j)=\eta_\P(j)\varphi(1)$. 
To prove the second point we observe that if $j\in J^1_P$ then $\mathsf{p}\circ\eta(j)\circ\iota=\mathsf{p}\circ\iota\circ\eta_\P(j)=\eta_\P(j)$ while if $j\notin J^1_\P$ the support of $\eta(j)(\iota(\varphi_v))$ is in $J^1_\P j^{-1}$ for every $\varphi_v\in V_{\M}$ and so $\mathsf{p}\circ\eta(j)\circ\iota=0$. 
Finally, to prove the third point we observe that for every 
$\varphi\in V_\eta$ the function
$\varphi_j=(\eta(j)\circ\iota\circ\mathsf{p}\circ\eta(j^{-1}))\varphi$ has support in $J^1_\P j^{-1}$ and 
$\varphi_j(j^{-1})=\varphi(j^{-1})$.
\end{proof}

We consider the surjective linear map 
$\mu:\End_R(V_\eta)\rightarrow \End_R(V_\M)$ given by $f\mapsto \mathsf{p}\circ f \circ \iota$.

\begin{lemma}\label{lemma:isomHeckeetaP}
The map $\zeta:\mathscr{H}_R(G,\eta)\rightarrow \mathscr{H}_R(G,\eta_\P)$ defined by $\Phi\mapsto \mu\circ\Phi$ for every $\Phi\in \mathscr{H}_R(G,\eta)$ is an isomorphism of $R$-algebras. 
Moreover, if the support of $\Phi\in \mathscr{H}_R(G,\eta)$ is in $J^1xJ^1$ with $x\in B^{\times}$ then the support of $\zeta(\Phi)$ is in $J^1_\P x J^1_\P$.
\end{lemma}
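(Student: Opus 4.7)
The plan is to verify, in order: (i) $\zeta$ takes values in $\mathscr{H}_R(G,\eta_\P)$, (ii) $\zeta$ is an algebra homomorphism, (iii) $\zeta$ is bijective, (iv) the claimed support bound. Steps (i) and (ii) are essentially formal. For (i), the required biequivariance $\zeta(\Phi)(hgh')=\eta_\P(h)\zeta(\Phi)(g)\eta_\P(h')$ for $h,h'\in J^1_\P$ follows directly from the $J^1_\P$-equivariance of $\iota$ and $\mathsf{p}$ in Lemma~\ref{lemma:etaP}(1), and finiteness of the support modulo $J^1_\P$-double cosets is inherited from that of $\Phi$ since $[J^1:J^1_\P]<\infty$. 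For (ii), I would expand
\[
\zeta(\Phi_1)*\zeta(\Phi_2)(g)=\sum_{y\in G/J^1_\P}\mathsf{p}\circ\Phi_1(y)\circ\iota\circ\mathsf{p}\circ\Phi_2(y^{-1}g)\circ\iota
\]
and split the sum using $G/J^1_\P=\bigsqcup_{x\in G/J^1}xJ^1/J^1_\P$; the $J^1$-equivariance of the $\Phi_i$ lets me pull $\eta(j)$ and $\eta(j^{-1})$ outside, and the inner sum $\sum_{j\in J^1/J^1_\P}\eta(j)\circ\iota\circ\mathsf{p}\circ\eta(j^{-1})$ collapses to $\mathrm{id}_{V_\eta}$ by Lemma~\ref{lemma:etaP}(3), giving $\zeta(\Phi_1*\Phi_2)(g)$.

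For (iii), the cleanest route is induction in stages: since $\eta=\ind_{J^1_\P}^{J^1}\eta_\P$, transitivity of compact induction gives $\ind_{J^1}^G\eta\cong\ind_{J^1_\P}^G\eta_\P$, whence via Lemma~\ref{lemma:isomalgebras} an $R$-algebra isomorphism $\mathscr{H}_R(G,\eta)\cong \mathscr{H}_R(G,\eta_\P)$. Tracing the formulas of Lemma~\ref{lemma:isomalgebras} on the basis functions $i_v$ (for $v\in V_\M$) shows this isomorphism is $\zeta$. Alternatively one can exhibit the explicit two-sided inverse
\[
\zeta^{-1}(\Psi)(g)=\sum_{k,k'\in J^1/J^1_\P}\eta(k)\circ\iota\circ\Psi(k^{-1}gk')\circ\mathsf{p}\circ\eta(k'^{-1})
\]
and verify $\zeta\circ\zeta^{-1}=\mathrm{id}$ and $\zeta^{-1}\circ\zeta=\mathrm{id}$ using parts (2) and (3) of Lemma~\ref{lemma:etaP}.

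For (iv), any $\Psi\in\mathscr{H}_R(G,\eta_\P)$ is automatically supported in the intertwining set $I_G(\eta_\P)=J^1_\P B^\times J^1_\P$, so $\zeta(\Phi)$ has support inside $J^1xJ^1\cap J^1_\P B^\times J^1_\P$. I would then verify the set-theoretic equality
\[
J^1xJ^1\cap J^1_\P B^\times J^1_\P=J^1_\P x J^1_\P,
\]
which reduces to two standard facts from simple-types theory: for every $b\in B^\times$, $J^1bJ^1\cap B^\times=K_B^1 b K_B^1$ (an intertwining/factorization statement), and $K_B^1=J^1\cap B^\times\subseteq J^1_\P$, which follows from the Iwahori decomposition of $K_B^1=1+\mathfrak{Q}$ together with $H^1\cap B^\times=K_B^1$ and the decomposition $J^1_\P=(H^1\cap\U^-)J^1_\M(J^1\cap\U)$ recalled in Remark~\ref{rmk:decomposed2}. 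Writing any $g=h_1bh_2\in J^1_\P B^\times J^1_\P$ lying also in $J^1xJ^1$, one then obtains $J^1bJ^1=J^1xJ^1$, factors $b\in K_B^1xK_B^1\subseteq J^1_\P x J^1_\P$, and concludes $g\in J^1_\P x J^1_\P$. The main obstacle is step (iv): the first three steps are formal consequences of Lemma~\ref{lemma:etaP}, whereas the support bound genuinely uses the compatibility of the simple character with the Levi decomposition, encoded in the containment $K_B^1\subseteq J^1_\P$.
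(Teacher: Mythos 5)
Your proposal is correct, and on the core algebraic steps it coincides with the paper's proof: well-definedness and the homomorphism property are obtained exactly as in the paper, by the same $J^1_\P$-equivariance of $\iota,\mathsf{p}$ and the same splitting of the convolution sum over $G/J^1_\P=\bigsqcup_{x\in G/J^1}xJ^1/J^1_\P$ together with part (3) of Lemma \ref{lemma:etaP}. Where you diverge is in the last two steps, and in both cases your route is at least as solid as the paper's. For bijectivity the paper proves injectivity by expanding $\Phi(g')$ with Lemma \ref{lemma:etaP}(3) and then concludes surjectivity from the abstract isomorphism $\mathscr{H}_R(G,\eta)\cong\mathscr{H}_R(G,\eta_\P)$ given by transitivity of induction; your explicit two-sided inverse $\zeta^{-1}(\Psi)(g)=\sum_{k,k'\in J^1/J^1_\P}\eta(k)\circ\iota\circ\Psi(k^{-1}gk')\circ\mathsf{p}\circ\eta(k'^{-1})$ is essentially the paper's injectivity expansion repackaged, and checking $\zeta\circ\zeta^{-1}=\mathrm{id}$ via Lemma \ref{lemma:etaP}(2) makes the argument self-contained, avoiding the slightly loose ``injective plus abstractly isomorphic hence bijective'' inference (your alternative of tracing the transitivity isomorphism through Lemma \ref{lemma:isomalgebras} also works, but then you must actually verify it equals $\zeta$). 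For the support statement the paper simply asserts $J^1xJ^1\cap J^1_\P B^\times J^1_\P=J^1_\P xJ^1_\P$; you supply a genuine argument, reducing it to $K_B^1\subseteq J^1_\P$ (immediate from $K_B^1=H^1\cap B^\times\subseteq H^1\subseteq J^1_\P$, so the Iwahori decomposition is not even needed) and to the coset identity $J^1bJ^1\cap B^\times=K_B^1bK_B^1$. The latter you quote rather than prove; it is indeed a standard fact of simple-type theory (it is the injectivity of $K_B^1\backslash B^\times/K_B^1\to J^1\backslash I_G(\eta)/J^1$, which the paper itself uses implicitly, e.g. for the linear independence in the proof of Theorem \ref{thm:isomHecke2}), so citing it is acceptable, but you should attach a precise reference rather than leave it as ``standard''.
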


\begin{proof}
Let $\Phi\in \mathscr{H}_R(G,\eta)$. Then 
the support of $\mu\circ\Phi$ is included in the support of $\Phi$ which is compact. 
Moreover, for every $x_1,x_2\in J^1_\P$ and every $j\in J^1$ we have
$\mu(\Phi(x_1jx_2))=\mathsf{p}\circ\eta(x_1)\circ\Phi(j)\circ\eta(x_2)\circ\iota$ that by lemma \ref{lemma:etaP} is 
$\eta_\P(x_1)\circ \mu(\Phi(j))\circ \eta_\P(x_2)$. 
Hence, $\zeta$ is well-defined and it is clearly an $R$-linear map. 
Let $\Phi_1,\Phi_2\in \mathscr{H}_R(G,\eta)$. 
For every $g\in G$ we have
\begin{align*}
\big((\mu\circ\Phi_1)*(\mu\circ\Phi_2)\big)(g)
&=\sum_{x\in G/J^1_\P}\mathsf{p}\circ\Phi_1(x)\circ\iota\circ\mathsf{p}\circ\Phi_2(x^{-1}g)\circ\iota\\
&=\sum_{y\in G/J^1}\sum_{z\in J^1/J^1_\P}\mathsf{p}\circ\Phi_1(yz)\circ\iota\circ\mathsf{p}\circ\Phi_2(z^{-1}y^{-1}g)\circ\iota\\
&=\sum_{y\in G/J^1}\mathsf{p}\circ\Phi_1(y)\circ
\bigg(\sum_{z\in J^1/J^1_\P} \eta(z)\circ\iota\circ\mathsf{p}\circ\eta(z^{-1})\bigg)\circ\Phi_2(y^{-1}g)\circ\iota\\
\text{\scriptsize{(lemma \ref{lemma:etaP})}}
&=\sum_{y\in G/J^1}\mathsf{p}\circ\Phi_1(y)\circ\Phi_2(y^{-1}g)\circ\iota=(\mu\circ(\Phi_1*\Phi_2))(g)
\end{align*}
and so $\zeta$ is a homomorphism of $R$-algebras.
Let $\Phi\in \mathscr{H}_R(G,\eta)$ such that 
$\mathsf{p}\circ\Phi(g)\circ\iota=0$ for every $g\in G$.
Then by lemma \ref{lemma:etaP}, for every $g'\in G$ we have
\begin{align*}
\Phi(g')&=
\sum_{j_1\in J^1/J^1_\P} \eta(j_1)\circ\iota\circ\mathsf{p}\circ\eta(j_1^{-1})\circ\Phi(g')\circ
\sum_{j_2\in J^1/J^1_\P} \eta(j_2)\circ\iota\circ\mathsf{p}\circ\eta(j_2^{-1})\\
&=\sum_{j_1,j_2\in J^1/J^1_\P} \eta(j_1)\circ\iota\circ(\mathsf{p}\circ\Phi(j_1^{-1}g'j_2)\circ\iota)\circ\mathsf{p}\circ\eta(j_2^{-1})=0
\end{align*}
and then $\zeta$ is injective.
Now, we know that $\mathscr{H}_R(G,\eta)\cong \End_G(\ind_{J^1}^G(\eta))$, $\mathscr{H}_R(G,\eta_\P)\cong \End_G(\ind_{J^1_\P}^G(\eta_\P))$ and  $\ind_{J^1_\P}^{J^1}(\eta_\P)\cong\eta$. Then by transitivity of the induction we have 
$\mathscr{H}_R(G,\eta)\cong \mathscr{H}_R(G,\eta_\P)$ and then $\zeta$ must be bijective.
Furthermore, if $\Phi\in \mathscr{H}_R(G,\eta)$ has support in $J^1xJ^1$ with $x\in B^{\times}$ then the support of $\zeta(\Phi)$ is in $J^1xJ^1\cap I_G(\eta_\P)
=J^1xJ^1\cap J^1_\P B^{\times}J^1_\P =J^1_\P xJ^1_\P$.
\end{proof}

\begin{lemma}\label{lemma:product}
Let $x_1,x_2\in B^{\times}$ and let $\widetilde f_i\in\mathscr{H}_R(G,\eta)_{J^1x_iJ^1}$ and $\widehat f_i=\zeta(\widetilde f_i)$ for $i\in\{1,2\}$.
\begin{enumerate}
\item If $x_1$ or $x_2$ normalizes $J^1_\P$ then the support of $\widehat f_1*\widehat f_2$ is in $J^1_\P x_1x_2J^1_\P$ and $$(\widehat f_1*\widehat f_2)(x_1x_2)=\widehat f_1(x_1)\circ \widehat f_2(x_2).$$
\item If $x_1$ or $x_2$ normalizes $J^1$ then the support of $\widehat f_1*\widehat f_2$ is in $J^1_\P x_1x_2J^1_\P$ and 
$$(\widehat f_1*\widehat f_2)(x_1x_2)=\mathsf{p}\circ\widetilde f_1(x_1)\circ \widetilde f_2(x_2)\circ\iota.$$
\end{enumerate}
\end{lemma}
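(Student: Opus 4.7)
The plan is to treat the two assertions separately. Part~(1) is an instance of remark~\ref{rmk:normal} applied inside the algebra $\mathscr{H}_R(G,\eta_\P)$, while part~(2) is proved by transporting the corresponding statement from $\mathscr{H}_R(G,\eta)$ through the algebra isomorphism $\zeta$ of lemma~\ref{lemma:isomHeckeetaP}.

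For part~(1), I would observe that by lemma~\ref{lemma:isomHeckeetaP} the functions $\widehat f_1,\widehat f_2$ lie in $\mathscr{H}_R(G,\eta_\P)$ with supports in $J^1_\P x_1 J^1_\P$ and $J^1_\P x_2 J^1_\P$ respectively. Since $x_1$ or $x_2$ normalizes $J^1_\P$, remark~\ref{rmk:normal}, applied with $\mathtt{H}=J^1_\P$ and $\mathtt{G}=G$, gives immediately that $\widehat f_1*\widehat f_2$ is supported in $J^1_\P x_1 x_2 J^1_\P$ and satisfies $(\widehat f_1*\widehat f_2)(x_1x_2)=\widehat f_1(x_1)\circ\widehat f_2(x_2)$. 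Nothing else is needed here.

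For part~(2), I would first apply remark~\ref{rmk:normal} in the algebra $\mathscr{H}_R(G,\eta)$ (with $\mathtt{H}=J^1$) to the pair $\widetilde f_1,\widetilde f_2$: since $x_1$ or $x_2$ normalizes $J^1$, the support of $\widetilde f_1*\widetilde f_2$ lies in $J^1 x_1 x_2 J^1$ and $(\widetilde f_1*\widetilde f_2)(x_1x_2)=\widetilde f_1(x_1)\circ\widetilde f_2(x_2)$. Because $\zeta$ is an $R$-algebra homomorphism (lemma~\ref{lemma:isomHeckeetaP}),
\begin{equation*}
\widehat f_1*\widehat f_2 \;=\; \zeta(\widetilde f_1)*\zeta(\widetilde f_2) \;=\; \zeta(\widetilde f_1*\widetilde f_2) \;=\; \mu\circ(\widetilde f_1*\widetilde f_2).
\end{equation*}
Since $x_1x_2\in B^\times$, the support statement in lemma~\ref{lemma:isomHeckeetaP} then gives that $\widehat f_1*\widehat f_2$ is supported in $J^1_\P x_1 x_2 J^1_\P$, and evaluating at $x_1x_2$ yields
\begin{equation*}
(\widehat f_1*\widehat f_2)(x_1x_2)\;=\;\mathsf{p}\circ(\widetilde f_1*\widetilde f_2)(x_1x_2)\circ\iota\;=\;\mathsf{p}\circ\widetilde f_1(x_1)\circ\widetilde f_2(x_2)\circ\iota.
\end{equation*}

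There is no real obstacle: the lemma is essentially bookkeeping. The only point requiring mild care is that in part~(2) the hypothesis concerns normalization of $J^1$, not of $J^1_\P$, so one cannot reason directly inside $\mathscr{H}_R(G,\eta_\P)$; this is exactly why the computation must be performed upstairs for $\widetilde f_1,\widetilde f_2$ and then pushed down through $\zeta$.
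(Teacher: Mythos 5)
Your proof is correct and follows essentially the same route as the paper: part (1) is remark \ref{rmk:normal} applied in $\mathscr{H}_R(G,\eta_\P)$, and part (2) applies remark \ref{rmk:normal} upstairs in $\mathscr{H}_R(G,\eta)$ and then pushes through $\zeta$, using that $\zeta$ is an algebra homomorphism and that it sends supports $J^1x_1x_2J^1$ (with $x_1x_2\in B^\times$) into $J^1_\P x_1x_2 J^1_\P$, exactly as in the paper's argument.
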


\begin{proof}
First point follows by remark \ref{rmk:normal}.
If $x_1$ or $x_2$ normalizes $J^1$, by remark \ref{rmk:normal} the support of $\widetilde f_1*\widetilde f_2$ is in $J^1x_1x_2J^1$ and so the support of $\widehat f_1*\widehat f_2=\zeta(\widetilde f_1*\widetilde f_2)$ is in $J^1x_1x_2J^1\cap I_G(\eta_\P)=J^1_\P x_1x_2J^1_\P$ and moreover $(\widehat f_1*\widehat f_2)(x_1x_2)=\zeta(\widetilde f_1*\widetilde f_2)(x_1x_2)=\mathsf{p}\circ\widetilde f_1(x_1)\circ \widetilde f_2(x_2)\circ\iota.$
\end{proof}

\begin{lemma}\label{lemma:intertetaP}
For every $x\in B^{\times}\cap\M$ and every 
$y\in I_G(\eta_\P)$ which normalizes $J^1_\M$
we have $I_x(\eta_\P)=I_x(\eta_\M)$ and $I_y(\eta_\P)=I_y(\eta_\M)$.
Moreover, every non-zero element in $I_z(\eta_\P)$, with 
$z\in I_G(\eta_\P)$, is invertible.
\end{lemma}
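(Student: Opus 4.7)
My plan is to separate the three assertions and handle them in turn. For the equality $I_x(\eta_\P)=I_x(\eta_\M)$ with $x\in B^{\times}\cap\M$, I would exploit that conjugation by $x\in\M$ preserves the Levi/unipotent decomposition of $J^1_\P$ given in remark \ref{rmk:decomposed2}. Writing $J^1_\P=(H^1\cap\U^-)J^1_\M(J^1\cap\U)$, conjugation by $x$ sends each factor into the corresponding component of $(\U^-,\M,\U)$, so intersecting with $J^1_\P$ yields a triangular decomposition
\[
J^1_\P\cap (J^1_\P)^x=\big((H^1\cap\U^-)\cap(H^1\cap\U^-)^x\big)\big(J^1_\M\cap(J^1_\M)^x\big)\big((J^1\cap\U)\cap(J^1\cap\U)^x\big).
\]
Since $(J^1_\P,\eta_\P)$ is decomposed with respect to $(\M,\P)$, both $\eta_\P$ and $\eta_\P^x$ are trivial on the two unipotent factors of this intersection. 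Hence an element of $\End_R(V_\M)$ intertwines $\eta_\P$ with $\eta_\P^x$ on the whole intersection if and only if it intertwines $\eta_\M$ with $\eta_\M^x$ on the Levi factor $J^1_\M\cap(J^1_\M)^x$, giving the desired equality.

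For $y\in I_G(\eta_\P)$ normalizing $J^1_\M$, the hypothesis yields $J^1_\M=(J^1_\M)^y\subseteq J^1_\P\cap(J^1_\P)^y$ and, since $\eta_\P|_{J^1_\M}=\eta_\M$, also $\eta_\P^y|_{J^1_\M}=\eta_\M^y$. Restriction to $J^1_\M$ therefore realizes $I_y(\eta_\P)$ as a subspace of $I_y(\eta_\M)$ inside $\End_R(V_\M)$. Both spaces are at most one-dimensional: the left by the basic property of $\eta_\P$ recalled in paragraph \ref{subsec:Heisenberg}, extended from $y\in B^{\times}$ to any $y\in I_G(\eta_\P)$ via the double coset decomposition $I_G(\eta_\P)=J^1_\P B^{\times}J^1_\P$; the right by Schur's lemma applied to the irreducible representations $\eta_\M$ and $\eta_\M^y$ of $J^1_\M$. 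Since the left-hand side is exactly one-dimensional, the inclusion is forced to be an equality.

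The third assertion, invertibility of every non-zero $f\in I_z(\eta_\P)$ for $z\in I_G(\eta_\P)$, is the main obstacle. My plan is to choose a non-zero $f'\in I_{z^{-1}}(\eta_\P)$ (which exists and is one-dimensional for the same reason as above) and to study the convolution product $\Phi_f*\Phi_{f'}$ in $\mathscr{H}_R(G,\eta_\P)$, where $\Phi_f$ and $\Phi_{f'}$ correspond to $f$ and $f'$ via the isomorphism of section \ref{subsec:Hecke}. Its value at $1$ lies in $I_1(\eta_\P)=\End_{J^1_\P}(\eta_\P)$, which by irreducibility of $\eta_\P$ and Schur's lemma equals $R\cdot\mathrm{id}_{V_\M}$. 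The non-vanishing of $(\Phi_f*\Phi_{f'})(1)$ would then produce an explicit left inverse of $f$ (up to scalar) as a linear combination of terms of the form $\eta_\P(j_1)\circ f\circ\eta_\P(j)\circ f'\circ\eta_\P(j_2)$, proving invertibility. I expect the non-vanishing itself to be the technical heart of the argument and to follow from the Heisenberg structure of $\eta_\P$: the fact that $\eta_\P|_{H^1_\P}=\theta_\P\cdot\mathrm{id}_{V_\M}$, together with the abelian $p$-group structure on $J^1_\P/H^1_\P$ endowed with a non-degenerate alternating form induced by $\theta_\P$, should yield the classical non-degeneracy of the composition pairing $I_z(\eta_\P)\times I_{z^{-1}}(\eta_\P)\to I_1(\eta_\P)$, and thereby the invertibility statement.
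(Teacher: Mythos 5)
Your treatment of the two equalities is essentially fine: for $y$ normalizing $J^1_\M$ your argument (one-dimensionality of $I_y(\eta_\P)$ plus Schur for $\eta_\M$ plus the obvious restriction inclusion) is the paper's argument, and for $x\in B^\times\cap\M$ your double inclusion via the Iwahori decomposition of $J^1_\P\cap(J^1_\P)^x$ is a valid, slightly more self-contained variant of the paper's one-line dimension count.

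The gap is in the invertibility assertion, which is the real content of the lemma. First, even if you knew $(\Phi_f*\Phi_{f'})(1)\neq 0$, this would not yield a one-sided inverse of $f$: choosing representatives $x=jz$ of $J^1_\P zJ^1_\P/J^1_\P$ one gets
\[
(\Phi_f*\Phi_{f'})(1)=\sum_{j\in J^1_\P/(J^1_\P\cap zJ^1_\P z^{-1})}\eta_\P(j)\circ f\circ f'\circ\eta_\P(j)^{-1},
\]
a sum of conjugates of $f\circ f'$. An identity $c\,\mathrm{id}=\sum_j\eta_\P(j)\circ(f\circ f')\circ\eta_\P(j)^{-1}$ with $c\neq 0$ cannot be rearranged into $L\circ f=c\,\mathrm{id}$; it only shows that the subspaces $\eta_\P(j)\,\mathrm{Im}(f)$ span $V_\M$, which does not make $f$ surjective or injective. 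The sum collapses to the single term $f\circ f'$ (and then does give invertibility) only when $z$ normalizes $J^1_\P$, i.e. exactly in the easy case. Second, the non-degeneracy of the pairing $I_z(\eta_\P)\times I_{z^{-1}}(\eta_\P)\to I_1(\eta_\P)$ is precisely the hard point, and you leave it at "should follow from the Heisenberg structure"; it is not formal, since if $f$ failed to be invertible the product $f\circ f'$ could vanish and the whole sum with it, so any direct proof of non-vanishing runs into the same difficulty you are trying to resolve (and in the modular setting such "value at $1$" pairings can genuinely degenerate, as the Iwahori--Hecke computation $(f_w*f_{w^{-1}})(1)=q^{\ell(w)}$ shows when the characteristic divides $q$). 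The paper's proof avoids all of this by a reduction: it writes $I_G(\eta_\P)=J^1_\P I_B\widetilde{W}I_B J^1_\P$, factors $z=z_1z_2z_3$ with $z_1,z_3$ normalizing $J^1_\P$ (because $I_B\subset J^1(J\cap\P)$) and $z_2\in\widetilde{W}$ normalizing $J^1_\M$, takes invertible $\gamma_1\in I_{z_1^{-1}}(\eta_\P)$ and $\gamma_3\in I_{z_3^{-1}}(\eta_\P)$ (Schur's lemma for the irreducible $\eta_\P$), and observes that $\gamma_1\circ\gamma\circ\gamma_3$ is a nonzero element of $I_{z_2}(\eta_\P)=I_{z_2}(\eta_\M)$ — this is where the first part of the lemma is used — hence invertible by Schur's lemma for $\eta_\M$; therefore $\gamma$ itself is invertible. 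Some reduction of this kind, to elements normalizing $J^1_\P$ or $J^1_\M$, is what your proposal is missing.
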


\begin{proof}
For the first assertion, in both cases the $R$-vector spaces are $1$-dimensional and so it suffices to prove an inclusion. Since $\eta_\M$ is the restriction of $\eta_\P$ to $J^1_\M$,
for every $x'\in I_G(\eta_\P)$ we have  $I_{x'}(\eta_\P)\subseteq I_{x'}(\eta_\M)$. For the second assertion, we observe that $I_G(\eta_\P)=J^1_\P B^{\times}J^1_\P=
J^1_\P I_B\widetilde{W}I_B J^1_\P$. 
Now $I_B$ normalizes $J^1_\P$ since it is contained in 
$J^1(J\cap \P)$ while $\widetilde{W}$ normalizes $J^1_\M$. 
Take $z=z_1z_2z_3\in I_G(\eta_\P)$ with $z_1\in J^1_\P I_B$, $z_2\in \widetilde{W}$ and $z_3\in I_B J^1_\P$ and take a non-zero element $\gamma$ in $I_z(\eta_P)$. 
Let $\gamma_1$ and $\gamma_3$ two invertible elements in $I_{z_1^{-1}}(\eta_\P)$
and in $I_{z_3^{-1}}(\eta_\P)$ respectively. 
Then $\gamma_1\circ\gamma\circ\gamma_3$ is a non-zero element in $I_{z_2}(\eta_\P)=I_{z_2}(\eta_\M)$ and so it is invertible.
\end{proof}

\subsection{The isomorphism  $\mathscr{H}_R(J,\eta)\cong\mathscr{H}_R(K_B,K^1_B)$}\label{subsec:isomfinitealgebras}
In this paragraph we want to prove that the subalgebra 
$\mathscr{H}_R(K_B,K^1_B)$ of $\mathscr{H}_R(B^{\times},K^1_B)$ is isomorphic to the subalgebra $\mathscr{H}_R(J,\eta_\P)$ of $\mathscr{H}_R(G,\eta_\P)$ and so to $\mathscr{H}_R(J,\eta)$.

\smallskip
In accordance with chapter 2 of \cite{Chin1}, 
we denote by $f_x \in \mathscr{H}_{R}(B^{\times},K^1_B)$
the characteristic function of $K^1_Bx K^1_B$
for every $x\in B^{\times}$ and we write 
$\Phi_1 \Phi_2= \Phi_1 * \Phi_2$ for every $\Phi_1$ and $\Phi_2$ in $\mathscr{H}_{R}(B^{\times},K^1_B)$, in 
$\mathscr{H}_R(G,\eta)$ or in $\mathscr{H}_R(G,\eta_\P)$.

\smallskip
We observe that every element in $\mathscr{H}_R(J,\eta_\P)$ has support in $J\cap J^1_\P B^{\times} J^1_\P= J^1_\P(J\cap  B^{\times}) J^1_\P= J^1_\P K_B J^1_\P$ and so its image by $\zeta^{-1}$ has support in $J^1 K_B J^1$. 
This implies that $\zeta$ induces an algebra isomorphism from
$\mathscr{H}_R(J,\eta)$ to $\mathscr{H}_R(J,\eta_\P)$.
We also remark that $\mathscr{H}_R(K_B,K^1_B)$ is isomorphic to the group algebra $R[K_B/K^1_B]\cong R[J/J^1]$, then we can identify every 
$\Phi\in \mathscr{H}_R(K_B,K^1_B)$ to a function $\Phi\in \mathscr{H}_R(J,J^1)$.

\smallskip
From now on we fix a $\beta$-extension $\kappa$ of $\eta$.
We recall that $\res^J_{J^1}\kappa =\eta$, $I_G(\eta)=I_G(\kappa)=J^1B^{\times}J^1$ and for every $y\in B^{\times}$ we have $I_y(\eta)=I_y(\kappa)$ which is an 
$R$-vector space of dimension $1$. 
Then $V_\eta$ is also the $R$-vector space of $\kappa$ and 
$\kappa(j)\in I_j(\eta)$ for every $j\in J$.

\begin{lemma}\label{lemma:finitealgebras}
The map $\Theta':\mathscr{H}_R(K_B,K^1_B)\rightarrow\mathscr{H}_R(J,\eta)$ defined by $\Phi\mapsto \Phi\otimes\kappa$ for every $\Phi\in \mathscr{H}_R(K_B,K^1_B)$ is an algebra isomorphism. 
\end{lemma}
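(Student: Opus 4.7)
The strategy is to interpret the formula $\Phi \otimes \kappa$ as the pointwise product $j \mapsto \Phi(j)\kappa(j) \in \End_R(V_\eta)$, where $\Phi$ is viewed as an element of $\mathscr{H}_R(J, J^1)$ through the canonical isomorphism $J/J^1 \cong K_B/K_B^1$ of Remark \ref{rmk:isomJB}, and where the finiteness of $J/J^1 \cong GL_{m'}(\frack_{D'})$ makes supports automatically compact. First I would check that $\Theta'$ takes values in $\mathscr{H}_R(J,\eta)$: for $h, h' \in J^1$ and $j \in J$, one has $\kappa(hjh') = \eta(h)\kappa(j)\eta(h')$ because $\kappa$ extends $\eta$, while $\Phi(hjh') = \Phi(j)$ by $J^1$-bi-invariance, so $(\Phi \otimes \kappa)(hjh') = \eta(h)(\Phi \otimes \kappa)(j)\eta(h')$. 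The $R$-linearity of $\Theta'$ is then evident.

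Next I would verify multiplicativity. For $g \in J$, the convolution evaluates as
\[
((\Phi_1 \otimes \kappa)(\Phi_2 \otimes \kappa))(g) = \sum_{x \in J/J^1} \Phi_1(x)\Phi_2(x^{-1}g)\kappa(x)\kappa(x^{-1}g) = (\Phi_1 \Phi_2)(g)\,\kappa(g),
\]
using that $\kappa$ is an actual representation of $J$, so that $\kappa(x)\kappa(x^{-1}g) = \kappa(g)$ factors out of the sum; hence $\Theta'(\Phi_1 \Phi_2) = \Theta'(\Phi_1)\Theta'(\Phi_2)$, and preservation of the unit is immediate.

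For bijectivity I would exploit the identification of $\mathscr{H}_R(J,\eta)_{J^1 j J^1}$ with the intertwining space $I_j(\eta) = I_j(\kappa)$ via $\Psi \mapsto \Psi(j)$, which by the results recalled in paragraph \ref{subsec:Heisenberg} is a $1$-dimensional $R$-module. Since $\kappa$ is a $\beta$-extension, $\kappa(j)$ is a non-zero (hence invertible) element of $I_j(\kappa)$ and therefore spans this line. Injectivity is then immediate: if $\Phi(j)\kappa(j) = 0$ for every $j$, then $\Phi(j) = 0$ because $\kappa(j)$ is invertible in $\End_R(V_\eta)$. For surjectivity, any $\Psi \in \mathscr{H}_R(J,\eta)$ supported on $J^1 j J^1$ satisfies $\Psi(j) = c_j \kappa(j)$ for a unique $c_j \in R$; setting $\Phi(j) = c_j$ and extending by $J^1$-bi-invariance (consistent thanks to the transformation laws of $\Psi$ and $\kappa$) produces a preimage which, through $J/J^1 \cong K_B/K_B^1$, corresponds to an element of $\mathscr{H}_R(K_B, K^1_B)$.

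The main obstacle is conceptual rather than technical: one must separate the roles played by $\Phi$ (a scalar-valued function on the finite quotient $J/J^1$) and $\kappa$ (an operator-valued extension of $\eta$ to all of $J$), and remain consistently careful about the identification of $\mathscr{H}_R(K_B,K_B^1)$-functions with $\mathscr{H}_R(J, J^1)$-functions. The multiplicativity calculation is precisely what uses that $\kappa$ is a genuine extension of $\eta$ to $J$ rather than an intertwining datum only on $J^1$, via the collapse $\kappa(x)\kappa(x^{-1}g) = \kappa(g)$ that reduces the convolution to the one in the group algebra $R[J/J^1]$.
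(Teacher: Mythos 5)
Your proposal is correct and follows essentially the same route as the paper: interpret $\Phi\otimes\kappa$ as the pointwise product $j\mapsto\Phi(j)\kappa(j)$, verify the transformation law and multiplicativity via $\kappa(x)\circ\kappa(x^{-1}j)=\kappa(j)$, get injectivity from $\kappa(j)\in GL(V_\eta)$, and get surjectivity from the fact that $I_j(\eta)=\Hom_{J^1}(\eta,\eta^j)$ is one-dimensional, so any $\widetilde f(j)$ is a scalar multiple of $\kappa(j)$. No substantive difference from the paper's argument.
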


\begin{proof}
The map is well-defined since for every $\Phi\in \mathscr{H}_R(K_B,K^1_B)$ we have $\Phi\otimes\kappa:J\rightarrow \End_R(V_\eta)$ 
and $(\Phi\otimes\kappa)(j_1 jj'_1)=\Phi(j)\kappa(j_1 jj'_1)=\eta(j_1)\circ(\Phi(j)\kappa(j))\circ\eta(j'_1)$ for every $j\in J$ and $j_1,j'_1\in J^1$. 
It is clearly $R$-linear and 
\begin{align*}
\Theta'(\Phi_1*\Phi_2)(j)&=\sum_{x\in J/J^1} \Phi_1(x)\Phi_2(x^{-1}j)\kappa(j)=\sum_{x\in J/J^1} \Phi_1(x)\Phi_2(x^{-1}j)\kappa(x)\circ \kappa(x^{-1}j)\\
&=\sum_{x\in J/J^1} (\Phi_1(x)\kappa(x))\circ (\Phi_2(x^{-1}j)\kappa(x^{-1}j))=(\Theta'(\Phi_1)*\Theta'(\Phi_2))(j)
\end{align*}
for every $\Phi_1,\Phi_2\in \mathscr{H}_R(K_B,K^1_B)$ and $j\in J$. Hence, $\Theta'$ is an $R$-algebra homomorphism.
It is injective because $\kappa(j)\in GL(V_\eta)$ for every $j\in J$. 
Let $\widetilde{f}\in \mathscr{H}_R(J,\eta)$ and $j\in J$. 
Since $\widetilde f(j)\in I_j(\eta)=\Hom_{J^1}(\eta,\eta^j)$, which is of dimension $1$, we have $\widetilde f(j)\in R\kappa(j)$ and then we can write 
$\widetilde{f}(j)=\Phi(j)\kappa(j)$ with $\Phi:J\rightarrow R$.
Since $\widetilde{f}\in \mathscr{H}_R(J,\eta)$, for every $j_1\in J^1$ we have $\Phi(j_1j)\kappa(j_1j)=\widetilde{f}(j_1j)=\eta(j_1)\widetilde{f}(j)=\eta(j_1)\Phi(j)\kappa(j)=
\Phi(j)\kappa(j_1j)$ and so $\Phi\in \mathscr{H}_R(J,J^1)$. 
We conclude that $\Theta'$ is surjective and then it is an algebra isomorphism.
\end{proof}

Composing the restriction of $\zeta$ to $\mathscr{H}_R(J,\eta)$ with $\Theta'$ we obtain an algebra isomorphism
$\mathscr{H}_R(K_B,K^1_B)\rightarrow\mathscr{H}_R(J,\eta_\P)$. 
For every $x\in K_B$ let $\widetilde f_x=\Theta'(f_x)\in \mathscr{H}_R(J,\eta)$ which is given by 
$\widetilde f_x(y)=\kappa(y)$ for every $y\in J^1xJ^1=J^1x$ and let $\widehat f_x=\zeta(\widetilde f_x)\in \mathscr{H}_R(J,\eta_\P)$ which is given by $\widehat f_x(z)=\mathsf{p}\circ\kappa(z)\circ\iota$ for every $z\in J^1_\P xJ^1_\P$.

\subsection{Generators and relations of $\mathscr{H}_R(B^{\times},K^1_B)$}\label{subsec:decomp}
In this paragraph we introduce some notations and we recall the presentation by generators and relations of the algebra $\mathscr{H}_R(B^{\times},K^1_B)$ presented in \cite{Chin1}.

\smallskip
We denote $\Omega=K_B\cup\{\tau_0,\tau_0^{-1}\}\cup
\{\tau_\alpha\,|\,\alpha\in\Sigma\}$ 
and $\bm\Omega=\{f_\omega \,|\,\omega\in\Omega\}$ which is a finite set.
We define some subgroup of $G$, through its identification with $GL_{m'}(A(E))$. 
For every $\alpha=\alpha_{ij}\in\bm\Phi$ we denote by 
$\U_{\alpha}$ the subgroup of matrices $(a_{hk})\in G$ with $a_{hh}=1$ for every 
$h\in\{1,\dots,m'\}$, $a_{ij}\in A(E)$ and $a_{hk}=0$ if $h\neq k$ and $(h,k)\neq (i,j)$. 
For every $P\subset\Sigma$ we denote by $\M_{P}$ the standard Levi subgroup associated to $P$ and by
$\U_{P}^+$ (resp. $\U_{P}^-$) the unipotent radical of upper (resp. lower) standard parabolic subgroups with Levi component $\M_P$. We remark that $\M=\M_{\emptyset}$, $\U=\U_{\emptyset}$ and $\U^-=\U_{\emptyset}^-$. 
Thus, we have 
$\U_{P}^+=\prod_{\alpha\in\mathbf{\Psi}_P^+}\U_{\alpha}$ 
and $\U_{P}^-=\prod_{\alpha\in\mathbf{\Psi}_P^-}\U_{\alpha}$.
Furthermore, if $P_1\subset P_2\subset\Sigma$ then $\U_{P_2}^+$ is a subgroup of $\U_{P_1}^+$ and  $\U_{P_2}^-$ a subgroup of $\U_{P_1}^-$. 

\begin{rmk}\label{rmk:UintersectJ1H1}
By proposition \ref{prop:structureJ1H1}, if we take 
$\alpha=\alpha_{ij}\in\bm\Phi$ and
$(a_{hk})$ in $\U_\alpha\cap J^1$ (resp. $\U_\alpha\cap H^1$) then $a_{ij}$ is in $\mathfrak{J}^1_0$  (resp. $\mathfrak{H}^1_0$).
\end{rmk}

\begin{rmk}\label{rmk:UintersectK}
In accordance with paragraph 2.2 of \cite{Chin1} we denote $M_P=\M_P\cap K_B$, $U_P^+=\U_P^+\cap K_B$ and $U_P^-=\U_P^-\cap K_B$ for every $P\subset \Sigma$ and 
$U_\alpha=\U_\alpha\cap K_B$ for every $\alpha\in \bm\Phi$.
\end{rmk}

As in paragraph 2.3 of \cite{Chin1}, for every $\alpha=\alpha_{i,i+1}\in \Sigma$ and $w\in W$ we consider the following sets: 
$A(w,\alpha)=\{w(j)\,|\,i+1\leq j\leq m\}$, $B(w,\alpha)=\{w(j)-1\,|\,i+1\leq j\leq m\}$, $P'(w,\alpha)=A(w,\alpha)\setminus B(w,\alpha)$, $P(w,\alpha)=\{\alpha_{i,i+1}\in\Sigma\,|\, i\in P'(w,\alpha)\}$ and $Q(w,\alpha)=B(w,\alpha)\setminus A(w,\alpha)$.
We remark that $\tau_{P'(w,\alpha)}=\tau_{P(w,\alpha)}$ because $0\notin P'(w,\alpha)$ and $\tau_m=\mathbb{I}_{m}$.
Moreover, if $\alpha=\alpha_{i,i+1}\in \Sigma$, $w'\in W$ and $w$ is of minimal length in $w'W_{\widehat {\alpha}}\in W/W_{\widehat {\alpha}}$ then we have 
\[w'\tau_i w'^{-1}=w\tau_iw^{-1}=\prod_{h=i+1}^{m}w\tau_{h-1}\tau_{h}^{-1}w^{-1}=\prod_{h=i+1}^{m}\tau_{w(h)-1}\tau_{w(h)}^{-1}=\tau_{P(w,\alpha)}^{-1}\tau_{Q(w,\alpha)}.\]

\begin{lemma}\label{lemma:relations}
The algebra $\mathscr{H}_{R}(B^{\times},K^1_B)$ is the $R$-algebra generated by $\bm\Omega$ subject to the following relations 
\begin{enumerate}[1.]
	\item  $f_{k}=1$ for every $k\in K^1$ and $f_{k_1}f_{k_2}=f_{k_1k_2}$ for every $k_1,k_2\in K$;
	\item $f_{\tau_0}f_{\tau_0^{-1}}=1$
				and $f_{\tau_0^{-1}}f_{\omega}=f_{\tau_0^{-1}\omega\tau_0}f_{\tau_0^{-1}}$ for every $\omega\in \Omega$;
	\item $f_{\tau_{\alpha}}f_x=f_{\tau_{\alpha}x\tau_{\alpha}^{-1}}f_{\tau_{\alpha}}$ for every $\alpha\in\Sigma$ and $x\in M_{\widehat\alpha}$;
  \item $f_uf_{\tau_{\alpha}}=f_{\tau_{\alpha}}$ if $u\in U_{\alpha'}$ with $\alpha'\in \mathbf{\Psi}_{\widehat{\alpha}}^+$, for every $\alpha\in\Sigma$;
	\item $f_{\tau_{\alpha}}f_u=f_{\tau_{\alpha}}$ if $u\in U_{\alpha'}$ with $\alpha'\in \mathbf{\Psi}_{\widehat{\alpha}}^-$, for every $\alpha\in\Sigma$;
	\item $f_{\tau_{\alpha}}f_{\tau_{\alpha'}}=f_{\tau_{\alpha'}}f_{\tau_{\alpha}}$ for every $\alpha,\alpha'\in\Sigma$;
	\item $\displaystyle{\left(\prod_{\alpha'\in P(w,\alpha)}f_{\tau_{\alpha'}}\right)f_wf_{\tau_\alpha}f_{w^{-1}}=
	q^{\ell(w)}\left(\prod_{\alpha''\in Q(w,\alpha)}f_{\tau_{\alpha''}}\right)\left(\sum_{u}f_u\right)}\,\quad$ 	for every $\alpha\in\Sigma$ and $w$ of minimal length in $wW_{\widehat \alpha}\in W/W_{\widehat \alpha}$ and where $u$ 
	describes a system of representatives of $(U\cap wU^-	w^{-1})K^1_B/K^1_B$ in $U\cap wU^-	w^{-1}$. 
\end{enumerate}
\end{lemma}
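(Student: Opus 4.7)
My plan is to establish three things: that $\bm\Omega$ generates $\mathscr{H}_R(B^\times,K^1_B)$, that all seven relations hold, and that they suffice (nothing more is needed). Since the algebra has a natural $R$-basis given by the characteristic functions $f_x$ indexed by double cosets in $K^1_B\backslash B^\times/K^1_B$, the cleanest route is to exhibit a normal form for words in $\bm\Omega$ modulo the listed relations, and to show that the induced map from the normal forms onto the basis $\{f_x\}$ is a bijection.

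For generation, I would use the Cartan/Iwahori decomposition $B^\times = K_B\,\widehat{\bm\Delta}\,K_B$. Since the $\tau_\alpha$ for $\alpha\in\Sigma$ together with $\tau_0^{\pm 1}$ generate the free abelian group $\widehat{\bm\Delta}$, and $K_B$ itself sits inside $\bm\Omega$, every $f_x$ is obtained (up to the relations) as a product $f_{k_1}\cdot(\prod f_{\tau_{\alpha_i}})\cdot f_{\tau_0^{n}}\cdot f_{k_2}$. The refinement needed is to pass from the double cosets $K_Bx K_B$ to the finer double cosets $K^1_B xK^1_B$; here one uses that each $K_B$-double coset is the disjoint union of $|K_B/K^1_B|^2/|\mathrm{stab}|$ many $K^1_B$-double cosets, each of which is already captured by $f_k\cdot f_\tau\cdot f_{k'}$ for appropriate $k,k'\in K_B$.

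For the relations themselves, items 1--6 are routine support computations. Relation 1 is the restriction-to-$K_B$ subalgebra $\mathscr{H}_R(K_B,K^1_B)\cong R[K_B/K^1_B]$. Relation 2 reflects that $\tau_0$ is central and invertible in $B^\times$. Relation 3 follows because $\tau_\alpha$ normalizes $M_{\widehat\alpha}$ (so one applies remark \ref{rmk:normal} twice). Relations 4 and 5 amount to the observation that if $u\in U_{\alpha'}$ with $\alpha'\in\bm\Psi^+_{\widehat\alpha}$ then $\tau_\alpha^{-1}u\tau_\alpha\in K^1_B$, and symmetrically for the negative side; thus $u\tau_\alpha\in K^1_B\tau_\alpha K^1_B$ and convolution is trivial. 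Relation 6 comes from the fact that $\tau_\alpha$ and $\tau_{\alpha'}$ commute in $B^\times$ and normalize $K^1_B$.

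The main obstacle is relation 7, which is the essential quadratic/braid relation encoding the affine Hecke structure. To prove it, I would compute the support of the left-hand product: using items 3 and 6, the support lies inside $K^1_B\,w\tau_\alpha w^{-1}\,K^1_B = K^1_B\,\tau_{P(w,\alpha)}^{-1}\tau_{Q(w,\alpha)}K^1_B$, and then multiply by $\prod_{\alpha'\in P(w,\alpha)}f_{\tau_{\alpha'}}$ to clear the negative powers. Evaluating convolutions at a fixed point in $U\cap wU^-w^{-1}$ reduces to counting, for each $u$ in a system of representatives of $(U\cap wU^-w^{-1})K^1_B/K^1_B$, the fiber over $u$ of the product map, and this count produces the factor $q^{\ell(w)}$ via remark \ref{rmk:length} (the length equals $|\bm\Phi^-\cap w\bm\Phi^+|$, giving the dimension of the unipotent chamber crossed). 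The clean packaging of this count, together with the check that no other double coset contributes, is the heart of the argument. Once the seven relations are verified, one compares the $R$-span of normal forms (parametrized by a system of $K^1_B$-double coset representatives via proposition \ref{prop:minlength} and the decomposition $B^\times = I_B\widetilde W I_B$) with the basis $\{f_x\}$ to conclude that no further relations are needed; this is essentially the content of \cite{Chin1}, to which I would appeal for the completeness step.
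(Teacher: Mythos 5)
The genuine gap is in your completeness step, and it is exactly the point the paper's own (one-line) proof addresses. The paper does not reprove the presentation at all: it observes that this list of relations is equivalent to the presentation already established in definition 2.21 of \cite{Chin1}, the only discrepancy being that relation 3 here, stated for all $x\in M_{\widehat\alpha}$, consolidates relations 3, 4 and 7 of that definition; the equivalence holds because $\M\cap K_B$, the $U_{\alpha'}$ with $\alpha'\in\bm\Phi_{\widehat\alpha}$ and $W_{\widehat\alpha}$ generate $M_{\widehat\alpha}$. Your plan instead reproves generation and the relations directly (plausible in outline), but then delegates the assertion that no further relations are needed wholesale to \cite{Chin1}. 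That citation does not apply verbatim: the completeness theorem of \cite{Chin1} is proved for a \emph{different} list of relations, so to invoke it you must show that the two-sided ideal generated by your relations coincides with the one generated by those of \cite{Chin1} --- in particular that your relation 3 implies, together with the others, relations 3, 4 and 7 of definition 2.21 of \cite{Chin1} and conversely. This bridging argument, which is the entire content of the paper's proof, appears nowhere in your proposal; without it you have only presented an algebra surjecting onto $\mathscr{H}_R(B^{\times},K^1_B)$, not the stated presentation. In the same vein, your verification of relation 7 is announced rather than carried out (you yourself call the count the heart of the argument), so the hardest relation is also left to the reference.

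Two smaller inaccuracies. First, $\tau_0=\varpi\mathbb{I}_{m'}$ is in general not central in $B^{\times}$: conjugation by $\varpi$ is a nontrivial automorphism of $\ent_{D'}$ when $D'$ is noncommutative, and relation 2 only uses that $\tau_0$ is invertible and normalizes $K^1_B$ (indeed the twist $f_{\tau_0^{-1}\omega\tau_0}$ in that relation would be pointless if $\tau_0$ were central). Second, in your generation step the passage from products of the generators to an arbitrary $f_x$ requires identities such as $f_{\tau_\alpha}f_{\tau_{\alpha'}}=f_{\tau_\alpha\tau_{\alpha'}}$ for dominant elements, which is itself a nontrivial support computation of the same nature as relations 6 and 7 and cannot simply be absorbed into the phrase ``up to the relations''.
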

\begin{proof}
The only difference between this presentation and these in \cite{Chin1} is the relation 3 which is equivalent to relations 3, 4 and 7 of definition 2.21 of \cite{Chin1} because $\M\cap K_B$, $U_{\alpha'}$ with $\alpha'\in\bm\Phi_{\widehat\alpha}$ and $W_{\widehat\alpha}$ generate $M_{\widehat\alpha}$.
\end{proof}

Hence, to define an algebra homomorphism from 
$\mathscr{H}_{R}(B^{\times},K^1_B)$ to $\mathscr{H}_R(G,\eta_\P)$, it is sufficient to choose elements 
$\widehat f_\omega\in\mathscr{H}_R(G,\eta_\P)$ for every $\omega\in\Omega$ such that $\widehat f_\omega$ respect the relations of lemma \ref{lemma:relations}. 
We remark that we can take $\widehat f_\omega\in 
\mathscr{H}_R(G,\eta_\P)_{J^1_\P \omega J^1_\P}$ for every $\omega\in\Omega$ and we recall that in paragraph \ref{subsec:isomfinitealgebras} we have just defined $\widehat f_k$ for every $k\in K_B$
as the image of $f_k$ by $\zeta\circ\Theta'$.

\subsection{Some decompositions of $J^1_\P$-double cosets}
In this paragraph we introduce some notations and some tools that we will use to construct elements in $\mathscr{H}_R(G,\eta_\P)_{J^1_\P \tau_i J^1_\P}$ with $i\in\{0,\dots,m'-1\}$.

\begin{lemma}\label{lemma:J1Ptau}
Let $\tau\in\bm\Delta$ and $P=P(\tau)$.
\begin{enumerate}
\item We have $J^1_\P=(J^1_\P \cap \U^-_P)(J^1_\P \cap \M_P)(J^1_\P \cap \U_P^+)=
(J^1_\P \cap \U_P^+)(J^1_\P \cap \M_P)(J^1_\P \cap \U_P^-)$.
\item We have 
$(J^1_\P \cap \U_P^+)^\tau\subset H^1\cap \U_P^+\subset J^1_\P \cap \U_P^+$,
$(J^1_\P \cap \U^-_P)^{\tau^{-1}}\subset 
(J^1 \cap \U^-_P)^{\tau^{-1}}\subset  
H^1 \cap \U^-_P=J^1_\P \cap \U^-_P$
 and 
$(J^1_\P \cap \M_P)^{\tau}=J^1_\P \cap \M_P$.
\item We have  
$(J^1_\P\cap \U)^\tau\subset J^1_\P\cap \U$,
$(J^1_\P\cap \U^-)^{\tau^{-1}}\subset J^1_\P\cap \U^-$ and  
$(J^1_\M)^{\tau}=J^1_\M$.
\end{enumerate}
\end{lemma}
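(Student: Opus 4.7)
The plan is to reduce each of the three assertions to a root-by-root computation, using the decomposition of $J^1_\P$ recorded in remark \ref{rmk:decomposed2} together with the explicit lattice description of proposition \ref{prop:structureJ1H1}. To set notation, write $\tau=\mathrm{diag}(1,\varpi^{b_1},\dots,\varpi^{b_{m'-1}})$ with $0=b_0\leq b_1\leq\dots\leq b_{m'-1}$; then $P=P(\tau)$ consists precisely of those simple roots $\alpha_{i,i+1}$ with $b_{i-1}=b_i$, so that $\alpha_{ij}\in\bm\Phi_P^+$ iff $b_{i-1}=b_{j-1}$ and $\alpha_{ij}\in\bm\Psi_P^+$ iff $b_{i-1}<b_{j-1}$.

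For point 1, since $P\subset\Sigma$, the Levi $\M_P$ is a standard Levi containing $\M$ with upper standard parabolic $\P_P$, so remark \ref{rmk:decomposed2} applies and gives directly the first ordered decomposition; the second ordering (with $\U_P^+$ on the left) is then formal, since a subgroup decomposed with respect to a parabolic factors in either order.

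For point 2, remark \ref{rmk:decomposed2} specialised to $(\M,\P)$ yields $J^1_\P\cap\U=J^1\cap\U$ and $J^1_\P\cap\U^-=H^1\cap\U^-$; intersecting with $\U_P^\pm\subset\U^\pm$ gives $J^1_\P\cap\U_P^+=J^1\cap\U_P^+$ and $J^1_\P\cap\U_P^-=H^1\cap\U_P^-$, so the claimed inclusions reduce to the individual root subgroups $\U_\alpha$ with $\alpha\in\bm\Psi_P^\pm$. Under the identification of remark \ref{rmk:UintersectJ1H1} (which uses proposition \ref{prop:structureJ1H1}), $J^1\cap\U_{\alpha_{ij}}$ corresponds to $\mathfrak{J}^1_0$ and $H^1\cap\U_{\alpha_{ij}}$ to $\mathfrak{H}^1_0$, while conjugation by $\tau$ acts on the $(i,j)$-entry by $x\mapsto\varpi^{-b_{i-1}}x\varpi^{b_{j-1}}$. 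Since $\wp_{D'}$ is two-sided and $\mathfrak{J}^1_0,\mathfrak{H}^1_0$ are $(\ent_{D'},\ent_{D'})$-bimodules, one has $\varpi^{-a}\mathfrak{J}^1_0\varpi^b=\varpi^{b-a}\mathfrak{J}^1_0$ whenever $b\geq a$; combined with the inclusion $\varpi\mathfrak{J}^1_0\subset\mathfrak{H}^1_0$ from the discussion preceding proposition \ref{prop:structureJ1H1}, this yields $(J^1\cap\U_\alpha)^\tau\subset H^1\cap\U_\alpha$ for every $\alpha\in\bm\Psi_P^+$ and, symmetrically, $(J^1\cap\U_\alpha)^{\tau^{-1}}\subset H^1\cap\U_\alpha$ for every $\alpha\in\bm\Psi_P^-$. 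For the Levi statement, roots $\alpha_{ij}\in\bm\Phi_P$ satisfy $b_{i-1}=b_{j-1}$, so $\tau$-conjugation preserves the lattices on those entries; combined with the fact that $\tau$ commutes with the diagonal part of $\M_P$, this gives $(J^1_\P\cap\M_P)^\tau=J^1_\P\cap\M_P$.

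Point 3 is the special case $P=\emptyset$ of the same mechanism, so that $\M_P=\M$ and $\U_P^\pm=\U^\pm$: for any $\alpha_{ij}\in\bm\Phi^+$ one has $b_{j-1}\geq b_{i-1}$, whence $\varpi^{-b_{i-1}}\mathfrak{J}^1_0\varpi^{b_{j-1}}\subset\mathfrak{J}^1_0$, giving $(J^1\cap\U)^\tau\subset J^1\cap\U=J^1_\P\cap\U$; the $\U^-$ case is symmetric via $H^1\cap\U^-=J^1_\P\cap\U^-$; and $(J^1_\M)^\tau=J^1_\M$ because $\tau$ is block-scalar and therefore commutes with each block of $\M$. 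The only real care required throughout is the noncommutative bookkeeping with powers of $\varpi$, namely the identity $\varpi^{-a}\mathfrak{J}^1_0\varpi^b=\varpi^{b-a}\mathfrak{J}^1_0$ for $b\geq a$, which rests on the two-sidedness of $\wp_{D'}$ and on the bimodule structure established in proposition \ref{prop:structureJ1H1}; everything else is extraction of exponents.
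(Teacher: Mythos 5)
Your argument follows essentially the same route as the paper: point 1 from remark \ref{rmk:decomposed2}, and points 2 and 3 by an entrywise lattice computation resting on remark \ref{rmk:UintersectJ1H1}, the inclusion $\varpi\mathfrak{J}^1\subset\mathfrak{H}^1$, and the stability of $\mathfrak{J}^1_0,\mathfrak{H}^1_0$ under conjugation by $\varpi$ (the paper deduces point 3 formally from points 1 and 2 rather than redoing the $P=\emptyset$ computation, but that is a negligible difference). Two of your stated justifications, however, are off and should be repaired, even though the conclusions stand. First, the identity $\varpi^{-a}\mathfrak{J}^1_0\varpi^{b}=\varpi^{b-a}\mathfrak{J}^1_0$ does \emph{not} follow from the $(\ent_{D'},\ent_{D'})$-bimodule structure together with two-sidedness of $\wp_{D'}$: a bimodule lattice in $A(E)$ need not be stable under conjugation by $\varpi$, since $\varpi$ is not central in $A(E)$. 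What is actually needed (and what the paper invokes directly) is the separate standard fact that $\varpi\in B_0^{\times}$ normalizes $J^1_0$ and $H^1_0$, i.e.\ $\varpi^{-1}\mathfrak{J}^1_0\varpi=\mathfrak{J}^1_0$ and $\varpi^{-1}\mathfrak{H}^1_0\varpi=\mathfrak{H}^1_0$; once you cite this, your exponent bookkeeping is fine. Second, for the Levi statements, ``$\tau$ commutes with the diagonal part of $\M_P$'' and ``$\tau$ is block-scalar and therefore commutes with each block of $\M$'' are literally false: conjugation by $\varpi^{a_k}$ acts nontrivially on the entry in $A(E)$; the correct reason, again, is that this conjugation preserves $\mathfrak{J}^1_0$ and $\mathfrak{H}^1_0$ (the case $a=b$ of your identity), which is exactly how the paper argues $(J^1_\P\cap\M_P)^\tau=J^1_\P\cap\M_P$ and $(J^1_\M)^\tau=J^1_\M$. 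With these attributions corrected, your proof is complete and coincides in substance with the paper's.
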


\begin{proof}
The first point follows by remark \ref{rmk:decomposed2}.
To prove the second point we observe that
remark \ref{rmk:UintersectJ1H1} implies that
$(J^1_\P\cap \U_P^+)^\tau = 
(J^1\cap\prod_{\alpha\in\bm\Psi_{P}^+}\U_\alpha)^\tau$ is contained in 
$(\mathbb{I}_{m'}+\varpi \mathfrak{J}^1)\cap \U_P^+$ which is in $H^1\cap \U_P^+\subset J^1_\P\cap \U_P^+$.
Similarly we prove $(J^1 \cap \U^-_P)^{\tau^{-1}}\subset H^1 \cap \U^-_P$.
Moreover, since 
$\varpi^{-1}\mathfrak{J}^1_0\varpi= \mathfrak{J}^1_0$ and
$\varpi^{-1}\mathfrak{H}^1_0\varpi= \mathfrak{H}^1_0$, we have
$(J^1_\P\cap \M_P)^\tau= J^1_\P\cap \M_P$. 
To prove the third point, we observe that  
$(J^1_\P\cap \U)^\tau\subset \big((J^1_\P\cap \M_P)(J^1_\P\cap \U_P^+)\big)^{\tau}\cap \U$ which is in $(J^1_\P\cap \M_P)(J^1_\P\cap \U_P^+)\cap \U=J^1_\P\cap \U$. 
Similarly we prove $(J^1_\P\cap \U^-)^{\tau^{-1}}\subset J^1_\P\cap \U^-$. 
Finally, since $\varpi^{-1}\mathfrak{J}^1_0\varpi= \mathfrak{J}^1_0$ we obtain $(J^1_\M)^{\tau}=J^1_\M$.
\end{proof}

\begin{lemma}\label{lemma:J1Ptau:2}
If $\tau\in\bm\Delta$ then $J^1_\P\tau J^1_\P=(J^1_\P\cap \U_{P(\tau)}^-)\tau J^1_\P=J^1_\P\tau (J^1_\P\cap \U_{P(\tau)}^+)$ and 
$J^1_\P\tau^{-1} J^1_\P=(J^1_\P\cap \U_{P(\tau)}^+)\tau^{-1} J^1_\P=J^1_\P\tau^{-1} (J^1_\P\cap \U_{P(\tau)}^-)$.
\end{lemma}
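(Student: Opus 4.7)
The proof is a direct manipulation using the two Iwahori-style decompositions of $J^1_\P$ given in lemma \ref{lemma:J1Ptau}(1) together with the absorption properties provided by lemma \ref{lemma:J1Ptau}(2). Write $P = P(\tau)$ throughout. One inclusion in each equality is obvious, so only the nontrivial direction needs work.

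For the first equality $J^1_\P \tau J^1_\P = (J^1_\P \cap \U^-_P)\tau J^1_\P$, the plan is to start from
\begin{equation*}
J^1_\P \tau J^1_\P = (J^1_\P \cap \U^-_P)(J^1_\P \cap \M_P)(J^1_\P \cap \U_P^+)\, \tau J^1_\P
\end{equation*}
and absorb the middle and right factors into $\tau J^1_\P$. By lemma \ref{lemma:J1Ptau}(2) we have $(J^1_\P \cap \U_P^+)^\tau \subset J^1_\P$, so $(J^1_\P \cap \U_P^+)\tau \subset \tau J^1_\P$; similarly $(J^1_\P \cap \M_P)^\tau = J^1_\P \cap \M_P \subset J^1_\P$ gives $(J^1_\P \cap \M_P)\tau \subset \tau J^1_\P$. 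Combining these absorptions yields the desired equality. For the second form $J^1_\P \tau J^1_\P = J^1_\P\tau (J^1_\P \cap \U_P^+)$, I would instead use the decomposition $J^1_\P = (J^1_\P \cap \U_P^+)(J^1_\P \cap \M_P)(J^1_\P \cap \U^-_P)$ applied on the left of $\tau$, and absorb $(J^1_\P \cap \U^-_P)$ and $(J^1_\P \cap \M_P)$ into $J^1_\P$ on the left by conjugation by $\tau$ (using $(J^1_\P \cap \U^-_P)^{\tau^{-1}} \subset J^1_\P$ and bijectivity of conjugation on $J^1_\P \cap \M_P$).

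The claims for $\tau^{-1}$ are proved by exactly the same procedure, but with the roles of $\U_P^+$ and $\U_P^-$ interchanged: now $(J^1_\P \cap \U_P^-)\tau^{-1} \subset \tau^{-1} J^1_\P$ via $(J^1_\P \cap \U_P^-)^{\tau^{-1}} \subset J^1_\P$ and $\tau^{-1}(J^1_\P \cap \U_P^+) \subset J^1_\P \tau^{-1}$ via $(J^1_\P \cap \U_P^+)^\tau \subset J^1_\P$, so starting from either decomposition of $J^1_\P$ and applying the same absorption argument gives both equalities for $\tau^{-1}$.

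There is no substantive obstacle: the whole content is bookkeeping, and all the nontrivial ingredients (the two Iwahori decompositions, stability of $J^1_\P \cap \M_P$ under $\tau$-conjugation, and one-sided contraction of the unipotent pieces) are already packaged into lemma \ref{lemma:J1Ptau}. The only thing to be careful about is which decomposition of $J^1_\P$ is used on which side of $\tau$ (or $\tau^{-1}$) so that the factor one wants to absorb is the one that conjugates into $J^1_\P$ in the correct direction.
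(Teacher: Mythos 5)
Your argument is essentially the paper's proof: both rest on the Iwahori-type decomposition of $J^1_\P$ from lemma \ref{lemma:J1Ptau}(1) and on absorbing the Levi factor and one unipotent factor across $\tau$ via the containments of lemma \ref{lemma:J1Ptau}(2), and your computation for $J^1_\P\tau J^1_\P=(J^1_\P\cap \U_{P(\tau)}^-)\tau J^1_\P$ is exactly the paper's. The one slip is in your description of the second form: writing $P=P(\tau)$, to get $J^1_\P\tau J^1_\P=J^1_\P\tau(J^1_\P\cap\U_{P}^+)$ you should decompose the \emph{right-hand} copy of $J^1_\P$ as $(J^1_\P\cap\U_{P}^-)(J^1_\P\cap\M_{P})(J^1_\P\cap\U_{P}^+)$ and pull the first two factors leftward across $\tau$ (which is precisely what your cited facts $(J^1_\P\cap\U_{P}^-)^{\tau^{-1}}\subset J^1_\P$ and the $\tau$-stability of $J^1_\P\cap\M_{P}$ accomplish), not apply a decomposition \emph{on the left of} $\tau$ in the order you state, since pushing $J^1_\P\cap\U_{P}^-$ rightward across $\tau$ fails: $(J^1_\P\cap\U_{P}^-)^{\tau}$ is in general not contained in $J^1_\P$.
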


\begin{proof}
Let $P=P(\tau)$. 
By lemma \ref{lemma:J1Ptau} we have 
$J^1_\P=(J^1_\P\cap \U_{P}^-)(J^1_\P\cap \M_{P})
(J^1_\P\cap \U_{P}^+)$
and so we obtain
$J^1_\P\tau J_\P^1=(J^1_\P\cap \U_{P}^-)\tau(J^1_\P\cap \M_{P})^\tau (J^1_\P\cap \U_{P}^+)^\tau J^1_\P$ which is equal to 
$(J^1_\P\cap \U_{P}^-)\tau J^1_\P$ by lemma \ref{lemma:J1Ptau}. 
Similarly we prove other equalities.
\end{proof}

\begin{lemma}\label{lemma:J1Pw}
If $w\in W$ then $(J^1_\P)^w J^1_\P=(J^1\cap \U^w\cap \U^-)J^1_\P$.
\end{lemma}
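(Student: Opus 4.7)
The plan is to analyze $(J^1_\P)^w$ root-subgroup by root-subgroup, compare it with the analogous decomposition of $J^1_\P$, and isolate the factors that do not already lie in $J^1_\P$.

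Since $W$ is realized as permutation matrices inside $K_B$ and by proposition~\ref{prop:structureJ1H1} the lattices $M_{m'}(\mathfrak{J}^1_0)$, $M_{m'}(\mathfrak{H}^1_0)$ and the block-diagonal lattice defining $J^1_\M$ are all invariant under permutation, $W$ normalizes each of $J^1, H^1, \M, J^1_\M$. In particular $w^{-1}\U_\beta w=\U_{w^{-1}\beta}$, $(J^1_\P)^w\subset J^1$, and for every root $\beta\in\bm\Phi$ one computes
\[
(J^1_\P)^w\cap \U_\beta = w^{-1}(J^1_\P\cap \U_{w\beta})w = \begin{cases} J^1\cap \U_\beta & \text{if } w\beta\in\bm\Phi^+,\\ H^1\cap \U_\beta & \text{if } w\beta\in\bm\Phi^-,\end{cases}
\]
while $(J^1_\P)^w\cap\M=J^1_\M$. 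Conjugating the decomposition of $J^1_\P$ from remark~\ref{rmk:decomposed2} by $w$, the group $(J^1_\P)^w$ is decomposed with respect to $(\M,w^{-1}\P w)$ and so factors as an ordered product of these root subgroups together with $J^1_\M$.

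Comparing with $J^1_\P\cap \U_\beta = J^1\cap\U_\beta$ for $\beta\in\bm\Phi^+$ and $=H^1\cap \U_\beta$ for $\beta\in\bm\Phi^-$, the two descriptions coincide for every root except those in $w^{-1}\bm\Phi^+\cap\bm\Phi^-$, where $(J^1_\P)^w\cap \U_\beta=J^1\cap \U_\beta$ strictly contains $H^1\cap \U_\beta$; the product of these "extra" root subgroups is exactly $J^1\cap \U^w\cap\U^-$. For the inclusion $\subseteq$, I would take $x\in(J^1_\P)^w$ and use the decomposition above to write $x$ as an ordered product whose factors indexed by roots $\beta\notin w^{-1}\bm\Phi^+\cap\bm\Phi^-$, together with the factor in $J^1_\M$, already lie in $J^1_\P$ (since $J^1\cap \U_\beta\subset J^1_\P$ for $\beta\in\bm\Phi^+$, $H^1\cap \U_\beta\subset J^1_\P$ for $\beta\in\bm\Phi^-$, and $J^1_\M\subset J^1_\P$). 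After rearranging so that the remaining "extra" factors, which lie in $J^1\cap \U^w\cap\U^-$, appear on the left, and absorbing everything else into $J^1_\P$ on the right, one obtains $x\in(J^1\cap \U^w\cap\U^-)J^1_\P$.

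The reverse inclusion is immediate: if $u\in J^1\cap \U^w\cap\U^-$ then $wuw^{-1}\in \U$ by definition of $\U^w$ and lies in $J^1$ because $W$ normalizes $J^1$, so $wuw^{-1}\in J^1\cap \U\subset J^1_\P$ and hence $u=w^{-1}(wuw^{-1})w\in (J^1_\P)^w$. The main obstacle is making the rearrangement in the forward direction rigorous. The key enabling fact is $[J^1,J^1]\subset H^1$: any commutator produced while sliding the "extra" factors past those inside $J^1_\P$ lies in $H^1\cap \U_\gamma$ for some root $\gamma$ (or in $H^1\cap\M$), and each such subgroup is contained in $J^1_\P$ (because $H^1\cap \U^-\subset J^1_\P$, $H^1\cap \U\subset J^1\cap \U\subset J^1_\P$, and $H^1\cap\M\subset J^1_\M\subset J^1_\P$). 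Thus every correction stays inside $J^1_\P$, so the "extra" factors can be gathered cleanly on the left to yield the required decomposition.
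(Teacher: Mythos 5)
Your argument is correct and follows essentially the same route as the paper: conjugate the factorization $J^1_\P=(H^1\cap\U^-)J^1_\M(J^1\cap\U)$ of remark \ref{rmk:decomposed2} by $w$, note that every factor except $J^1\cap\U^w\cap\U^-$ already lies in $J^1_\P$, and absorb them into $J^1_\P$; the paper merely sidesteps your commutator rearrangement by taking the Iwahori-type factorization in a suitable order so the absorbable factors sit directly against $J^1_\P$. One small imprecision: a commutator of elements of $J^1$ need not lie in a single $H^1\cap\U_\gamma$ or in $H^1\cap\M$, but this is harmless since $[J^1,J^1]\subset H^1\subset J^1_\P$ already gives what your rearrangement needs.
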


\begin{proof}
Since $(H^1\cap \U^-)^w\subset J^1_\P$ and $(J^1_\M)^w=J^1_\M$ we obtain $(J^1_\P)^w J^1_\P=(J^1\cap\U)^wJ^1_\P$. 
Moreover, we have $(J^1\cap\U)^w\cap\U\subset J^1_\P$ and so 
$(J^1_\P)^w J^1_\P=(J^1\cap\U^w\cap\U^-)J^1_\P$.
\end{proof}

\begin{lemma}\label{lemma:J1PU}
We have $J^1_\P \U^-J^1_\P\cap \U= J^1_\P\cap\U$ and $J^1_\P\U J^1_\P\cap \U^-= J^1_\P\cap\U^-$.
\end{lemma}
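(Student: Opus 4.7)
The plan is to rewrite $J^1_\P\U^-J^1_\P$ in a form where the Iwahori factorizations of the two copies of $J^1_\P$ collapse cleanly against the central $\U^-$, and then to invoke the elementary identity $\U\cap\P^-=\{1\}$. The inclusion $J^1_\P\cap\U\subset J^1_\P\U^-J^1_\P\cap\U$ is immediate, so all the work lies in the other direction.

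First I would establish the cleaner description
\[
J^1_\P\U^-J^1_\P = (J^1_\P\cap\U)(J^1_\P\cap\M)\,\U^-\,(J^1_\P\cap\U).
\]
By lemma \ref{lemma:J1Ptau} the group $J^1_\P$ admits an Iwahori decomposition with respect to $(\M,\P)$ in both orders $\U\M\U^-$ and $\U^-\M\U$; writing the left-hand copy of $J^1_\P$ in the first order and the right-hand copy in the second places a factor $J^1_\P\cap\U^-$ on either side of the central $\U^-$, where each is absorbed into $\U^-$. The remaining product $(J^1_\P\cap\U)(J^1_\P\cap\M)\U^-(J^1_\P\cap\M)(J^1_\P\cap\U)$ simplifies further because $J^1_\P\cap\M$ lies in $\M$ and therefore normalizes $\U^-$: the two central $\M$-factors may be merged and commuted past $\U^-$, yielding the displayed identity.

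Given $x\in J^1_\P\U^-J^1_\P\cap\U$ I then write $x = u_1 m u^- u_2$ with $u_1,u_2\in J^1_\P\cap\U$, $m\in J^1_\P\cap\M$ and $u^-\in\U^-$. Rearranging gives $u_1^{-1}x u_2^{-1} = m u^-$; the left-hand side lies in $\U$ and the right-hand side in $\M\U^-=\P^-$. Since $\P$ and $\P^-$ are opposite parabolic subgroups, $\U\cap\P^-=\{1\}$, forcing $m=1$ and $u^-=1$, whence $x = u_1 u_2 \in J^1_\P\cap\U$. The second equality follows by the symmetric argument with $\P$ replaced by $\P^-$. The only delicate point is keeping careful track of which order of Iwahori decomposition is used on each side; since both orders are already supplied by lemma \ref{lemma:J1Ptau}, I do not anticipate any real obstacle.
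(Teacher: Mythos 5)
Your proposal is correct and follows essentially the same route as the paper: both use the Iwahori decomposition of $J^1_\P$ with respect to $(\M,\P)$ in the two orders, absorb the $J^1_\P\cap\U^-$ factors into the central $\U^-$, use that $\M$ normalizes $\U^-$ to land in $\P^-=\M\U^-$, and conclude via $\U\cap\P^-=\{1\}$. Your explicit element-wise rearrangement $u_1^{-1}xu_2^{-1}=mu^-$ is just an unpacked form of the paper's chain of set identities, so there is nothing to add.
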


\begin{proof}
We have 
$J^1_\P\U^-J^1_\P\cap\U=
(J^1_\P\cap \U)(J^1_\P\cap\M)(J^1_\P\cap \U^-)
\U^-(J^1_\P\cap \U^-)(J^1_\P\cap\M)(J^1_\P\cap \U)\cap \U=
(J^1_\P\cap\U)\Big((J^1_\P\cap\M)\U^-(J^1_\P\cap\M)\cap \U\Big)(J^1_\P\cap \U)\subset 
(J^1_\P\cap\U)(\P^-\cap\U)(J^1_\P\cap \U)=J^1_\P\cap \U$ which is clearly contained in $J^1_\P\U^-J^1_\P\cap\U$.
Similarly we prove the second statement.
\end{proof}

\begin{lemma}\label{lemma:producttau}
Let $\tau,\tau'\in\bm\Delta$. Then 
$J^1_\P\tau J^1_\P\tau'J^1_\P=J^1_\P\tau\tau'J^1_\P$ and 
$(J^1_\P)^{\tau} J^1_\P\cap (J^1_\P)^{\tau'^{-1}}J^1_\P=J^1_\P.$
\end{lemma}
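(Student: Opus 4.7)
The plan is to treat the two equalities separately, with the first serving as a warm-up for the kind of manipulations needed in the second. The containments $\supseteq$ in both cases are immediate (insert identity elements in the middle), so the work lies in the reverse inclusions.

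For the first equality, I would use Lemma \ref{lemma:J1Ptau:2} to tighten the middle double coset: $J^1_\P \tau J^1_\P = J^1_\P \tau (J^1_\P \cap \U^+_{P(\tau)})$. Any element of $J^1_\P \tau J^1_\P \tau' J^1_\P$ then has the form $j_1 \tau u \tau' j_2$ with $j_1,j_2 \in J^1_\P$ and $u \in J^1_\P \cap \U^+_{P(\tau)} \subset J^1_\P \cap \U$. By Lemma \ref{lemma:J1Ptau}(3), $(J^1_\P \cap \U)^{\tau'} \subset J^1_\P \cap \U \subset J^1_\P$, so $\tau'^{-1} u \tau' \in J^1_\P$. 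Rewriting $\tau u \tau' = \tau\tau' \cdot (\tau'^{-1}u\tau')$ places the whole element in $J^1_\P \tau\tau' J^1_\P$, as wanted.

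For the second equality, my plan is to leverage the Iwahori decomposition $J^1_\P = (J^1_\P \cap \U^-)J^1_\M (J^1_\P \cap \U)$ from Remark \ref{rmk:decomposed2} together with the uniqueness of decomposition in the open Bruhat cell $\U^-\M\U$. Starting from $(J^1_\P)^\tau J^1_\P = \tau^{-1}J^1_\P\tau \cdot J^1_\P$ and using that $\tau$ normalizes $J^1_\M$ as a set together with Lemma \ref{lemma:J1Ptau}(3), which gives $\tau^{-1}(J^1_\P \cap \U)\tau \subset J^1_\P$, both those factors are absorbed into the $J^1_\P$ on the right, yielding $(J^1_\P)^\tau J^1_\P = \tau^{-1}(J^1_\P \cap \U^-)\tau \cdot J^1_\P$. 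Writing an element $x$ of this set as $(\tau^{-1}v\tau)\cdot j_-j_0j_+$ and collecting the two $\U^-$-factors on the left, one reads off that in the Iwahori decomposition $x=x_-x_0x_+$ in $\U^-\cdot\M\cdot\U$, necessarily $x_0 \in J^1_\M$ and $x_+ \in J^1_\P \cap \U$. A symmetric computation, using the opposite Iwahori decomposition $J^1_\P = (J^1_\P\cap\U)J^1_\M(J^1_\P\cap\U^-)$, gives the analogous identity $(J^1_\P)^{\tau'^{-1}}J^1_\P = \tau'(J^1_\P \cap \U)\tau'^{-1} \cdot J^1_\P$, from which one deduces that any $y$ in this set has $y_- \in J^1_\P \cap \U^-$ and $y_0 \in J^1_\M$ in its Iwahori decomposition. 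For $z$ lying in the intersection of the two sets, the uniqueness of the Iwahori decomposition forces $z_- \in J^1_\P \cap \U^-$, $z_0 \in J^1_\M$, and $z_+ \in J^1_\P \cap \U$, whence $z \in J^1_\P$.

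The main obstacle is the handling of the $\tau'^{-1}$ side: while the $\tau$ side flows naturally from the standard Iwahori decomposition, the corresponding statement for $(J^1_\P)^{\tau'^{-1}} J^1_\P$ requires decomposing $J^1_\P$ in the \emph{opposite} order so that the non-$J^1_\P$ factor $\tau'(J^1_\P\cap\U)\tau'^{-1}$ ends up on the $\U$ side after simplification, rather than being trapped between $\U$ and $\U^-$ factors where it would require commutator computations to move. Once the two one-sided decompositions are set up correctly, uniqueness in the open cell does the rest; but getting both decompositions to be compatible, so that uniqueness can be invoked coherently, is the delicate point.
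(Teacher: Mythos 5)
Your treatment of the first equality is exactly the paper's argument (lemma \ref{lemma:J1Ptau:2} to replace the middle $J^1_\P$ by $J^1_\P\cap\U^+_{P(\tau)}$, then lemma \ref{lemma:J1Ptau} to push that factor across $\tau'$), and your reductions $(J^1_\P)^{\tau}J^1_\P=(J^1_\P\cap\U^-)^{\tau}J^1_\P$ and $(J^1_\P)^{\tau'^{-1}}J^1_\P=(J^1_\P\cap\U)^{\tau'^{-1}}J^1_\P$ are also fine (they are a coarser form of lemma \ref{lemma:J1Ptau:2}). The gap is in the final step. The two memberships hand you factorizations of $z$ in \emph{opposite} orders: the first gives $z=z_-z_0z_+$ with $z_-\in\U^-$ possibly large, $z_0\in J^1_\M$, $z_+\in J^1_\P\cap\U$; the second gives $z=\tilde z_+\tilde z_0\tilde z_-$ with $\tilde z_+\in\U$ possibly large, $\tilde z_0\in J^1_\M$, $\tilde z_-\in J^1_\P\cap\U^-$. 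Uniqueness of the decomposition in $\U^-\M\U$ says nothing about the factors of a decomposition taken in the order $\U\M\U^-$: the two factorizations of the same $z$ have genuinely different factors (as for LU versus UL factorizations of a matrix), so you cannot match them to conclude $z_-\in J^1_\P\cap\U^-$. If instead you read the second membership in the order $\U^-\M\U$, the intermediate claim is simply false for general elements of $(J^1_\P\cap\U)^{\tau'^{-1}}J^1_\P$: in the rank-one picture, $\left(\begin{smallmatrix}1&x\\0&1\end{smallmatrix}\right)\left(\begin{smallmatrix}1&0\\c&1\end{smallmatrix}\right)=\left(\begin{smallmatrix}1+xc&x\\c&1\end{smallmatrix}\right)$ with $c\in\mathfrak{H}^1_0$ and $x$ of negative valuation (such $x$ occur in $(J^1_\P\cap\U)^{\tau'^{-1}}$ as soon as the gaps between the exponents of $\tau'$ are large) has middle factor $\mathrm{diag}(1+xc,\ast)\notin J^1_\M$ when $xc$ has negative valuation.

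What is actually needed at this point is precisely lemma \ref{lemma:J1PU}. Writing the intersection as $\big((J^1_\P\cap\U^-)^{\tau}J^1_\P\cap(J^1_\P\cap\U)^{\tau'^{-1}}\big)J^1_\P$, the inner set is contained in $J^1_\P\U^-J^1_\P\cap\U=J^1_\P\cap\U$, whence the whole intersection equals $J^1_\P$; this is the paper's route. The content of lemma \ref{lemma:J1PU} is where the real input sits, namely the Iwahori decomposition of $J^1_\P$ combined with $(J^1_\P\cap\M)\U^-(J^1_\P\cap\M)\cap\U\subset\P^-\cap\U=\{1\}$; it is not a formal consequence of uniqueness in the open cell. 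Your argument becomes correct once the final appeal to uniqueness is replaced by an appeal to (or a re-proof of) lemma \ref{lemma:J1PU}.
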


\begin{proof}
By lemma \ref{lemma:J1Ptau:2} we have 
$J^1_\P\tau J^1_\P\tau'J^1_\P=J^1_\P\tau(J^1_\P\cap \U_{P(\tau)}^+)\tau'J^1_\P
=J^1_\P\tau\tau'(J^1_\P\cap \U_{P(\tau)}^+)^{\tau'}J^1_\P
$.
By lemma \ref{lemma:J1Ptau} it is in 
$J^1_\P\tau\tau' (J^1_\P\cap \U)^{\tau'}J^1_\P\subset J^1_\P\tau\tau'J^1_\P$ and so $J^1_\P\tau J^1_\P\tau'J^1_\P=J^1_\P\tau\tau'J^1_\P$.
Now, by lemma \ref{lemma:J1Ptau:2}, the set $(J^1_\P)^{\tau} J^1_\P\cap (J^1_\P)^{\tau'^{-1}}J^1_\P$ is contained in $(J^1_\P\cap\U^-)^\tau J^1_\P \cap (J^1_\P
\cap\U)^{\tau'^{-1}}J^1_\P
=\Big((J^1_\P\cap\U^-)^{\tau} J^1_\P \cap (J^1_\P\cap\U)^{\tau'^{-1}}\Big)J^1_\P$ which is equal to $J^1_\P$ by lemma \ref{lemma:J1PU}.
\end{proof}

\begin{rmk}\label{rmk:J1}
We can prove similar results of lemmas \ref{lemma:J1Ptau}, \ref{lemma:J1Ptau:2}, \ref{lemma:J1PU} and \ref{lemma:producttau} by replacing $J^1_\P$ with $J^1$.
\end{rmk}

\begin{lemma}\label{lemma:roots}
Let $\alpha=\alpha_{i,i+1}\in\Sigma$, $w\in W$ and $P=P(w,\alpha)$. 
Then 
$\mathbf{\Psi}_{\widehat P}^+\cap w \mathbf{\Psi}_{\widehat\alpha}^-=\bm\Phi^+\cap w \mathbf{\Psi}_{\widehat\alpha}^-$ and 
$\mathbf{\Psi}_{\widehat P}^-\cap w \mathbf{\Psi}_{\widehat\alpha}^+=\bm\Phi^-\cap w \mathbf{\Psi}_{\widehat\alpha}^+$. If in addition $w$ is of minimal length in $w W_{\widehat\alpha}\in W/ W_{\widehat\alpha}$ then 
$\bm\Phi^+\cap w \mathbf{\Psi}_{\widehat\alpha}^-=\bm\Phi^+\cap w\bm\Phi^-$ and $\bm\Phi^-\cap w \mathbf{\Psi}_{\widehat\alpha}^+=\bm\Phi^-\cap w\bm\Phi^+$.
\end{lemma}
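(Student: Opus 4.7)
The plan is to split the statement into two independent halves: the first two equalities, which are a purely combinatorial consequence of the definition of $P=P(w,\alpha)$, and the last two, which follow in one line from proposition \ref{prop:minlength} once we have them.

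For the last two equalities, I would start from the disjoint decomposition $\bm\Phi^-=\bm\Phi_{\widehat\alpha}^-\sqcup\mathbf{\Psi}_{\widehat\alpha}^-$ and apply $w$ to get $w\bm\Phi^-=w\bm\Phi_{\widehat\alpha}^-\sqcup w\mathbf{\Psi}_{\widehat\alpha}^-$. Since $w$ is of minimal length in $wW_{\widehat\alpha}$, proposition \ref{prop:minlength} (applied to $P=\widehat\alpha$) gives $w\bm\Phi_{\widehat\alpha}^+\subset\bm\Phi^+$ and hence $w\bm\Phi_{\widehat\alpha}^-\subset\bm\Phi^-$. Intersecting with $\bm\Phi^+$ wipes out the first summand and yields $\bm\Phi^+\cap w\bm\Phi^-=\bm\Phi^+\cap w\mathbf{\Psi}_{\widehat\alpha}^-$, which is (c). The argument for (d) is obtained by exchanging the roles of $\pm$.

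For the first two equalities, I would rewrite them in the equivalent form $\bm\Phi_{\widehat P}^+\cap w\mathbf{\Psi}_{\widehat\alpha}^-=\emptyset$ and $\bm\Phi_{\widehat P}^-\cap w\mathbf{\Psi}_{\widehat\alpha}^+=\emptyset$, using the disjoint decomposition $\bm\Phi^\pm=\bm\Phi_{\widehat P}^\pm\sqcup\mathbf{\Psi}_{\widehat P}^\pm$. Set $S=\{w(j)\mid i+1\le j\le m'\}$, so that $P'(w,\alpha)=\{s\in S\mid s+1\notin S\}$ by definition, and describe the blocks of the standard Levi $\M_{\widehat P}$ as the maximal intervals of $\{1,\ldots,m'\}$ not separated by any $j\in P'(w,\alpha)$. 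Pick $\alpha_{h'k'}\in\mathbf{\Psi}_{\widehat\alpha}^-$, so $h'>i\ge k'$, and write $a=w(h')\in S$, $b=w(k')\notin S$; assume $a<b$ so that $w\alpha_{h'k'}=\alpha_{ab}\in\bm\Phi^+$. The key observation is that the largest element $j$ of $S\cap[a,b-1]$ satisfies $j\in S$ and $j+1\notin S$ (since either $j+1=b\notin S$, or $j+1<b$ and $j+1\in S$ would contradict the maximality of $j$), hence $j\in P'(w,\alpha)$ with $a\le j<b$; therefore $a$ and $b$ lie in different blocks of $\M_{\widehat P}$ and $\alpha_{ab}\in\mathbf{\Psi}_{\widehat P}^+$. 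The symmetric argument, applied with $a=w(h')\notin S$, $b=w(k')\in S$ and $b<a$, handles the case of $\mathbf{\Psi}_{\widehat\alpha}^+$.

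The main obstacle is the bookkeeping: one has to juggle four distinct subsets ($\bm\Phi_{\widehat\alpha}^\pm$, $\mathbf{\Psi}_{\widehat\alpha}^\pm$, $\bm\Phi_{\widehat P}^\pm$, $\mathbf{\Psi}_{\widehat P}^\pm$) and keep track of how the definition $P'=A\setminus B$ is really picking out the right endpoints of runs of $S$, which is precisely what makes the combinatorial claim work. Once the block decomposition of $\M_{\widehat P}$ is phrased in terms of the $P'$-separators, both cases collapse to the single observation about runs in $S$, and the remaining identities are formal.
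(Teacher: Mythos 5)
Your argument is correct, but it is not the paper's route: the paper disposes of this lemma in one line by citing Lemma 2.19 of \cite{Chin1}, whereas you give a self-contained proof from the definitions. Your reduction of the first two equalities to $\bm\Phi_{\widehat P}^{+}\cap w\mathbf{\Psi}_{\widehat\alpha}^{-}=\emptyset$ and $\bm\Phi_{\widehat P}^{-}\cap w\mathbf{\Psi}_{\widehat\alpha}^{+}=\emptyset$ is exactly the right reformulation, and the combinatorial core checks out: with $S=A(w,\alpha)$ one has $P'(w,\alpha)=\{s\in S\mid s+1\notin S\}$, the roots in $\bm\Phi_{\widehat P}^{+}$ are precisely the $\alpha_{ab}$, $a<b$, with no $j\in P'(w,\alpha)$ satisfying $a\le j<b$, and for a positive root $\alpha_{ab}\in w\mathbf{\Psi}_{\widehat\alpha}^{-}$ (so $a\in S$, $b\notin S$, $a<b$) the largest element of $S\cap[a,b-1]$ is such a separator $j$; the mirror case with $b\in S$, $a\notin S$ handles $\mathbf{\Psi}_{\widehat\alpha}^{+}$, and neither case uses minimality of $w$, consistent with the statement. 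The last two equalities follow, as you say, from Proposition \ref{prop:minlength} applied with $P=\widehat\alpha$, which gives $w\bm\Phi_{\widehat\alpha}^{-}\subset\bm\Phi^{-}$ and kills the unwanted summand in $w\bm\Phi^{-}=w\bm\Phi_{\widehat\alpha}^{-}\sqcup w\mathbf{\Psi}_{\widehat\alpha}^{-}$. What your approach buys is independence from the external reference and an explicit explanation of why $P(w,\alpha)$ (the right endpoints of runs of $S$) is the correct cut set; what the paper's citation buys is brevity, since the identical statement is already proved in \cite{Chin1} in the Iwahori-type setting being mimicked here. The only cosmetic slip is an occasional $m$ versus $m'$ in the index ranges, which is inherited from the paper's own typo in the definition of $A(w,\alpha)$ and $B(w,\alpha)$.
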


\begin{proof}
It follows by lemma 2.19 of \cite{Chin1}.
\end{proof}

From now on, we denote $\delta(\mathfrak{J}^1_0,\mathfrak{H}^1_0)=
[\mathfrak{J}^1_0:\mathfrak{H}^1_0]$ and
$\delta(\mathfrak{H}^1_0,\varpi\mathfrak{H}^1_0)=
[\mathfrak{H}^1_0:\varpi\mathfrak{H}^1_0]$.

\begin{rmk}\label{rmk:index}
By remark \ref{rmk:UintersectJ1H1} 
we have 
$\delta(\mathfrak{J}^1_0,\mathfrak{H}^1_0)=[J^1\cap\U_\alpha:H^1\cap\U_\alpha]$ and 
$\delta(\mathfrak{H}^1_0,\varpi\mathfrak{H}^1_0)=
[H^1\cap\U_{\alpha'}:(H^1\cap\U_{\alpha'})^{\tau_{\alpha'}}]=
[H^1\cap\U_{\alpha''}:(H^1\cap\U_{\alpha''})^{\tau_{\alpha''}^{-1}}]$
for every $\alpha\in\bm\Phi$, $\alpha'\in \bm\Phi^+$ and $\alpha''\in \bm\Phi^-$.
In particular $\delta(\mathfrak{J}^1_0,\mathfrak{H}^1_0)$ and 
$\delta(\mathfrak{H}^1_0,\varpi\mathfrak{H}^1_0)$ are powers of $p$ and so they are invertible in $R$.
\end{rmk}

\smallskip
From now on we fix $1\leq i\leq m'-1$ and we consider 
$\alpha=\alpha_{ii+1}$, $w$ of minimal length in 
$wW_{\widehat{\alpha}}$, $P=P(w,\alpha)$ and $Q=Q(w,\alpha)$. 

\begin{rmk}\label{rmk:roots}
Lemma \ref{lemma:roots} implies that 
$w \U_{\widehat\alpha}^-w^{-1}\cap \U_{\widehat{P}}^+=
w \U^- w^{-1}\cap \U^+$ and
$w \U_{\widehat\alpha}^+w^{-1}\cap \U_{\widehat{P}}^-=
w \U w^{-1}\cap \U^-$. 
Moreover, we have $\ell(w)=|\mathbf{\Psi}_{\widehat P}^+\cap w \mathbf{\Psi}_{\widehat\alpha}^-|=|\mathbf{\Psi}_{\widehat P}^-\cap w \mathbf{\Psi}_{\widehat\alpha}^+|$ by remark \ref{rmk:length}.
\end{rmk}

We define
\begin{equation}\label{eq:defV}
\V(w,\alpha)=
(J^1_\P\cap w \U_{\widehat\alpha}^+w^{-1}\cap \U_{\widehat{P}}^-)^{w\tau_{\alpha}^{-1}w^{-1}}
\end{equation}
which is a pro-$p$-group. 
We remark that it is equal to $(J^1_\P\cap w \U w^{-1}\cap \U^-)^{w\tau_{\alpha}^{-1}w^{-1}}$ by remark \ref{rmk:roots} and to
$(H^1\cap w \U_{\widehat\alpha}^+w^{-1}\cap \U_{\widehat{P}}^-)^{w\tau_{\alpha}^{-1}w^{-1}}$ since $J^1_\P\cap \U_{\widehat{P}}^-=H^1\cap \U_{\widehat{P}}^-$. Then $\V(w,\alpha)$ is equal to 
$$\prod_{\alpha'\in w\bm\Psi_{\widehat{\alpha}}^+\cap\bm\Psi_{\widehat{P}}^-}
(H^1\cap \U_{\alpha'})^{w\tau_{\alpha}^{-1}w^{-1}} 
=\prod_{\alpha''\in \bm\Psi_{\widehat{\alpha}}^+\cap w^{-1}\bm\Psi_{\widehat{P}}^-}
\hspace{-0,25cm}
(H^1\cap \U_{\alpha''})^{\tau_{\alpha}^{-1}w^{-1}} 
=\prod_{\alpha'\in w \bm\Psi_{\widehat{\alpha}}^+\cap \bm\Psi_{\widehat{P}}^-}
\hspace{-0,25cm}
(\mathbb{I}_{m'}+\varpi^{-1}\mathfrak{H}^1)\cap \U_{\alpha'}
$$
which is $(\mathbb{I}_{m'}+\varpi^{-1}\mathfrak{H}^1)\cap w \U_{\widehat\alpha}^+w^{-1}\cap \U_{\widehat{P}}^-.$

\begin{lemma}\label{lemma:V}
The group $wU_{\widehat\alpha}^+w^{-1}\cap U_{\widehat{P}}^-$ is in $\V(w,\alpha)$, it normalizes $\V(w,\alpha)\cap J^1_\P$ and 
$$(wU_{\widehat\alpha}^+w^{-1}\cap U_{\widehat{P}}^-)\cap (\V(w,\alpha)\cap J^1_\P)= wU_{\widehat\alpha}^+w^{-1}\cap U_{\widehat{P}}^-\cap K^1_B.$$
\end{lemma}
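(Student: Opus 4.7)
The plan is to handle the three claims in order, extracting everything from the identification $\V(w,\alpha)=(\mathbb{I}_{m'}+\varpi^{-1}\mathfrak{H}^1)\cap w\U_{\widehat\alpha}^+w^{-1}\cap \U_{\widehat P}^-$ computed just before the statement, from proposition \ref{prop:structureJ1H1}, and from the standard inclusion $\mathfrak{Q}\subset \mathfrak{H}^1$ that holds for any simple character (so in particular $\wp_{D'}\subset \mathfrak{H}^1_0$).

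First, for the inclusion $wU_{\widehat\alpha}^+w^{-1}\cap U_{\widehat P}^-\subseteq \V(w,\alpha)$, any element is a product of matrices of the form $\mathbb{I}_{m'}+xE_{\alpha'}$ with $\alpha'\in w\bm\Psi_{\widehat\alpha}^+\cap\bm\Psi_{\widehat P}^-$ and $x\in\ent_{D'}$. Since $\varpi\ent_{D'}=\wp_{D'}\subset\mathfrak{H}^1_0$, we get $\ent_{D'}\subset\varpi^{-1}\mathfrak{H}^1_0$; combined with proposition \ref{prop:structureJ1H1} this places every such element in $\mathbb{I}_{m'}+\varpi^{-1}\mathfrak{H}^1$, hence in $\V(w,\alpha)$.

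Next, I would identify $\V(w,\alpha)\cap J^1_\P$ explicitly. Since $\V(w,\alpha)\subset \U_{\widehat P}^-\subset \U^-$, the Iwahori-type decomposition of $J^1_\P$ (remark \ref{rmk:decomposed2}) forces $\V(w,\alpha)\cap J^1_\P\subset H^1\cap \U^-$, so entries lie in $\varpi^{-1}\mathfrak{H}^1_0\cap \mathfrak{H}^1_0=\mathfrak{H}^1_0$ and therefore
\[
\V(w,\alpha)\cap J^1_\P=H^1\cap w\U_{\widehat\alpha}^+w^{-1}\cap\U_{\widehat P}^-.
\]
For the normalization, the key observation is that $wU_{\widehat\alpha}^+w^{-1}\cap U_{\widehat P}^-$ sits inside $K_B=\mathfrak{B}^\times$, and $\mathfrak{H}^1$ is a $(\mathfrak{B},\mathfrak{B})$-bimodule, so any $u\in K_B$ satisfies $u\mathfrak{H}^1u^{-1}\subseteq \mathfrak{H}^1$ and symmetrically $u^{-1}\mathfrak{H}^1 u\subseteq \mathfrak{H}^1$, giving $uH^1u^{-1}=H^1$. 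On the other hand $u$ lies in both of the groups $w\U_{\widehat\alpha}^+w^{-1}$ and $\U_{\widehat P}^-$, hence conjugation by $u$ preserves each; the three stability properties together give $u(\V(w,\alpha)\cap J^1_\P)u^{-1}=\V(w,\alpha)\cap J^1_\P$. I expect this step to be the main obstacle, precisely because showing stability requires pairing the bimodule property of $\mathfrak{H}^1$ with the group-theoretic stability of the two unipotent factors.

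Finally, for the intersection equality, using the identification above the left-hand side equals $wU_{\widehat\alpha}^+w^{-1}\cap U_{\widehat P}^-\cap H^1$. An element here is a matrix in $K_B$ (so with entries in $\ent_{D'}$) that also belongs to $H^1$, so with entries in $\mathfrak{H}^1_0\subset\mathfrak{P}(E)$. Since the embedding $D'\hookrightarrow A(E)$ induces $\ent_{D'}\cap \mathfrak{P}(E)=\wp_{D'}$ (read off from $K^1_B=U_1(\Lambda)\cap B^\times=\mathbb{I}_{m'}+M_{m'}(\wp_{D'})$), the entries lie in $\wp_{D'}$, which means the element is in $K^1_B$; the converse inclusion is immediate because $\wp_{D'}\subset \mathfrak{H}^1_0$ places $K^1_B$ inside $H^1$.
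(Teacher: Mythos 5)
Your proof is correct and follows essentially the same route as the paper: the first inclusion rests on $\wp_{D'}\subset\mathfrak{H}^1_0$ (equivalently $K^1_B\subset H^1$), read through the description $\V(w,\alpha)=(\mathbb{I}_{m'}+\varpi^{-1}\mathfrak{H}^1)\cap w\U_{\widehat\alpha}^+w^{-1}\cap\U_{\widehat P}^-$ that the paper establishes just before the lemma, while the paper phrases the same computation via $U_{\alpha'}=\tau_\alpha(K^1_B\cap U_{\alpha'})\tau_\alpha^{-1}$ and the definition of $\V(w,\alpha)$ as a conjugate. The normalization and the intersection equality are the paper's arguments ($K_B$ normalizes $H^1$, and $K_B\cap H^1=K^1_B$), which you merely unpack at the level of matrix entries using proposition \ref{prop:structureJ1H1} and the bimodule structure of $\mathfrak{H}^1$.
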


\begin{proof}
We recall that 
$wU_{\widehat\alpha}^+w^{-1}\cap U_{\widehat{P}}^-=w \U_{\widehat\alpha}^+w^{-1}\cap \U_{\widehat{P}}^-\cap K_B$ by remark \ref{rmk:UintersectK}.
Since $U_{\alpha'}=\tau_\alpha (K^1_B\cap U_{\alpha'})\tau_\alpha^{-1}$ for every $\alpha'\in \bm\Psi_{\widehat{\alpha}}^+$ (see lemma 2.9 of \cite{Chin1}), then we have 
$wU_{\widehat\alpha}^+w^{-1}\cap U_{\widehat{P}}^-
=(K^1_B\cap wU_{\widehat\alpha}^+w^{-1}\cap U_{\widehat{P}}^-)^{w\tau_\alpha^{-1}w^{-1}}$ which is in 
$\V(w,\alpha)$.
Moreover, the group  $wU_{\widehat\alpha}^+w^{-1}\cap U_{\widehat{P}}^-$ normalizes $\V(w,\alpha)\cap J^1_\P=\V(w,\alpha)\cap H^1$ because we have 
$wU_{\widehat\alpha}^+w^{-1}\cap U_{\widehat{P}}^- \subset K_B$ and $K_B$ normalizes $H^1$. 
Finally, since $K_B\cap H^1=K_B^1$, we have
$wU_{\widehat\alpha}^+w^{-1}\cap U_{\widehat{P}}^-\cap \V(w,\alpha)\cap J^1_\P
=wU_{\widehat\alpha}^+w^{-1}\cap U_{\widehat{P}}^-\cap H^1
= wU_{\widehat\alpha}^+w^{-1}\cap U_{\widehat{P}}^-\cap K_B\cap H^1
=wU_{\widehat\alpha}^+w^{-1}\cap U_{\widehat{P}}^-\cap K_B^1$.
\end{proof}

\noindent
By lemma \ref{lemma:V} the group 
$\V'=(wU_{\widehat\alpha}^+w^{-1}\cap U_{\widehat{P}}^-)(\V(w,\alpha)\cap J^1_\P)$ is a subgroup of  
$\V(w,\alpha)$.
We set $$d(w,\alpha)=[\V(w,\alpha):\V']\in R$$
which is non-zero because it is a power of $p$.

\begin{rmk}\label{rmk:index2} 
We have $\V(w,\alpha)\cap J^1_\P=H^1\cap w \U_{\widehat\alpha}^+w^{-1}\cap \U_{\widehat{P}}^-=\prod_{\alpha'\in w \bm\Psi_{\widehat{\alpha}}^+\cap \bm\Psi_{\widehat{P}}^-}H^1\cap \U_{\alpha'}$.
Hence, by remarks \ref{rmk:index} and \ref{rmk:roots} we have 
$[\V(w,\alpha):\V(w,\alpha)\cap J^1_\P]=
[\varpi^{-1}\mathfrak{H}^1_0:\mathfrak{H}^1_0]^{\ell(w)}=
\delta(\mathfrak{H}^1_0,\varpi \mathfrak{H}^1_0)^{\ell(w)}$.
On the other hand we have
$[\V(w,\alpha):\V(w,\alpha)\cap J^1_\P]
=d(w,\alpha)\; [\V':\V(w,\alpha)\cap J^1_\P]$
which is equal to 
$d(w,\alpha)\;
[(wU^+w^{-1}\cap U^-)(\V(w,\alpha)\cap J^1_\P):\V(w,\alpha)\cap J^1_\P]$
by remark \ref{rmk:roots}
and so to
$d(w,\alpha) [wUw^{-1}\cap U^-: wUw^{-1}\cap U^-\cap K_B^1]=d(w,\alpha) q^{\ell(w)}$
where $q$ is the cardinality of $\frack_{D'}$.
So, if we denote 
$\partial=\delta(\mathfrak{H}^1_0,\varpi\mathfrak{H}^1_0)/q\in R^{\times}$ then 
$d(w,\alpha)=\partial^{\ell(w)}$.
\end{rmk}

\begin{lemma}\label{lemma:supportrel9}
We have $(J_\P^1)^{\tau_{P}}J_\P^1 \cap (J_\P^1)^{w\tau_{\alpha}^{-1}w^{-1}}J_\P^1
=\V(w,\alpha)J_\P^1.$
\end{lemma}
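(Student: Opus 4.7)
The plan is to prove the equality by establishing the two inclusions separately, using an explicit description of $(J_\P^1)^{\tau_P}J_\P^1$ derived from Lemma \ref{lemma:J1Ptau:2}. Combining that lemma with the Iwahori-type decomposition of Lemma \ref{lemma:J1Ptau} yields $(J_\P^1)^{\tau_P}J_\P^1=(J_\P^1\cap\U_{\widehat P}^-)^{\tau_P}\cdot J_\P^1$, since the $\M_{\widehat P}$-component is fixed by conjugation by $\tau_P$ and the $\U_{\widehat P}^+$-component is absorbed into $J_\P^1$ by Lemma \ref{lemma:J1Ptau}. By Proposition \ref{prop:structureJ1H1} and Remark \ref{rmk:UintersectJ1H1}, writing $\tau_P=\mathrm{diag}(\varpi^{a_1},\ldots,\varpi^{a_{m'}})$, the root subgroup of $(J_\P^1\cap\U_{\widehat P}^-)^{\tau_P}$ at $\alpha_{hk}\in\bm\Psi_{\widehat P}^-$ consists of the matrices with $(h,k)$-entry in $\varpi^{a_k-a_h}\mathfrak{H}_0^1$, and for such roots the structure of $\tau_P$ forces the strict inequality $a_h>a_k$.

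For the inclusion $\supseteq$, the containment $\V(w,\alpha)\subseteq (J_\P^1)^{w\tau_\alpha^{-1}w^{-1}}$ is immediate from the definition of $\V(w,\alpha)$. For the complementary containment $\V(w,\alpha)\subseteq (J_\P^1)^{\tau_P}J_\P^1$, I would use the identity $w\tau_\alpha^{-1}w^{-1}=\tau_P\tau_Q^{-1}$, together with the fact that $\tau_P$ and $\tau_Q$ commute and normalise each root subgroup, to write an element of $\V(w,\alpha)$ as $\tau_Q\tau_P^{-1}x\tau_P\tau_Q^{-1}$ for some $x\in J_\P^1\cap w\U_{\widehat\alpha}^+w^{-1}\cap\U_{\widehat P}^-$. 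Examining the $(h,k)$-entry at each root $\alpha_{hk}\in w\bm\Psi_{\widehat\alpha}^+\cap\bm\Psi_{\widehat P}^-$, the strict inequality $a_h>a_k$ gives the inclusion $\varpi^{-1}\mathfrak{H}_0^1\subseteq\varpi^{a_k-a_h}\mathfrak{H}_0^1$, so the entry of the conjugate lies in the required lattice. This shows $\V(w,\alpha)\subseteq(J_\P^1\cap\U_{\widehat P}^-)^{\tau_P}$ and hence $\V(w,\alpha)J_\P^1\subseteq(J_\P^1)^{\tau_P}J_\P^1$.

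For the reverse inclusion, any $g$ in the intersection may be written $g=uj$ with $u\in(J_\P^1\cap\U_{\widehat P}^-)^{\tau_P}$ and $j\in J_\P^1$, so $u$ also lies in $(J_\P^1)^{w\tau_\alpha^{-1}w^{-1}}J_\P^1$. Decomposing $u$ as a product over the root subgroups $\U_{\alpha'}$ for $\alpha'\in\bm\Psi_{\widehat P}^-$, which is legitimate because $\tau_P$ and $\tau_Q$ act diagonally, I would translate the condition imposed by the second double coset into a root-by-root valuation condition parallel to the one for $\tau_P$. Combining the two conditions at each root forces the $(h,k)$-entry of $u$ to vanish for $\alpha_{hk}\notin w\bm\Psi_{\widehat\alpha}^+$ and to lie in $\varpi^{-1}\mathfrak{H}_0^1$ otherwise, which is precisely the description $(\I_{m'}+\varpi^{-1}\mathfrak{H}^1)\cap w\U_{\widehat\alpha}^+w^{-1}\cap\U_{\widehat P}^-$ of $\V(w,\alpha)$. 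The main obstacle will be this last step: since $w\tau_\alpha^{-1}w^{-1}$ lies in $\widehat{\bm\Delta}$ but not in $\bm\Delta$, Lemma \ref{lemma:J1Ptau:2} does not directly yield a clean description of $(J_\P^1)^{w\tau_\alpha^{-1}w^{-1}}J_\P^1$ analogous to the one for $\tau_P$, and I expect to circumvent this by a purely root-wise analysis exploiting the simultaneous diagonality and commutativity of $\tau_P$ and $\tau_Q$, rather than by any global Iwahori-type decomposition.
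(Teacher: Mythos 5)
Your reduction of the left double coset, $(J_\P^1)^{\tau_P}J_\P^1=(J_\P^1\cap\U_{\widehat P}^-)^{\tau_P}J_\P^1$, and your proof of the inclusion $\V(w,\alpha)J_\P^1\subseteq (J_\P^1)^{\tau_{P}}J_\P^1 \cap (J_\P^1)^{w\tau_{\alpha}^{-1}w^{-1}}J_\P^1$ are correct and essentially the same as the corresponding steps in the paper (the paper gets $\V(w,\alpha)\subset(J^1_\P\cap\U_{\widehat P}^-)^{\tau_P}$ from $w\tau_\alpha^{-1}w^{-1}=\tau_Q^{-1}\tau_P$ and lemma \ref{lemma:J1Ptau}, where you use explicit valuations; both work). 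But the forward inclusion, which is the whole content of the lemma, is not proved: you explicitly defer it to a hoped-for ``root-by-root valuation analysis'' of the condition $u\in(J_\P^1)^{w\tau_\alpha^{-1}w^{-1}}J_\P^1$, and this is exactly where the difficulty lies. Membership of a lower-unipotent element $u$ in this \emph{product} of two non-commuting groups cannot be read off entry by entry: the right factor $J^1_\P$ and the conjugates of the pieces $J^1_\M$ and $J^1\cap\U$ of $J^1_\P$ (whose entries live in the larger lattice $\mathfrak{J}^1_0$, not $\mathfrak{H}^1_0$) mix the root coordinates of $u$, so diagonality and commutativity of $\tau_P$ and $\tau_Q$ alone do not localize the condition at single roots. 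A symptom of the gap is that your argument never uses the hypothesis that $w$ has minimal length in $wW_{\widehat\alpha}$, which is essential: it enters through lemma \ref{lemma:roots} to control which negative root spaces of the conjugated $J^1_\P$ fail to be absorbed. Moreover, the coordinatewise conclusion you aim for is too strong: an element of $\V(w,\alpha)J^1_\P$ need not have vanishing entries at roots outside $w\bm\Psi_{\widehat\alpha}^+$; those entries only need to lie in $\mathfrak{H}^1_0$ (up to the product structure), so even the target of the proposed entrywise analysis is misstated.

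The paper closes this gap structurally rather than entrywise. It conjugates the Iwahori-type decomposition $J^1_\P=(H^1\cap\U^-)J^1_\M(J^1\cap\U)$ by $w\tau_\alpha^{-1}w^{-1}$, splits $H^1\cap w^{-1}\U^-w$ and $J^1\cap w^{-1}\U w$ according to $\U$ and $\U^-$, and uses lemma \ref{lemma:roots} (minimality of $w$) together with lemma \ref{lemma:J1Ptau} to show that the $\U^-$-components are absorbed into $J^1_\P$, yielding
$(J^1_\P)^{w\tau_\alpha^{-1}w^{-1}}\subset\V(w,\alpha)\,J^1_\P\,(J^1\cap\U\cap w\U w^{-1})^{w\tau_\alpha^{-1}w^{-1}}$.
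Intersecting with $(J^1_\P\cap\U_{\widehat P}^-)^{\tau_P}J^1_\P$, pulling out $\V(w,\alpha)$ (legitimate since $\V(w,\alpha)\subset(J^1_\P\cap\U_{\widehat P}^-)^{\tau_P}$), the residual intersection sits inside $\U^-\cap J^1_\P\U J^1_\P$, which equals $J^1_\P\cap\U^-$ by lemma \ref{lemma:J1PU}. Some argument of this kind — controlling $\U^-\cap(J^1_\P)^{w\tau_\alpha^{-1}w^{-1}}J^1_\P$ globally via the decomposition and absorption lemmas — is unavoidable, and it is precisely what your proposal is missing.
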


\begin{proof}
We have 
$(J^1_\P)^{w\tau_\alpha^{-1}w^{-1}}=
(H^1\cap w^{-1}\U^-w)^{\tau_\alpha^{-1}w^{-1}} (J^1_\M)^{w\tau_\alpha^{-1}w^{-1}}(J^1\cap w^{-1}\U w)^{\tau_\alpha^{-1}w^{-1}}$. Now we consider the decompositions 
$H^1\cap w^{-1}\U^-w=(H^1\cap w^{-1}\U^-w\cap\U)(H^1\cap w^{-1}\U^-w\cap\U^-)$ and 
$J^1\cap w^{-1}\U w=(J^1\cap w^{-1}\U w\cap\U^-)(J^1\cap w^{-1}\U w\cap\U)$. 
By lemma \ref{lemma:roots} we have 
$J^1\cap w^{-1}\U w\cap\U^-=J^1\cap w^{-1}\U w\cap\U^-_{\widehat\alpha}$ and so by lemma \ref{lemma:J1Ptau} we obtain
$(J^1\cap w^{-1}\U w\cap\U^-)^{\tau_\alpha^{-1}w^{-1}}\subset
(J^1\cap \U^-_{\widehat\alpha})^{\tau_\alpha^{-1}w^{-1}}\subset(H^1\cap \U^-_{\widehat\alpha})^{w^{-1}}\subset J^1_\P$
and $(H^1\cap w^{-1}\U^-w\cap\U^-)^{\tau_\alpha^{-1}w^{-1}}\subset (H^1\cap\U^-)^{\tau_\alpha^{-1}w^{-1}}\subset(H^1\cap\U^-)^{w^{-1}}\subset J^1_\P$. 
Then, since  $(J^1_\M)^{w\tau_\alpha^{-1}w^{-1}}=J^1_\M$ by lemma \ref{lemma:J1Ptau}
and since $(H^1\cap \U^-\cap w\U w^{-1})^{w\tau_\alpha^{-1}w^{-1}}=\V(w,\alpha)$, we obtain 
$(J^1_\P)^{w\tau_\alpha^{-1}w^{-1}}\subset
\V(w,\alpha) J^1_\P (J^1\cap \U \cap w\U w^{-1})^{w\tau_\alpha^{-1}w^{-1}}.$
By lemma \ref{lemma:J1Ptau:2} and by previous calculations we have 
\[
(J_\P^1)^{\tau_{P}}J_\P^1 \cap (J_\P^1)^{w\tau_{\alpha}^{-1}w^{-1}}J_\P^1= \Big((J_\P^1\cap \U_{\widehat{P}}^-)^{\tau_{P}} \cap \V(w,\alpha) J^1_\P (J^1\cap \U \cap w\U w^{-1})^{w\tau_\alpha^{-1}w^{-1}} J^1_\P \Big)J_\P^1.
\]
Now, since $w\tau_{\alpha}^{-1}w^{-1}=\tau_Q^{-1}\tau_P$, the group $\V(w,\alpha)$ is contained both in  
$(\U^-_{\widehat{P}})^{\tau_Q^{-1}\tau_P}=
(\U^-_{\widehat{P}})^{\tau_P}$ and in 
$(J^1_\P\cap \U^-)^{\tau_Q^{-1}\tau_P}\subset (J^1_\P\cap \U^-)^{\tau_P} \subset (J^1_\P)^{\tau_P}$ by lemma \ref{lemma:J1Ptau}. This implies 
$\V(w,\alpha)\subset (J^1_\P\cap \U^-_{\widehat{P}})^{\tau_P}$
and so 
$(J_\P^1)^{\tau_{P}}J_\P^1 \cap (J_\P^1)^{w\tau_{\alpha}^{-1}w^{-1}}J_\P^1= \V(w,\alpha)\Big((J_\P^1\cap \U_{\widehat{P}}^-)^{\tau_{P}} \cap  J^1_\P (J^1\cap \U \cap w\U w^{-1})^{w\tau_\alpha^{-1}w^{-1}} J^1_\P \Big)J_\P^1.$
Now we have $(J_\P^1\cap \U_{\widehat{P}}^-)^{\tau_{P}} \cap  J^1_\P (J^1\cap \U \cap w\U w^{-1})^{w\tau_\alpha^{-1}w^{-1}} J^1_\P\subset \U^-\cap J^1_\P\U J^1_\P$ that is in $J^1_\P$ by lemma \ref{lemma:J1PU}.
\end{proof}

\subsection{The group $\widetilde{W}$}
In this paragraph we use a presentation by generators and relations of $\widetilde{W}$ to find a subgroup of $\Aut_R(V_\M)$ isomorphic to a quotient of $\widetilde{W}$.

\begin{rmk}\label{rmk:presentationW}
We know that the Iwahori-Hecke algebra (see 3.14 of \cite{Vig2}) is a deformation of the $R$-algebra $R[\widetilde{W}]$ and so it is not difficult to show that $\widetilde W$ is the group generated by $s_1,\dots, s_{m'-1}$ and $\tau_{m'-1}$ subject to relations $s_is_j=s_js_i$ for every $i,j$ such that $|i-j|>1$, $s_is_{i+1}s_i=s_{i+1}s_is_{i+1}$ for every $i\neq m'-1$, $s_i^2=1$ for every $i$, $\tau_{m'-1} s_i=s_i\tau_{m'-1}$ for every $i\neq m'-1$ and $\tau_{m'-1} s_{m'-1}\tau_{m'-1} s_{m'-1}=s_{m'-1}\tau_{m'-1} s_{m'-1}\tau_{m'-1}$.
\end{rmk}

\begin{lemma}\label{lemma:wtauw}
Let $i\in\{1,\dots,m'-1\}$, $\alpha=\alpha_{i,i+1}$, $w\in W$ of minimal length in $wW_{\widehat{\alpha}}$ and $\Phi\in\mathscr{H}_R(G,\eta_\P)_{J^1_\P \tau_i J^1_\P}$. 
Then the support of $\widehat{f}_w\Phi\widehat{f}_{w^{-1}}$ is in $J^1_\P w\tau_iw^{-1}J^1_\P$ and 
$$(\widehat{f}_w\Phi\widehat{f}_{w^{-1}})(w\tau_iw^{-1})=\delta(\mathfrak{J}^1_0,\mathfrak{H}^1_0)^{\ell(w)}\widehat{f}_w(w)\circ \Phi(\tau_i) \circ \widehat{f}_{w^{-1}}(w^{-1}).$$
\end{lemma}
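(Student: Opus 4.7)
The argument naturally splits into establishing the support of $\widehat{f}_w \Phi \widehat{f}_{w^{-1}}$ and then computing its value at $w\tau_i w^{-1}$. First I would write $\Phi = \zeta(\widetilde{\Phi})$ for some $\widetilde{\Phi} \in \mathscr{H}_R(G,\eta)_{J^1 \tau_i J^1}$; this is legitimate because lemma~\ref{lemma:isomHeckeetaP} makes $\zeta$ an algebra isomorphism that carries the one-dimensional subspace $\mathscr{H}_R(G,\eta)_{J^1 x J^1}$ onto $\mathscr{H}_R(G,\eta_\P)_{J^1_\P x J^1_\P}$ for every intertwining $x \in B^{\times}$. Since the permutation matrices $w, w^{-1}$ normalize $J^1$ (by the block description $\mathfrak{J}^1 = M_{m'}(\mathfrak{J}^1_0)$ of proposition~\ref{prop:structureJ1H1}), lemma~\ref{lemma:product}(2) applies twice: once to $\widehat{f}_w \ast \Phi$ (noting via remark~\ref{rmk:normal} that $\widetilde{f}_w \ast \widetilde{\Phi}$ has support in the single $J^1$-double coset $J^1 w \tau_i J^1$), then to the result convolved with $\widehat{f}_{w^{-1}}$. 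This yields simultaneously the support statement and the value
\[
(\widehat{f}_w \Phi \widehat{f}_{w^{-1}})(w\tau_i w^{-1}) = \mathsf{p} \circ \kappa(w) \circ \widetilde{\Phi}(\tau_i) \circ \kappa(w^{-1}) \circ \iota.
\]

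Unpacking $\widehat{f}_w(w) = \mathsf{p}\kappa(w)\iota$, $\Phi(\tau_i) = \mathsf{p}\widetilde{\Phi}(\tau_i)\iota$ and $\widehat{f}_{w^{-1}}(w^{-1}) = \mathsf{p}\kappa(w^{-1})\iota$, the lemma then reduces to the operator identity
\[
\mathsf{p}\kappa(w)\widetilde{\Phi}(\tau_i)\kappa(w^{-1})\iota = \delta(\mathfrak{J}^1_0,\mathfrak{H}^1_0)^{\ell(w)}\,\mathsf{p}\kappa(w)\iota\mathsf{p}\widetilde{\Phi}(\tau_i)\iota\mathsf{p}\kappa(w^{-1})\iota
\]
in $\End_R(V_\M)$. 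Both sides lie in the one-dimensional intertwining space $I_{w\tau_i w^{-1}}(\eta_\P)$ (the right-hand side because it is the composition of intertwiners $\widehat{f}_w(w) \in I_w(\eta_\P)$, $\Phi(\tau_i) \in I_{\tau_i}(\eta_\P)$, $\widehat{f}_{w^{-1}}(w^{-1}) \in I_{w^{-1}}(\eta_\P)$), so they differ by a scalar, and the task is to pin that scalar down. To do so I would insert the resolution of identity $\sum_{j \in J^1/J^1_\P} \eta(j) \iota \mathsf{p} \eta(j^{-1}) = \mathrm{id}_{V_\eta}$ of lemma~\ref{lemma:etaP}(3) twice into the left-hand side: once between $\kappa(w)$ and $\widetilde{\Phi}(\tau_i)$, and once between $\widetilde{\Phi}(\tau_i)$ and $\kappa(w^{-1})$. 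Exploiting $\kappa(w)\eta(j) = \eta(wjw^{-1})\kappa(w)$, the intertwining property of $\widetilde{\Phi}(\tau_i) \in I_{\tau_i}(\eta)$, and the vanishing criterion of lemma~\ref{lemma:etaP}(2), the resulting double sum collapses so that the surviving pairs $(j_1, j_2)$ each contribute a single identical copy of $\mathsf{p}\kappa(w)\iota\mathsf{p}\widetilde{\Phi}(\tau_i)\iota\mathsf{p}\kappa(w^{-1})\iota$.

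The main obstacle, and what the remaining work amounts to, is the precise combinatorial counting of those surviving pairs, which must come out to exactly $\delta(\mathfrak{J}^1_0,\mathfrak{H}^1_0)^{\ell(w)}$. Using proposition~\ref{prop:structureJ1H1} to decompose $J^1/J^1_\P$ along the root subgroups $\U_\alpha$ (noting that $J^1_\P$ already contains $J^1 \cap \U$, $J^1_\M$ and $H^1 \cap \U^-$), the surviving coset representatives correspond to positive roots that $w^{-1}$ sends to negative ones — there are $\ell(w)$ such roots by remark~\ref{rmk:length} — and for each such root the relevant local index is $[J^1 \cap \U_\alpha : H^1 \cap \U_\alpha] = \delta(\mathfrak{J}^1_0,\mathfrak{H}^1_0)$ by remark~\ref{rmk:index}. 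Disentangling how the two summation indices $j_1, j_2$ are paired through $\widetilde{\Phi}(\tau_i)$, so as neither to double-count nor to miss any surviving pair, is the delicate technical step that requires careful use of the Iwahori-style decomposition of $J^1$ and of lemmas~\ref{lemma:J1Ptau}--\ref{lemma:producttau}.
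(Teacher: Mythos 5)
Your first step is correct and even tidy: writing $\Phi=\zeta(\widetilde\Phi)$ and applying remark \ref{rmk:normal} and lemma \ref{lemma:product}(2) twice (first to $\widehat f_w*\Phi$, then to the result against $\widehat f_{w^{-1}}$) does give the support statement and the clean value $(\widehat f_w\Phi\widehat f_{w^{-1}})(w\tau_iw^{-1})=\mathsf{p}\circ\kappa(w)\circ\widetilde\Phi(\tau_i)\circ\kappa(w^{-1})\circ\iota$. But from that point on the proposal stops being a proof: the entire content of the lemma is the scalar $\delta(\mathfrak{J}^1_0,\mathfrak{H}^1_0)^{\ell(w)}$, and you explicitly defer the step that produces it ("the precise combinatorial counting of those surviving pairs... is the delicate technical step"). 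Asserting that the count "must come out to exactly $\delta(\mathfrak{J}^1_0,\mathfrak{H}^1_0)^{\ell(w)}$" because there are $\ell(w)$ roots flipped by $w^{-1}$, each of local index $\delta(\mathfrak{J}^1_0,\mathfrak{H}^1_0)$, is not an argument: it gives at best a plausible upper bound on which terms can survive, and it never shows (i) that \emph{all} of these cosets actually contribute a non-zero term, (ii) that each surviving term equals exactly $\widehat f_w(w)\circ\Phi(\tau_i)\circ\widehat f_{w^{-1}}(w^{-1})$, nor (iii) how the two indices $j_1,j_2$ in your double sum are matched so that the total multiplicity is $\delta^{\ell(w)}$ rather than something between $1$ and $\delta^{2\ell(w)}$.

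Points (i)--(iii) are precisely where the hypotheses and the preparatory lemmas are consumed, and your sketch does not engage with them. In the paper's argument the convolution is associated as $(\widehat f_w\Phi)*\widehat f_{w^{-1}}$, so only a single sum appears at each stage; the summand's support is identified via lemma \ref{lemma:J1Pw} as $(J^1_\P)^{w\tau_i}J^1_\P\cap(J^1\cap\U^w\cap\U^-)J^1_\P$, and the \emph{minimality of $w$ in $wW_{\widehat\alpha}$} is used through lemma \ref{lemma:roots} (to replace $\U^-$ by $\U^-_{\widehat\alpha}$) and lemma \ref{lemma:J1Ptau} to prove the inclusion $J^1\cap\U^w\cap\U^-\subset(J^1_\P)^{w\tau_i}$ — this is what guarantees that all $[ (J^1\cap\U^w\cap\U^-)J^1_\P:J^1_\P]=\delta(\mathfrak{J}^1_0,\mathfrak{H}^1_0)^{\ell(w)}$ cosets survive, and the constancy of the contribution comes from the conjugates $(J^1\cap\U^w\cap\U^-)^{w^{-1}}$ and $(J^1\cap\U^w\cap\U^-)^{\tau_i^{-1}w^{-1}}$ landing in $J^1\cap\U\subset\ker\eta_\P$. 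A second, separate support computation (lemmas \ref{lemma:J1Ptau:2}, \ref{lemma:J1Pw}, \ref{lemma:J1PU}) shows that in $(\widehat f_w\Phi)(w\tau_i)$ only the trivial coset survives, giving $(\widehat f_w\Phi)(w\tau_i)=\widehat f_w(w)\circ\Phi(\tau_i)$. Your plan, by inserting the resolution of identity of lemma \ref{lemma:etaP}(3) twice, replaces this by a genuinely harder double-sum analysis (the pairing condition $j_1^{-1}\tau_ij_2\in J^1_\P\tau_iJ^1_\P$ must be resolved), and none of it is carried out. As it stands the proposal proves only the support claim and an expression for the left-hand side; the identity with the factor $\delta(\mathfrak{J}^1_0,\mathfrak{H}^1_0)^{\ell(w)}$ — the actual statement — remains unproved.
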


\begin{proof}
Since $w$ and $w^{-1}$ normalize $J^1$, by lemma \ref{lemma:product} the support of $\widehat{f}_w\Phi\widehat{f}_{w^{-1}}$ is in $J^1_\P w\tau_iw^{-1}J^1_\P$. 
We recall that 
$$(\widehat{f}_w\Phi\widehat{f}_{w^{-1}})(w\tau_iw^{-1})=\sum_{x\in G/J^1_\P}(\widehat{f}_w\Phi)(w\tau_i x)\widehat{f}_{w^{-1}}(x^{-1}w^{-1}).$$
By lemma \ref{lemma:J1Pw}, the support of $x\mapsto (\widehat{f}_w\Phi)(w\tau_ix)
\widehat{f}_{w^{-1}}(x^{-1}w^{-1})$ is in 
$(J^1_\P)^{w\tau_i}J^1_\P\cap(J^1_\P)^{w}J^1_\P
=(J^1_\P)^{w\tau_i}J^1_\P\cap(J^1\cap\U^w\cap\U^-)J^1_\P 
.$
Since $w$ is of minimal length in $wW_{\widehat{\alpha}}$, by lemma \ref{lemma:roots} we have $J^1\cap\U^w\cap\U^-=J^1\cap\U^w\cap\U^-_{\widehat{\alpha}}$ which is included in 
$(J^1_\P)^{w\tau_i}$ because 
$(J^1\cap\U^w\cap\U^-_{\widehat{\alpha}})^{\tau_i^{-1}w^{-1}}
=((J^1\cap\U^-_{\widehat{\alpha}})^{\tau_i^{-1}} \cap\U^w)^{w^{-1}}$ that by lemma \ref{lemma:J1Ptau} is included in 
$(H^1\cap\U^-_{\widehat{\alpha}})^{w^{-1}} \cap\U$ and so in $J^1_\P$. 
Hence, we obtain $(J^1_\P)^{w\tau_i}J^1_\P\cap(J^1_\P)^{w}J^1_\P=(J^1\cap\U^w\cap\U^-)J^1_\P$.
Now since $(J^1\cap\U^w\cap\U^-)^{w^{-1}}$ and $(J^1\cap\U^w\cap\U^-)^{\tau_i^{-1}w^{-1}}$ are contained in $J^1\cap\U$ and so in the kernel of $\eta_\P$ and since we have
$[(J^1\cap\U^w\cap\U^-)J^1_\P:J^1_\P]
=[J^1\cap\U^w\cap\U^-:H^1\cap\U^w\cap\U^-]=\delta(\mathfrak{J}^1_0,\mathfrak{H}^1_0)^{\ell(w)}$
 we obtain
 $(\widehat{f}_w\Phi\widehat{f}_{w^{-1}})(w\tau_i w^{-1})=\delta(\mathfrak{J}^1_0,\mathfrak{H}^1_0)^{\ell(w)}(\widehat{f}_w\Phi)(w\tau_i)\circ \widehat{f}_{w^{-1}}(w^{-1})$. 
To conclude we observe that by lemma \ref{lemma:product} the support of $\widehat{f}_w\Phi$ is contained in $J^1_\P w\tau_iJ^1_\P$ and by lemmas \ref{lemma:J1Ptau:2} and \ref{lemma:J1Pw}
the support of $x\mapsto (\widehat{f}_w)(wx)
\Phi(x^{-1}\tau_i)$ is in 
$(J^1_\P)^wJ^1_\P\cap (J^1_\P)^{\tau_i^{-1}}J^1_\P
=(J^1\cap\U^w\cap\U^-)J^1_\P\cap (J^1_\P\cap \U_{P(\tau_i)}^+)^{\tau_i^{-1}}J^1_\P$
that is contained in 
$(\U J^1_\P \cap\U^-)J^1_\P=J^1_\P$ by lemma \ref{lemma:J1PU}. Hence, $(\widehat{f}_w\Phi)(w\tau_i)=\widehat{f}_w(w)\circ \Phi(\tau_i)$.
\end{proof}

\begin{lemma}\label{lemma:kappa}
Let $w\in W$ and $\alpha\in\Sigma$. Then
\begin{equation*}
\mathsf{p}\circ\kappa(w)\circ\iota\circ\mathsf{p}\circ\kappa(s_\alpha)\circ\iota=\left\{
\begin{array}{ll}
\mathsf{p}\circ\kappa(ws_\alpha)\circ\iota & \text{if } w\alpha>0\\
\delta(\mathfrak{J}^1_0,\mathfrak{H}^1_0)^{-1} \mathsf{p}\circ\kappa(ws_\alpha)\circ\iota & \text{if } w\alpha<0.
\end{array}
\right.
\end{equation*}
\end{lemma}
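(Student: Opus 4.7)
My plan is to compute the convolution product $\widehat{f}_w*\widehat{f}_{s_\alpha}$ in $\mathscr{H}_R(G,\eta_\P)$ in two different ways. Since $w$ and $s_\alpha$ both lie in $W\subset K_B\subset J$, they both normalize $J^1$, and lemma \ref{lemma:product}(2) applies: the support of $\widehat{f}_w*\widehat{f}_{s_\alpha}$ is contained in $J^1_\P ws_\alpha J^1_\P$, with
$(\widehat{f}_w*\widehat{f}_{s_\alpha})(ws_\alpha)=\mathsf{p}\circ\kappa(w)\circ\kappa(s_\alpha)\circ\iota=\mathsf{p}\circ\kappa(ws_\alpha)\circ\iota.$
On the other hand, the convolution formula together with the change of variables $x=wy$ gives
$(\widehat{f}_w*\widehat{f}_{s_\alpha})(ws_\alpha)=\sum_{y\in Y/J^1_\P}\widehat{f}_w(wy)\,\widehat{f}_{s_\alpha}(y^{-1}s_\alpha),$
where $Y=(J^1_\P)^w J^1_\P\cap (J^1_\P)^{s_\alpha}J^1_\P$, with the summand being well-defined on cosets by the bi-equivariance of $\widehat{f}_w$ and $\widehat{f}_{s_\alpha}$. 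The $y=1$ term of this sum is precisely $\mathsf{p}\circ\kappa(w)\circ\iota\circ\mathsf{p}\circ\kappa(s_\alpha)\circ\iota$, the quantity to be computed.

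Next I will determine the coset space $Y/J^1_\P$. Since $s_\alpha\bm\Phi^+\cap\bm\Phi^-=\{-\alpha\}$, lemma \ref{lemma:J1Pw} gives $(J^1_\P)^w J^1_\P=(J^1\cap\U^w\cap\U^-)J^1_\P$ and $(J^1_\P)^{s_\alpha}J^1_\P=(J^1\cap\U_{-\alpha})J^1_\P$. Representing cosets of $J^1_\P$ via the isomorphism $\U^-/(H^1\cap\U^-)\cong \U^-J^1_\P/J^1_\P$ (from the Iwahori decomposition of remark \ref{rmk:decomposed2}) and decomposing $\U^-$ as a product of root subgroups $\U_\gamma$ with $\gamma\in\bm\Phi^-$, the intersection reduces factor by factor to: $H^1\cap\U_\gamma$ for every $\gamma\neq-\alpha$, and for $\gamma=-\alpha$ it equals $H^1\cap\U_{-\alpha}$ if $-\alpha\notin w^{-1}\bm\Phi^+$ (i.e.\ $w\alpha>0$) and $J^1\cap\U_{-\alpha}$ if $w\alpha<0$. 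Consequently $Y/J^1_\P$ is trivial when $w\alpha>0$, while for $w\alpha<0$ it has representatives running over $J^1\cap\U_{-\alpha}$ modulo $H^1\cap\U_{-\alpha}$, hence has cardinality $\delta(\mathfrak{J}^1_0,\mathfrak{H}^1_0)$ by remark \ref{rmk:index}.

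The case $w\alpha>0$ is then immediate: the sum collapses to the $y=1$ term, yielding
$\mathsf{p}\circ\kappa(w)\circ\iota\circ\mathsf{p}\circ\kappa(s_\alpha)\circ\iota=\mathsf{p}\circ\kappa(ws_\alpha)\circ\iota.$
For $w\alpha<0$, I will verify that each of the $\delta(\mathfrak{J}^1_0,\mathfrak{H}^1_0)$ terms coincides with the $y=1$ term. For a representative $y\in J^1\cap\U_{-\alpha}$, the conjugate $wyw^{-1}$ lies in $J^1\cap\U_{-w\alpha}\subset J^1\cap\U$ (since $-w\alpha>0$), which is contained in the kernel of $\eta_\P$ because $(J^1_\P,\eta_\P)$ is decomposed with respect to $(\M,\P)$. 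Writing $\widehat{f}_w(wy)=\mathsf{p}\circ\kappa(wyw^{-1})\circ\kappa(w)\circ\iota$ and applying lemma \ref{lemma:etaP}(1) to $wyw^{-1}\in J^1_\P$, this equals $\eta_\P(wyw^{-1})\circ\mathsf{p}\circ\kappa(w)\circ\iota=\widehat{f}_w(w)$. Symmetrically, $s_\alpha^{-1}y^{-1}s_\alpha\in J^1\cap\U_\alpha\subset\ker\eta_\P$, so $\widehat{f}_{s_\alpha}(y^{-1}s_\alpha)=\widehat{f}_{s_\alpha}(s_\alpha)$. Thus the sum equals $\delta(\mathfrak{J}^1_0,\mathfrak{H}^1_0)\cdot\widehat{f}_w(w)\widehat{f}_{s_\alpha}(s_\alpha)$, and dividing by $\delta(\mathfrak{J}^1_0,\mathfrak{H}^1_0)$ (invertible in $R$ by remark \ref{rmk:index}) yields the second claimed identity. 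The main technical point, and where the case split is enforced, is the explicit computation of $Y/J^1_\P$ via the root-subgroup decomposition of $\U^-$ and the Iwahori structure of $J^1_\P$.
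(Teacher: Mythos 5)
Your proof is correct and follows essentially the same route as the paper's: evaluate $\widehat f_w\widehat f_{s_\alpha}$ at $ws_\alpha$ in two ways, identify the support of the summand as $J^1_\P$ or $(J^1\cap\U_{-\alpha})J^1_\P$ according to the sign of $w\alpha$, and in the latter case use that the conjugates $(J^1\cap\U_{-\alpha})^{w^{-1}}$ and $(J^1\cap\U_{-\alpha})^{s_\alpha}$ lie in $J^1\cap\U\subset\ker\eta_\P$ to count $\delta(\mathfrak{J}^1_0,\mathfrak{H}^1_0)$ equal terms. The only cosmetic differences are that you obtain the value $\mathsf{p}\circ\kappa(ws_\alpha)\circ\iota$ from lemma \ref{lemma:product} rather than from $\widehat f_w\widehat f_{s_\alpha}=\widehat f_{ws_\alpha}$ (lemma \ref{lemma:finitealgebras}), and you compute the intersection of supports via the root-subgroup factorization of $\U^-$ instead of the paper's direct manipulation.
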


\begin{proof}
By lemma \ref{lemma:finitealgebras} we have $\widehat{f}_w\widehat{f}_{s_\alpha}=\widehat{f}_{ws_\alpha}$ and then  
$(\widehat{f}_w\widehat{f}_{s_\alpha})(ws_\alpha)=\mathsf{p}\circ\kappa(ws_\alpha)\circ\iota$.
On the other hand we have
$$(\widehat{f}_w\widehat{f}_{s_\alpha})(ws_\alpha)=
\sum_{x\in G/J^1_\P}(\widehat{f}_w)(wx)\widehat{f}_{s_\alpha}(x^{-1}s_\alpha).$$
By lemma \ref{lemma:J1Pw} the support of 
$x\mapsto \widehat{f}_{w}(wx)\widehat{f}_{s_\alpha}(x^{-1}
s_\alpha)$ is contained in 
$(J^1_\P)^wJ^1_\P \cap (J^1_\P)^{s_\alpha}J^1_\P
=(J^1_\P)^wJ^1_\P \cap (J^1\cap \U^{s_\alpha}\cap \U^{-1})J^1_\P
=\big((J^1_\P)^wJ^1_\P \cap J^1\cap \U_{-\alpha}\big)J^1_\P
$ which is equal to $J^1_\P$ if $w(-\alpha)<0$ and to $(J^1\cap\U_{-\alpha})J^1_\P$ if $w(-\alpha)>0$.
Hence, if $w\alpha>0$ we obtain 
$(\widehat{f}_w\widehat{f}_{s_\alpha})(ws_\alpha)=\mathsf{p}\circ\kappa(w)\circ\iota\circ\mathsf{p}\circ\kappa(s_\alpha)\circ\iota$ while if $w\alpha<0$ since $(J^1\cap\U_{-\alpha})^{w^{-1}}$ and 
$(J^1\cap\U_{-\alpha})^{s_\alpha}$ are contained in $J^1\cap\U$ and so in the kernel of $\eta_\P$ and since $[(J^1\cap\U_{-\alpha})J^1_\P:J^1_\P]=[J^1\cap\U_{-\alpha}:H^1\cap \U_{-\alpha}]=\delta(\mathfrak{J}^1_0,\mathfrak{H}^1_0)$ we obtain 
$(\widehat{f}_w\widehat{f}_{s_\alpha})(ws_\alpha)=\delta(\mathfrak{J}^1_0,\mathfrak{H}^1_0)\;\mathsf{p}\circ\kappa(w)\circ\iota\circ\mathsf{p}\circ\kappa(s_\alpha)\circ\iota$.
\end{proof}

From now on we fix a non-zero element $\gamma\in I_{\tau_{m'-1}}(\eta_\P)$ which is invertible by lemma \ref{lemma:intertetaP} and we consider the function $\widehat f_{\tau_{m'-1}}\in\mathscr{H}_R(G,\eta_\P)_{J^1_\P \tau_{m'-1}J^1_\P}$ defined by $\widehat f_{\tau_{m'-1}}(j_1\tau_{m'-1}j_2)=\eta_\P(j_1)\circ\gamma\circ\eta_\P(j_2)$ for every $j_1,j_2\in J^1_\P$.

\smallskip
From now on we fix a square root $\delta(\mathfrak{J}^1_0,\mathfrak{H}^1_0)^{1/2}$ of $\delta(\mathfrak{J}^1_0,\mathfrak{H}^1_0)$ in $R$.
We consider the subgroup $\widetilde{\mathcal{W}}$ of $\Aut_R(V_\M)$ generated by $\gamma$ and by $\delta(\mathfrak{J}^1_0,\mathfrak{H}^1_0)^{1/2}\mathsf{p}\circ\kappa(s_i)\circ\iota$ with $i\in\{1,\dots,m'-1\}$.

\begin{lemma}\label{lemma:isomWtilde}
The function that maps $s_i$ to $\delta(\mathfrak{J}^1_0,\mathfrak{H}^1_0)^{1/2}\mathsf{p}\circ\kappa(s_i)\circ\iota$ for every $i\in\{1,\dots,m'-1\}$ and $\tau_{m'-1}$ to $\gamma$ extends to a surjective group homomorphism 
$\varepsilon:\widetilde{W}\rightarrow \widetilde{\mathcal W}$.
\end{lemma}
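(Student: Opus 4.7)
The plan is to verify the five families of defining relations of $\widetilde W$ given in Remark \ref{rmk:presentationW} directly at the level of the images in $\Aut_R(V_\M)$. Write $\delta:=\delta(\mathfrak{J}^1_0,\mathfrak{H}^1_0)$ and set $\sigma_i:=\delta^{1/2}\,\mathsf{p}\circ\kappa(s_i)\circ\iota$ for brevity; then $\varepsilon(s_i)=\sigma_i$ and $\varepsilon(\tau_{m'-1})=\gamma$.

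First I would show that $w\mapsto \delta^{\ell(w)/2}\,\mathsf{p}\circ\kappa(w)\circ\iota$ extends to a group homomorphism from $W$ to $\Aut_R(V_\M)$. By induction on $\ell(w)$ using Lemma \ref{lemma:kappa}, one checks that $\delta^{\ell(w)/2}\mathsf{p}\kappa(w)\iota\cdot\sigma_\alpha=\delta^{\ell(ws_\alpha)/2}\mathsf{p}\kappa(ws_\alpha)\iota$ regardless of the sign of $w\alpha$, since the extra factor $\delta^{\pm 1}$ produced by the lemma shifts the exponent by exactly $\pm 1$. Specializing this identity yields $\sigma_i^2=\mathrm{id}$ (since $s_i\alpha_{i,i+1}<0$), $\sigma_i\sigma_j=\sigma_j\sigma_i$ for $|i-j|>1$ (both sides equal $\delta\,\mathsf{p}\kappa(s_is_j)\iota$), and the braid relation $\sigma_i\sigma_{i+1}\sigma_i=\sigma_{i+1}\sigma_i\sigma_{i+1}$ (both sides equal $\delta^{3/2}\mathsf{p}\kappa(s_is_{i+1}s_i)\iota$). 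This settles three of the five families of relations.

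For the commutation $\tau_{m'-1}s_i=s_i\tau_{m'-1}$ with $i\neq m'-1$, since $\zeta$ is an algebra isomorphism by Lemma \ref{lemma:isomHeckeetaP} I would lift $\widehat f_{\tau_{m'-1}}=\zeta(\widetilde f_{\tau_{m'-1}})$ to a unique element $\widetilde f_{\tau_{m'-1}}\in\mathscr{H}_R(G,\eta)_{J^1\tau_{m'-1}J^1}$ satisfying $\mathsf{p}\circ\widetilde f_{\tau_{m'-1}}(\tau_{m'-1})\circ\iota=\gamma$. As $s_i\in K_B\subset J$ normalizes $J^1$, applying Lemma \ref{lemma:product}(2) to both orderings gives
\[
(\widehat f_{\tau_{m'-1}}\widehat f_{s_i})(\tau_{m'-1}s_i)=\mathsf{p}\circ\widetilde f_{\tau_{m'-1}}(\tau_{m'-1})\circ\kappa(s_i)\circ\iota,
\]
and a symmetric formula for the reversed product. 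The one-dimensionality of $I_{\tau_{m'-1}s_i}(\eta_\P)$ (Lemma \ref{lemma:intertetaP}), combined with the fact that $s_i$ centralizes $\tau_{m'-1}$ in $G$ and lies in the standard Levi $\M_{\widehat{\alpha_{m'-1,m'}}}$ governing the block decomposition defining $\eta_\P$, should force the two evaluations to agree; after dividing by $\delta^{1/2}$ this delivers $\gamma\sigma_i=\sigma_i\gamma$.

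The main obstacle will be the final relation $\tau_{m'-1}s_{m'-1}\tau_{m'-1}s_{m'-1}=s_{m'-1}\tau_{m'-1}s_{m'-1}\tau_{m'-1}$, whose common value in $\widetilde W$ is the central diagonal element $\tau_{m'-2}$. The asymmetry between $\widehat f_{\tau_{m'-1}}$ (defined directly via the intertwiner $\gamma$, not of the form $\Phi\otimes\kappa$) and $\widehat f_{s_{m'-1}}$ (coming from the $\beta$-extension $\kappa$) prevents any one-shot application of Lemma \ref{lemma:product}. I would instead compute the fourfold convolution $\widehat f_{\tau_{m'-1}}\widehat f_{s_{m'-1}}\widehat f_{\tau_{m'-1}}\widehat f_{s_{m'-1}}$ by peeling off one factor at a time, tracking supports with Lemmas \ref{lemma:J1Ptau}, \ref{lemma:J1Ptau:2} and \ref{lemma:J1PU}, and finally evaluate at $\tau_{m'-2}$. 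Comparison with the mirror product, together with the one-dimensionality of $I_{\tau_{m'-2}}(\eta_\P)$, should reduce the identity to an equality of scalars that emerges from the very definition of $\gamma$ and the $\beta$-extension property of $\kappa$. Surjectivity of $\varepsilon$ is immediate from the definition of $\widetilde{\mathcal W}$ as the subgroup of $\Aut_R(V_\M)$ generated by these images.
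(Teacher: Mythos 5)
Your treatment of the relations internal to $W$ is fine and is exactly the paper's: Lemma \ref{lemma:kappa} gives $\varepsilon(w)=\delta(\mathfrak{J}^1_0,\mathfrak{H}^1_0)^{\ell(w)/2}\mathsf{p}\circ\kappa(w)\circ\iota$ and the quadratic, commuting and braid relations follow. The gap is in the two relations involving $\tau_{m'-1}$, precisely where you lean on one-dimensionality of intertwining spaces to "force the two evaluations to agree". One-dimensionality only gives that the two evaluations are proportional; the whole point is to show the scalar is $1$, and nothing in your plan pins it down. (It cannot "emerge from the very definition of $\gamma$": $\gamma$ is an arbitrary non-zero element of $I_{\tau_{m'-1}}(\eta_\P)$, so the relation has to hold for every choice, and the cancellation must come from structure, not normalization.) The mechanism the paper uses is the $\beta$-extension property in the form of an exact commutation of intertwiners: $\zeta^{-1}(\widehat f_{\tau})(\tau)\in I_{\tau}(\eta)=I_{\tau}(\kappa)=\Hom_{J\cap J^{\tau}}(\kappa,\kappa^{\tau})$, and since $s_i\in J\cap J^{\tau}$ with $\tau s_i\tau^{-1}=s_i$ ($i\neq m'-1$), this element commutes with $\kappa(s_i)$ on the nose, which is what converts the two support computations into the equality $\varepsilon(\tau)\varepsilon(s_i)=\varepsilon(s_i)\varepsilon(\tau)$. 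You gesture at "$s_i$ centralizes $\tau_{m'-1}$", but you never use it through the intertwining space of $\kappa$, which is the step that actually closes the argument.

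For the last relation the same omission is fatal, and in addition your support bookkeeping is not the right one. The paper's proof has three ingredients you are missing: (i) the symmetry $(\widehat f_{\tau}\widehat f_{s}\widehat f_{\tau}\widehat f_{s})(\tau_{m'-2})=(\widehat f_{s}\widehat f_{\tau}\widehat f_{s}\widehat f_{\tau})(\tau_{m'-2})$ is obtained exactly as above, from $\zeta^{-1}(\widehat f_{\tau}\widehat f_{s}\widehat f_{\tau})(\tau s\tau)\in I_{\tau s\tau}(\kappa)$ together with $s^{\tau s\tau}=s$, not from comparing two independent peelings; (ii) the evaluation of the fourfold product requires Lemma \ref{lemma:supportrel9} (not Lemmas \ref{lemma:J1Ptau}, \ref{lemma:J1Ptau:2}, \ref{lemma:J1PU} alone) to show the relevant function is supported on $(H^1\cap\U_{\alpha'})^{\tau}J^1_\P$, which produces the index $\delta(\mathfrak{H}^1_0,\varpi\mathfrak{H}^1_0)$ and, via Lemma \ref{lemma:wtauw}, the value $\delta(\mathfrak{H}^1_0,\varpi\mathfrak{H}^1_0)\,\varepsilon(\tau)\varepsilon(s)\varepsilon(\tau)\varepsilon(s)$; and (iii) the symmetric identity of (i)--(ii) is not yet the desired relation on the $\varepsilon$'s; the paper extracts it by evaluating the fivefold product $(\widehat f_{s}\widehat f_{\tau}\widehat f_{s}\widehat f_{\tau}\widehat f_{s})(s\tau s\tau s)$ in two ways and cancelling the invertible scalars. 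Without (i) you only know the two fourfold evaluations are proportional, and without (ii)--(iii) you cannot identify or cancel the nontrivial index factors, so your plan as written does not prove the relation $\varepsilon(\tau)\varepsilon(s)\varepsilon(\tau)\varepsilon(s)=\varepsilon(s)\varepsilon(\tau)\varepsilon(s)\varepsilon(\tau)$.
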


\begin{proof}
Let $\delta=\delta(\mathfrak{J}^1_0,\mathfrak{H}^1_0)$.
To prove that $\varepsilon$ is a group homomorphism we use the presentation of $\widetilde W$ given in remark \ref{rmk:presentationW}.
For every $i,j\in\{1,\dots,m'-1\}$ such that $|i-j|>1$ we have 
$\varepsilon(s_i)\varepsilon(s_j)=\delta\;\mathsf{p}\circ\kappa(s_i)\circ\iota\circ\mathsf{p}\circ\kappa(s_j)\circ\iota$ that by lemma \ref{lemma:kappa} is equal to 
$\delta\;\mathsf{p}\circ\kappa(s_is_j)\circ\iota
=\delta\;\mathsf{p}\circ\kappa(s_js_i)\circ\iota=\varepsilon(s_j)\varepsilon(s_i)$.
For every $i\neq m'-1$ we have 
$\varepsilon(s_i)\varepsilon(s_{i+1})\varepsilon(s_i)=
\delta^{3/2}\;\mathsf{p}\circ\kappa(s_i)\circ\iota\circ\mathsf{p}\circ\kappa(s_{i+1})\circ\iota\circ\mathsf{p}\circ\kappa(s_i)\circ\iota$ that by lemma \ref{lemma:kappa} is equal to 
$\delta^{3/2}\;\mathsf{p}\circ\kappa(s_is_{i+1}s_i)\circ\iota
=\delta^{3/2}\;\mathsf{p}\circ\kappa(s_{i+1}s_is_{i+1})\circ\iota=\varepsilon(s_{i+1})\varepsilon(s_i)\varepsilon(s_{i+1})$.
For every $i$ we have $\varepsilon(s_i)^2=\delta\;\mathsf{p}\circ\kappa(s_i)\circ\iota\circ\mathsf{p}\circ\kappa(s_i)\circ\iota$ that by lemma \ref{lemma:kappa} is equal to 
$\mathsf{p}\circ\kappa(s_is_i)\circ\iota$ that is the identity of $\Aut_R(V_\M)$.
Let $\tau=\tau_{m'-1}$ and $\widehat{f}_{\tau}=\widehat{f}_{\tau_{m'-1}}$.
For every $i\neq m'-1$ we have
$\varepsilon(\tau)\varepsilon(s_i)=\delta^{1/2}\gamma\circ\mathsf{p}\circ\kappa(s_i)\circ\iota$
that is equal to $\delta^{1/2}(\widehat{f}_{\tau}\widehat{f}_{s_i})(\tau s_i)$ since the support of 
$x\mapsto \widehat{f}_{\tau}(\tau x)\widehat{f}_{s_i}(x^{-1}s_i)$ is contained in $(J^1_\P)^{\tau}J^1_\P\cap (J^1_\P)^{s_i}J^1_\P=
((J^1_\P\cap \U_{P(\tau)}^-)^{\tau}J^1_\P\cap J^1_\P\cap \U_{\alpha_{i+1,i}})J^1_\P=J^1_\P$. 
Hence, by lemma \ref{lemma:product} we have
$\varepsilon(\tau)\varepsilon(s_i)=\delta^{1/2}\mathsf{p}\circ\zeta^{-1}(\widehat{f}_\tau)(\tau)\circ\kappa(s_i)\circ\iota$. 
Since $\zeta^{-1}(\widehat{f}_\tau)(\tau)\in I_{\tau}(\eta)=I_{\tau}(\kappa)$ and $s_i\in J\cap J^{\tau}$ we obtain $\varepsilon(\tau)\varepsilon(s_i)=\delta^{1/2}\mathsf{p}\circ\kappa(s_i)\circ\zeta^{-1}(\widehat{f}_\tau)(\tau)\circ\iota
=\delta^{1/2}(\widehat{f}_{s_i}\widehat{f}_{\tau})(s_i\tau)
$ that is equal to 
$\delta^{1/2}\mathsf{p}\circ\kappa(s_i)\circ\iota\circ\gamma=\varepsilon(s_i)\varepsilon(\tau)$ since 
the support of 
$x\mapsto \widehat{f}_{s_i}(s_i x)\widehat{f}_{\tau}(x^{-1}\tau)$ is contained in $(J^1_\P)^{s_i}J^1_\P\cap (J^1_\P)^{\tau^{-1}}J^1_\P=
(J^1_\P\cap \U_{\alpha_{i+1,i}}\cap(J^1_\P\cap \U_{P(\tau)}^+)^{\tau^{-1}}J^1_\P)J^1_\P=J^1_\P$. 
It remains to prove the last relation.
Let $s=s_{m'-1}$. 
Then $\tau s\tau s=\tau_{m'-2}=s\tau s\tau$ and
by lemma \ref{lemma:product} we have 
$(\widehat{f}_{\tau}\widehat{f}_{s}\widehat{f}_{\tau}\widehat{f}_{s})(\tau s\tau s)=
\mathsf{p}\circ\zeta^{-1}(\widehat{f}_{\tau}\widehat{f}_{s}\widehat{f}_{\tau})(\tau s \tau)\circ \kappa(s)\circ \iota$.
Now, since $\zeta^{-1}(\widehat{f}_{\tau}\widehat{f}_{s}\widehat{f}_{\tau})(\tau s \tau)\in I_{\tau s \tau}(\kappa)$ and since $s=s^{\tau s\tau}\in J\cap J^{\tau s \tau}$, we obtain  
$(\widehat{f}_{\tau}\widehat{f}_{s}\widehat{f}_{\tau}\widehat{f}_{s})(\tau_{m'-2})=\mathsf{p}\circ \kappa(s)\circ\zeta^{-1}(\widehat{f}_{\tau}\widehat{f}_{s}\widehat{f}_{\tau})(\tau s \tau)\circ \iota
=(\widehat{f}_{s}\widehat{f}_{\tau}\widehat{f}_{s}\widehat{f}_{\tau})(\tau_{m'-2})$.
On the other hand we have 
$$(\widehat{f}_{\tau}\widehat{f}_{s}\widehat{f}_{\tau}\widehat{f}_{s})(\tau_{m'-2})=(\widehat{f}_{\tau}\widehat{f}_{s}\widehat{f}_{\tau}\widehat{f}_{s})(\tau s\tau s)=\sum_{x\in G/J^1_\P}
\widehat{f}_{\tau}(\tau x)(\widehat{f}_{s}\widehat{f}_{\tau}\widehat{f}_{s})(x^{-1}s\tau s).$$
The support of 
$x\mapsto \widehat{f}_{\tau}(\tau x)(\widehat{f}_{s}\widehat{f}_{\tau}\widehat{f}_{s})(x^{-1}s\tau s)$ is in 
$(H^1\cap\U_{\alpha'})^\tau J^1_\P$ with $\alpha'=\alpha_{m',m'-1}$ by lemma \ref{lemma:supportrel9}.
For every $x\in (H^1\cap\U_{\alpha'})^\tau$ the elements $x^{\tau^{-1}}$ and $(x^{-1})^{s\tau s}$ are in $H^1\cap \U$ and so in the kernel of $\eta_\P$. 
Then 
$(\widehat{f}_{\tau}\widehat{f}_{s}\widehat{f}_{\tau}\widehat{f}_{s})(\tau_{m'-2})=(\widehat{f}_{s}\widehat{f}_{\tau}\widehat{f}_{s}\widehat{f}_{\tau})(\tau_{m'-2})$ is equal to 
$\delta(\mathfrak{H}^1_0,\varpi \mathfrak{H}^1_0) \;\gamma \circ (\widehat{f}_{s}\widehat{f}_{\tau}\widehat{f}_{s})(s\tau s)$ and by lemma \ref{lemma:wtauw} it is also equal to 
$\delta(\mathfrak{H}^1_0,\varpi \mathfrak{H}^1_0)\;\varepsilon(\tau)\varepsilon(s)\varepsilon(\tau)\varepsilon(s)$.
Now, if $\alpha''=\alpha_{m'-2,m'-1}$ then  
$\alpha'\notin \bm\Psi_{\widehat{\alpha''}}^+\cup\bm\Psi_{\widehat{\alpha''}}^- $ and so we have
$(J^1_\P)^sJ^1_\P\cap(J^1_\P)^{\tau_{m'-2}^{-1}}J^1_\P=J^1_\P=(J^1_\P)^{\tau_{m'-2}}J^1_\P\cap(J^1_\P)^{s}J^1_\P$. 
Hence, $(\widehat{f}_{s}\widehat{f}_{\tau}\widehat{f}_{s}\widehat{f}_{\tau}\widehat{f}_{s})(s\tau s\tau s)$ is equal both to 
$$\widehat{f}_{s}(s)\circ(\widehat{f}_{\tau}\widehat{f}_{s}\widehat{f}_{\tau}\widehat{f}_{s})(\tau_{m'-2})
=\delta(\mathfrak{H}^1_0,\varpi \mathfrak{H}^1_0)\delta(\mathfrak{J}^1_0,\mathfrak{H}^1_0)^{-1/2}\varepsilon(s)\varepsilon(\tau)\varepsilon(s)\varepsilon(\tau)\varepsilon(s)$$ and also to 
\begin{align*}
(\widehat{f}_{s}\widehat{f}_{\tau}\widehat{f}_{s}\widehat{f}_{\tau})(\tau_{m'-2})\circ \widehat{f}_{s}(s)
&=\delta(\mathfrak{H}^1_0,\varpi \mathfrak{H}^1_0)\;\delta(\mathfrak{J}^1_0,\mathfrak{H}^1_0)^{-1/2}
\varepsilon(\tau)\varepsilon(s)\varepsilon(\tau)\varepsilon(s)^2\\
&=\delta(\mathfrak{H}^1_0,\varpi \mathfrak{H}^1_0)\;\delta(\mathfrak{J}^1_0,\mathfrak{H}^1_0)^{-1/2}
\varepsilon(\tau)\varepsilon(s)\varepsilon(\tau).
\end{align*}
This implies $\varepsilon(\tau)\varepsilon(s)\varepsilon(\tau)\varepsilon(s)=\varepsilon(s)\varepsilon(\tau)\varepsilon(s)\varepsilon(\tau)$ since both $\delta(\mathfrak{H}^1_0,\varpi \mathfrak{H}^1_0)$ and $\delta^{-1/2}$ are invertible in $R$.
We conclude that $\varepsilon$ is a group homomorphism and it is clearly surjective.
\end{proof}

\begin{rmk}\label{rmk:xiw}
For every $w\in W$ we have 
$\varepsilon(w)=\delta(\mathfrak{J}^1_0,\mathfrak{H}^1_0)^{\ell(w)/2}\mathsf{p}\circ\kappa(w)\circ\iota$.
\end{rmk}

\begin{lemma}\label{lemma:xibijective}
For every $\widetilde{w}\in\widetilde{W}$ we have 
$\varepsilon(\widetilde{w})\in I_{\widetilde w}(\eta_\P)$.
\end{lemma}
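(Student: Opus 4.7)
The plan is to reduce the claim from $\eta_\P$ to $\eta_\M$, where the apparent obstruction to multiplicativity of intertwiners disappears. The key observation is that every $\tilde w\in\widetilde W$ normalizes $J^1_\M=J^1_0\times\cdots\times J^1_0$: elements of $W$ merely permute the $m'$ diagonal factors of $J^1_\M$, while elements of $\widehat{\bm\Delta}$ are diagonal matrices with $\varpi$-power entries whose conjugation preserves each factor $J^1_0$ via the identity $\varpi^{-1}\mathfrak{J}^1_0\varpi=\mathfrak{J}^1_0$ already used in Lemma \ref{lemma:J1Ptau}. Since $\widetilde W\subset B^\times\subset I_G(\eta_\P)$, Lemma \ref{lemma:intertetaP} yields $I_{\tilde w}(\eta_\P)=I_{\tilde w}(\eta_\M)$ for every $\tilde w\in\widetilde W$, so it is enough to prove $\varepsilon(\tilde w)\in I_{\tilde w}(\eta_\M)$.

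This reformulation trivialises the multiplicative step. Since each $\tilde w\in\widetilde W$ normalizes $J^1_\M$, one has $J^1_\M\cap (J^1_\M)^{\tilde w}=J^1_\M$, so whenever $j\in J^1_\M$ the intermediate conjugate $\tilde w_2 j\tilde w_2^{-1}$ again lies in $J^1_\M$, and for $\varphi_i\in I_{\tilde w_i}(\eta_\M)$ we may chain the intertwining relations
\[
\varphi_1\varphi_2\,\eta_\M(j)=\varphi_1\,\eta_\M(\tilde w_2 j\tilde w_2^{-1})\,\varphi_2=\eta_\M\big(\tilde w_1\tilde w_2 j(\tilde w_1\tilde w_2)^{-1}\big)\,\varphi_1\varphi_2
\]
to conclude that $\varphi_1\varphi_2\in I_{\tilde w_1\tilde w_2}(\eta_\M)$. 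Combined with the fact that $\varepsilon\colon\widetilde W\to\widetilde{\mathcal W}$ is a group homomorphism (Lemma \ref{lemma:isomWtilde}), this reduces the entire proof to checking the statement on the generators $s_1,\dots,s_{m'-1}$ and $\tau_{m'-1}^{\pm 1}$.

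For $\tau_{m'-1}$ the assertion is immediate from the choice of $\gamma\in I_{\tau_{m'-1}}(\eta_\P)$, and for $\tau_{m'-1}^{-1}$ it follows because $\gamma$ is invertible by Lemma \ref{lemma:intertetaP}. For a simple reflection $s_i\in K_B\subset J$, the representation property $\kappa(s_i)\eta(j)=\eta(s_ijs_i^{-1})\kappa(s_i)$ valid for $j\in J^1$, together with Lemma \ref{lemma:etaP} and the identity $\eta_\P|_{J^1_\M}=\eta_\M$, yields for every $j\in J^1_\M$
\[
\mathsf{p}\circ\kappa(s_i)\circ\iota\circ\eta_\M(j)=\mathsf{p}\circ\eta(s_ijs_i^{-1})\circ\kappa(s_i)\circ\iota=\eta_\M(s_ijs_i^{-1})\circ\mathsf{p}\circ\kappa(s_i)\circ\iota,
\]
so $\varepsilon(s_i)=\delta(\mathfrak{J}^1_0,\mathfrak{H}^1_0)^{1/2}\,\mathsf{p}\circ\kappa(s_i)\circ\iota$ indeed belongs to $I_{s_i}(\eta_\M)$. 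The main conceptual difficulty of the statement, namely that intertwiners living on distinct double cosets of $J^1_\P$ need not \emph{a priori} compose to intertwiners of the product $\tilde w_1\tilde w_2$, is entirely circumvented by replacing $J^1_\P$ with the smaller subgroup $J^1_\M$, which is stable under the full normalizing action of $\widetilde W$.
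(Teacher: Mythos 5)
Your proof is correct and follows essentially the same route as the paper: reduce from $\eta_\P$ to $\eta_\M$ via Lemma \ref{lemma:intertetaP} and the fact that $\widetilde{W}$ normalizes $J^1_\M$, use that normalization to make intertwiners multiplicative, and check the generators (the paper observes directly that $\varepsilon(w)=\delta(\mathfrak{J}^1_0,\mathfrak{H}^1_0)^{\ell(w)/2}\widehat f_w(w)\in I_w(\eta_\M)$ for all $w\in W$, where you verify $s_i$ by an explicit computation with $\kappa$, which amounts to the same thing). No gaps; your treatment is just a more detailed write-up of the paper's three-line argument.
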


\begin{proof}
Since $\eta_\M$ is the restriction of $\eta_\P$ to the group $J^1_\M$, we have 
$\varepsilon(w)=\delta(\mathfrak{J}^1_0,\mathfrak{H}^1_0)^{\ell(w)/2}\widehat{f}_w(w)\in I_w(\eta_\M)$ for every $w\in W$ and $\gamma\in  I_{\tau_{m'-1}}(\eta_\M)$. 
Then, since every $w\in W$ and $\tau_{m'-1}$ normalize $J^1_\M$, we have 
$\varepsilon(\widetilde{w})\in I_{\widetilde w}(\eta_\M)$ for every $\widetilde{w}\in\widetilde{W}$ and so $\varepsilon(\widetilde{w})\in I_{\widetilde w}(\eta_\P)$ by lemma \ref{lemma:intertetaP}.
\end{proof}

\begin{lemma}\label{lemma:gammacommute}
For every $\tau',\tau''\in\bm\Delta$, 
$\gamma'\in I_{\tau'}(\eta_\P)$ and $\gamma''\in I_{\tau''}(\eta_\P)$ we have $\gamma'\circ\gamma''=\gamma''\circ\gamma'$.
\end{lemma}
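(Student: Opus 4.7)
The plan is to express $\gamma'$ and $\gamma''$ as scalar multiples of $\varepsilon(\tau')$ and $\varepsilon(\tau'')$ respectively, and then to exploit the commutativity of $\bm\Delta$ inside the extended affine Weyl group $\widetilde{W}$.

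First, since $\bm\Delta$ is a commutative monoid contained in the abelian group $\widehat{\bm\Delta}\subset\widetilde{W}=W\ltimes\widehat{\bm\Delta}$, the elements $\tau'$ and $\tau''$ commute in $\widetilde{W}$, so $\tau'\tau''=\tau''\tau'$. Applying the group homomorphism $\varepsilon:\widetilde{W}\to\widetilde{\mathcal{W}}\subset\Aut_R(V_\M)$ of lemma \ref{lemma:isomWtilde} yields
\[
\varepsilon(\tau')\circ\varepsilon(\tau'')=\varepsilon(\tau'\tau'')=\varepsilon(\tau''\tau')=\varepsilon(\tau'')\circ\varepsilon(\tau').
\]

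Next, by lemma \ref{lemma:xibijective} we have $\varepsilon(\tau')\in I_{\tau'}(\eta_\P)$ and $\varepsilon(\tau'')\in I_{\tau''}(\eta_\P)$; since $\varepsilon$ takes values in $\Aut_R(V_\M)$, both are invertible, in particular non-zero. Because $\dim_R I_y(\eta_\P)=1$ for every $y\in B^\times$, as recalled just before lemma \ref{lemma:etaP}, we may write $\gamma'=c'\,\varepsilon(\tau')$ and $\gamma''=c''\,\varepsilon(\tau'')$ for some $c',c''\in R$.

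Combining these facts gives
\[
\gamma'\circ\gamma''=c'c''\,\varepsilon(\tau')\circ\varepsilon(\tau'')=c'c''\,\varepsilon(\tau'')\circ\varepsilon(\tau')=\gamma''\circ\gamma',
\]
as required. All the substantial work — constructing the homomorphism $\varepsilon$ and checking that its values lie in the correct intertwining spaces — has already been carried out in lemmas \ref{lemma:isomWtilde} and \ref{lemma:xibijective}, so no further obstacle remains; the present lemma is a formal consequence of those results together with the one-dimensionality of the intertwining spaces.
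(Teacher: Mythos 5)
Your proof is correct and follows essentially the same route as the paper: both arguments write $\gamma'=c'\varepsilon(\tau')$ and $\gamma''=c''\varepsilon(\tau'')$ using the one-dimensionality of the intertwining spaces (with lemma \ref{lemma:xibijective} guaranteeing $\varepsilon(\tau')$, $\varepsilon(\tau'')$ are non-zero elements of those spaces) and then invoke the homomorphism property of $\varepsilon$ from lemma \ref{lemma:isomWtilde} together with commutativity of $\bm\Delta$. Your write-up merely makes explicit a couple of justifications the paper leaves implicit; no gap.
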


\begin{proof}
We recall that $I_{\tau}(\eta_\P)$ is $1$-dimensional for every $\tau\in\bm\Delta$ and so there exist $c',c''\in R$ such that $\gamma'=c'\varepsilon(\tau')$ and $\gamma''=c''\varepsilon(\tau'')$. We obtain 
$\gamma'\circ\gamma''=c'c''\varepsilon(\tau')\circ\varepsilon(\tau'')=
c'c''\varepsilon(\tau'\tau'')=c'c''\varepsilon(\tau''\tau')=\gamma''\circ\gamma'$.
\end{proof}

\subsection{The isomorphisms $\mathscr{H}_R(G,\eta_P)\cong\mathscr{H}_R(B^{\times},K^1_B)$}
In this paragraph we define elements $\widehat f_{\tau_{i}}\in\mathscr{H}_R(G,\eta_\P)_{J^1_\P \tau_{i}J^1_\P}$ for every $i\in\{0,\dots,m'-1\}$ and we prove that $\widehat f_\omega$ with $\omega\in\Omega$ respect relations of lemma \ref{lemma:relations} obtaining an algebra homomorphism from $\mathscr{H}_{R}(B^{\times},K^1_B)$ to $\mathscr{H}_R(G,\eta_\P)$.

\smallskip
For every $i\in\{0,\dots,m'-1\}$ we denote
 $\gamma_i=\partial^{(m'-i)(m'-i-1)/2}\varepsilon(\tau_i)$ 
where $\partial$ is the power of $p$ defined in remark \ref{rmk:index2}. 
Then $\gamma_i$ is an invertible element in $I_{\tau_i}(\eta_\P)$ and $\gamma_{m'-1}=\gamma$. 

\begin{lemma}\label{lemma:gammai}
We have $\gamma_{i-1}\circ\gamma_i^{-1}=\partial^{m'-i}\varepsilon(\tau_{i-1}\tau_i^{-1})$ and $\gamma_i=\prod_{h={i+1}}^{m'}\partial^{m'-h}\varepsilon(\tau_i)$ for every $i\in\{1,\dots,m'-1\}$.
\end{lemma}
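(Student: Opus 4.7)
The plan is to verify both identities by direct algebraic manipulation, relying on two ingredients already established: the explicit definition $\gamma_i = \partial^{(m'-i)(m'-i-1)/2}\varepsilon(\tau_i)$ given just above the statement, and the fact (lemma \ref{lemma:isomWtilde}) that $\varepsilon:\widetilde W\to\widetilde{\mathcal W}$ is a group homomorphism, which in particular yields $\varepsilon(\tau_{i-1})\circ\varepsilon(\tau_i)^{-1} = \varepsilon(\tau_{i-1}\tau_i^{-1})$. Note also that by lemma \ref{lemma:xibijective} each $\varepsilon(\tau_j)$ is an invertible element of $\Aut_R(V_\M)$, so that inverses make sense.

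I would begin with the second identity, which is essentially a bookkeeping exercise. As $h$ runs over $\{i+1,\dots,m'\}$, the exponents $m'-h$ run over $\{0,1,\dots,m'-i-1\}$, hence
\[
\sum_{h=i+1}^{m'}(m'-h) = \sum_{k=0}^{m'-i-1} k = \frac{(m'-i)(m'-i-1)}{2}.
\]
Substituting this into the right-hand side of the second identity recovers exactly the defining expression for $\gamma_i$.

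For the first identity, I would directly compute the exponent of $\partial$ in $\gamma_{i-1}\circ\gamma_i^{-1}$:
\[
\frac{(m'-i+1)(m'-i)}{2} - \frac{(m'-i)(m'-i-1)}{2} = \frac{m'-i}{2}\bigl[(m'-i+1)-(m'-i-1)\bigr] = m'-i.
\]
Combining this with the homomorphism property $\varepsilon(\tau_{i-1})\circ\varepsilon(\tau_i)^{-1} = \varepsilon(\tau_{i-1}\tau_i^{-1})$ yields the claimed equality.

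There is no substantive obstacle: the lemma merely records convenient consequences of the definition of $\gamma_i$. The real content lies in the earlier lemmas \ref{lemma:isomWtilde} and \ref{lemma:xibijective}, which produce $\varepsilon$ as a homomorphism whose images have the required intertwining properties; once those are in hand, the present lemma is pure arithmetic.
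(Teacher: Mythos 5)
Your proof is correct and follows essentially the same route as the paper: both identities are obtained by comparing exponents of $\partial$ with the defining formula $\gamma_i=\partial^{(m'-i)(m'-i-1)/2}\varepsilon(\tau_i)$, using the homomorphism property of $\varepsilon$ from lemma \ref{lemma:isomWtilde} for the first one and the elementary sum $\sum_{h=i+1}^{m'}(m'-h)=(m'-i)(m'-i-1)/2$ for the second.
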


\begin{proof}
Since $((m'-(i-1))(m'-(i-1)-1)-(m'-i)(m'-i-1))/2=m'-i$ we have $\gamma_{i-1}\circ\gamma_i^{-1}=\partial^{m'-i} \varepsilon(\tau_{i-1})\varepsilon(\tau_{i})^{-1}=\partial^{m'-i}\varepsilon(\tau_{i-1}\tau_i^{-1})$.
The second statement is true because $\sum_{h=i+1}^{m'}m'-h=\sum_{j=0}^{m'-i-1}j=(m'-i)(m'-i-1)/2$.
\end{proof}

For every $i\in\{0,\dots,m'-1\}$ we consider the function 
$\widehat f_{\tau_{i}}\in\mathscr{H}_R(G,\eta_\P)_{J^1_\P \tau_{i}J^1_\P}$ defined by $\widehat f_{\tau_{i}}(j_1\tau_{i}j_2)=\eta_\P(j_1)\circ\gamma_i\circ\eta_\P(j_2)$ for every $j_1,j_2\in J^1_\P$.
We remark that in general $\widehat f_{\tau_{i}}$ is not invertible but since $\tau_0$ normalizes $J^1_\P$ the function
 $\widehat{f}_{\tau_0}$ is invertible in  
 $\mathscr{H}_R(G,\eta_\P)$ with inverse 
$\widehat{f}_{\tau_0^{-1}}: \tau_0^{-1}J^1_\P\rightarrow \End_R(V_\M)$ defined by
$\widehat{f}_{\tau_0^{-1}}(\tau_0^{-1}j)=\gamma_0^{-1}\circ\eta_\P(j)$ for every $j\in J^1_\P$. 

\begin{lemma}\label{lemma:isomHecke}
The map 
$\Theta'':\bm\Omega  \rightarrow \mathscr{H}_R(G,\eta_P)$
given by $f_\omega  \mapsto  \widehat f_\omega$ for every 
$f_\omega\in\bm\Omega$
is well-defined.
\end{lemma}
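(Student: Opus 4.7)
The approach is to check, case by case, that for each $\omega\in\Omega$ the prescribed function $\widehat f_\omega$ is a bona fide element of $\mathscr{H}_R(G,\eta_\P)$, i.e.\ it is bi-$\eta_\P$-equivariant under $J^1_\P$ and has support a finite union of $J^1_\P$-double cosets. The index set $\Omega$ is the disjoint union of $K_B$, $\{\tau_0,\tau_0^{-1}\}$, and $\{\tau_\alpha\mid\alpha\in\Sigma\}$: none of $\tau_0$, $\tau_0^{-1}$ or the $\tau_\alpha$ has unit reduced norm and so none lies in $K_B$, while the $\tau_\alpha$ are distinguished from $\tau_0^{\pm 1}$ by their diagonal shape. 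Hence no compatibility between the three pieces needs to be verified and each can be handled independently.

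For $\omega\in K_B$ the function $\widehat f_\omega$ has already been constructed in paragraph \ref{subsec:isomfinitealgebras} as the image of $f_\omega$ under the composition of the algebra isomorphisms $\Theta'$ of lemma \ref{lemma:finitealgebras} and $\zeta$ of lemma \ref{lemma:isomHeckeetaP}, so it automatically belongs to $\mathscr{H}_R(J,\eta_\P)\subset\mathscr{H}_R(G,\eta_\P)$ with support in $J^1_\P\omega J^1_\P$; there is nothing further to verify. For $\omega=\tau_i$ with $i\in\{0,\dots,m'-1\}$ (covering both $\tau_0$ and every $\tau_\alpha$ with $\alpha\in\Sigma$), the formula $\widehat f_{\tau_i}(j_1\tau_ij_2)=\eta_\P(j_1)\circ\gamma_i\circ\eta_\P(j_2)$ defines a bi-$\eta_\P$-equivariant function on $J^1_\P\tau_iJ^1_\P$ if and only if $\gamma_i$ lies in $I_{\tau_i}(\eta_\P)=\Hom_{J^1_\P\cap(J^1_\P)^{\tau_i}}(\eta_\P,\eta_\P^{\tau_i})$, via the standard identification $\mathscr{H}_R(G,\eta_\P)_{J^1_\P\tau_iJ^1_\P}\cong I_{\tau_i}(\eta_\P)$ sending $\Phi\mapsto\Phi(\tau_i)$. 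By construction $\gamma_i=\partial^{(m'-i)(m'-i-1)/2}\varepsilon(\tau_i)$ is a scalar multiple of $\varepsilon(\tau_i)$, and lemma \ref{lemma:xibijective} precisely asserts $\varepsilon(\tau_i)\in I_{\tau_i}(\eta_\P)$ since $\tau_i\in\widetilde W$: this settles the case.

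It remains to treat $\omega=\tau_0^{-1}$. Since $\tau_0=\varpi\mathbb{I}_{m'}$ is central in $G$ it normalizes $J^1_\P$, so $\widehat f_{\tau_0}$ is supported on the single coset $\tau_0 J^1_\P=J^1_\P\tau_0$ and takes there the value $\gamma_0$, which is invertible by lemma \ref{lemma:intertetaP}. The prescription $\widehat f_{\tau_0^{-1}}(\tau_0^{-1}j)=\gamma_0^{-1}\circ\eta_\P(j)$ is then manifestly bi-equivariant on $J^1_\P\tau_0^{-1}J^1_\P=\tau_0^{-1}J^1_\P$, and a short convolution computation using remark \ref{rmk:normal} yields $\widehat f_{\tau_0}\widehat f_{\tau_0^{-1}}=\widehat f_{\tau_0^{-1}}\widehat f_{\tau_0}=\mathbf 1$, so the notation is used consistently. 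The only substantive input in the whole proof is the intertwining property encoded in lemma \ref{lemma:xibijective}: the statement itself poses no real obstacle, the real work having been done earlier in constructing the representation $\varepsilon$ of $\widetilde W$ and verifying its intertwining behaviour.
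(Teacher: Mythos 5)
Your verifications are correct as far as they go, but most of the work is aimed at something the lemma does not really ask, and the one fact the lemma actually turns on is left out. Showing that each $\widehat f_\omega$ is a genuine element of $\mathscr{H}_R(G,\eta_\P)$ is essentially built into its construction: for $\omega\in K_B$ it is by definition $\zeta(\Theta'(f_\omega))$, and for $\omega=\tau_i$ the identification $\Phi\mapsto\Phi(\tau_i)$ of $\mathscr{H}_R(G,\eta_\P)_{J^1_\P\tau_iJ^1_\P}$ with $I_{\tau_i}(\eta_\P)$, together with $\gamma_i\in I_{\tau_i}(\eta_\P)$ (your appeal to lemma \ref{lemma:xibijective}, and to lemma \ref{lemma:intertetaP} for invertibility of $\gamma_0$, is correct), already produces an element of the algebra. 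The real issue is that the map is declared on the set of functions $\bm\Omega=\{f_\omega\,|\,\omega\in\Omega\}$ by a formula depending on the index $\omega$: since $f_\omega$ is the characteristic function of $K^1_B\omega K^1_B$, two different indices can name the same element of $\bm\Omega$, and one must check they receive the same image. Inside $K_B$ such collisions genuinely occur ($f_k=f_{k'}$ whenever $k'\in kK^1_B$) and are harmless exactly because $\widehat f_k$ depends only on the function $f_k$; this is what the paper means by "$\Theta'$ is a homomorphism", and your treatment of that piece is fine.

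Where your argument is incomplete is for the elements $\tau_0^{\pm1}$ and $\tau_\alpha$: you distinguish them, as matrices, from $K_B$ and from one another across the three pieces, but you never address possible coincidences $f_{\tau_i}=f_{\tau_j}$ for $i\neq j$ (nor $f_{\tau_0}=f_{\tau_0^{-1}}$), and distinctness of group elements is not the relevant statement — what is needed is that the double cosets $K^1_B\tau_iK^1_B$, $i\in\{0,\dots,m'-1\}$, together with $K^1_B\tau_0^{-1}K^1_B$, are pairwise distinct, for otherwise well-definedness would force the corresponding prescriptions (hence the various $\gamma_i$) to agree, which is not automatic. This is precisely the fact the paper's one-line proof records: $K^1_B\tau_iK^1_B=K^1_B\tau_jK^1_B$ implies $i=j$. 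It is easy to supply — for instance, every element of $K^1_B\tau_iK^1_B$ has reduced norm of valuation determined by $i$, or one uses $K^1_B\subset I_B$ and the disjointness of the cosets $I_B\widetilde w I_B$, $\widetilde w\in\widetilde W$ — and indeed your reduced-norm observation, once applied to the double cosets rather than to the elements themselves, settles all the remaining cases at once; but as written the claim that "each piece can be handled independently" leaves exactly this step unproved.
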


\begin{proof}
The map is well-defined on $f_k$ with $k\in K_B$ because 
$\Theta'$ is a homomorphism and it is well-defined on $\tau_i$ with  $i\in\{0,\dots,m'-1\}$ because 
$K^1_B\tau_iK^1_B=K^1_B\tau_jK^1_B$ implies $i=j$.
\end{proof}

\begin{lemma}\label{lemma:producttau2}
For every $i,j\in\{0,\dots,m'-1\}$ the function 
$\widehat f_{\tau_i}\widehat f_{\tau_j}$ is in  
$\mathscr{H}_R(G,\eta_\P)_{J^1_\P\tau_i\tau_j J^1_\P}$ and
$(\widehat f_{\tau_i}\widehat f_{\tau_j})(\tau_i\tau_j)=\gamma_i\circ\gamma_j$.
\end{lemma}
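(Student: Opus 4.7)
The plan is to reduce everything to the two identities in lemma \ref{lemma:producttau}, applied to the pair $\tau = \tau_i$ and $\tau' = \tau_j$ of elements of $\bm\Delta$.

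First I would address the support claim. By construction the support of $\widehat f_{\tau_i}\widehat f_{\tau_j}$ is contained in $J^1_\P \tau_i J^1_\P \tau_j J^1_\P$, and the first identity of lemma \ref{lemma:producttau} gives $J^1_\P \tau_i J^1_\P \tau_j J^1_\P = J^1_\P \tau_i\tau_j J^1_\P$. Hence $\widehat f_{\tau_i}\widehat f_{\tau_j} \in \mathscr{H}_R(G,\eta_\P)_{J^1_\P \tau_i\tau_j J^1_\P}$.

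Next I would evaluate at $\tau_i\tau_j$, writing
\begin{equation*}
(\widehat f_{\tau_i}\widehat f_{\tau_j})(\tau_i\tau_j) = \sum_{x \in G/J^1_\P} \widehat f_{\tau_i}(\tau_i x)\, \widehat f_{\tau_j}(x^{-1}\tau_j).
\end{equation*}
The summand is non-zero only when $\tau_i x \in J^1_\P \tau_i J^1_\P$ and $x^{-1}\tau_j \in J^1_\P \tau_j J^1_\P$, that is, when $x$ lies in $(J^1_\P)^{\tau_i} J^1_\P \cap (J^1_\P)^{\tau_j^{-1}} J^1_\P$. By the second identity of lemma \ref{lemma:producttau}, this intersection is exactly $J^1_\P$, so only the coset $x = 1$ contributes.

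Thus $(\widehat f_{\tau_i}\widehat f_{\tau_j})(\tau_i\tau_j) = \widehat f_{\tau_i}(\tau_i) \circ \widehat f_{\tau_j}(\tau_j) = \gamma_i \circ \gamma_j$, by the definition of $\widehat f_{\tau_i}$ and $\widehat f_{\tau_j}$. There is no real obstacle here: the whole point is that the combinatorial machinery already packaged in lemma \ref{lemma:producttau} makes the product of two elements of $\bm\Delta$ behave as if each of them normalized $J^1_\P$, so the computation collapses to the single term at $x = 1$.
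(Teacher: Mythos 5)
Your argument for $i,j\in\{1,\dots,m'-1\}$ is exactly the paper's: support via the first identity of lemma \ref{lemma:producttau}, then the observation that the summand $x\mapsto \widehat f_{\tau_i}(\tau_i x)\widehat f_{\tau_j}(x^{-1}\tau_j)$ is supported in $(J^1_\P)^{\tau_i}J^1_\P\cap(J^1_\P)^{\tau_j^{-1}}J^1_\P=J^1_\P$, so only the trivial coset contributes and the value is $\gamma_i\circ\gamma_j$.

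There is, however, a gap at the boundary case $i=0$ or $j=0$, which the statement explicitly includes. You invoke lemma \ref{lemma:producttau} "for the pair $\tau=\tau_i$, $\tau'=\tau_j$ of elements of $\bm\Delta$", but $\tau_0=\varpi\mathbb{I}_{m'}$ is \emph{not} an element of $\bm\Delta$: by definition $\bm\Delta$ is the monoid generated by $\tau_\alpha$ with $\alpha\in\Sigma$, i.e.\ by $\tau_1,\dots,\tau_{m'-1}$, and its elements are exactly the $\mathrm{diag}(1,\varpi^{a_1},\dots,\varpi^{a_{m'-1}})$ with first entry $1$ (the quantity $P(\tau)$ used throughout lemmas \ref{lemma:J1Ptau}--\ref{lemma:producttau} is only defined for such $\tau$). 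So lemma \ref{lemma:producttau} as stated does not cover $\tau_0$, and your proof as written does not apply to it. The fix is immediate and is what the paper does: $\tau_0$ is central, hence normalizes $J^1_\P$, so remark \ref{rmk:normal} (equivalently lemma \ref{lemma:product}(1)) gives directly that the support of $\widehat f_{\tau_0}\widehat f_{\tau_j}$ (or $\widehat f_{\tau_i}\widehat f_{\tau_0}$) lies in $J^1_\P\tau_0\tau_j J^1_\P$ and that the value at $\tau_0\tau_j$ is $\gamma_0\circ\gamma_j$; alternatively you could note that both identities of lemma \ref{lemma:producttau} hold trivially when one of the two factors is the central element $\tau_0$. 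Add this case split and your proof is complete and coincides with the paper's.
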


\begin{proof}
If $i$ or $j$ is $0$ then it follows by lemma \ref{lemma:product} since $\tau_0$ normalizes $J^1_\P$.
Otherwise, by lemma \ref{lemma:producttau} the support of 
$\widehat f_{\tau_i}\widehat f_{\tau_j}$ is in 
$J^1_\P\tau_iJ^1_\P\tau_jJ^1_\P=J^1_\P\tau_i\tau_jJ^1_\P$ and the support of 
$x\mapsto \widehat f_{\tau_i}(\tau_ix)\widehat f_{\tau_j}(x^{-1}\tau_j)$ is in $(J^1_\P)^{\tau_i} J^1_\P \cap (J^1_\P)^{\tau_j^{-1}}J^1_\P=J^1_\P$. We obtain 
$(\widehat f_{\tau_i}\widehat f_{\tau_j})(\tau_i\tau_j)=
\sum_{x\in G/J^1_\P}\widehat f_{\tau_i}(\tau_ix)\widehat f_{\tau_j}(x^{-1}\tau_j)=
\widehat f_{\tau_i}(\tau_i)\circ\widehat f_{\tau_j}(\tau_j)=\gamma_i\circ\gamma_j$. 
\end{proof}

By lemmas \ref{lemma:producttau2} and \ref{lemma:gammacommute} we obtain $\widehat f_{\tau_i}\widehat f_{\tau_j}=\widehat f_{\tau_j}\widehat f_{\tau_i}$ for every $i,j\in\{0,\dots,m'-1\}$. 
So, if $P\subset\{0,\dots,m'-1\}$ we denote by $\gamma_P$ the composition of $\gamma_i$ with $i\in P$, which is well-defined by lemma \ref{lemma:gammacommute}, and by 
$\widehat{f}_{\tau_P}$ the product of $\widehat{f}_{\tau_i}$ with $i\in P$, which is well-defined because the $\widehat f_{\tau_i}$ commute.
Furthermore, by lemma \ref{lemma:producttau} we obtain that the support of $\widehat{f}_{\tau_P}$ is $J^1_\P\tau_P J^1_\P$ and by lemma \ref{lemma:producttau2} we have $\widehat{f}_{\tau_P}(\tau_P)=\gamma_P$.

\begin{lemma}\label{lemma:rel347}
We have $\widehat f_{\tau_i}\widehat f_{x}=\widehat f_{\tau_i x\tau_{i}^{-1}}\widehat f_{\tau_i}$
for every $i\in\{0,\dots,m'-1\}$ and every 
$x\in M_{\widehat{\alpha_{i,i+1}}}=K_B\cap \M_{\widehat{\alpha_{i,i+1}}}$ if $i\neq 0$ or $x\in K_B$ if $i=0$.
\end{lemma}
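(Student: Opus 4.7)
The plan is to lift the identity to $\mathscr{H}_R(G,\eta)$ via the algebra isomorphism $\zeta^{-1}$ of lemma \ref{lemma:isomHeckeetaP}, where supports are cleaner because $J^1$ is always normalized by $K_B\subset J$. Write $\widetilde f_{\tau_i}=\zeta^{-1}(\widehat f_{\tau_i})\in\mathscr{H}_R(G,\eta)_{J^1\tau_iJ^1}$ with value $\widetilde\gamma_i=\widetilde f_{\tau_i}(\tau_i)\in I_{\tau_i}(\eta)$, and $\widetilde f_x=\Theta'(f_x)\in\mathscr{H}_R(J,\eta)$ (as in lemma \ref{lemma:finitealgebras}) with support $J^1xJ^1=J^1x$ (since $x\in K_B\subset J$ normalizes $J^1$) and value $\kappa(x)$ at $x$.

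A direct convolution shows that the constraints $y\in J^1\tau_iJ^1$ and $y^{-1}\tau_ix\in J^1x$ force the summation variable $y$ to lie in the single right coset $\tau_iJ^1\subset G/J^1$ (and dually for the other product), yielding
\[
(\widetilde f_{\tau_i}\widetilde f_x)(\tau_ix)=\widetilde\gamma_i\circ\kappa(x), \qquad (\widetilde f_x\widetilde f_{\tau_i})(x\tau_i)=\kappa(x)\circ\widetilde\gamma_i.
\]
Moreover the supports of $\widetilde f_{\tau_i}\widetilde f_x$ and $\widetilde f_x\widetilde f_{\tau_i}$ are both contained in the double coset $J^1\tau_ixJ^1=J^1x\tau_iJ^1$, over which the Hecke space is $1$-dimensional since $\tau_ix\in B^\times$ intertwines $\eta$.

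The crucial step is the commutation $\widetilde\gamma_i\circ\kappa(x)=\kappa(x)\circ\widetilde\gamma_i$. By paragraph \ref{subsec:Heisenberg}, the $1$-dimensional spaces $I_{\tau_i}(\eta)$ and $I_{\tau_i}(\kappa)$ coincide, so $\widetilde\gamma_i$ satisfies the stronger $\kappa$-intertwining relation $\widetilde\gamma_i\circ\kappa(y)=\kappa(\tau_iy\tau_i^{-1})\circ\widetilde\gamma_i$ for every $y\in J\cap J^{\tau_i}$. The hypothesis on $x$ guarantees that $x$ centralizes $\tau_i$ (because $\tau_0$ is central in $G$ when $i=0$, and $\tau_i$ is central in $\M_{\widehat{\alpha_{i,i+1}}}\ni x$ when $i\ne 0$); hence $\tau_ix\tau_i^{-1}=x$ and in particular $x\in J\cap J^{\tau_i}$, so applying the intertwining relation with $y=x$ produces the desired commutation.

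Combining these observations, $\widetilde f_{\tau_i}\widetilde f_x$ and $\widetilde f_x\widetilde f_{\tau_i}$ coincide since they lie in the same $1$-dimensional Hecke space and take the same value at $\tau_ix=x\tau_i$. Applying $\zeta$ gives $\widehat f_{\tau_i}\widehat f_x=\widehat f_x\widehat f_{\tau_i}=\widehat f_{\tau_ix\tau_i^{-1}}\widehat f_{\tau_i}$, which is the claim. The main technical input, and the potential obstacle, is the identification $I_{\tau_i}(\eta)=I_{\tau_i}(\kappa)$ from paragraph \ref{subsec:Heisenberg}: it is what upgrades the Heisenberg intertwiner $\widetilde\gamma_i$ to an intertwiner of the $\beta$-extension $\kappa$, thereby making it compatible with the $J$-action on $V_\eta$ and allowing the simple convolution argument to close.
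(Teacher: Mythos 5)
Your strategy---lifting to $\mathscr{H}_R(G,\eta)$ via $\zeta^{-1}$, computing the convolutions at $\tau_i x$ using that $x$ normalizes $J^1$, and invoking $I_{\tau_i}(\eta)=I_{\tau_i}(\kappa)$ so that $\widetilde\gamma_i$ intertwines the $\beta$-extension $\kappa$---is exactly the paper's route, but the step where you replace $\tau_i x\tau_i^{-1}$ by $x$ is a genuine error. Neither is $\tau_0$ central in $G$ (nor in $B^\times$), nor is $\tau_i$ central in $\M_{\widehat{\alpha_{i,i+1}}}\cap K_B$: under the identification $B^{\times}\cong GL_{m'}(D')$ the entry $\varpi$ of $\tau_i$ is a uniformizer of the division algebra $D'$, and conjugation by $\varpi$ is a nontrivial automorphism of $\ent_{D'}$ (inducing a generator of $\mathrm{Gal}(\frack_{D'}/\frack_{E})$ on the residue field) whenever the reduced degree $d'$ of $D'$ over $E$ is $>1$. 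Thus for $x=\mathrm{diag}(A,B)\in M_{\widehat{\alpha_{i,i+1}}}$ one has $\tau_i x\tau_i^{-1}=\mathrm{diag}(A,\varpi B\varpi^{-1})\neq x$ in general, and likewise $\tau_0 x\tau_0^{-1}=\varpi x\varpi^{-1}\neq x$. Consequently the double cosets $J^1\tau_i xJ^1$ and $J^1 x\tau_i J^1$ need not coincide, the commutation $\widetilde\gamma_i\circ\kappa(x)=\kappa(x)\circ\widetilde\gamma_i$ fails, and the identity you actually derive, $\widehat f_{\tau_i}\widehat f_x=\widehat f_x\widehat f_{\tau_i}$, is not the statement of the lemma and cannot hold in general: for $i=0$, where $\widehat f_{\tau_0}$ is invertible, it would force $\widehat f_{\tau_0 x\tau_0^{-1}}=\widehat f_x$ for all $x\in K_B$, i.e.\ that conjugation by $\varpi$ is trivial on $K_B/K^1_B$. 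This non-commutation is precisely why relations 2 and 3 of lemma \ref{lemma:relations} carry the conjugation by $\tau_0$, resp.\ $\tau_\alpha$.

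The good news is that your correct ingredients already suffice once the false centrality claim is dropped, and doing so recovers the paper's argument: by lemma \ref{lemma:product}, both $\widehat f_{\tau_i}\widehat f_x$ and $\widehat f_{\tau_i x\tau_i^{-1}}\widehat f_{\tau_i}$ are supported in the single double coset $J^1_\P\tau_i xJ^1_\P$ (note $(\tau_i x\tau_i^{-1})\tau_i=\tau_i x$), with values at $\tau_i x$ equal to $\mathsf{p}\circ\widetilde\gamma_i\circ\kappa(x)\circ\iota$ and $\mathsf{p}\circ\kappa(\tau_i x\tau_i^{-1})\circ\widetilde\gamma_i\circ\iota$ respectively, where $\widetilde\gamma_i=\zeta^{-1}(\widehat f_{\tau_i})(\tau_i)$. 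Since $\widetilde\gamma_i\in I_{\tau_i}(\eta)=I_{\tau_i}(\kappa)$ and $x\in J\cap J^{\tau_i}$ (because $\tau_i x\tau_i^{-1}\in K_B\subset J$), the intertwining relation reads $\widetilde\gamma_i\circ\kappa(x)=\kappa(\tau_i x\tau_i^{-1})\circ\widetilde\gamma_i$, which is exactly the equality of the two values; no commutation of $x$ with $\tau_i$ is needed anywhere.
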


\begin{proof}
Since $x$ normalizes $J^1$ by lemma \ref{lemma:product} 
the supports of $\widehat f_{\tau_i}\widehat f_{x}$ and of 
$\widehat f_{\tau_i x\tau_{i}^{-1}}\widehat f_{\tau_i}$ is in $J^1_\P \tau_i x J^1_\P$ and 
$(\widehat f_{\tau_i}\widehat f_{x})(\tau_i x)=
\mathsf{p}\circ \zeta^{-1}(\widehat f_{\tau_i})(\tau_i)\circ\kappa(x)\circ\iota$ that is equal to
$\mathsf{p}\circ \kappa(\tau_i x\tau_i^{-1})\circ\zeta^{-1}(\widehat f_{\tau_i})(\tau_i)\circ\iota
=(\widehat f_{\tau_i x\tau_{i}^{-1}}\widehat f_{\tau_i})(\tau_i x)$ because 
$\zeta^{-1}(\widehat f_{\tau_i})(\tau_i)\in I_{\tau_i}(\kappa)$ and $x\in J\cap J^{\tau_i}$.
\end{proof}

\begin{lemma}\label{lemma:rel56}
Let $i\in\{1,\dots, m'-1\}$ and $\alpha\in \mathbf{\Psi}_{\widehat{\alpha_{ii+1}}}^+$. Then for every 
$u\in U_{\alpha}$ and $u'\in U_{-\alpha}$ we have 
$\widehat f_{u}\widehat f_{\tau_i}=\widehat f_{\tau_i}$ and
$\widehat f_{\tau_i}\widehat f_{u'}=\widehat f_{\tau_i}$.
\end{lemma}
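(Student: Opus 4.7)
The plan is to prove both identities by a symmetric mechanism, reducing each to a single-point verification on the double coset $J^1_\P \tau_i J^1_\P$ through lemma \ref{lemma:product}(2) combined with the intertwining property of $\widetilde{f}_{\tau_i}(\tau_i)$ and lemma \ref{lemma:etaP}. The first thing I would record is that, by lemma 2.9 of \cite{Chin1} (already invoked inside lemma \ref{lemma:V}), any $u \in U_\alpha$ can be written $u = \tau_i u_0 \tau_i^{-1}$ with $u_0 \in K_B^1 \cap U_\alpha$, and dually any $u' \in U_{-\alpha}$ can be written $u' = \tau_i^{-1} u'_0 \tau_i$ with $u'_0 \in K_B^1 \cap U_{-\alpha}$. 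A key auxiliary remark is that $K_B^1 \subset H^1 \subset J^1_\P$: indeed the proof of proposition \ref{prop:structureJ1H1} shows $\wp_{D'} \subset \mathfrak{H}^1_0$, so $K_B^1 = 1 + M_{m'}(\wp_{D'}) \subset 1 + M_{m'}(\mathfrak{H}^1_0) = H^1$, and $H^1 \subset J^1_\P$ by construction. Consequently both $u_0$ and $u'_0$ lie in $J^1_\P$.

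For the first identity, since $u \in K_B \subset J$ normalizes $J^1$, I would apply lemma \ref{lemma:product}(2) with $x_1 = u$, $x_2 = \tau_i$, $\widetilde{f}_1 = \Theta'(f_u)$ and $\widetilde{f}_2 = \zeta^{-1}(\widehat{f}_{\tau_i})$; this simultaneously yields that the support of $\widehat{f}_u \widehat{f}_{\tau_i}$ is in $J^1_\P u \tau_i J^1_\P$ and that $(\widehat{f}_u \widehat{f}_{\tau_i})(u \tau_i) = \mathsf{p} \circ \kappa(u) \circ \widetilde{f}_{\tau_i}(\tau_i) \circ \iota$. The equality $u \tau_i = \tau_i u_0$ together with $u_0 \in J^1_\P$ immediately gives $J^1_\P u \tau_i J^1_\P = J^1_\P \tau_i J^1_\P$, so the support is the correct double coset. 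For the value, I would use that $\widetilde{f}_{\tau_i}(\tau_i) \in I_{\tau_i}(\kappa)$ with $u_0 \in J \cap \tau_i^{-1} J \tau_i$ and $\tau_i u_0 \tau_i^{-1} = u$, rewriting $\kappa(u) \circ \widetilde{f}_{\tau_i}(\tau_i) = \widetilde{f}_{\tau_i}(\tau_i) \circ \kappa(u_0)$; then lemma \ref{lemma:etaP} applied to $u_0 \in J^1_\P$ gives $\kappa(u_0) \circ \iota = \eta(u_0) \circ \iota = \iota \circ \eta_\P(u_0)$, hence $(\widehat{f}_u \widehat{f}_{\tau_i})(u \tau_i) = \gamma_i \circ \eta_\P(u_0) = \widehat{f}_{\tau_i}(\tau_i u_0) = \widehat{f}_{\tau_i}(u \tau_i)$. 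Two elements of $\mathscr{H}_R(G, \eta_\P)_{J^1_\P \tau_i J^1_\P}$ that agree at a single point of the double coset must coincide, and the identity follows.

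The second identity is proved by the mirror argument: apply lemma \ref{lemma:product}(2) with $x_1 = \tau_i$, $x_2 = u'$, use $\tau_i u' = u'_0 \tau_i$ together with $u'_0 \in J^1_\P$ to reduce the support to $J^1_\P \tau_i J^1_\P$, and use the intertwining in the form $\widetilde{f}_{\tau_i}(\tau_i) \circ \kappa(u') = \kappa(u'_0) \circ \widetilde{f}_{\tau_i}(\tau_i)$ (since $\tau_i u' \tau_i^{-1} = u'_0$) followed by lemma \ref{lemma:etaP} on the left to obtain $\eta_\P(u'_0) \circ \gamma_i = \widehat{f}_{\tau_i}(u'_0 \tau_i) = \widehat{f}_{\tau_i}(\tau_i u')$, from which equality on the whole double coset follows as before.

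No step is a serious obstacle; the only care required is the bookkeeping of which side the intertwining relation produces the conjugation $\tau_i (\cdot) \tau_i^{-1}$, and the verification that $u_0$ and $u'_0$ genuinely belong to $J^1_\P$ rather than merely to $J^1$, which is why the containment $K_B^1 \subset H^1$ is essential.
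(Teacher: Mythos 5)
Your proof is correct and follows essentially the same route as the paper's: reduce to the double coset $J^1_\P\tau_iJ^1_\P$ via lemma \ref{lemma:product}, use that $\tau_i^{-1}u\tau_i$ and $\tau_iu'\tau_i^{-1}$ lie in $K^1_B\subset J^1_\P$, move $\kappa(u)$ (resp. $\kappa(u')$) across the intertwining operator $\zeta^{-1}(\widehat f_{\tau_i})(\tau_i)\in I_{\tau_i}(\kappa)$, and finish with lemma \ref{lemma:etaP}. The only (harmless) difference is that you spell out the containment $K^1_B\subset H^1\subset J^1_\P$, which the paper uses without comment.
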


\begin{proof}
The elements $\tau_i^{-1}u\tau_i$ and $\tau_iu'\tau_i^{-1}$ are in $K^1_B\subset J^1_\P$ and so, since $u$ and $u'$ normalize $J^1$, by lemma \ref{lemma:product} the supports of $\widehat f_{u}\widehat f_{\tau_i}$ and of $\widehat f_{\tau_i}\widehat f_{u'}$ are in $J^1_\P u\tau_iJ^1_\P=J^1_\P\tau_i J^1_\P=J^1_\P\tau_i u'J^1_\P$.
Now since $\zeta^{-1}(\widehat f_{\tau_i})(\tau_i)\in I_{\tau_i}(\eta)=I_{\tau_i}(\kappa)$ and $u\in J\cap J^{\tau_i^{-1}}$, by lemma \ref{lemma:product} we  have 
$(\widehat f_{u}\widehat f_{\tau_i})(u\tau_i)=
\mathsf{p}\circ\kappa(u)\circ\zeta^{-1}(\widehat f_{\tau_i})(\tau_i)\circ\iota=
\mathsf{p}\circ\zeta^{-1}(\widehat f_{\tau_i})(\tau_i)\circ\eta(\tau_i^{-1}u\tau_i)\circ\iota$. 
By lemma \ref{lemma:etaP} we obtain 
$(\widehat f_{u}\widehat f_{\tau_i})(u\tau_i)=\mathsf{p}\circ\zeta^{-1}(\widehat f_{\tau_i})(\tau_i)\circ\iota\circ\eta_\P(\tau_i^{-1}u\tau_i)=\widehat f_{\tau_i}(\tau_i)\circ\eta_\P(\tau_i^{-1}u\tau_i)=\widehat f_{\tau_i}(u\tau_i)$. 
Similarly we have 
$\widehat f_{\tau_i}(\tau_iu')=
\mathsf{p}\circ\zeta^{-1}(\widehat f_{\tau_i})(\tau_i)\circ\kappa(u')\circ\iota=
\mathsf{p}\circ\eta(\tau_iu'\tau_i^{-1})\circ\zeta^{-1}(\widehat f_{\tau_i})(\tau_i)\circ\iota=
\eta_\P(\tau_iu'\tau_i^{-1})\circ\mathsf{p}\circ\zeta^{-1}(\widehat f_{\tau_i})(\tau_i)\circ\iota=
\eta_\P(\tau_iu'\tau_i^{-1})\circ\widehat f_{\tau_i}(\tau_i)=
\widehat f_{\tau_i}(\tau_iu')$. 
\end{proof}

We introduce some subgroup of $G$ through its identification with $GL_{m'}(A(E))$ in order to find the support of 
$\widehat f_{\tau_{P}}\widehat f_w\widehat f_{\tau_{\alpha}}\widehat f_{w^{-1}}$. We recall that $\mathfrak{A}(E)$ is the unique hereditary order normalized by $E^{\times}$ in $A(E)$ and $\mathfrak{P}(E)$ is its radical.
\begin{itemize}[$\bullet$]
\item Let $\mathcal{Z}$ be the set of matrices $(z_{ij})$ such that $z_{ii}=1$, $z_{ij}\in \varpi^{-1}\mathfrak{P}(E)$ if 
$i<j$ and $z_{ij}=0$ if $i>j$. 
\item Let $\V$ be the group  
$\displaystyle{(J^1 \cap w \U_{\widehat{\alpha}}^- w^{-1}\cap \U_{\widehat{P}}^+)^{w\tau_\alpha w^{-1}}=
\mathbb{I}_{m'}+\prod_{\alpha'\in w\bm\Psi_{\widehat{\alpha}}^-\cap\bm\Phi_{\widehat{P}}^+}(\varpi^{-1}\mathfrak{J}^1\cap \U_{\alpha'})\subset\mathcal{Z}.}$
We remark that it is different from $\V(w,\alpha)$ defined by (\ref{eq:defV}).
\item Let $\tilde{I}^1$ be the group of matrices 
$(m_{ij})$ such that $m_{ii}\in 1+\mathfrak{P}(E)$, $m_{ij}\in \mathfrak{A}(E)$ if 
$i<j$ and $m_{ij}\in\mathfrak{P}(E)$ if $i>j$. 
\item Let $\mathbf{W}=W\ltimes M$ be the subgroup of $B^{\times}$ of monomial matrices with coefficients in  $\ent_{D'}^{\times}$. Then $B^{\times}$ is the disjoint union of $I_{B}(1)wI_B(1)$ with $w\in \mathbf{W}$, where $I_B(1)=K^1U$ is the \emph{pro-$p$-Iwahori subgroup of $K_{B}$}, i.e. the pro-$p$-radical of $I_B$.
\end{itemize}

\begin{lemma}\label{lemma:supportrel9:2}
We have
$J^1_\P \tau_P J^1_\P w\tau_\alpha w^{-1}J^1_\P
=J^1_\P \tau_Q \mathcal{V}J^1_\P.$
\end{lemma}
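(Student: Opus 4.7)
The plan is to exploit the identity $w\tau_\alpha w^{-1}=\tau_P^{-1}\tau_Q$ (established earlier in the section) together with the commutativity of $\tau_P,\tau_Q\in\bm\Delta$, and reduce both sides to a common triple-coset form.

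First I would transform the left-hand side by applying lemma \ref{lemma:J1Ptau:2} (with $\tau=\tau_P$, so $P(\tau)=\widehat P$):
\begin{equation*}
J^1_\P\tau_P J^1_\P\,w\tau_\alpha w^{-1}\,J^1_\P
=J^1_\P\tau_P(J^1_\P\cap\U_{\widehat P}^+)\tau_P^{-1}\tau_Q J^1_\P
=J^1_\P\bigl[\tau_P(J^1_\P\cap\U_{\widehat P}^+)\tau_P^{-1}\bigr]\tau_Q J^1_\P.
\end{equation*}
For the right-hand side, set $T=J^1\cap w\U_{\widehat\alpha}^- w^{-1}\cap\U_{\widehat P}^+$, so that $\V=(w\tau_\alpha w^{-1})^{-1}T(w\tau_\alpha w^{-1})$. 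Using $\tau_Q\cdot w\tau_\alpha^{-1}w^{-1}=\tau_Q\tau_P\tau_Q^{-1}=\tau_P$, I would obtain $\tau_Q\V=\tau_P T\tau_P^{-1}\tau_Q$, and hence
\begin{equation*}
J^1_\P\tau_Q\V J^1_\P=J^1_\P\bigl[\tau_P T\tau_P^{-1}\bigr]\tau_Q J^1_\P.
\end{equation*}
The inclusion $\supseteq$ of the claimed identity is then immediate, since $T\subseteq J^1\cap\U\subseteq J^1_\P$ and $T\subseteq\U_{\widehat P}^+$ give $T\subseteq J^1_\P\cap\U_{\widehat P}^+$.

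For the reverse inclusion I would use proposition \ref{prop:structureJ1H1} to factor
\begin{equation*}
J^1_\P\cap\U_{\widehat P}^+=\prod_{\alpha'\in\bm\Psi_{\widehat P}^+}(J^1\cap\U_{\alpha'}),\qquad T=\prod_{\alpha'\in\bm\Psi_{\widehat P}^+\cap w\bm\Psi_{\widehat\alpha}^-}(J^1\cap\U_{\alpha'}),
\end{equation*}
and partition $\bm\Psi_{\widehat P}^+$ into the roots that lie in $w\bm\Psi_{\widehat\alpha}^-$ (which contribute exactly $\tau_P T\tau_P^{-1}$ after conjugation) and those that do not. By proposition \ref{prop:minlength} and lemma \ref{lemma:roots}, the complementary part $\bm\Psi_{\widehat P}^+\setminus w\bm\Psi_{\widehat\alpha}^-$ is contained in $w\bm\Phi_{\widehat\alpha}^+\cup w\bm\Psi_{\widehat\alpha}^+$ (the case $w\bm\Phi_{\widehat\alpha}^-$ being excluded by minimality of $w$). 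For each such $\alpha'$, I would show that $\tau_Q^{-1}\tau_P(J^1\cap\U_{\alpha'})\tau_P^{-1}\tau_Q$ lies in $J^1_\P$, so the corresponding factor can be moved across $\tau_Q$ and absorbed into the outer $J^1_\P$ on the right.

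The main obstacle is precisely this last absorption step: tracking, root-by-root, the effect of the diagonal element $\tau_P^{-1}\tau_Q=w\tau_\alpha w^{-1}$ on $\U_{\alpha'}$ when $\alpha'\in\bm\Psi_{\widehat P}^+\cap w\bm\Phi_{\widehat\alpha}^+$ versus $\alpha'\in\bm\Psi_{\widehat P}^+\cap w\bm\Psi_{\widehat\alpha}^+$, and verifying that the resulting valuation on the $(j,k)$-matrix entry keeps one within $\mathfrak{J}^1_0$ so that the conjugate remains in $J^1_\P$. This is a valuation analysis of the same flavour as lemma \ref{lemma:J1Ptau} and the proof of lemma \ref{lemma:supportrel9}, carried out using the explicit description of $P$ and $Q$ through $A(w,\alpha)$, $B(w,\alpha)$ and $P'(w,\alpha)$; the combinatorial content that makes it work is exactly the identity $\tau_P^{-1}\tau_Q=w\tau_\alpha w^{-1}$, which balances expansions on one side of $\tau_Q$ against contractions on the other.
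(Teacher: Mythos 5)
Your argument is correct, and it reaches the identity by a mirror image of the paper's route. The paper first collapses the middle coset against $w\tau_\alpha w^{-1}$: arguing as at the start of lemma \ref{lemma:supportrel9}, it shows $J^1_\P w\tau_\alpha w^{-1}J^1_\P=(J^1_\P\cap w\U_{\widehat\alpha}^-w^{-1})\,w\tau_\alpha w^{-1}J^1_\P$, then splits $J^1_\P\cap w\U_{\widehat\alpha}^-w^{-1}$ into its $\U^-$- and $\U$-parts, absorbs the $\U^-$-part leftward through $\tau_P$ via lemma \ref{lemma:J1Ptau}, and identifies the $\U$-part with $J^1\cap w\U_{\widehat\alpha}^-w^{-1}\cap\U_{\widehat P}^+$ by lemma \ref{lemma:roots}, so the equality comes out in a single chain. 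You instead resolve the $\tau_P$-coset first via lemma \ref{lemma:J1Ptau:2}, put both sides in the common shape $J^1_\P\bigl[\tau_P X\tau_P^{-1}\bigr]\tau_Q J^1_\P$, get one inclusion for free, and concentrate all the work in the rightward absorption through $\tau_Q$. That step, which you flag as the obstacle, does go through and is exactly a lemma-\ref{lemma:J1Ptau}-type computation: writing $J^1\cap\U_{\widehat P}^+=T\cdot S$ with $S$ the product of the $J^1\cap\U_{\alpha'}$ for $\alpha'\in\bm\Psi_{\widehat P}^+\setminus w\bm\Psi_{\widehat\alpha}^-$ (both root subsets are closed, so this ordered factorization is legitimate, and prop.\ \ref{prop:structureJ1H1} makes the $J^1$-intersections factor entrywise), one checks that for such $\alpha'$ the element $w\tau_\alpha w^{-1}=\tau_P^{-1}\tau_Q$ conjugates the relevant entry by $\varpi^{c_k-c_j}$ with $c_k\geq c_j$ (equality when $w^{-1}\alpha'\in\bm\Phi_{\widehat\alpha}$, strict when $w^{-1}\alpha'\in\bm\Psi_{\widehat\alpha}^+$, the case $w^{-1}\alpha'\in\bm\Phi_{\widehat\alpha}^-$ being excluded by minimality of $w$), so $S^{w\tau_\alpha w^{-1}}\subset J^1\cap\U\subset J^1_\P$ by $\varpi^{-1}\mathfrak{J}^1_0\varpi=\mathfrak{J}^1_0$. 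The trade-off: the paper's ordering avoids factoring $J^1_\P\cap\U_{\widehat P}^+$ along the $w$-conjugated parabolic and needs only the leftward absorption already packaged in lemma \ref{lemma:J1Ptau}, while your ordering buys the inclusion $\supseteq$ immediately and isolates the remaining content in one explicit valuation check; both rest on the same identity $w\tau_\alpha w^{-1}=\tau_P^{-1}\tau_Q$ and on lemma \ref{lemma:roots}.
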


\begin{proof}
We proceed similarly to the beginning of proof of lemma \ref{lemma:supportrel9}:
we can prove that
$J_\P^1 w\tau_{\alpha}w^{-1}J_\P^1=
(J^1_\P\cap w\U_{\widehat\alpha}^- w^{-1}) w\tau_\alpha w^{-1}J^1_\P$.
Now we consider the decomposition of the group $(J^1_\P\cap w\U_{\widehat{\alpha}}^-w^{-1})$ into the product
$(J^1_\P\cap w\U_{\widehat{\alpha}}^-w^{-1}\cap\U^-)
(J^1_\P\cap w\U_{\widehat{\alpha}}^-w^{-1}\cap\U)$.
By lemma \ref{lemma:J1Ptau} we have 
$(J^1_\P\cap w\U_{\widehat{\alpha}}^-w^{-1}\cap\U^-)^{\tau_P^{-1}}\subset J^1_\P$ and by lemma \ref{lemma:roots} we have 
$J^1_\P\cap w\U_{\widehat{\alpha}}^-w^{-1}\cap\U=
J^1_\P\cap w\U_{\widehat{\alpha}}^-w^{-1}\cap\U_{\widehat{P}}^+$.
\end{proof}

\begin{lemma}\label{lemma:supportrel9:3}
Let $\tau\in\bm\Delta$. If $z\in\mathcal{Z}$ is such that 
$\tilde{I}^1\tau z\tilde{I}^1\cap \mathbf{W}\neq\emptyset$ then
$\tilde{I}^1\tau z\tilde{I}^1\cap \mathbf{W}=\{\tau\}$.
\end{lemma}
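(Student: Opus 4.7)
The plan is to analyze the matrix equation $w = \tilde{i}_1 \tau z \tilde{i}_2$ directly and show that any monomial $w \in \mathbf{W}$ so arising must equal $\tau$; together with the non-emptiness assumption this identifies the intersection as the singleton $\{\tau\}$.

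Write $\tau = \mathrm{diag}(1, \varpi^{a_1}, \ldots, \varpi^{a_{m'-1}})$ with $0 = a_0 \leq a_1 \leq \cdots \leq a_{m'-1}$. Then $y := \tau z$ is upper triangular with $y_{ii} = \varpi^{a_{i-1}}$, $y_{ij} = 0$ for $i > j$, and $y_{ij} \in \varpi^{a_{i-1}-1}\mathfrak{P}(E)$ for $i < j$. Using the defining conditions of $\tilde{I}^1$ on the entries of $\tilde{i}_1$ and $\tilde{i}_2$, I would expand $w_{ij} = \sum_{k \leq l} (\tilde{i}_1)_{ik}\, y_{kl}\, (\tilde{i}_2)_{lj}$ and track each term in the $\mathfrak{P}(E)$-adic filtration. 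The core observation is that $(\tilde{i}_1 y \tilde{i}_2)_{ii}$ carries a dominant contribution $(\tilde{i}_1)_{ii}\,\varpi^{a_{i-1}}\,(\tilde{i}_2)_{ii} \in \varpi^{a_{i-1}}(1 + \mathfrak{P}(E))$, while every other contribution picks up either a sub-diagonal factor in $\mathfrak{P}(E)$ from one of the $\tilde{i}_k$, or an off-diagonal factor in $\varpi^{-1}\mathfrak{P}(E)$ from $y$ paired with an additional factor in $\mathfrak{P}(E)$. The inclusion $\varpi^{-1}\mathfrak{P}(E) \cdot \mathfrak{P}(E) \subseteq \mathfrak{A}(E)$, coming from the ramification structure of $\mathfrak{A}(E)$, then places all these remainders in $\varpi^{a_{i-1}}\mathfrak{P}(E)$, so that $w_{ii} \in \varpi^{a_{i-1}}(1 + \mathfrak{P}(E))$ is in particular non-zero.

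The non-vanishing of \emph{every} diagonal entry forces the permutation underlying the monomial matrix $w$ to be trivial, i.e., $w$ is diagonal. Since $\mathbf{W}$ parametrizes a discrete transversal of Iwahori double-cosets (with entries pinned down modulo $1 + \mathfrak{P}(E)$ by their valuation), the constraint $w_{ii}\,\varpi^{-a_{i-1}} \in 1 + \mathfrak{P}(E)$ forces $w_{ii} = \varpi^{a_{i-1}} = \tau_{ii}$ exactly, and therefore $w = \tau$.

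The main obstacle is the noncommutativity of $A(E)$: the uniformizer $\varpi$ of $\ent_{D'}$ does not commute in general with entries of $\mathfrak{A}(E)$, so the $\mathfrak{P}(E)$-adic valuation and the left/right $\varpi^{\Z}$-action must be tracked simultaneously rather than collapsed into a single scalar valuation. The structural identity $\mathfrak{P}(E)^{e} = \varpi\,\mathfrak{A}(E)$ for the ramification index $e$, together with its consequence $\varpi^{-1}\mathfrak{P}(E) \cdot \mathfrak{P}(E) \subseteq \mathfrak{A}(E)$, serves as the noncommutative replacement for the usual ultrametric inequality and is what makes the entry-by-entry bookkeeping in the estimates above go through.
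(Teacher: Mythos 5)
Your proposal has a genuine gap, in two places. First, the structural inclusion you rely on is false in general. Since $\varpi\in\ent_{D'}^{\times}\cdot\wp_{D'}\subset D'$ centralizes $E$ and $\mathfrak{A}(E)$ is the unique hereditary order normalized by $E^{\times}$, conjugation by $\varpi$ preserves $\mathfrak{A}(E)$, so $\varpi\mathfrak{A}(E)=\mathfrak{P}(E)^{e}$ for some integer $e\geq 1$; your claimed consequence $\varpi^{-1}\mathfrak{P}(E)\cdot\mathfrak{P}(E)\subseteq\mathfrak{A}(E)$ is equivalent to $e\leq 2$, and that bound fails. For instance, take $D$ of reduced degree $3$ and $E=F[\beta]$ unramified cubic inside $D$: then $N\cong D$, $A(E)=D$, $D'=E$, $\mathfrak{A}(E)=\ent_D$, $\mathfrak{P}(E)=\wp_D$, and $\varpi$ is $\varpi_F$ up to a unit, so $\varpi\mathfrak{A}(E)=\wp_D^{3}$ and $\varpi^{-1}\mathfrak{P}(E)\cdot\mathfrak{P}(E)=\wp_D^{-1}\not\subseteq\ent_D$ (and this configuration occurs with $m'\geq 2$, e.g. $m=2$). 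Second, and more fundamentally, even when $e=1$ (e.g. the split case) the "dominant diagonal term" estimate is wrong: in $w_{ii}=\sum_{k\leq l}(\tilde{i}_1)_{ik}y_{kl}(\tilde{i}_2)_{li}$ the error terms carry the factors $\varpi^{a_{k-1}}$ with $k\leq i$, and since $a_{k-1}$ can be strictly smaller than $a_{i-1}$ these terms need not lie in $\varpi^{a_{i-1}}\mathfrak{P}(E)$ and can swamp the main term. Concretely, with $m'=2$, $\tau=\mathrm{diag}(1,\varpi)$, $z=\left(\begin{smallmatrix}1&1\\0&1\end{smallmatrix}\right)\in\mathcal{Z}$, $\tilde{i}_1=\left(\begin{smallmatrix}1&0\\-\varpi&1\end{smallmatrix}\right)\in\tilde{I}^1$ and $\tilde{i}_2=\mathbb{I}_2$, the $(2,2)$ entry of $\tilde{i}_1\tau z\tilde{i}_2$ equals $-\varpi\cdot 1+1\cdot\varpi=0$. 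So elements of $\tilde{I}^1\tau z\tilde{I}^1$ can have vanishing diagonal entries, and the step "every diagonal entry of $w$ is non-zero, hence the underlying permutation is trivial" is unavailable.

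Ruling out non-diagonal monomial matrices in the double coset is exactly the content of the lemma, and it cannot be done by entrywise domination: you must exploit globally that $w$ itself is monomial. That is what the paper's proof does: it first normalizes $x$ to be lower unipotent by absorbing its other parts into a modified $z$; then, on the block of rows where the exponent of $\tau$ is minimal, it runs a minimal-valuation argument showing that if some $z_{ij}$ there were not in $\mathfrak{A}(E)$, a specific entry of $w=x\tau z y$ would have $\mathfrak{P}(E)$-valuation not divisible by $\mathrm{v}(\varpi)$, contradicting $w\in\mathbf{W}$; finally it performs a block decomposition isolating that first block and concludes by induction on $m'$. Some argument of this global type (using monomiality of $w$ and an induction or descending block reduction), rather than the local estimate you propose, is needed.
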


\begin{proof}
For every $r\in\{1,\dots, m'\}$ we denote $\bm\Delta_{(r)}$, $\mathcal{Z}_{(r)}$, $\tilde{I}^1_{(r)}$ and 
$\mathbf{W}_{(r)}$ the subsets of $GL_{r}(A(E))$ similar to those defined for $GL_{m'}(A(E))$. 
We prove the statement of the lemma by induction on $r$. 
If $r=1$ we have 
$\bm\Delta_{(1)}=\varpi^\Z$, $\mathcal{Z}_{(1)}=\{1\}$, $\tilde{I}^1_{(1)}=1+\mathfrak{P}(E)$ and 
$\mathbf{W}_{(1)}=\varpi^{\Z}$ and we have
$(1+\mathfrak{P}(E))\varpi^a(1+\mathfrak{P}(E))\cap \varpi^{\Z}=\varpi^a(1+\mathfrak{P}(E))\cap \varpi^{\Z}=\{\varpi^a\}$ for every $a\in\Z$.
Now we suppose the statement true for every $r<m'$.
Let $x,y\in\tilde{I}^1$ such that $x\tau z y\in\mathbf{W}$. 
We proceed by steps.\\
\emph{First step}. We consider the decomposition
$\tilde{I}^1=(\tilde{I}^1\cap\U^-)(\tilde{I}^1\cap\U)(\tilde{I}^1\cap\M)$ and we write $x=x_1x_2x_3$ with $x_1\in \tilde{I}^1\cap\U^-$, $x_2\in \tilde{I}^1\cap\U$ and $x_3\in \tilde{I}^1\cap\M$. Then we have
$$x\tau z y=x_1\tau\big((\tau^{-1}x_2\tau)(\tau^{-1}x_3\tau)z(\tau^{-1}x_3^{-1}\tau)\big)(\tau^{-1}x_3\tau)y.$$ 
We observe that $\tau^{-1}x_3\tau$ is a diagonal matrix with coefficients in $1+\mathfrak{P}(E)$ and the conjugate of $z$ by this element is in $\mathcal{Z}$.
Moreover, $\tau^{-1}x_2\tau$ is in $\tilde{I}^1\cap\U$ and if we multiply it by an element of $\mathcal{Z}$ we obtain another element of $\mathcal{Z}$. 
If we set $z_1=\tau^{-1}x_2x_3\tau z\tau^{-1}x_3^{-1}\tau\in \mathcal{Z}$ then $\tilde{I}^1\tau z\tilde{I}^1=\tilde{I}^1\tau  z_1\tilde{I}^1$ and $(\tilde{I}^1\cap\U^-)\tau z_1\tilde{I}^1\cap \mathbf{W}\neq\emptyset$.
Hence, we can suppose $x\in \tilde{I}^1\cap\U^-$.\\
\emph{Second step}. Let $a_1\leq\dots\leq a_{m'}\in\N$ such that $\tau=\mathrm{diag}(\varpi^{a_i})$ and let $s\in\N^*$ such that $a_1=\cdots=a_s$ and $a_1< a_{s+1}$. 
We want to prove $z_{ij}\in \mathfrak{A}(E)$ for every $i\in\{1,\dots,s\}$ so we assume the opposite and we look for a contradiction. 
Let $\mathrm{v}$ be the valuation on $A(E)$ associated to $\mathfrak{P}(E)$ and let
\begin{align*}
b&=\min\{\mathrm{v}(\varpi^{a_1}z_{ij})\,|\,1\leq i\leq s, 1\leq j\leq m'\}\\
k&=\min\{1\leq j\leq m'\,|\, \text{there exists }z_{ij} \text{ with }1\leq i\leq s \text{ such that }\mathrm{v}(\varpi^{a_1}z_{ij})=b\}.
\end{align*}
Let $1\leq h\leq s$ be such that $\mathrm{v}(\varpi^{a_1}z_{hk})=b$.
By hypothesis the element $z_{hk}$ is not in $\mathfrak{A}(E)$ and so $h<k$ and 
\begin{equation}\label{eq:conditionb}
(a_1-1)\mathrm{v}(\varpi)<b<a_1\mathrm{v}(\varpi).
\end{equation}
We observe that for every $i\in\{1,\dots,m'\}$ and $j>i$ we have $\mathrm{v}(\varpi^{a_i}z_{ij})\geq b$: if $i\leq s$ by definition of $b$ and if 
$i>s$ because $\mathrm{v}(\varpi^{a_i}z_{ij})=a_i\mathrm{v}(\varpi)+\mathrm{v}(z_{ij})>(a_i-1)\mathrm{v}(\varpi)\geq a_1\mathrm{v}(\varpi)>b$. 
We consider the coefficient at position $(h,k)$ of $x\tau z y$ which is equal to 
$$\sum_{e=1}^{m'}\sum_{f=1}^{m'}x_{he}\varpi^{a_e}z_{ef}y_{fk}
=\sum_{e=1}^{h}\sum_{f=e}^{m'}x_{he}\varpi^{a_1}z_{ef}y_{fk}.$$
since $x_{he}=0$ if $e>h$ and $z_{ef}=0$ if $f<e$.
Now,
\begin{itemize}[$\bullet$]
\item if $e=h$ and $f=k$ then $\mathrm{v}(x_{hh}\varpi^{a_1}z_{hk}y_{kk})=b$  because $x_{hh}=1$, and $y_{kk}\in 1+\mathfrak{P}(E)$;
\item if $e=h$ and $f<k$ then $\mathrm{v}(x_{hh}\varpi^{a_1}z_{hf}y_{fk})>b$  by definition of $k$;
\item if $e=h$ and $f>k$ then $\mathrm{v}(x_{hh}\varpi^{a_1}z_{hf}y_{fk})>b$ because $y_{fk}\in\mathfrak{P}(E)$;
\item if $e<h$ then $\mathrm{v}(x_{he}\varpi^{a_1}z_{ef}y_{fk})>b$ because $x_{he}\in\mathfrak{P}(E)$.
\end{itemize}
We obtain an element of valuation $b$. 
Then $b$ must be a multiple of $\mathrm{v}(\varpi)$ because $x\tau z y\in\mathbf{W}$ but this in contradiction with (\ref{eq:conditionb}). 
Hence, $z_{ij}\in \mathfrak{A}(E)$ for every $i\in\{1,\dots,s\}$. Now, we can write $z=z'z''$ with 
$z'_{ii}=1$, $z'_{ij}=z_{ij}$ if $i\in\{s+1,\dots,m'\}$ and $j>i$ and $z'_{ij}=0$ otherwise and 
$z''_{ii}=1$, $z''_{ij}=z_{ij}$ if $i\in\{1,\dots,s\}$ and $j>i$ and $z''_{ij}=0$ otherwise. 
Then $z''\in\tilde{I}^1$ and so $\tilde{I}^1\tau z\tilde{I}^1=\tilde{I}^1\tau  z'\tilde{I}^1$ and 
$(\tilde{I}^1\cap\U^-)\tau z'\tilde{I}^1\cap \mathbf{W}\neq\emptyset$.
Then we can suppose $z$ of the form $\left(\begin{smallmatrix}\mathbb{I}_{s}&0\\0&\hat{z}\end{smallmatrix}\right)$ 
with $\hat{z}\in \mathcal{Z}_{(m'-s)}$.\\
\emph{Third step}. We write $x=x'x''$ with 
$x'_{ii}=1$, $x'_{ij}=x_{ij}$ if $i\in\{s+1,\dots,m'\}$ and $j<i$ and $x'_{ij}=0$ otherwise and  
$x''_{ii}=1$, $x''_{ij}=x_{ij}$ if $i\in\{1,\dots,s\}$ and $j<i$ and $x''_{ij}=0$ otherwise. Then $\tau^{-1}x''\tau\in \tilde{I}^1$ and it commutes with $z$. 
Then we can suppose $x$ of the form  
$\left(\begin{smallmatrix}\mathbb{I}_{s}&0\\ x'''&\hat{x}\end{smallmatrix}\right)$ with $x'''\in M_{(m'-s)\times s}(\mathfrak{P}(E))$ and
$\hat{x}\in \tilde{I}^1_{(m'-s)}$.\\
\emph{Fourth step}.
Let $\tau=\left(\begin{smallmatrix}\varpi^{a_1}\mathbb{I}_{s}&0\\ 0&\hat{\tau}\end{smallmatrix}\right)$ 
with $\hat\tau\in\bm\Delta_{(m'-s)}$ and 
$y=\left(\begin{smallmatrix}y_1&y_2\\y_3&\hat{y}\end{smallmatrix}\right)$ with $y_1\in \tilde{I}^1_{(s)}$,
$y_2\in M_{s\times (m'-s)}(\mathfrak{A}(E))$,
$y_3\in M_{(m'-s)\times s}(\mathfrak{P}(E))$ and
$\hat{y}\in \tilde{I}^1_{(m'-s)}$. 
Then the product $x\tau z y$ is
$$\begin{pmatrix}\mathbb{I}_s&0\\x'''&\hat{x}\end{pmatrix}
\begin{pmatrix}\varpi^{a_1}\mathbb{I}_s&0\\ 0&\hat{\tau}\end{pmatrix}
\begin{pmatrix}\mathbb{I}_s&0\\ 0&\hat{z}\end{pmatrix}
\begin{pmatrix}y_1&y_2\\y_3&\hat{y}\end{pmatrix}
=\begin{pmatrix}\varpi^{a_1}y_1&\varpi^{a_1}y_2\\ t&x'''\varpi^{a_1}y_2+\hat{x}\hat{\tau}\hat{z}\hat{y}\end{pmatrix}
$$
where $t=x'''\varpi^{a_1}y_1+\hat{x}\hat{\tau}\hat{z}y_3$. 
Since $x\tau z y$ is in $\mathbf{W}$ and since $y_1$ is invertible then $\varpi^{a_1}y_1$ must be in $\mathbf{W}_{(s)}$ and so $\varpi^{a_1}y_2=t=0$. 
This implies $y_1=\mathbb{I}$ and so $x\tau z y=\left(\begin{smallmatrix}\varpi^{a_1}\mathbb{I}_s& 0\\ 0&\hat{x}\hat{\tau}\hat{z}\hat{y}\end{smallmatrix}\right)$ with 
$\hat{x}\hat{\tau}\hat{z}\hat{y}\in\mathbf{W}_{(m'-s)}$.
Now, since $\tilde{I}^1_{(m'-s)}\hat\tau\hat{z}\tilde{I}^1_{(m'-s)}\cap \mathbf{W}_{(m'-s)}\neq\emptyset$, by inductive hypothesis we have $\hat{x}\hat{\tau}\hat{z}\hat{y}=\hat{\tau}$ and so $x\tau z y=\tau$.
\end{proof}

\begin{lemma}\label{lemma:supportrel9:4}
We have $J^1_\P \tau_P J^1_\P w\tau_\alpha w^{-1}J^1_\P
\cap J^1_\P B^{\times}J^1_\P=J^1_\P \tau_Q(U\cap wU^-w^{-1})J^1_\P$.
\end{lemma}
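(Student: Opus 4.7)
\emph{Plan.} By lemma \ref{lemma:supportrel9:2}, the claim is equivalent to
\[
J^1_\P\tau_Q\mathcal{V}J^1_\P\cap J^1_\P B^\times J^1_\P = J^1_\P\tau_Q(U\cap wU^-w^{-1})J^1_\P.
\]
For the inclusion $\supseteq$, I would first check that $U\cap wU^-w^{-1}\subseteq\mathcal{V}$. Using remark \ref{rmk:roots} to identify the relevant root set $\bm\Phi^+\cap w\bm\Phi^-$ with $w\bm\Psi_{\widehat\alpha}^-\cap\bm\Phi_{\widehat P}^+$, this reduces root-by-root to $\ent_{D'}\subseteq\varpi^{-1}\mathfrak{J}^1_0$, which follows from the general containment $\mathfrak{J}^1\supseteq\mathfrak{Q}$ (hence $\mathfrak{J}^1_0\supseteq\wp_{D'}=\varpi\ent_{D'}$, so $\varpi^{-1}\mathfrak{J}^1_0\supseteq\ent_{D'}$). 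Combined with $\tau_Q\in B^\times$ and $U\cap wU^-w^{-1}\subseteq K_B\subseteq B^\times$, this yields $\supseteq$.

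For the harder inclusion $\subseteq$, I would take $b\in B^\times$ with $b\in J^1_\P\tau_Q vJ^1_\P$ for some $v\in\mathcal{V}$ and aim to show that the same double coset can be represented by $\tau_Q v'$ with $v'\in U\cap wU^-w^{-1}$. Since $\mathfrak{J}^1\subseteq M_{m'}(\mathfrak{P}(E))$ gives $J^1_\P\subseteq\tilde{I}^1$, we have $b\in\tilde{I}^1\tau_Q v\tilde{I}^1$. The strategy is to combine this with the Bruhat decomposition of $B^\times$ relative to its pro-$p$-Iwahori subgroup $I_B(1)=K_B^1 U\subseteq\tilde{I}^1$: write $b=i_1\mathbf{w}i_2$ with $i_1,i_2\in I_B(1)$ and $\mathbf{w}$ a monomial matrix over $D'^\times$, so that $\mathbf{w}\in\tilde{I}^1\tau_Q v\tilde{I}^1$; factoring $\mathbf{w}=\widetilde w_0\cdot m$ with $\widetilde w_0\in\widetilde W$ and $m\in M$, lemma \ref{lemma:supportrel9:3} forces $\widetilde w_0=\tau_Q$. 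Decomposing $v=\prod_{\alpha'}(\mathbb{I}+t_{\alpha'}e_{\alpha'})$ root by root with $t_{\alpha'}\in\varpi^{-1}\mathfrak{J}^1_0$, and writing each $t_{\alpha'}=s_{\alpha'}+r_{\alpha'}$ with $s_{\alpha'}\in\ent_{D'}$, the output of the rigidity statement together with lemma \ref{lemma:J1Ptau} and the triangular decomposition of $J^1_\P$ absorbs the fractional excess $r_{\alpha'}$ into the two $J^1_\P$-factors, leaving $v'=\prod(\mathbb{I}+s_{\alpha'}e_{\alpha'})\in U\cap wU^-w^{-1}$ as the desired representative.

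\emph{Main obstacle.} The delicate step is the last absorption argument: one must show that every fractional coefficient of $v$ (in $\varpi^{-1}\mathfrak{J}^1_0\setminus\ent_{D'}$) can be pushed into the $J^1_\P$-factors without leaving the orbit $J^1_\P\tau_Q v'J^1_\P$. This requires careful book-keeping of how the Bruhat cell of $b$ inside $G=GL_{m'}(A(E))$ restricts to the corresponding structure inside $B^\times$, and a precise use of lemma \ref{lemma:supportrel9:3} applied with $\tau=\tau_Q$ and $z=v$.
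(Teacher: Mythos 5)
Your skeleton matches the paper's up to the point where the real work begins: like the paper, you reduce via lemma \ref{lemma:supportrel9:2} to $J^1_\P\tau_Q\V J^1_\P$, check $U\cap wU^-w^{-1}\subset\V$ for the easy inclusion, and use the decomposition $B^{\times}=K^1_BU\mathbf{W}UK^1_B$ together with the rigidity lemma \ref{lemma:supportrel9:3} to force the monomial part of an element $b\in B^\times\cap J^1_\P\tau_Q vJ^1_\P$ to be exactly $\tau_Q$ (note the lemma gives $\mathbf{w}=\tau_Q$ itself, not merely its $\widetilde W$-part). Up to here the argument is sound and is essentially the paper's first display.

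The gap is in what you call the absorption step, which is precisely the content of the lemma and is not carried out. Your proposed mechanism --- write $v=\prod(\mathbb{I}+t_{\alpha'}e_{\alpha'})$ and split each entry $t_{\alpha'}=s_{\alpha'}+r_{\alpha'}$ with $s_{\alpha'}\in\ent_{D'}$ --- does not work as stated: the entries lie in $\varpi^{-1}\mathfrak{J}^1_0$, and in general $\varpi^{-1}\mathfrak{J}^1_0\neq\ent_{D'}+\mathfrak{J}^1_0$ (one only has $\wp_{D'}\subset\mathfrak{J}^1_0$, which gives the opposite inclusion $\ent_{D'}\subset\varpi^{-1}\mathfrak{J}^1_0$), so there is no a priori "integral part plus absorbable excess" decomposition; indeed a general $v\in\V$ does \emph{not} lie in $(U\cap wU^-w^{-1})J^1_\P$, and the hypothesis that its double coset meets $K^1_B\tau_QUK^1_B$ must be used to produce such a decomposition, not assumed. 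Even granting a splitting, the excess factors would have to be shown to lie in $J^1_\P$ (not just $J^1$: these are positive-root factors, but they must be moved across $\tau_Q$, which rescales entries by powers of $\varpi$, and across the other root factors, generating commutator terms), and none of this book-keeping is supplied. The paper closes this step differently and without entry-wise manipulation: from $J^1_\P\tau_QvJ^1_\P\cap K^1_B\tau_QUK^1_B\neq\emptyset$ it deduces $v\in\tau_Q^{-1}J^1_\P\tau_QUJ^1_\P\cap\U$, then uses $K^1_B\subset J^1_\P$, the normalization of $J^1_\P$ by $U$, lemma \ref{lemma:J1Ptau:2} and lemma \ref{lemma:J1PU} to get $v\in UJ^1_\P\cap\V$, and finally a conjugation-by-$w$ argument (with $U=(U\cap wU^-w^{-1})(U\cap wUw^{-1})$, the containment $\V^w\subset\U^-$, and remark \ref{rmk:J1}) to land in $(U\cap wU^-w^{-1})J^1_\P$. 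Since your proposal explicitly defers exactly this part as the "main obstacle", it does not yet constitute a proof.
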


\begin{proof}
By lemma \ref{lemma:supportrel9:2} we have 
$J^1_\P\tau_P J^1_\P w\tau_\alpha w^{-1}J^1_\P =J^1_\P\tau_Q \mathcal{V} J^1_\P$.
Now, since 
$\mathfrak{J}^1\subset M_{m'}(\mathfrak{P}(E))$ we have 
$\V\subset \mathcal{Z}$ and $J^1_\P\subset \tilde{I}^1$ and so we obtain 
\begin{align*}
J^1_\P\tau_P J^1_\P w\tau_\alpha w^{-1}J^1_\P \cap B^{\times}
&\subset\tilde{I^1}\tau_Q \mathcal{Z} \tilde{I^1} \cap K^1_BU\mathbf{W}UK^1_B=K^1_BU(\tilde{I^1}\tau_Q \mathcal{Z} \tilde{I^1} \cap \mathbf{W})UK^1_B\\
\text{\scriptsize{(lemma \ref{lemma:supportrel9:3})}} &=K^1_BU\tau_Q UK^1_B=K^1_B\tau_Q UK^1_B.
\end{align*}
This implies $J^1_\P\tau_P J^1_\P w\tau_\alpha w^{-1}J^1_\P \cap B^{\times}= J^1_\P\tau_Q \mathcal{V} J^1_\P\cap K^1_B\tau_Q UK^1_B$. 
Let now $v\in\mathcal{V}$ be such that 
$J^1_\P\tau_Q v J^1_\P\cap K^1_B\tau_Q UK^1_B\neq\emptyset$. 
Then 
$v\in \tau_Q^{-1}J^1_\P K^1_B \tau_Q U K^1_B J^1_\P \cap\V\subset \tau_Q^{-1}J^1_\P \tau_Q U J^1_\P \cap\U$. 
Now $U=K_B\cap \U\subset J\cap \P $ normalizes $J^1_\P$ and so
$v\in \tau_Q^{-1}J^1_\P \tau_QJ^1_\P U\cap\U$ which is in $
\big(\tau_Q^{-1}(J^1_\P \cap\U^-_{\widehat{Q}})\tau_Q J^1_\P \cap\U\big)U$ by lemma \ref{lemma:J1Ptau:2}.
Hence, by lemma \ref{lemma:J1PU} we obtain
$v\in UJ^1_\P \cap\V\subset UJ^1\cap\V$. 
By lemma \ref{lemma:roots} we have 
$U\cap wU^-w^{-1}=U_{\widehat{P}}^+\cap wU_{\widehat{\alpha}}^-w^{-1}$ and proceeding similarly to proof of lemma \ref{lemma:V} we can prove $U_{\widehat{P}}^+\cap wU_{\widehat{\alpha}}^-w^{-1}\subset \V$. 
We obtain
\begin{align*}
UJ^1\cap\V&=(U\cap wU^-w^{-1})(U\cap wUw^{-1})J^1 \cap \V =(U\cap wU^-w^{-1})\Big(J^1 (U\cap wUw^{-1}) \cap \V\Big)\\
&=(U\cap wU^-w^{-1})\Big(J^1 (w^{-1}Uw\cap U) \cap \V^{w}\Big)^{w^{-1}}
\end{align*}
By definition of $\V$ we have  
$\V^{w}=(J^1_\P \cap w \U_{\widehat{\alpha}}^- w^{-1}\cap \U_{\widehat{P}}^+)^{w\tau_\alpha}
\subset (\U_{\widehat{\alpha}}^-)^{\tau_\alpha}\subset \U^-$ and then
$UJ^1\cap\V\subset (U\cap wU^-w^{-1})\big(J^1 \U \cap \U^-\big)^{w^{-1}}$ that by remark \ref{rmk:J1} is equal to $(U\cap wU^-w^{-1})J^1$.
We obtain $v$ in $(U\cap wU^-w^{-1})J^1\cap UJ^1_\P=
(U\cap wU^-w^{-1})(J^1\cap U)J^1_\P\subset
(U\cap wU^-w^{-1})K^1_B J^1_\P=(U\cap wU^-w^{-1}) J^1_\P$.
Hence, we have $J^1\tau_P J^1_\P w\tau_\alpha w^{-1}J^1_\P \cap J^1_\P B^{\times}J^1_\P=J^1_\P\tau_Q(U\cap wU^-w^{-1})J^1_\P$.
\end{proof}

\begin{lemma}\label{lemma:rel9:1}
For every $u\in U\cap wU^-w^{-1}$ we have $$(\widehat f_{\tau_{P}}\widehat f_w\widehat f_{\tau_{\alpha}}\widehat f_{w^{-1}})(\tau_Q u)=
q^{\ell(w)}
d(w,\alpha)\delta(\mathfrak{J}^1_0,\mathfrak{H}^1_0)^{\ell(w)}
\gamma_P\circ
\mathsf{p}\circ\kappa(w)\circ\iota\circ \gamma_i \circ \mathsf{p}\circ\kappa(w^{-1})\circ\iota\circ\mathsf{p}\circ\kappa(u)\circ\iota.$$
\end{lemma}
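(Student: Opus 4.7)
The plan is to evaluate the fourfold convolution at $\tau_Q u$ by reducing it to quantities handled by Lemma~\ref{lemma:wtauw} combined with a careful count of contributing coset representatives. The central identity driving the reduction is $\tau_Q = \tau_P\cdot w\tau_\alpha w^{-1}$, so that $\tau_Q u$ is a product of a factor (namely $\tau_P$) in the support of $\widehat f_{\tau_P}$ and a factor $(w\tau_\alpha w^{-1})u$ that lies in the support of the inner triple $\Phi := \widehat f_w\widehat f_{\tau_\alpha}\widehat f_{w^{-1}}$ after a suitable decomposition.

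First I would invoke Lemma~\ref{lemma:wtauw} for $\Phi$, which is supported in $J^1_\P w\tau_\alpha w^{-1}J^1_\P$ and satisfies
\begin{equation*}
\Phi(w\tau_\alpha w^{-1}) = \delta(\mathfrak{J}^1_0,\mathfrak{H}^1_0)^{\ell(w)}(\mathsf{p}\circ\kappa(w)\circ\iota)\circ\gamma_i\circ(\mathsf{p}\circ\kappa(w^{-1})\circ\iota).
\end{equation*}
Then I would expand
\begin{equation*}
(\widehat f_{\tau_P}\Phi)(\tau_Q u) = \sum_{x\in G/J^1_\P}\widehat f_{\tau_P}(x)\,\Phi(x^{-1}\tau_Q u),
\end{equation*}
and determine the non-zero terms by intersecting the supports. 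A term is non-zero exactly when $x\in J^1_\P\tau_P J^1_\P$ and $x^{-1}\tau_Q u\in J^1_\P\tau_P^{-1}\tau_Q J^1_\P$; using $\tau_Q u = \tau_P(w\tau_\alpha w^{-1})u$ this condition translates to $x = \tau_P v$ for $v$ in a certain double coset. Appealing to Lemma~\ref{lemma:supportrel9}, the set of such cosets is controlled by $\V(w,\alpha)J^1_\P$, and Lemma~\ref{lemma:V} and Remark~\ref{rmk:index2} further decompose $\V(w,\alpha)$ as the product $\V' = (wU_{\widehat\alpha}^+w^{-1}\cap U_{\widehat P}^-)(\V(w,\alpha)\cap J^1_\P)$ together with $d(w,\alpha)$ more cosets.

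The main obstacle will be to verify that each contributing $x$ produces the \emph{same} value, namely
$\gamma_P\circ\Phi(w\tau_\alpha w^{-1})\circ\mathsf{p}\circ\kappa(u)\circ\iota$.
To do this I would check that, when $x = \tau_P v$ with $v$ in one of the contributing cosets, the conjugates $v^{\tau_P^{-1}}$ and the relevant conjugates by $w\tau_\alpha^{-1}w^{-1}$ land in $H^1\cap\U$ or $H^1\cap\U^-$, hence in $\ker\eta_\P$, making those factors collapse to $1$. The right-hand displacement by $u$ then passes through $\iota\circ\mathsf{p}$ via Lemma~\ref{lemma:etaP}, producing the final operator $\mathsf{p}\circ\kappa(u)\circ\iota$. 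Counting the contributing cosets gives the factor $d(w,\alpha)\,q^{\ell(w)}$, where $q^{\ell(w)}$ records the index $[U\cap wU^-w^{-1}:U\cap wU^-w^{-1}\cap K^1_B]$ and $d(w,\alpha)=[\V(w,\alpha):\V']$ by Remark~\ref{rmk:index2}. Combining this multiplicity with the $\delta(\mathfrak{J}^1_0,\mathfrak{H}^1_0)^{\ell(w)}$ factor from Lemma~\ref{lemma:wtauw} and the leftmost $\widehat f_{\tau_P}(\tau_P)=\gamma_P$ yields the asserted formula.
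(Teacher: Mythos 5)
Your plan for the core computation at $\tau_Q$ coincides with the paper's proof: the support of the summand function via Lemma \ref{lemma:supportrel9}, the fact that all contributing terms agree because the conjugates $x^{\tau_P^{-1}}$ and $(x^{-1})^{w\tau_\alpha w^{-1}}$ for $x\in\V(w,\alpha)$ lie in $J^1_\P\cap\U^-\subset\ker\eta_\P$, the multiplicity $[\V(w,\alpha):\V(w,\alpha)\cap J^1_\P]=d(w,\alpha)q^{\ell(w)}$ from Remark \ref{rmk:index2}, and Lemma \ref{lemma:wtauw} for the inner triple. The genuine gap is in how you extract the factor $\mathsf{p}\circ\kappa(u)\circ\iota$. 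You assert that the right-hand displacement by $u$ "passes through $\iota\circ\mathsf{p}$ via Lemma \ref{lemma:etaP}", but that lemma only concerns $\eta(j)$ and $\eta_\P(j)$ for $j$ in $J^1$ resp.\ $J^1_\P$, whereas $u\in U\cap wU^-w^{-1}\subset K_B$ lies in $J$ but in general not in $J^1$, so it cannot produce the operator $\kappa(u)$. Concretely, each summand involves $\Phi(v^{-1}w\tau_\alpha w^{-1}u)$ with $\Phi=\widehat f_w\widehat f_{\tau_\alpha}\widehat f_{w^{-1}}$, and since $\Phi$ is only $J^1_\P$-bi-equivariant there is no formal way to peel $u$ off as a right composition by $\mathsf{p}\circ\kappa(u)\circ\iota$. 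For the same reason, your appeal to Lemma \ref{lemma:supportrel9} is only licensed at the point $\tau_Q$; to control the sum at $\tau_Qu$ you would first need $(w\tau_\alpha w^{-1})u(w\tau_\alpha w^{-1})^{-1}\in K^1_B\subset J^1_\P$, which is precisely the computation underlying Lemma \ref{lemma:rel56} and is nowhere invoked in your sketch.

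What is missing is the paper's preliminary reduction, which handles $u$ before any convolution sum is expanded: since $w^{-1}uw$ is a product of elements of $U_{-\alpha'}$ with $\alpha'\in\bm\Psi^+_{\widehat\alpha}$, Lemma \ref{lemma:rel56} gives $\widehat f_{\tau_\alpha}=\widehat f_{\tau_\alpha}\widehat f_{w^{-1}uw}$, Lemma \ref{lemma:finitealgebras} gives $\widehat f_{w^{-1}uw}\widehat f_{w^{-1}}=\widehat f_{w^{-1}}\widehat f_u$, and then, because $u$ normalizes $J^1_\P$, Lemma \ref{lemma:product} yields $(\widehat f_{\tau_P}\widehat f_w\widehat f_{\tau_\alpha}\widehat f_{w^{-1}})(\tau_Qu)=(\widehat f_{\tau_P}\widehat f_w\widehat f_{\tau_\alpha}\widehat f_{w^{-1}})(\tau_Q)\circ\mathsf{p}\circ\kappa(u)\circ\iota$. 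After this step, the evaluation at $\tau_Q$ that you describe is exactly the paper's argument and completes the proof; without it (or an equivalent manipulation using the one-dimensionality of the intertwining spaces $I_y(\kappa)$ together with the conjugation fact above), the appearance of $\mathsf{p}\circ\kappa(u)\circ\iota$ in your claimed common value is unjustified.
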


\begin{proof}
By lemma \ref{lemma:supportrel9:4} the support of 
$\widehat f_{\tau_{P}}\widehat f_w\widehat f_{\tau_{\alpha}}\widehat f_{w^{-1}}$ is contained in  
$J^1_\P\tau_{Q}(U\cap wU^-w^{-1})J^1_\P$. 
Let $u\in U\cap wU^-w^{-1}$. 
By lemma \ref{lemma:roots} we have $U\cap wU^-w^{-1}=U_{\widehat{P}}^+\cap wU_{\widehat\alpha}^-w^{-1}$, by lemma \ref{lemma:rel56} we have 
$\widehat f_{\tau_{\alpha}}=\widehat f_{\tau_{\alpha}}\widehat f_{w^{-1}u w}$ and by lemma \ref{lemma:finitealgebras} we have 
$\widehat f_{w^{-1}uw}\widehat f_{w^{-1}}=\widehat f_{w^{-1}}\widehat f_{u}$. 
Since $u$ is in $U=K_B\cap\U\subset J\cap\P$, it normalizes $J^1_\P$ and then by lemma \ref{lemma:product} we obtain 
$(\widehat f_{\tau_{P}}\widehat f_w\widetilde f_{\tau_{\alpha}}\widehat f_{w^{-1}})(\tau_Qu)
=(\widehat f_{\tau_{P}}\widehat f_w\widehat f_{\tau_{\alpha}}\widehat f_{w^{-1}}\widehat f_{u})(\tau_Qu)
=(\widehat f_{\tau_{P}}\widehat f_w\widehat f_{\tau_{\alpha}}\widehat f_{w^{-1}})(\tau_Q)\circ\mathsf{p}\circ\kappa(u)\circ\iota.$
It remains to calculate
$$(\widehat f_{\tau_{P}}\widehat f_w\widehat f_{\tau_{\alpha}}\widehat f_{w^{-1}})(\tau_Q)
=\sum_{x\in G/J^1_\P}\widehat f_{\tau_{P}}(\tau_P x)(\widehat f_w\widehat f_{\tau_{\alpha}}\widehat f_{w^{-1}})(x^{-1}w\tau_\alpha w^{-1}).$$ 
By lemma \ref{lemma:supportrel9} the support of function
$x\mapsto \widehat f_{\tau_{P}}(\tau_P x)(\widehat f_w\widehat f_{\tau_{\alpha}}\widehat f_{w^{-1}})(x^{-1}w\tau_{\alpha}w^{-1})$ is in $\V(w,\alpha)J^1_\P$.
Now, since for every $x\in \V(w,\alpha)=(J^1_\P\cap w\U_{\widehat\alpha}^+w^{-1} \cap \U_{\widehat{P}}^-)^{w\tau_\alpha^{-1} w^{-1}}$ we have $(x^{-1})^{w\tau_\alpha w^{-1}}\in J^1_\P\cap \U^-$ and 
$x^{\tau_\P^{-1}}\in (J^1_\P\cap w\U_{\widehat\alpha}^+w^{-1} \cap \U_{\widehat{P}}^-)^{\tau_Q^{-1}}\subset (J^1_\P\cap \U^-)^{\tau_Q^{-1}}$ which is in $J^1_\P\cap\U^-$ by lemma \ref{lemma:J1Ptau}, then $(x^{-1})^{w\tau_\alpha w^{-1}}$ and $x^{\tau_\P^{-1}}$ are in the kernel of $\eta_\P$. We obtain
\begin{align*}
(\widehat f_{\tau_{P}}\widehat f_w\widehat f_{\tau_{\alpha}}\widehat f_{w^{-1}})(\tau_Q)
&=[\V(w,\alpha):\V(w,\alpha)\cap H^1]\:
\widehat f_{\tau_{P}}(\tau_P)\circ
(\widehat f_w\widehat f_{\tau_{\alpha}}\widehat f_{w^{-1}})(w\tau_{\alpha}w^{-1})\\
\text{\scriptsize{(remark \ref{rmk:index2})}}
&=d(w,\alpha)q^{\ell(w)}
\widehat f_{\tau_{P}}(\tau_P)\circ
(\widehat f_w\widehat f_{\tau_{\alpha}}\widehat f_{w^{-1}})(w\tau_{\alpha}w^{-1})\\
\text{\scriptsize{(lemma \ref{lemma:wtauw})}}&=
d(w,\alpha)q^{\ell(w)}\delta(\mathfrak{J}^1_0,\mathfrak{H}^1_0)^{\ell(w)}
\gamma_P\circ
\mathsf{p}\circ\kappa(w)\circ\iota\circ \gamma_i \circ \mathsf{p}\circ\kappa(w^{-1})\circ\iota
\end{align*}
and so the result.
\end{proof}

\begin{lemma}\label{lemma:rel9:2}
We have 
$\gamma_Q=d(w,\alpha)\delta(\mathfrak{J}^1_0,\mathfrak{H}^1_0)^{\ell(w)}
\gamma_P\circ\mathsf{p}\circ\kappa(w)\circ\iota\circ \gamma_i \circ 
\mathsf{p}\circ\kappa(w^{-1})\circ\iota.$
\end{lemma}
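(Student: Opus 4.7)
The plan is to reduce the identity to an equality inside the group $\widetilde{\mathcal{W}}$ of lemma~\ref{lemma:isomWtilde}, followed by a combinatorial check of the exponents of $\partial$.

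First, applying remark~\ref{rmk:xiw} to both $w$ and $w^{-1}$ (which share the same length) together with the definition $\gamma_i=\partial^{(m'-i)(m'-i-1)/2}\varepsilon(\tau_i)$ yields
\[
\mathsf{p}\circ\kappa(w)\circ\iota\circ\gamma_i\circ\mathsf{p}\circ\kappa(w^{-1})\circ\iota
=\delta(\mathfrak{J}^1_0,\mathfrak{H}^1_0)^{-\ell(w)}\,\partial^{(m'-i)(m'-i-1)/2}\,\varepsilon(w)\circ\varepsilon(\tau_i)\circ\varepsilon(w^{-1}).
\]
Since $\varepsilon$ is a group homomorphism by lemma~\ref{lemma:isomWtilde}, the last three factors collapse to $\varepsilon(w\tau_i w^{-1})$. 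By the root system paragraph one has $w\tau_i w^{-1}=\tau_{P(w,\alpha)}^{-1}\tau_{Q(w,\alpha)}$, and hence $\varepsilon(w\tau_iw^{-1})=\varepsilon(\tau_P)^{-1}\varepsilon(\tau_Q)$.

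Next, lemma~\ref{lemma:gammacommute} together with the homomorphism property of $\varepsilon$ gives $\gamma_P=c_P\,\varepsilon(\tau_P)$ and $\gamma_Q=c_Q\,\varepsilon(\tau_Q)$, where $c_P=\prod_{j\in P'(w,\alpha)}\partial^{(m'-j)(m'-j-1)/2}$ and $c_Q=\prod_{j\in Q(w,\alpha)}\partial^{(m'-j)(m'-j-1)/2}$. Substituting into the right-hand side of the statement, the factor $\delta(\mathfrak{J}^1_0,\mathfrak{H}^1_0)^{\ell(w)}$ cancels the $\delta^{-\ell(w)}$ above, the $\varepsilon(\tau_P)$ inside $\gamma_P$ cancels $\varepsilon(\tau_P)^{-1}$, and $d(w,\alpha)=\partial^{\ell(w)}$ by remark~\ref{rmk:index2}. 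The right-hand side thus becomes $\partial^{\ell(w)+(m'-i)(m'-i-1)/2}\,c_P\,\varepsilon(\tau_Q)$, and equating with $\gamma_Q=c_Q\,\varepsilon(\tau_Q)$ reduces the lemma to the numerical identity
\[
\ell(w)=\sum_{j\in Q(w,\alpha)}\binom{m'-j}{2}-\sum_{j\in P'(w,\alpha)}\binom{m'-j}{2}-\binom{m'-i}{2}.
\]

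Setting $A=\{w(i+1),\dots,w(m')\}$ and $B=\{a-1:a\in A\}$, the relations $P'=A\setminus B$ and $Q=B\setminus A$ collapse the first difference to $\sum_{b\in B}\binom{m'-b}{2}-\sum_{a\in A}\binom{m'-a}{2}$; the telescoping identity $\binom{m'-a+1}{2}-\binom{m'-a}{2}=m'-a$ then rewrites this as $\sum_{a\in A}(m'-a)$. Subtracting $\binom{m'-i}{2}=\sum_{k=i+1}^{m'}(m'-k)$ leaves $\sum_{k=i+1}^{m'}(k-w(k))$. The main obstacle, and the only step requiring a genuine argument, is the identification of this with $\ell(w)$: since $w$ has minimal length in $wW_{\widehat{\alpha}}$, $w$ is increasing on $\{1,\dots,i\}$ and on $\{i+1,\dots,m'\}$, so every inversion of $w$ has the form $(a,b)$ with $a\leq i<b$; for each such $b$ the number of $a\leq i$ with $w(a)>w(b)$ equals $(m'-w(b))-(m'-b)=b-w(b)$ (since $w$ is increasing on $\{i+1,\dots,m'\}$ contributes exactly $m'-b$ positions $>i$ with $w(a)>w(b)$), and summing over $b>i$ gives $\ell(w)$, as required.
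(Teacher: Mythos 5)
Your proposal is correct, and its algebraic core coincides with the paper's: both arguments transport everything into the group $\widetilde{\mathcal{W}}$ via the homomorphism $\varepsilon$ of lemma \ref{lemma:isomWtilde}, use remark \ref{rmk:xiw}, the identity $w\tau_iw^{-1}=\tau_{P(w,\alpha)}^{-1}\tau_{Q(w,\alpha)}$ and $d(w,\alpha)=\partial^{\ell(w)}$ from remark \ref{rmk:index2}, and thereby reduce the lemma to the single combinatorial identity $\sum_{h=i+1}^{m'}\big(h-w(h)\big)=\ell(w)$. Your bookkeeping of the $\partial$-exponents through $c_P$, $c_Q$ and the telescoping of binomial coefficients is just a repackaging of lemma \ref{lemma:gammai}, which the paper uses for the same purpose (and the possible index $m'\in P'(w,\alpha)$ is harmless, since $\tau_{m'}=\mathbb{I}_{m'}$ and the corresponding exponent is $0$); also, your reduction goes in the safe direction, deducing the equality in $R$ from an identity of integers, so no issue arises when $\partial$ has finite order. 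The genuine divergence is in the proof of the combinatorial identity: the paper argues by induction on $\ell(w)$, writing $w=s_jw'$ via lemma 2.12 of \cite{Chin1}, whereas you count inversions directly, observing that minimality of $w$ in $wW_{\widehat\alpha}$ (proposition \ref{prop:minlength}, which you should cite for the ``increasing on $\{1,\dots,i\}$ and on $\{i+1,\dots,m'\}$'' claim) forces every inversion to straddle position $i$, and that for each $b>i$ there are exactly $b-w(b)$ such inversions. Your count is self-contained, non-inductive and arguably more transparent about where minimality enters; the paper's induction instead leans on an already available structural lemma about reduced decompositions, at the cost of an external reference.
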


\begin{proof}
By definition of $P(w,\alpha)$ and $Q(w,\alpha)$ (see paragraph \ref{subsec:decomp}) we have 
$\tau_P^{-1}\tau_Q=w\tau_i w^{-1}=\prod_{h=i+1}^{m'}\tau_{w(h)}^{-1}\tau_{w(h)-1}$ and so
\begin{align*}
\gamma_P^{-1}\gamma_Q&=\prod_{h=i+1}^{m'}\gamma_{w(h)}^{-1}\gamma_{w(h)-1}\\
\text{\scriptsize{(lemma \ref{lemma:gammai})}}
&=\prod_{h=i+1}^{m'}\partial^{m'-w(h)} \varepsilon\big(\tau_{w(h)}^{-1}\tau_{w(h)-1}\big)
=\Big(\prod_{h=i+1}^{m'}\partial^{m'-w(h)}\Big) \varepsilon(w\tau_{i}w^{-1})\\
\text{\scriptsize{(lemma \ref{lemma:gammai})}}
&=\Big(\prod_{h=i+1}^{m'}\partial^{m'-w(h)}\Big) \Big(\prod_{h=i+1}^{m'}\partial^{h-m'}\Big) \varepsilon(w)\circ\gamma_{i}\circ\varepsilon(w^{-1})\\
\text{\scriptsize{(remark \ref{rmk:xiw})}}
&=\Big(\prod_{h=i+1}^{m'}\partial^{m'-w(h)}\Big) \Big(\prod_{h=i+1}^{m'}\partial^{h-m'}\Big) \delta(\mathfrak{J}^1_0,\mathfrak{H}^1_0)^{\ell(w)}
\mathsf{p}\circ\kappa(w)\circ\iota\circ\gamma_{i}\circ\mathsf{p}\circ\kappa(w^{-1})\circ\iota\\
&=\Big(\prod_{h=i+1}^{m'}\partial^{h-w(h)}\Big) \delta(\mathfrak{J}^1_0,\mathfrak{H}^1_0)^{\ell(w)}
\mathsf{p}\circ\kappa(w)\circ\iota\circ\gamma_{i}\circ\mathsf{p}\circ\kappa(w^{-1})\circ\iota.
\end{align*}
It remains to prove $d(w,\alpha)=\prod_{h=i+1}^{m'}\partial^{h-w(h)}$.
Since by remark \ref{rmk:index2} we have $d(w,\alpha)=\partial^{\ell(w)}$, it is sufficient to prove 
$\sum_{h=i+1}^{m'}h-w(h)=\ell(w)$. 
We prove this statement by induction on $\ell(w)$. 
If $\ell(w)=1$, since $w$ is of minimal length in $wW_{\widehat{\alpha}}$, we have $w=s_\alpha=(i,i+1)$ and  
$\sum_{h=i+1}^{m'}h-w(h)=i+1-w(i+1)+\sum_{h=i+2}^{m'}h-w(h)=i+1-i+0=1.$ Let now $w$ of length $\ell(w)=n>1$. 
By lemma 2.12 of \cite{Chin1} there exists $\alpha_{jj+1}\in P$ and $w'\in W$ of length $n-1$ such that $w=s_jw'$.
Then $w'$ is of minimal length in $w'W_{\widehat\alpha}$ and so we can use inductive hypothesis. Moreover, by definition of  
$P$, there exist $\hat{h}\in\{i+1,\dots,m'\}$ such that  $j=w(\hat{h})$ and $j+1\neq w(h)$ for every 
$h\in\{i+1,\dots,m'\}$ and then $w(h)=w'(h)$ for every $h\in\{i+1,\dots,m'\}$ different from $\hat{h}$.
We obtain 
$\sum_{h=i+1}^{m'}h-w(h)= \sum_{h\neq\hat{h}}(h-w(h))+\hat{h}-w'(\hat{h})+w'(\hat{h})-w(\hat{h})
=\sum_{h\neq\hat{h}}(h-w'(h))+\hat{h}-w'(\hat{h})+(s_j(j))-j
=\sum_{h=i+1}^{m'}h-w'(h) +j+1-j=\ell(w')+1=\ell(w)$.
\end{proof}

\begin{lemma}\label{lemma:rel9:3}
We have 
$\widehat f_{\tau_{P}}\widehat f_w\widehat f_{\tau_{\alpha}}\widehat f_{w^{-1}}
=q^{\ell(w)}\widehat{f}_{\tau_Q}\sum_{u} \widehat{f}_u$
where $u$ describes a system of representatives of 
$(U\cap wU^-w^{-1})K^1/K^1$ in $U\cap wU^-w^{-1}$.
\end{lemma}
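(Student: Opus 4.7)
The plan is to verify the identity as functions in $\mathscr{H}_R(G,\eta_\P)$ by matching supports and then comparing pointwise values on double coset representatives.

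First, both sides are supported in $J^1_\P\tau_Q(U\cap wU^-w^{-1})J^1_\P$: the left side by Lemma \ref{lemma:supportrel9:4}, and the right side because each $u\in U\cap wU^-w^{-1}\subset U\subset J\cap\P$ normalizes $J^1_\P$, so Lemma \ref{lemma:product}(1) gives that $\widehat f_{\tau_Q}\widehat f_u$ is supported in $J^1_\P\tau_Q u J^1_\P$. By bi-$J^1_\P$-equivariance it is enough to compare values at $\tau_Q u$ for every $u\in U\cap wU^-w^{-1}$.

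For the left-hand side at $\tau_Q u$, Lemma \ref{lemma:rel9:1} produces the value
\[
q^{\ell(w)}\,d(w,\alpha)\,\delta(\mathfrak{J}^1_0,\mathfrak{H}^1_0)^{\ell(w)}\,\gamma_P\circ\mathsf{p}\circ\kappa(w)\circ\iota\circ\gamma_i\circ\mathsf{p}\circ\kappa(w^{-1})\circ\iota\circ\mathsf{p}\circ\kappa(u)\circ\iota,
\]
which Lemma \ref{lemma:rel9:2} collapses to $q^{\ell(w)}\gamma_Q\circ\mathsf{p}\circ\kappa(u)\circ\iota$.

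For the right-hand side, write $u=u_0 k$ where $u_0$ is the chosen representative of $u K^1_B$; then $k=u_0^{-1}u\in \U\cap K^1_B\subset J^1\cap \U\subset J^1_\P$. By Lemma \ref{lemma:product}(1) and the right-equivariance,
\[
(\widehat f_{\tau_Q}\widehat f_{u_0})(\tau_Q u_0 k)=\widehat f_{\tau_Q}(\tau_Q)\circ\widehat f_{u_0}(u_0)\circ\eta_\P(k)=\gamma_Q\circ\mathsf{p}\circ\kappa(u_0)\circ\iota\circ\eta_\P(k),
\]
and Lemma \ref{lemma:etaP}(1) combined with $\kappa|_{J^1}=\eta$ rewrites this as $\gamma_Q\circ\mathsf{p}\circ\kappa(u_0)\,\eta(k)\circ\iota=\gamma_Q\circ\mathsf{p}\circ\kappa(u)\circ\iota$, matching the left-hand side up to the factor $q^{\ell(w)}$.

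The remaining point, and the main technical obstacle, is to rule out contributions from any other representative $u_0'\ne u_0$. If such a contribution existed, then $u u_0'^{-1}$ would lie in $U\cap(\tau_Q^{-1}J^1_\P\tau_Q)J^1_\P$. I would establish the inclusion $U\cap(\tau_Q^{-1}J^1_\P\tau_Q)J^1_\P\subset K^1_B$ by decomposing $\tau_Q^{-1}J^1_\P\tau_Q$ via Lemma \ref{lemma:J1Ptau} (only the $\U^-_{\widehat Q}$-piece can exceed $J^1_\P$), writing a typical element of the product $\tau_Q^{-1}J^1_\P\tau_Q\cdot J^1_\P$ in the Bruhat form $\U^-\cdot\M\cdot\U$, then using the triviality of $\U^-\cap\M\U$ to force the $\U^-$ and $\M$ factors to cancel once the element is assumed to sit inside $\U$, and finally invoking $J^1_\P\cap K_B=K^1_B$. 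This would give $u u_0'^{-1}\in K^1_B$, hence $u_0'=u_0$, and the functional equality follows.
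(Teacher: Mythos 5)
Your proof is correct and follows essentially the same route as the paper: support via Lemma \ref{lemma:supportrel9:4}, the values of the left-hand side at $\tau_Q u$ via Lemmas \ref{lemma:rel9:1} and \ref{lemma:rel9:2}, and the values of the right-hand side via Lemma \ref{lemma:product} together with the equivariance of $\eta_\P$. The only difference is that you spell out the vanishing of the cross terms $\widehat f_{\tau_Q}\widehat f_{u_0'}$ at $\tau_Q u$ for $u_0'\neq u_0$, which the paper leaves implicit; your claimed inclusion $U\cap(\tau_Q^{-1}J^1_\P\tau_Q)J^1_\P\subset K^1_B$ is valid and is exactly what Lemmas \ref{lemma:J1Ptau:2} and \ref{lemma:J1PU} give, combined with $J^1_\P\cap K_B\subset J^1\cap B^{\times}=K^1_B$.
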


\begin{proof}
By lemma \ref{lemma:supportrel9:4} the support of 
$\widehat f_{\tau_{P}}\widehat f_w\widehat f_{\tau_{\alpha}}\widehat f_{w^{-1}}$ is contained in  
$J^1_\P\tau_{Q}(U\cap wU^-w^{-1})J^1_\P$. 
For every $u'\in U\cap wU^-w^{-1}$, by lemmas \ref{lemma:rel9:1} and \ref{lemma:rel9:2} we have
$(\widehat f_{\tau_{P}}\widehat f_w\widehat f_{\tau_{\alpha}}\widehat f_{w^{-1}})(\tau_Q u')
=q^{\ell(w)}
d(w,\alpha)\delta(\mathfrak{J}^1_0,\mathfrak{H}^1_0)^{\ell(w)}
\gamma_P\circ
\mathsf{p}\circ\kappa(w)\circ\iota\circ \gamma_i \circ \mathsf{p}\circ\kappa(w^{-1})\circ\iota\circ\mathsf{p}\circ\kappa(u')\circ\iota
=q^{\ell(w)}\gamma_Q\circ \mathsf{p}\circ\kappa(u')\circ\iota$.
To conclude we observe that 
$\big(\widehat{f}_{\tau_Q}\sum_{u} \widehat{f}_u\big)(\tau_Qu')=
(\widehat{f}_{\tau_Q} \widehat{f}_{u'})(\tau_Qu')
=\gamma_{Q}\circ \mathsf{p}\circ\kappa(u')\circ\iota$
\end{proof}

\begin{prop}\label{prop:isomHecke}
The map $\Theta''$ of lemma \ref{lemma:isomHecke} respect relations of lemma \ref{lemma:relations}. 
\end{prop}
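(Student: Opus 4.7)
The plan is to verify each of the seven relations in lemma \ref{lemma:relations} separately for the images $\widehat{f}_\omega = \Theta''(f_\omega)$ in $\mathscr{H}_R(G,\eta_\P)$. Since essentially all the hard work has been packaged into the preceding lemmas, the proof of the proposition will amount to collecting these results and checking one remaining case.

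Relation 1 is immediate: by construction, the restriction of $\Theta''$ to $\mathscr{H}_R(K_B,K^1_B)$ coincides with the composite $\zeta\circ\Theta'$, which is an algebra homomorphism by lemmas \ref{lemma:isomHeckeetaP} and \ref{lemma:finitealgebras}. Relations 3, 4 and 5 are exactly the statements of lemmas \ref{lemma:rel347} and \ref{lemma:rel56}. Relation 7 has just been established as lemma \ref{lemma:rel9:3}. Relation 6 is obtained by combining lemma \ref{lemma:producttau2}, which computes $\widehat{f}_{\tau_\alpha}\widehat{f}_{\tau_{\alpha'}}$ on its support as $\gamma_{\alpha}\circ\gamma_{\alpha'}$, with lemma \ref{lemma:gammacommute}, which ensures that the $\gamma_i$ commute pairwise; the two products $\widehat{f}_{\tau_\alpha}\widehat{f}_{\tau_{\alpha'}}$ and $\widehat{f}_{\tau_{\alpha'}}\widehat{f}_{\tau_\alpha}$ are supported on the same double coset $J^1_\P \tau_\alpha\tau_{\alpha'}J^1_\P$ and take the same value there.

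The only relation not yet covered is relation 2. Since $\tau_0 = \varpi\mathbb{I}_{m'}$ is central in $G$ and normalizes $J^1_\P$, lemma \ref{lemma:product} applies and gives $(\widehat{f}_{\tau_0}\widehat{f}_{\tau_0^{-1}})(1) = \gamma_0\circ\gamma_0^{-1} = \mathrm{id}_{V_\M}$, so $\widehat{f}_{\tau_0}\widehat{f}_{\tau_0^{-1}} = 1$. Centrality of $\tau_0$ also makes $\tau_0^{-1}\omega\tau_0 = \omega$ for every $\omega\in\Omega$, so the second identity in relation 2 reduces to proving that $\widehat{f}_{\tau_0^{-1}}$ commutes with each $\widehat{f}_\omega$. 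For $\omega\in K_B$ this is the case $i=0$ of lemma \ref{lemma:rel347}; for $\omega = \tau_\alpha$ with $\alpha\in\Sigma$, as well as for $\omega=\tau_0$, this follows from lemmas \ref{lemma:producttau2} and \ref{lemma:gammacommute} applied with $i=0$.

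I do not anticipate any serious obstacle here: the proposition is a clean assembly of results, and the only minor point to record carefully is the centrality of $\tau_0$, which collapses the intertwining relation in (2) to a commutation and lets us reuse lemmas \ref{lemma:rel347}, \ref{lemma:producttau2}, and \ref{lemma:gammacommute} uniformly.
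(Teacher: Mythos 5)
Most of your assembly matches the paper's proof exactly: relation 1 via lemma \ref{lemma:finitealgebras} (and $\zeta$), relations 3, 4, 5 via lemmas \ref{lemma:rel347} and \ref{lemma:rel56}, relation 6 via lemmas \ref{lemma:producttau2} and \ref{lemma:gammacommute}, and relation 7 via lemma \ref{lemma:rel9:3}. The problem is your treatment of relation 2, which rests on the false claim that $\tau_0=\varpi\mathbb{I}_{m'}$ is central in $G$ (or even in $B^{\times}$). Here $\varpi$ is a uniformizer of the division algebra $D'$, and conjugation by $\varpi$ induces a nontrivial automorphism of $\ent_{D'}$ (and of $\frack_{D'}$) whenever $d'>1$; moreover $\varpi$ need not commute with $A(E)$. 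So $\tau_0$ normalizes $K_B$ and $J^1_\P$ but does not centralize them, and $\tau_0^{-1}k\tau_0\neq k$ in general for $k\in K_B$ --- which is precisely why relation 2 in lemma \ref{lemma:relations} is written as the twisted identity $f_{\tau_0^{-1}}f_{\omega}=f_{\tau_0^{-1}\omega\tau_0}f_{\tau_0^{-1}}$ rather than a commutation. Your reduction of this identity to ``$\widehat f_{\tau_0^{-1}}$ commutes with each $\widehat f_\omega$'' is therefore incorrect for $\omega\in K_B$, and lemma \ref{lemma:rel347} with $i=0$ does not give commutation: it gives exactly the twisted relation $\widehat f_{\tau_0}\widehat f_{x}=\widehat f_{\tau_0 x\tau_0^{-1}}\widehat f_{\tau_0}$.

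The repair is short and is what the paper does: use that $\widehat f_{\tau_0}$ is invertible in $\mathscr{H}_R(G,\eta_\P)$ with inverse $\widehat f_{\tau_0^{-1}}$ (this also disposes of the first identity of relation 2, as you noted), and then multiply the $i=0$ case of lemma \ref{lemma:rel347} on both sides by $\widehat f_{\tau_0^{-1}}$ to obtain $\widehat f_{\tau_0^{-1}}\widehat f_k=\widehat f_{\tau_0^{-1}k\tau_0}\widehat f_{\tau_0^{-1}}$ for all $k\in K_B$. For $\omega\in\{\tau_0,\tau_0^{-1}\}\cup\{\tau_\alpha\mid\alpha\in\Sigma\}$ one does have $\tau_0^{-1}\omega\tau_0=\omega$ (these are diagonal matrices in powers of the same $\varpi$), so there relation 2 does reduce to commutation and follows from lemmas \ref{lemma:producttau2} and \ref{lemma:gammacommute}, as you said. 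With that correction your argument coincides with the paper's.
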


\begin{proof}
By lemma \ref{lemma:finitealgebras} the map $\Theta''$ respects relation 1.
By lemma \ref{lemma:rel347} it respects relation 3 and $\widehat f_{\tau_0^{-1}}\widehat f_k=\widehat f_{\tau_0^{-1}k\tau_0}\widehat f_{\tau_0^{-1}}$ for every $k\in K_B$ and by lemmas \ref{lemma:producttau2} and \ref{lemma:gammacommute} it respects relations 2 and 6.
Moreover it respects relations 4 and 5 by lemma \ref{lemma:rel56} and relation 7 by lemma \ref{lemma:rel9:3}. 
\end{proof}

\begin{teor}\label{thm:isomHecke2}
For every non-zero $\gamma\in I_{\tau_{m'-1}}(\eta)$ and every $\beta$-extension $\kappa$ of $\eta$ there exists an algebra isomorphism 
$\Theta_{\gamma,\kappa}:\mathscr{H}_R(B^{\times},K^1_B)\rightarrow \mathscr{H}_R(G,\eta)$.
\end{teor}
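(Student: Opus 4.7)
The plan is to assemble $\Theta_{\gamma,\kappa}$ from the pieces already constructed, then to verify bijectivity by a graded argument. By proposition \ref{prop:isomHecke}, the images $\widehat{f}_\omega$ of the generators $\omega \in \Omega$ satisfy all relations of lemma \ref{lemma:relations}, so the assignment $f_\omega \mapsto \widehat{f}_\omega$ extends uniquely to an $R$-algebra homomorphism $\Theta'' : \mathscr{H}_R(B^\times, K^1_B) \to \mathscr{H}_R(G, \eta_\P)$. Composing with the inverse of the algebra isomorphism $\zeta$ of lemma \ref{lemma:isomHeckeetaP} yields the desired homomorphism $\Theta_{\gamma,\kappa} := \zeta^{-1} \circ \Theta''$. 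It remains to show this map is bijective.

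Both algebras are naturally graded by the double-coset set $\mathcal{S} := K^1_B \backslash B^\times / K^1_B$. On the source, $\mathscr{H}_R(B^\times, K^1_B) = \bigoplus_{x \in \mathcal{S}} R f_x$. On the target, since $I_G(\eta) = J^1 B^\times J^1$, since $J^1 \cap B^\times = K^1_B$, and since $\dim_R I_y(\eta) = 1$ for every $y \in B^\times$ (paragraph \ref{subsec:Heisenberg}), one has $\mathscr{H}_R(G, \eta) = \bigoplus_{x \in \mathcal{S}} \mathscr{H}_R(G, \eta)_{J^1 x J^1}$ with each summand of dimension $1$. By construction and by the support statements of lemmas \ref{lemma:product}, \ref{lemma:producttau2}, \ref{lemma:rel347}, the homomorphism $\Theta_{\gamma,\kappa}$ respects this grading, so it suffices to show that for each $x \in B^\times$ the image $\Theta_{\gamma,\kappa}(f_x)$ is a nonzero element of $\mathscr{H}_R(G, \eta)_{J^1 x J^1}$.

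For this, use the Iwahori--Bruhat decomposition $B^\times = \bigsqcup_{\widetilde w \in \widetilde W} I_B \widetilde w I_B$ and the presentation of $\widetilde W$ from remark \ref{rmk:presentationW}: every $x$ has a representative of the form $k_1 \widetilde w k_2$ with $k_1, k_2 \in I_B \subset K_B$ and $\widetilde w$ expressed as a reduced word in the generators $s_1, \ldots, s_{m'-1}, \tau_{m'-1}, \tau_0^{\pm 1}$. Relations 1--7 of lemma \ref{lemma:relations} then write $f_x$, up to an invertible scalar, as the product of the $f_\omega$'s along this reduced expression, and applying $\Theta''$ yields the analogous product of $\widehat{f}_\omega$'s in $\mathscr{H}_R(G, \eta_\P)$. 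Iterating lemmas \ref{lemma:product}, \ref{lemma:producttau2}, \ref{lemma:wtauw} and the multiplication identities in the proof of lemma \ref{lemma:rel9:3}, this product evaluates at $x$ to a composite of the $\gamma_i$'s with operators $\mathsf{p} \circ \kappa(w) \circ \iota$, scaled by a product of factors drawn from $\{q, \delta(\mathfrak{J}^1_0, \mathfrak{H}^1_0), \delta(\mathfrak{H}^1_0, \varpi \mathfrak{H}^1_0), \partial\}$. These scalars are all powers of $p$, hence units in $R$ since $\ell \neq p$, and the operator-valued factor is a nonzero element of $I_x(\eta_\P)$ by lemmas \ref{lemma:intertetaP} and \ref{lemma:xibijective}.

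The main obstacle is this last step: one must verify that the scalars accumulated along a reduced expression are coherent, so that $\Theta_{\gamma,\kappa}(f_x)$ is well-defined and nonzero independently of the chosen reduction. Coherence is automatic from proposition \ref{prop:isomHecke}, while nonvanishing is guaranteed by the hypothesis $\ell \neq p$, which makes all the $p$-power indices between pro-$p$-subgroups invertible in $R$.
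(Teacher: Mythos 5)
Your overall architecture is the paper's: build $\Theta''$ from proposition \ref{prop:isomHecke}, compose with $\zeta^{-1}$ from lemma \ref{lemma:isomHeckeetaP}, and then deduce bijectivity from the facts that $\{f_x\}_{x\in K^1_B\backslash B^\times/K^1_B}$ is a basis, that $I_G(\eta)=J^1B^\times J^1$, and that each $I_y(\eta)$ is one-dimensional, by showing that each image $\Theta_{\gamma,\kappa}(f_x)$ is a nonzero element supported on $J^1xJ^1$. The problem is the step by which you try to establish that last statement. You claim that, writing $x=k_1\widetilde wk_2$ via the Iwahori--Bruhat decomposition and taking a reduced word for $\widetilde w$ in $s_1,\dots,s_{m'-1},\tau_{m'-1},\tau_0^{\pm1}$, relations 1--7 express $f_x$ up to a unit as the product of the $f_\omega$ along that word. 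This is false: the relevant relation is precisely relation 7, which produces sums, not single basis elements. For instance, for $\widetilde w=\tau_{m'-2}$ the reduced word is $\tau_{m'-1}s_{m'-1}\tau_{m'-1}s_{m'-1}$, and relation 7 (with $w=s_{m'-1}$, $\alpha=\alpha_{m'-1,m'}$, so $P=\{\alpha_{m'-1,m'}\}$, $Q=\{\alpha_{m'-2,m'-1}\}$) gives
\begin{equation*}
f_{\tau_{m'-1}}f_{s_{m'-1}}f_{\tau_{m'-1}}f_{s_{m'-1}}
= q\, f_{\tau_{m'-2}}\sum_{u}f_u ,
\end{equation*}
a sum of $q$ distinct basis functions, not a unit multiple of $f_{\tau_{m'-2}}$. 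This is exactly why the presentation of $\mathscr{H}_R(B^\times,K^1_B)$ keeps all the $f_{\tau_\alpha}$, $\alpha\in\Sigma$, and all $f_k$, $k\in K_B$, as generators instead of having braid-type relations. Consequently your "graded" argument never establishes that $\Theta_{\gamma,\kappa}(f_x)$ is nonzero with support $J^1xJ^1$, and the phrase "the homomorphism respects this grading" has no content on its own (an algebra homomorphism does not preserve a double-coset decomposition); that per-basis-element nonvanishing is the whole point, and your appeal to "coherence" from proposition \ref{prop:isomHecke} does not supply it, since well-definedness of $\Theta''$ was never in question.

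The step can be repaired, and this is in effect what the paper's short proof relies on: use the Cartan decomposition instead. Every $x\in B^\times$ is $k_1\tau k_2$ with $k_1,k_2\in K_B$ and $\tau$ a dominant monomial in $\tau_0^{\pm1}$ and the $\tau_i$; since $k_1,k_2$ normalize $K^1_B$ and dominant elements of $\bm\Delta$ multiply without lower terms (the $K^1_B$-analogue of lemma \ref{lemma:producttau}), one has exactly $f_x=f_{k_1}f_\tau f_{k_2}$ with $f_\tau$ a product of the generators $f_{\tau_0}^{\pm1},f_{\tau_i}$. Applying $\Theta''$ and using lemmas \ref{lemma:product} and \ref{lemma:producttau2}, the image is supported in $J^1_\P xJ^1_\P$ with value $\mathsf{p}\circ\kappa(k_1)\circ\zeta^{-1}(\widehat f_\tau)(\tau)\circ\kappa(k_2)\circ\iota$, which is nonzero because $\kappa(k_1),\kappa(k_2)$ are invertible, $\zeta^{-1}(\widehat f_\tau)(\tau)$ is a nonzero element of $I_\tau(\eta)$ (the $\gamma_i$ are invertible), and $\zeta$ is an isomorphism. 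With that in hand, your surjectivity and linear-independence conclusions go through as in the paper; note also that the independence uses that distinct $K^1_B$-double cosets of $B^\times$ lie in distinct $J^1$-double cosets, which needs the standard fact $J^1xJ^1\cap B^\times=K^1_BxK^1_B$ rather than merely $J^1\cap B^\times=K^1_B$ as you assert.
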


\begin{proof}
By proposition \ref{prop:isomHecke} and by lemma \ref{lemma:isomHeckeetaP} there exists an algebra homomorphism from $\mathscr{H}_R(B^{\times},K^1_B)$ to $\mathscr{H}_R(G,\eta)$ which depends on the choice of 
a $\beta$-extension of $\eta$ and of an element in $I_{\tau_{m'-1}}(\eta_\P)$ which is isomorphic to $I_{\tau_{m'-1}}(\eta)$
by lemma \ref{lemma:isomHeckeetaP}.
Let $\Xi$ be a set of representatives of $K^1_B$-double cosets of $B^{\times}$. Then 
$\{f_x \,|\; x\in \Xi\}$ is a basis of $\mathscr{H}_R(B^{\times},K^1_B)$ as $R$-vector space and, since $I_G(\eta)=J^1B^{\times}J^1$ and $\mathrm{dim}_R(I_y(\eta))=1$ for every $y\in I_G(\eta)$, the set  
$\{\Theta_{\gamma,\kappa}(f_x) \,|\; x\in \Xi\}$ is a set of generators of $\mathscr{H}_R(G,\eta)$ as $R$-vector space and so 
$\Theta_{\gamma,\kappa}$ is surjective. 
Moreover, the set 
$\{\Theta_{\gamma,\kappa}(f_x) \,|\; x\in \Xi\}$ is linearly independent and so $\Theta_{\gamma,\kappa}$ is also injective. 
\end{proof}

\begin{rmk}\label{rmk:kappa'}
Let $\kappa$ and $\kappa'$ be two $\beta$-extensions of $\eta$. By paragraph \ref{subsec:Heisenberg} there exists a character $\chi$  of $\ent_{E}^{\times}$  trivial on $1+\wp_E$ such that $\kappa'=\kappa\otimes (\chi\circ N_{B/E})$. If we denote $\widetilde\chi=\infl_{\ent_E^{\times}}^{E^\times}\chi\circ N_{B/E}$, which is a character of $B^{\times}$, then $\Theta_{\gamma,\kappa}^{-1}\circ\Theta_{\gamma,\kappa'}$ maps 
$f_x$ to $\widetilde\chi f_x=\widetilde\chi(x)f_x$ for every $x\in B^{\times}$.
\end{rmk}

\section{Semisimple types}\label{sec:semisimpletypes}
Using notations of section \ref{sec:maximaltypes}, in this section we present the construction of semisimple types of $G$ with coefficients in $R$. We refer to sections 2.8-9 of \cite{MS} for more details.

\smallskip
Let $r\in\N^*$ and let $(m_1,\dots,m_r)$ be a family of strictly positive integers such that $\sum_{i=1}^r m_i=m$.
For every $i\in\{1,\dots,r\}$ we fix a maximal simple type $(J_i,\lambda_i)$ of $GL_{m_i}(D)$ and a simple stratum  $[\Lambda_i,n_i,0,\beta_i]$ of $A_{i}=M_{m_i}(D)$ such that $J_i=J(\beta_i,\Lambda_i)$. 
Then, the centralizer $B_i$ of $E_i=F[\beta_i]$ in 
$A_{i}$ is isomorphic to  $M_{m'_i}(D_i')$ for a suitable $E_i$-division algebra $D'_i$ of reduced degree $d'_i$ and a suitable $m'_i\in \N^*$. 
Moreover, $U(\Lambda_i)\cap B_i^{\times}$ is a maximal compact open subgroup of $B_i^{\times}$ that we identify with $GL_{m'_i}(\ent_{D'_i})$. 

\smallskip
Let $M$ be the standard Levi subgroup of $G$ of block diagonal matrices of sizes $m_1,\dots,m_r$. 
The pair $(J_M,\lambda_M)$ with $J_M=\prod_{i=1}^rJ_i$ and $\lambda_M=\bigotimes_{i=1}^r\lambda_i$ is called \emph{maximal simple type} of $M$.

\smallskip
For every $i\in\{1,\dots,r\}$ we fix a simple character $\theta_i\in \mathscr{C}_R(\Lambda_i,0,\beta_i)$  contained in $\lambda_i$ and we observe that this choice does not depend on the choice of the $\beta$-extension $\kappa_i$ such that $\lambda_i=\kappa_i\otimes\sigma_i$. Grouping $\theta_i$ according their  endo-classes, we obtain a partition $\{1,\dots,r\}=\bigsqcup_{j=1}^l I_j$ with $l\in\N^*$. 
Up to renumbering the $(J_i,\lambda_i)$ we can suppose that there exist integers 
$0=a_0<a_1<\cdots<a_l=r$ such that we have 
$I_j=\{i\in \N\,|\,a_{j-1}<i\leq a_j\}$. 
For every $j\in\{1,\dots,l\}$ we denote $m^j=\sum_{i\in I_j}m_i$ and $m'^j=\sum_{i\in I_j}m'_i$ and we consider the standard Levi subgroup $L$ of $G$ containing $M$ of block diagonal matrices of sizes $m^1,\dots,m^l$.

\smallskip
Let $j\in\{1,\dots,l\}$.  We choose a simple stratum  
$[\Lambda^j,n^j,0,\beta^j]$ of $M_{m^j}(D)$ as in paragraph 2.8 of \cite{MS}. 
If we denote by $B^j$ the centralizer of $E^j=F[\beta^j]$ in $M_{m^j}(D)$, there exist a $E^j$-division algebra $D'^j$ and an isomorphism that identifies $B^j$ to $M_{m'^{j}}(D'^j)$ and $U(\Lambda^j)\cap B^{j\times}$ to a standard parabolic subgroup of $GL_{m'^{j}}(\ent_{D'^j})$ associated to $m'_i$ with $i\in I_j$. 
We denote by $\theta^j$ the transfer of $\theta_i$ with $i\in I_j$ to $\mathscr{C}_R(\Lambda^j,0,\beta^j)$ that does not depend on $i$ and we fix a $\beta$-extension $\kappa^j$ of $\theta^j$.
In section 2.8 of \cite{MS} are defined two compact open subgroups  
$\mathbf{J}_j\subset J(\beta^j,\Lambda^j)$ and $\mathbf{J}^1_j\subset J^1(\beta^j,\Lambda^j)$ of $G$ such that  
$\mathbf{J}_j/\mathbf{J}_j^1\cong \prod_{i\in I_j}J_i/J^1_i$,  
and representations $\bm\kappa_j$ of $\mathbf{J}_j$ and $\bm\eta_j$ of $\mathbf{J}^1_j$ such that 
$\ind_{\mathbf{J}_j^1}^{J^1(\beta^j,\Lambda^j)}\bm\eta_j\cong \res^{J(\beta^j,\Lambda^j)}_{J^1(\beta^j,\Lambda^j)}\kappa^j$,
$\ind_{\mathbf{J}_j}^{J(\beta^j,\Lambda^j)}\bm\kappa_j\cong \kappa^j$, 
$\mathbf{J}_j\cap M=\prod_{i\in I_j}J_i$ and 
$\res^{\mathbf{J}_j}_{\mathbf{J}_j\cap M}\bm\kappa_j=\bigotimes_{i\in I_j}\kappa_i$ where $\kappa_i\in\mathcal{B}(\theta_i)$ for every $i\in I_j$.
We denote $\eta_i$ the restriction of $\kappa_i$ to $J^1(\beta_i,\Lambda_i)$ for every $i\in I_j$.
We obtain a decomposition $\lambda_i=\kappa_i\otimes\sigma_i$ for every $i\in I_j$ where $\sigma_i$ is a representation of $J_i$ trivial on $J_i^1$. 
We denote $\bm\sigma_j$ the representation $\bigotimes_{i\in I_j}\sigma_i$ viewed as a representation of $\mathbf{J}_j$ trivial on  $\mathbf{J}_j^1$ and we set $\bm\lambda_j=\bm\kappa_j\otimes \bm\sigma_j$.
Then $(\mathbf{J}_j,\bm\lambda_j)$ is a cover of $(\prod_{i\in I_j}J_i,\bigotimes_{i\in I_j}\lambda_i)$ (proposition 2.26 of \cite{MS}), $(\mathbf{J}_j,\bm\kappa_j)$ is  decomposed above $(\prod_{i\in I_j}J_i,\bigotimes_{i\in I_j}\kappa_i)$
and $(\mathbf{J}^1_j,\bm\eta_j)$ is a cover of $(\prod_{i\in I_j}J^1_i,\bigotimes_{i\in I_j}\eta_i)$ (proposition 2.27 of \cite{MS}).

\smallskip
We set $J^1_M=\prod_{i=1}^rJ^1_i$, $\kappa_M=\bigotimes_{i=1}^r\kappa_i$, $\eta_M=\bigotimes_{i=1}^r\eta_i$,
$\mathbf{J}_L=\prod_{j=1}^l \mathbf{J}_j$, $\mathbf{J}^1_L=\prod_{j=1}^l \mathbf{J}^1_j$, 
$\bm\lambda_L=\bigotimes_{j=1}^l\bm\lambda_j$, $\bm\kappa_L=\bigotimes_{j=1}^l\bm\kappa_j$, 
$\bm\eta_L=\bigotimes_{j=1}^l\bm\eta_j$ and $\bm\sigma_L=\bigotimes_{j=1}^l\bm\sigma_j$. 
By construction $(\mathbf{J}_L,\bm\lambda_L)$ and $(\mathbf{J}^1_L,\bm\eta_L)$ are covers of $(J_M,\lambda_M)$ and $(J_M^1,\eta_M)$ respectively and  
$(\mathbf{J}_L,\bm\kappa_L)$ is decomposed above $(J_M,\kappa_M)$. 

\smallskip
Proposition 2.28 of \cite{MS} defines a cover $(\mathbf{J},\bm\lambda)$ of $(\mathbf{J}_L,\bm\lambda_L)$ and so of $(J_M,\lambda_M)$, that we call \emph{semisimple type} of $G$. If the $(J_i,\lambda_i)$ are maximal simple supertypes, we call $(\mathbf{J},\bm\lambda)$ \emph{semisimple supertype} of $G$.
The semisimple type $(\mathbf{J},\bm\lambda)$ is associated to a stratum 
$[\bm\Lambda,\mathbf{n},0,\bm\beta]$ of $A$, not necessarily simple (section 2.9 of \cite{MS}). 
We denote by $B$ the centralizer of $\bm\beta$ in $A$, $B_L^{\times}=B^{\times}\cap L=\prod_{j=1}^lB^{j\times}$ and $\mathbf{J}^1=\mathbf{J}\cap U_1(\bm\Lambda)$. 
By propositions 2.30 and 2.31 of \cite{MS} there exists a unique pair $(\mathbf{J}^1,\bm\eta)$ decomposed above $(\mathbf{J}^1_L,\bm\eta_L)$ and so above $(J^1_M,\eta_M)$. Its intertwining set is $I_G(\bm\eta)=\mathbf{J}B^{\times}_L\mathbf{J}$ and for every $y\in B_L^{\times}$ the $R$-vector space $I_y(\bm\eta)$ is $1$-dimensional. We also have the isomorphisms
$$\mathbf{J}/\mathbf{J}^1\cong \mathbf{J}_L/\mathbf{J}^1_L\cong\prod_{i=1}^r J_i/J^1_i\cong \prod_{i=1}^r GL_{m'_i}(\frack_{D'_i}).$$
We can identify $\bm\sigma_L$ to an irreducible representation $\bm\sigma$ of $\mathbf{J}$ trivial on 
$\mathbf{J}^1$. 
By proposition 2.33 of \cite{MS} there exists a unique pair $(\mathbf{J},\bm\kappa)$ decomposed above $(\mathbf{J}_L,\bm\kappa_L)$ and so above $(J_M,\kappa_M)$. 
Moreover, we have 
$\bm\eta=\res^{\mathbf{J}}_{\mathbf{J}^1}\bm\kappa$, $\bm\lambda=\bm\kappa\otimes\bm\sigma$ and $I_G(\bm\kappa)=\mathbf{J}B^{\times}_L\mathbf{J}$.
We denote $\mathscr{M}$ the finite group 
$\prod_{i=1}^r GL_{m'_i}(\frack_{D'_i})$.
Then we can identify $\bm\sigma$ to a cuspidal (supercuspidal if $(\mathbf{J},\bm\lambda)$ is a semisimple supertype)  representation of $\mathscr{M}$. 

\begin{rmk}\label{rmk:choix1}
The choice of $\beta$-extensions $\kappa^j\in\mathcal{B}(\theta^j)$ for every $j\in\{1,\dots,l\}$ determines $\kappa_i\in\mathcal{B}(\theta_i)$ for every $i\in\{1,\dots,r\}$, $\bm\kappa^j$ for every $j\in\{1,\dots,l\}$, $\bm\kappa_L$ and $\bm\kappa$ and so the decompositions 
$\lambda_i=\kappa_i\otimes\sigma_i$, $\bm\lambda_j=\bm\kappa_j\otimes\bm\sigma_j$ and $\bm\lambda=\bm\kappa\otimes\bm\sigma$.
\end{rmk}

\subsection{The representation $\bm\eta_{max}$}\label{subsec:etamax}

In this paragraph we associate to every semisimple supertype $(\mathbf{J},\bm\lambda)$ of $G$ an irreducible projective representation $\bm\eta_{max}$ of a compact open subgroup of $G$ and we prove that the algebra 
$\mathscr{H}_R(G,\bm\eta_{max})$ is isomorphic to $\mathscr{H}_R(B^{\times}_L,K^1_L)$ where $K^1_L$ is the pro-$p$-radical of the maximal compact open subgroup of $B^{\times}_L$.

\smallskip
For every $j\in\{1,\dots,l\}$ we choose a simple stratum $[\Lambda_{max,j},n_{max,j},0,\beta^j]$ of $M_{m^j}(D)$ such that $U(\Lambda_{max,j})\cap B^{j\times}$ is a maximal compact open subgroup of $B^{j\times}$ containing 
$U(\Lambda^j)\cap B^{j\times}$ as in paragraph 6.2 of \cite{SeSt1}. 
Then we can identify $U(\Lambda_{max,j})\cap B^{j\times}$ to $GL_{m'^j}(\ent_{D'^j})$. 
Let $J_{max,j}=J(\beta^j,\Lambda_{max,j})$ and $J^1_{max,j}=J^1(\beta^j,\Lambda_{max,j})$.
We can also choose $\theta_{max,j}\in\mathscr{C}_R(\Lambda_{max,j},0,\beta^j)$ such that its transfer to $\mathscr{C}_R(\Lambda^j,0,\beta^j)$ is $\theta^j$. We fix a $\beta$-extension $\kappa_{max,j}$ of $\theta_{max,j}$ and we denote $\eta_{max,j}$ its restriction to $J^1_{max,j}$.
By (5.2) of \cite{SeSt1}, there exists a unique 
$\kappa^j\in \mathcal{B}(\theta^j)$ such that 
\begin{equation}\label{eq:kappamax}
\ind_{J(\beta^j,\Lambda^j)}^{(U(\Lambda_j)\cap B^{j\times})U_1(\Lambda^j)}\kappa^j  \cong \ind_{(U(\Lambda_j)\cap B^{j\times})J^1_{max,j}}^{(U(\Lambda^j)\cap B^{j\times})U_1(\Lambda^j)}\kappa_{max,j}
\end{equation}
and so by remark \ref{rmk:choix1} the choice of 
$\kappa_{max,j}$ determines $\bm\kappa_j$.
We denote  $J_{max}=\prod_{j=1}^lJ_{max,j}$, $J^1_{max}=\prod_{j=1}^lJ^1_{max,j}$,
$\kappa_{max}=\bigotimes_{j=1}^l\kappa_{max,j}$,
$\eta_{max}=\bigotimes_{j=1}^l\eta_{max,j}$,
$K_L= \prod_{j=1}^l U(\Lambda_{max,j})\cap B^{j\times}$ and
$K^1_L= \prod_{j=1}^l U_1(\Lambda_{max,j})\cap B^{j\times}$. 
If we denote $\mathscr{G}$ the finite group $\prod_{j=1}^l GL_{m'^j}(\frack_{D'^j})$, we obtain $J_{max}/J^1_{max}\cong K_L/K^1_L\cong\mathscr{G}$ and $(\mathscr{M},\bm\sigma)$ is a supercuspidal pair of 
$\mathscr{G}$.

\smallskip
As before in this section, by propositions 2.30, 2.31 and 2.33 of \cite{MS} we can define two compact open subgroups $\mathbf{J}_{max}$ and $\mathbf{J}^1_{max}$ of 
$G$ such that $\mathbf{J}_{max}/\mathbf{J}_{max}^1\cong J_{max}/J^1_{max}\cong\mathscr{G}$ and pairs 
$(\mathbf{J}_{max},\bm\kappa_{max})$ and 
$(\mathbf{J}^1_{max},\bm\eta_{max})$ decomposed above 
$(J_{max},\kappa_{max})$ and $(J_{max}^1,\eta_{max})$ respectively. Then we have 
$I_G(\bm\kappa_{max})=I_G(\bm\eta_{max})=\mathbf{J}_{max}B_L^{\times}\mathbf{J}_{max}$ 
and the $R$-vector spaces $I_y(\bm\eta_{max})$ and $I_y(\bm\kappa_{max})$ have dimension 1 for every 
$y\in B_L^{\times}$.

\begin{rmk}\label{rmk:choix2}
Since for every $j\in\{1,\dots,l\}$ the choice of 
$\kappa_{max,j}\in\mathcal{B}(\theta_{max,j})$ determine  $\bm\kappa_j$, the choice of $\kappa_{max}$ determine 
$\bm\kappa$ and $\bm\kappa_{max}$ and so the decomposition 
$\bm\lambda=\bm\kappa\otimes\bm\sigma$. 
On the other hand $\bm\eta_{max}$, the group $\mathscr{G}$
and the conjugacy class of $\mathscr{M}$ are uniquely determined by the semisimple supertype 
$(\mathbf{J},\bm\lambda)$, independently by the choice of $\kappa_{max}$ or of $\bm\kappa$.
\end{rmk}

\begin{prop}\label{prop:isomorphismetamaxj}
The algebras $\mathscr{H}_R(G,\bm\eta_{max})$ and $\bigotimes_{j=1}^{l}\mathscr{H}_R(GL_{m^j}(D),\eta_{max,j})$ are isomorphic.
\end{prop}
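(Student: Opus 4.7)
The plan is to factor the desired isomorphism through the Hecke algebra at the Levi level, proving in turn two isomorphisms:
\begin{equation*}
\mathscr{H}_R(G,\bm\eta_{max}) \;\cong\; \mathscr{H}_R(L,\eta_{max}) \;\cong\; \bigotimes_{j=1}^{l}\mathscr{H}_R(GL_{m^{j}}(D),\eta_{max,j}),
\end{equation*}
where $L=\prod_{j=1}^{l}GL_{m^{j}}(D)$, $J^{1}_{max}=\prod_{j}J^{1}_{max,j}$ and $\eta_{max}=\bigotimes_{j}\eta_{max,j}$. The second isomorphism is formal: since $L$, $J^{1}_{max}$ and $\eta_{max}$ all factor as direct products (respectively tensor product), any element of $\mathscr{H}_{R}(L,\eta_{max})$ supported on a single $J^{1}_{max}$-double coset $J^{1}_{max}(y_1,\dots,y_l)J^{1}_{max}$ decomposes uniquely as a tensor of elements supported on the double cosets $J^{1}_{max,j}y_j J^{1}_{max,j}$, and convolution in the product group is coordinate-wise, so the map is an algebra isomorphism.

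For the first isomorphism, I would exploit three facts established in \cite{MS} and recalled at the end of paragraph \ref{subsec:etamax}: the pair $(\mathbf{J}^{1}_{max},\bm\eta_{max})$ is decomposed above $(J^{1}_{max},\eta_{max})$ with respect to $(L,\P)$ for every parabolic $\P$ with Levi component $L$; the intertwining set satisfies $I_G(\bm\eta_{max})=\mathbf{J}_{max}B_L^{\times}\mathbf{J}_{max}\subseteq \mathbf{J}^{1}_{max}L\mathbf{J}^{1}_{max}$ (using $\mathbf{J}_{max}=\mathbf{J}^{1}_{max}J_{max}$ and $B_L^{\times}\subset L$); and $\dim_R I_y(\bm\eta_{max})=1$ for every $y\in B_L^{\times}$. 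Fix such a $\P$ with unipotent radical $\U$ and opposite $\U^-$. Mimicking the construction of $\zeta$ in lemma \ref{lemma:isomHeckeetaP}, I would define a support-preserving linear map $t_{\P}:\mathscr{H}_R(L,\eta_{max})\to \mathscr{H}_R(G,\bm\eta_{max})$ by extending $\phi\in\mathscr{H}_R(L,\eta_{max})$ to the function $t_{\P}(\phi)$ with support in $\mathbf{J}^{1}_{max}\,(\mathrm{supp}\,\phi)\,\mathbf{J}^{1}_{max}$ and with value
\begin{equation*}
t_{\P}(\phi)(u^{-}\ell u^{+})=\bm\eta_{max}(u^{-})\circ \phi(\ell)\circ \bm\eta_{max}(u^{+})
\end{equation*}
for $\ell\in L$ and $u^{\pm}\in \mathbf{J}^{1}_{max}\cap\U^{\pm}$, and extended by $\mathbf{J}^{1}_{max}$-equivariance. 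Bijectivity is then a basis count: both algebras are free $R$-modules with bases indexed by $\mathbf{J}^{1}_{max}$-double cosets in $\mathbf{J}_{max}B_L^{\times}\mathbf{J}_{max}$, respectively $J^{1}_{max}$-double cosets in $J_{max}B_L^{\times}J_{max}$, and the decomposed structure yields a bijection between these double coset sets.

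The main obstacle is verifying that $t_{\P}$ is an $R$-algebra homomorphism. This requires showing that when one computes $(t_{\P}(\phi_1)*t_{\P}(\phi_2))(\ell)$ for $\ell\in B_L^{\times}$, the sum over $x\in G/\mathbf{J}^{1}_{max}$ collapses to a sum over $x\in L/J^{1}_{max}$, reproducing $t_{\P}(\phi_1*\phi_2)(\ell)$. The argument is parallel to the convolution computations of paragraph \ref{subsec:decomp} and to lemmas of type \ref{lemma:etaP} and \ref{lemma:isomHeckeetaP}: contributions from the unipotent radicals cancel because $\bm\eta_{max}$ is trivial on $\mathbf{J}^{1}_{max}\cap\U^{\pm}$ by the decomposed condition, and the support of the intermediate term lies in $(\mathbf{J}^{1}_{max})^{\ell}\mathbf{J}^{1}_{max}\cap \mathbf{J}_{max}L\mathbf{J}_{max}$, which intersects $\U^{\pm}$ trivially modulo $\mathbf{J}^{1}_{max}$. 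Once this collapse is established, composing $t_{\P}^{-1}$ with the tensor-product identification of the second step produces the desired isomorphism.
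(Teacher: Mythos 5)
Your proposal takes essentially the same route as the paper: the paper also factors the isomorphism through $\mathscr{H}_R(L,\eta_{max})$, obtaining $\bigotimes_{j=1}^{l}\mathscr{H}_R(GL_{m^j}(D),\eta_{max,j})\cong\mathscr{H}_R(L,\eta_{max})$ from lemma 2.4 and proposition 2.5 of \cite{Gui} together with lemma \ref{lemma:isomalgebras}, and then deducing a support-preserving isomorphism $\mathscr{H}_R(L,\eta_{max})\to\mathscr{H}_R(G,\bm\eta_{max})$ from the containment $I_G(\bm\eta_{max})\subset \mathbf{J}_{max}L\mathbf{J}_{max}$ by invoking sections II.6--8 of \cite{Vig1}. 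The inputs you use (the product structure of $(L,J^1_{max},\eta_{max})$, the decomposed-pair structure, one-dimensionality of intertwining spaces and the intertwining containment) are exactly those of the paper; the only difference is that where the paper cites \cite{Gui} and \cite{Vig1}, you sketch direct verifications, and your check that $t_{\P}$ is multiplicative is precisely the content of the cited Vignéras result.
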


\begin{proof}
By lemma 2.4, proposition 2.5 of \cite{Gui} and lemma \ref{lemma:isomalgebras} there exists an algebra isomorphism 
$\bigotimes_{j=1}^{l}\mathscr{H}_R(GL_{m^j}(D),\eta_{max,j})\rightarrow \mathscr{H}_R(L,\eta_{max})$.
Now, since $I_G(\bm\eta_{max})\subset \mathbf{J}_{max}L\mathbf{J}_{max}$ the subalgebra 
$\mathscr{H}_R(\mathbf{J}_{max}L\mathbf{J}_{max},\bm\eta_{max})$ of $\mathscr{H}_R(G,\bm\eta_{max})$ of functions with support in $\mathbf{J}_{max}L\mathbf{J}_{max}$ is equal to $\mathscr{H}_R(G,\bm\eta_{max})$ and so by sections II.6-8 of \cite{Vig1} there exists an algebra isomorphism $\mathscr{H}_R(L,\eta_{max})\rightarrow \mathscr{H}_R(G,\bm\eta_{max})$ which preserves the support.  
\end{proof}

\begin{corol}\label{corol:isomHeckealgebras}
The $R$-algebras $\mathscr{H}_R(B_L^{\times},K^1_L)$ and $ \mathscr{H}_R(G,\bm\eta_{max})$ are isomorphic. 
\end{corol}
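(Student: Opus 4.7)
The plan is to combine Theorem \ref{thm:isomHecke2}, applied separately on each block, with Proposition \ref{prop:isomorphismetamaxj}, using the obvious product decompositions $B_L^{\times}=\prod_{j=1}^l B^{j\times}$ and $K_L^1=\prod_{j=1}^l (U_1(\Lambda_{max,j})\cap B^{j\times})$. The heavy lifting has already been done in Section \ref{sec:isom}, so the corollary should follow by assembly.

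First I would apply Theorem \ref{thm:isomHecke2} to the simple stratum $[\Lambda_{max,j},n_{max,j},0,\beta^j]$ of $M_{m^j}(D)$, noting that by construction $U(\Lambda_{max,j})\cap B^{j\times}$ is maximal compact in $B^{j\times}$ and $\eta_{max,j}$ is the Heisenberg representation associated to $\theta_{max,j}\in \mathscr{C}_R(\Lambda_{max,j},0,\beta^j)$. Choosing the $\beta$-extension $\kappa_{max,j}$ fixed in paragraph \ref{subsec:etamax} and a non-zero element in the appropriate one-dimensional intertwining space, Theorem \ref{thm:isomHecke2} yields an $R$-algebra isomorphism
\[
\mathscr{H}_R(B^{j\times},\,U_1(\Lambda_{max,j})\cap B^{j\times}) \;\cong\; \mathscr{H}_R(GL_{m^j}(D),\eta_{max,j})
\]
for each $j\in\{1,\dots,l\}$.

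Next I would verify the purely formal fact that the Hecke algebra of a direct product of locally profinite groups, relative to a product of open subgroups, factors as a tensor product of the corresponding Hecke algebras. Concretely, since any function $\Phi:B_L^{\times}\rightarrow R$ that is bi-invariant under $K_L^1$ and has support a finite union of $K_L^1$-double cosets separates as a finite sum of tensors of such functions on each $B^{j\times}$, and since the convolution product respects the factorization, we obtain a canonical algebra isomorphism
\[
\mathscr{H}_R(B_L^{\times},K_L^1) \;\cong\; \bigotimes_{j=1}^l \mathscr{H}_R(B^{j\times},\,U_1(\Lambda_{max,j})\cap B^{j\times}).
\]

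Finally I would tensor the $l$ isomorphisms from Theorem \ref{thm:isomHecke2} together and compose with the isomorphism of Proposition \ref{prop:isomorphismetamaxj} to obtain
\[
\mathscr{H}_R(B_L^{\times},K_L^1) \;\cong\; \bigotimes_{j=1}^l \mathscr{H}_R(GL_{m^j}(D),\eta_{max,j}) \;\cong\; \mathscr{H}_R(G,\bm\eta_{max}),
\]
which is the desired statement. There is no substantial obstacle: all of the genuine difficulty is already encapsulated in Theorem \ref{thm:isomHecke2} (for the single-block case) and in Proposition \ref{prop:isomorphismetamaxj} (for passing from $L$ to $G$ via the explicit support description $I_G(\bm\eta_{max})=\mathbf{J}_{max}B_L^{\times}\mathbf{J}_{max}$). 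The only minor care needed is to record the dependence of the final isomorphism on the choice of $\kappa_{max}$ and of the intertwining elements $\gamma_j\in I_{\tau_{m'^j-1}}(\eta_{max,j})$, since this dependence will be important in Section \ref{sec:categoryequivalence} when analysing how the resulting equivalence of categories depends on the extension of $\bm\eta_{max}$ to $\mathbf{J}_{max}$.
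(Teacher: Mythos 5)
Your proposal is correct and follows essentially the same route as the paper: the paper's proof likewise decomposes $\mathscr{H}_R(B_L^{\times},K^1_L)$ as the tensor product $\bigotimes_{j=1}^{l}\mathscr{H}_R(B^{j\times},U_1(\Lambda_{max,j})\cap B^{j\times})$ (citing remark 1.5 of \cite{Chin1}, i.e.\ the formal product decomposition you verify directly), applies Theorem \ref{thm:isomHecke2} factor by factor, and concludes via Proposition \ref{prop:isomorphismetamaxj}. Your remark on recording the dependence on $\kappa_{max}$ and the intertwining elements matches the paper's remark \ref{rmk:isomHeckealgebras}.
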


\begin{proof}
By remark 1.5 of \cite{Chin1} (see also theorem 6.3 of \cite{Krieg}) we have $\mathscr{H}_R(B_L^{\times},K^1_L)\cong \bigotimes_{j=1}^{l}\mathscr{H}_R(B^{j\times},U_1(\Lambda_{max,j})\cap B^{j\times})$ and by theorem \ref{thm:isomHecke2} we have 
$\mathscr{H}_R(B^{j\times},U_1(\Lambda_{max,j})\cap B^{j\times})\cong \mathscr{H}_R(GL_{m^j}(D),\eta_{max,j})$ for every $j\in\{1,\dots,l\}$. 
\end{proof}

\begin{rmk}\label{rmk:isomHeckealgebras}
By theorem \ref{thm:isomHecke2} the isomorphism of corollary \ref{corol:isomHeckealgebras} depends on the choice  of a $\beta$-extension $\kappa_{max,j}$ of $\eta_{max,j}$ and of an intertwining element of $\eta_{max,j}$ for every $j\in\{1,\dots,l\}$. Using proposition \ref{prop:isomorphismetamaxj}, the tensor product of these intertwining elements becomes an intertwining element of $\bm\eta_{max}$.
\end{rmk}

\begin{rmk}\label{rmk:corrTheta}
The procedure that associates $\bm\eta_{max}$ to $(\mathbf{J},\bm\lambda)$ depends on several non-canonical choices, for example the choice of the isomorphism $B_L^{\times}\rightarrow \prod GL_{m'^j}(D'^j)$.
To obtain a canonical correspondence, we denote  $\bm\Theta_i$ the endo-class of $\theta_i$ with $i\in\{1,\dots,r\}$ and we canonically associate to $(\mathbf{J},\bm\lambda)$ the formal sum $\bm\Theta(\mathbf{J},\bm\lambda)=\bm\Theta=\sum_{i=1}^r \frac{m_id}{[E_i:F]}\bm\Theta_i$. 
Furthermore, the group $\mathscr{G}$ and the $\mathscr{G}$-conjugacy class of $\mathscr{M}$ depend only on $(\mathbf{J},\bm\lambda)$ and actually the group $\mathscr{G}$ depends only on $\bm\Theta$ because $m'^j[\frack_{D'^j}:\frack_{E^j}]=\frac{m^jd}{[E^j:F]}=\sum_{i\in I_j}\frac{m_id}{[E_i:F]}$ which is the coefficient of $\bm\Theta_i$ in $\bm\Theta$.
We refer to paragraph 6.3 of \cite{SeSt1} for more details.
\end{rmk}

\section{The category equivalence $\mathscr{R}(G,\bm\eta_{max})\simeq\mathscr{R}(B_L^{\times},K_L^1)$}\label{sec:categoryequivalence}
Using notations of section \ref{sec:semisimpletypes}, in this section we prove that there exists an equivalence of categories between $\mathscr{R}(G,\bm\eta_{max})$ and $\mathscr{R}(B_L^{\times},K_L^1)$.
This allows to reduce the description of a positive-level block of $\mathscr{R}_R(G)$ to the description of a level-$0$ block of $\mathscr{R}_R(B_L^{\times})$.

\subsection{The category $\mathscr{R}(\mathbf{J},\bm\lambda)$}\label{subsec:categoryJlambda}
In this paragraph we associate to a semisimple supertype $(\mathbf{J},\bm\lambda)$ of $G$ a subcategory of $\mathscr{R}_R(G)$. 
We refer to \cite{SeSt1} for more details.

\smallskip
From now on we fix an extension $\bm\kappa_{max}$ of $\bm\eta_{max}$ to $\mathbf{J}_{max}$, as in paragraph \ref{subsec:etamax}. 
This uniquely determines a decomposition $\bm\lambda=\bm\kappa\otimes\bm\sigma$ where $\bm\kappa$ is an irreducible representation of $\mathbf{J}$ and $\bm\sigma$ is a supercuspidal representation of $\mathscr{M}$ viewed as an irreducible representation of $\mathbf{J}$ trivial on $\mathbf{J}^1$. 
We consider the functor 
$\textbf{\textsf{K}}_{\bm\kappa_{max}}:\mathscr{R}_R(G)\rightarrow\mathscr{R}(\mathbf{J}_{max}/\mathbf{J}^1_{max})=\mathscr{R}_R(\mathscr{G})$
given by $\textbf{\textsf{K}}_{\bm\kappa_{max}}(\pi)=\Hom_{\mathbf{J}^1_{max}}(\bm\eta_{max},\pi)$ for every representation $\pi$ of $G$ with $\mathbf{J}_{max}$ that acts on $\textbf{\textsf{K}}_{\bm\kappa_{max}}(\pi)$ by 
\begin{equation}\label{eq:actionJ}
x.\varphi=\pi(x)\circ \varphi\circ\bm\kappa_{max}(x)^{-1} 
\end{equation}
for every $x\in\mathbf{J}_{max}$. 
We denote $\pi(\bm\kappa_{max})$ this representation of $\mathscr{G}$. 
We remark that if $V_1$ and $V_2$ are representations of $G$ and $\phi\in\Hom_G(V_1,V_2)$ then $\textbf{\textsf{K}}_{\bm\kappa_{max}}(\phi)$ maps $\varphi$ to $\phi\circ\varphi$ for every $\varphi\in \Hom_G(\rho,V_1)$.
To more details on this functor see section 5 of \cite{MS} and \cite{SeSt1}.

\smallskip
We recall that we have $\bm\sigma=\bigotimes_{i=1}^r\sigma_i$ where $\sigma_i$ is a supercuspidal representation of $GL_{m'_{i}}(\frack_{D'_i})$. 
We denote $\Gamma_{\mathscr{M}}=\prod_{j=1}^l \mathrm{Gal}(\frack_{D'^j}/\frack_{E^j})^{|I_j|}$.
The equivalence class of $(\mathscr{M},\bm\sigma)$ (see definition 1.14 of \cite{SeSt1}) is the set, denoted by $[\mathscr{M},\bm\sigma]$, of supercuspidal pairs $(\mathscr{M}',\bm\sigma')$ of $\mathscr{G}$  such that there exists $\epsilon\in\Gamma_{\mathscr{M}}$ such that $(\mathscr{M}',\bm\sigma')$ is $\mathscr{G}$-conjugated to $(\mathscr{M},\bm\sigma^\epsilon)$.

\smallskip
Let $\bm\Theta=\bm\Theta(\mathbf{J},\bm\lambda)$.
For every representation $V$ of $G$ let $V[\bm\Theta,\bm\sigma]$ be the subrepresentation of $V$ generated by the maximal subspace of $\textbf{\textsf{K}}_{\bm\kappa_{max}}(V)$
such that every irreducible subquotient has supercuspidal support in $[\mathscr{M},\bm\sigma]$ and let $V[\bm\Theta]$ be the subrepresentation of $V$ generated by $\textbf{\textsf{K}}_{\bm\kappa_{max}}(V)$ (see paragraph 9.1 of \cite{SeSt1}).

\begin{defin}\label{def:RJlambda}
Let $\mathscr{R}(\mathbf{J},\bm\lambda)$ be the full subcategory of $\mathscr{R}_R(G)$ of representations $V$ such that $V=V[\bm\Theta,\bm\sigma]$. This does not depend on the choice of $\bm\kappa_{max}$ (see paragraph 10.1 of \cite{SeSt1}).
\end{defin}

\begin{rmk}\label{rmk:RJlambda}
For every representation $V$ of $G$ we have $V[\bm\Theta,\bm\sigma][\bm\Theta,\bm\sigma]=V[\bm\Theta,\bm\sigma]$ (see lemma 9.1 of \cite{SeSt1}) and so $V[\bm\Theta,\bm\sigma]$ is an object of $\mathscr{R}(\mathbf{J},\bm\lambda)$.
\end{rmk}

We call \emph{equivalence class of} $(\mathbf{J},\bm\lambda)$ the set $[\mathbf{J},\bm\lambda]$ of semisimple supertypes 
$(\widetilde{\mathbf{J}},\widetilde{\bm\lambda})$ of $G$ such that
$\ind_{\widetilde{\mathbf{J}}}^G(\widetilde{\bm\lambda})\cong\ind_{\mathbf{J}}^G(\bm\lambda)$.

\begin{teor}\label{thm:blocks}
The category $\mathscr{R}(\mathbf{J},\bm\lambda)$ depends only on the class $[\mathbf{J},\bm\lambda]$ and it is a block of $\mathscr{R}_R(G)$.
\end{teor}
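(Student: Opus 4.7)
The plan is to deduce both assertions from the block decomposition of $\mathscr{R}_R(G)$ established by S\'echerre–Stevens in \cite{SeSt1}, by identifying $\mathscr{R}(\mathbf{J},\bm\lambda)$ with the summand indexed by the class $[\mathbf{J},\bm\lambda]$.

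For the independence statement, I would first unpack the datum used in Definition \ref{def:RJlambda}. The condition $V=V[\bm\Theta,\bm\sigma]$ involves only: (a) the formal endo-class sum $\bm\Theta=\bm\Theta(\mathbf{J},\bm\lambda)$; (b) the finite group $\mathscr{G}$, which by remark \ref{rmk:corrTheta} depends only on $\bm\Theta$; and (c) the $\mathscr{G}\rtimes\Gamma_{\mathscr{M}}$-orbit of the supercuspidal pair $(\mathscr{M},\bm\sigma)$ of $\mathscr{G}$, via the definition of the equivalence class $[\mathscr{M},\bm\sigma]$. Independence of the auxiliary choice of $\bm\kappa_{max}$ is already built into Definition \ref{def:RJlambda}, so what remains is to show that both $\bm\Theta$ and $[\mathscr{M},\bm\sigma]$ are invariants of the class $[\mathbf{J},\bm\lambda]$. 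If $(\widetilde{\mathbf{J}},\widetilde{\bm\lambda})$ is a second semisimple supertype with $\ind_{\widetilde{\mathbf{J}}}^G\widetilde{\bm\lambda}\cong \ind_{\mathbf{J}}^G\bm\lambda$, then the two types occur in each other's compact inductions, so by Frobenius reciprocity each one intertwines the other inside $G$; the uniqueness properties of simple characters and their endo-classes (applied to the stratum data of each type) force equality of the endo-class sums, and then the uniqueness of supercuspidal support for irreducible representations of $\mathscr{G}$ — applied via $\textbf{\textsf{K}}_{\bm\kappa_{max}}$ to the common irreducible subquotients of the induced representation — forces $(\mathscr{M},\bm\sigma)$ and $(\widetilde{\mathscr{M}},\widetilde{\bm\sigma})$ to lie in a common $\mathscr{G}\rtimes\Gamma_{\mathscr{M}}$-orbit.

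For the block statement, I would invoke the main decomposition theorem of \cite{SeSt1}, which asserts
\[
\mathscr{R}_R(G)=\bigoplus_{[\mathbf{J},\bm\lambda]}\mathscr{R}(\mathbf{J},\bm\lambda),
\]
indexed by the equivalence classes of semisimple supertypes, each summand being an indecomposable full subcategory. The method of proof in \emph{loc.\,cit.} is to use the functor $\textbf{\textsf{K}}_{\bm\kappa_{max}}$ to pull back the decomposition of $\mathscr{R}_R(\mathscr{G})$ along classes of supercuspidal pairs, which is classical for the finite reductive group $\mathscr{G}$; together with remark \ref{rmk:RJlambda} (the idempotency $V[\bm\Theta,\bm\sigma][\bm\Theta,\bm\sigma]=V[\bm\Theta,\bm\sigma]$) this yields that $\mathscr{R}(\mathbf{J},\bm\lambda)$ is closed under subobjects, quotients and extensions and is a direct factor of $\mathscr{R}_R(G)$.

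The main obstacle, if one insists on a self-contained argument, is the indecomposability of each summand; but this reduces, through $\textbf{\textsf{K}}_{\bm\kappa_{max}}$, to the analogous (and standard) indecomposability for a single supercuspidal Harish-Chandra class on the finite group $\mathscr{G}$. Importing the results of \cite{SeSt1} therefore gives the theorem essentially as a bookkeeping exercise matching our definition of $\mathscr{R}(\mathbf{J},\bm\lambda)$ with their summand.
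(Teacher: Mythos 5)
Your core move --- deducing both assertions by importing the results of \cite{SeSt1} --- is exactly what the paper does: its proof of theorem \ref{thm:blocks} is the bare citation of propositions 10.2 and 10.5 and theorem 10.4 of \cite{SeSt1}, the first two giving that $\mathscr{R}(\mathbf{J},\bm\lambda)$ depends only on $[\mathbf{J},\bm\lambda]$, the last that it is a block. Read as a proof by citation your proposal is therefore in line with the paper, and your sketch of why $\bm\Theta$ and $[\mathscr{M},\bm\sigma]$ are invariants of the class has the right shape, though ``intertwining forces equality of the endo-class sums'' compresses a substantial amount of the endo-class and semisimple-character machinery that \cite{SeSt1} actually has to develop.

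Your last paragraph, however, contains a genuine error. The indecomposability of $\mathscr{R}(\mathbf{J},\bm\lambda)$ does not ``reduce, through $\textbf{\textsf{K}}_{\bm\kappa_{max}}$, to the analogous (and standard) indecomposability for a single supercuspidal Harish-Chandra class on the finite group $\mathscr{G}$'', because that finite-group statement is false in general. For $\mathscr{G}$ a product of finite general linear groups, supercuspidal support is constant on $\ell$-blocks, so the summand of $\mathscr{R}_R(\mathscr{G})$ attached to one class $[\mathscr{M},\bm\sigma]$ is a direct sum of $\ell$-blocks --- but typically of several of them. For instance, for $GL_n(\mathbb{F}_q)$ with $e$ the order of $q$ modulo $\ell$ and $e\geq 2$, the unipotent $\ell$-blocks are separated by $e$-cores (Fong--Srinivasan), yet every simple module occurring in the reduction of a unipotent character is a subquotient of the parabolic induction of the trivial character of the torus and hence has supercuspidal support in the single class of the trivial pair; for suitable $n$ this class therefore meets at least two distinct blocks of $\mathscr{G}$, and the corresponding summand of $\mathscr{R}_R(\mathscr{G})$ is decomposable. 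Consequently the indecomposability of the $p$-adic summand is a genuinely $p$-adic phenomenon: it requires linking simple objects of $\mathscr{R}_R(G)$ with the same supercuspidal support over $G$ itself (unramified twists, parabolic induction, integral structures), which is precisely the content and the difficulty of theorem 10.4 of \cite{SeSt1} and is invisible to $\textbf{\textsf{K}}_{\bm\kappa_{max}}$. So if you intend the pure citation, your argument matches the paper's; if you intend the ``self-contained'' route you outline, that step fails.
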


\begin{proof}
It follows by propositions 10.2 and 10.5 and theorem 10.4 of \cite{SeSt1}.
\end{proof}

\begin{rmk}
The proof in \cite{SeSt1} of theorem \ref{thm:blocks} use the notions of inertial class of a supercuspidal pair of $G$ and the notion of supercuspidal support (see 1.1.3, 2.1.2 and 2.1.3 of \cite{MS2}).
These notions are very important in the study of representations of $GL_m(D)$ but in this article they are not used explicitly.
\end{rmk}

\subsection{The category equivalence}\label{subsec:categoryequivalence}

Let $(\mathbf{J},\bm\lambda)$ be a semisimple supertype of $G$ and let $\bm\Theta=\bm\Theta(\mathbf{J},\bm\lambda)$ be the formal sum of endo-classes associated to it.
In general there exist several semisimple supertypes of $G$ associated to $\bm\Theta$. 
We denote 
$\mathbf{X}=\mathbf{X}_{\bm\Theta}=\{[\mathbf{J}',\bm\lambda']\,|\,\bm\Theta(\mathbf{J}',\bm\lambda')=\bm\Theta\}.$
In this paragraph we prove that the sum $\bigoplus_{[\mathbf{J}',\bm\lambda']\in \mathbf{X}}\mathscr{R}(\mathbf{J}',\bm\lambda')$ is equivalent to the level-$0$ subcategory of $\mathscr{R}_R(B^{\times}_L)$.

\smallskip
Let $\mathbf{Y}=\mathbf{Y}_{\bm\Theta}$ be the set of equivalence classes of supercuspidal pairs of $\mathscr{G}$, that is uniquely determined by $\bm\Theta$ by remark \ref{rmk:corrTheta}.
Let $\bm\kappa_{max}$ be a fixed extension of $\bm\eta_{max}$ to $\mathbf{J}_{max}$ as in paragraph \ref{subsec:etamax} and let $\textbf{\textsf{K}}=\textbf{\textsf{K}}_{\bm\kappa_{max}}$.
By proposition 10.6 of \cite{SeSt1} there exists a bijection
\begin{equation}\label{eq:phikappa}
\phi_{\bm\kappa_{max}}: \mathbf{X}\rightarrow \mathbf{Y}
\end{equation}
given by $\phi_{\bm\kappa_{max}}([\mathbf{J}',\bm\lambda'])=[\mathscr{M},\bm\sigma]$ if the supercuspidal supports of irreducible subquotients of  $\textbf{\textsf{K}}(V)$ are in $[\mathscr{M},\bm\sigma]$ for every (or equivalently for one) object $V$ of $\mathscr{R}(\mathbf{J}',\bm\lambda')$.
This is equivalent to say that there exists $\bm\kappa$ as in section \ref{sec:semisimpletypes} (which depends on $\bm\kappa_{max}$) such that $\bm\lambda'=\bm\kappa\otimes\bm\sigma'$ with $(\mathscr{M},\bm\sigma')\in[\mathscr{M},\bm\sigma]$. 

\begin{prop}[Corollary 9.4 of \cite{SeSt1}]\label{prop:SS1}
For every representation $V$ of $G$ we have
\begin{equation}\label{eq:SS1}
V[\bm\Theta]=\bigoplus_{[\mathscr{M}',\bm\sigma']\in \mathbf{Y}}V[\bm\Theta,\bm\sigma'].
\end{equation}
\end{prop}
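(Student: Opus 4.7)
The plan is to apply the functor $\textbf{\textsf{K}} = \textbf{\textsf{K}}_{\bm\kappa_{max}}$ and transfer a Harish-Chandra-type decomposition on the finite reductive group $\mathscr{G} = \mathbf{J}_{max}/\mathbf{J}^1_{max}$ to a decomposition of $V[\bm\Theta]$ as a $G$-representation.

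First I would recall the modular analogue of the Harish-Chandra decomposition for the finite reductive group $\mathscr{G} = \prod_{j=1}^{l}GL_{m'^j}(\frack_{D'^j})$: for any smooth representation $W$ of $\mathscr{G}$ over $R$, one has a canonical direct sum decomposition
\begin{equation*}
W = \bigoplus_{[\mathscr{M}',\bm\sigma'] \in \mathbf{Y}} W_{[\mathscr{M}',\bm\sigma']},
\end{equation*}
where $W_{[\mathscr{M}',\bm\sigma']}$ is the largest subrepresentation all of whose irreducible subquotients have supercuspidal support in the class $[\mathscr{M}',\bm\sigma']$. Since $\ell \ne p$, this is classical: irreducible $R$-representations of $\mathscr{G}$ admit a well-defined supercuspidal support up to $\mathscr{G}$-conjugation, and the Harish-Chandra idempotents associated to distinct equivalence classes are orthogonal, giving the desired direct sum.

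Next I would apply this to $W = \textbf{\textsf{K}}(V)$, which is a smooth $\mathscr{G}$-module via the action (\ref{eq:actionJ}), and combine the result with the two definitions in paragraph 9.1 of \cite{SeSt1}: namely, $V[\bm\Theta,\bm\sigma']$ is the $G$-subrepresentation of $V$ generated by $\textbf{\textsf{K}}(V)_{[\mathscr{M}',\bm\sigma']}$, while $V[\bm\Theta]$ is the $G$-subrepresentation of $V$ generated by all of $\textbf{\textsf{K}}(V)$. This immediately yields the equality
\begin{equation*}
V[\bm\Theta] = \sum_{[\mathscr{M}',\bm\sigma'] \in \mathbf{Y}} V[\bm\Theta,\bm\sigma'].
\end{equation*}
To upgrade this to a direct sum, I would invoke remark \ref{rmk:RJlambda} and the bijection $\phi_{\bm\kappa_{max}}$ of (\ref{eq:phikappa}): each summand $V[\bm\Theta,\bm\sigma']$ is an object of the subcategory $\mathscr{R}(\mathbf{J}',\bm\lambda')$ with $[\mathbf{J}',\bm\lambda'] = \phi_{\bm\kappa_{max}}^{-1}([\mathscr{M}',\bm\sigma'])$. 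Distinct classes $[\mathscr{M}',\bm\sigma']$ therefore produce pairwise distinct blocks of $\mathscr{R}_R(G)$ by theorem \ref{thm:blocks}, and since blocks are pairwise orthogonal indecomposable direct summands of the category, the sum must be direct.

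The main obstacle is not any single step but the careful coordination between the $\mathscr{G}$-side decomposition obtained through $\textbf{\textsf{K}}$ and the block decomposition on the $G$-side. In particular, the modular Harish-Chandra decomposition for finite reductive groups, while standard, is the essential non-trivial input; one must also verify that the functor $\textbf{\textsf{K}}$ interacts properly with the passage from the subspace $\textbf{\textsf{K}}(V)_{[\mathscr{M}',\bm\sigma']}$ to the $G$-subrepresentation it generates, in order to legitimately identify the latter with $V[\bm\Theta,\bm\sigma']$ and position it inside the corresponding block.
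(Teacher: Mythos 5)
First, a point of comparison: the paper does not prove this proposition at all; it is imported as Corollary 9.4 of \cite{SeSt1}, so the only honest benchmark is the argument in that reference, which proceeds through the finite-group decomposition of $\mathscr{R}_R(\mathscr{G})$ (their Section 1) and the lemmas preceding Corollary 9.4, and emphatically \emph{not} through the Bernstein block decomposition, which in \cite{SeSt1} comes afterwards. Your overall plan (decompose $\textbf{\textsf{K}}_{\bm\kappa_{max}}(V)$ over $\mathbf{Y}$ on the finite level, then transfer along $\textbf{\textsf{K}}_{\bm\kappa_{max}}$) is indeed the right shape, but two of your steps do not stand as written. The first is the finite-group input: the decomposition $W=\bigoplus_{[\mathscr{M}',\bm\sigma']\in\mathbf{Y}}W_{[\mathscr{M}',\bm\sigma']}$ for an arbitrary $R[\mathscr{G}]$-module is not a consequence of ``orthogonal Harish-Chandra idempotents''. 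In the $\ell$-modular setting there are no a priori central idempotents indexed by supercuspidal classes; the assertion that each class cuts out a direct summand of the category (equivalently, that each class is a union of $\ell$-blocks of $\mathscr{G}$, with the equivalence including the twists by $\Gamma_{\mathscr{M}}$) is precisely the substantial block-theoretic theorem of Section 1 of \cite{SeSt1}. Uniqueness of supercuspidal support for irreducibles does not by itself exclude extensions, or longer linkings, between irreducibles whose supports lie in different classes, which is what the direct sum requires. So the input you call classical is essentially the crux, and your justification of it assumes what has to be proven.

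The second gap is the directness argument on the $G$-side, which is circular. Theorem \ref{thm:blocks} is quoted from propositions 10.2, 10.5 and theorem 10.4 of \cite{SeSt1}, whose proofs rest on Corollary 9.4, i.e.\ on the statement you are proving; likewise, within the present paper the facts that objects of $\mathscr{R}(G,\bm\eta_{max})$ are detected by $\textbf{\textsf{K}}_{\bm\kappa_{max}}$ (lemma \ref{lemma:categoryequality}, remark \ref{rmk:decblocks}) are themselves deduced from proposition \ref{prop:SS1}, so they are not available here. And some such conservativity input is genuinely needed: granting the finite decomposition and granting that $\textbf{\textsf{K}}_{\bm\kappa_{max}}\bigl(V[\bm\Theta,\bm\sigma']\bigr)$ still has all irreducible subquotients with support in $[\mathscr{M}',\bm\sigma']$ (the lemma 9.1--9.3 material of \cite{SeSt1}, which you acknowledge but do not supply), the intersection $N$ of one summand with the sum of the others only satisfies $\textbf{\textsf{K}}_{\bm\kappa_{max}}(N)=0$; since a subrepresentation of $V[\bm\Theta,\bm\sigma']$ need not be generated by its $\bm\eta_{max}$-isotypic part, this does not give $N=0$ without a further argument. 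So the passage from $V[\bm\Theta]=\sum_{\mathbf{Y}}V[\bm\Theta,\bm\sigma']$ to the direct sum is exactly where the remaining work lies, and invoking the block decomposition cannot legitimately do it.
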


\begin{prop}[Lemma 10.3 of \cite{SeSt1}]\label{prop:SS2}
If $[\mathbf{J}',\bm\lambda']\in \mathbf{X}$ and $W$ is a simple object of $\mathscr{R}(\mathbf{J}',\bm\lambda')$ then $\textbf{\textsf{K}}(W)\neq 0$.
\end{prop}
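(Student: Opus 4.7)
The plan is to unwind the definition of $\mathscr{R}(\mathbf{J}',\bm\lambda')$ in terms of the fixed functor $\textbf{\textsf{K}}=\textbf{\textsf{K}}_{\bm\kappa_{max}}$ and then to exploit the inclusion $V[\bm\Theta,\bm\sigma']\subseteq V[\bm\Theta]$ together with the fact that $V[\bm\Theta]$ is, by its construction in paragraph \ref{subsec:categoryJlambda}, generated by $\textbf{\textsf{K}}(V)$.

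First I would use the bijection $\phi_{\bm\kappa_{max}}$ of (\ref{eq:phikappa}) to attach to $[\mathbf{J}',\bm\lambda']\in\mathbf{X}$ the unique class $[\mathscr{M},\bm\sigma']=\phi_{\bm\kappa_{max}}([\mathbf{J}',\bm\lambda'])\in\mathbf{Y}$. As recalled just after (\ref{eq:phikappa}), the ambient $\bm\kappa_{max}$ itself induces a decomposition $\bm\lambda'=\bm\kappa\otimes\bm\sigma'$ with $(\mathscr{M},\bm\sigma')$ in this class; combined with the independence of $\mathscr{R}(\mathbf{J}',\bm\lambda')$ from the choice of extension (definition \ref{def:RJlambda}) and with proposition~10.6 of \cite{SeSt1}, this shows that $\mathscr{R}(\mathbf{J}',\bm\lambda')$ admits the uniform description as the full subcategory of representations $V$ with $V=V[\bm\Theta,\bm\sigma']$, where now $V[\bm\Theta,\bm\sigma']$ is built from the single ambient functor $\textbf{\textsf{K}}$.

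Granted this reparametrization, the rest is immediate. Since $W$ is simple and hence nonzero in $\mathscr{R}(\mathbf{J}',\bm\lambda')$, we have $W=W[\bm\Theta,\bm\sigma']\neq 0$. But by construction $W[\bm\Theta,\bm\sigma']$ is the $G$-subrepresentation of $W$ generated by a $\mathscr{G}$-submodule $U$ of $\textbf{\textsf{K}}(W)$, so it sits inside $W[\bm\Theta]$, the $G$-subrepresentation generated by all of $\textbf{\textsf{K}}(W)$. Hence $W[\bm\Theta]\neq 0$, and since this subrepresentation vanishes whenever $\textbf{\textsf{K}}(W)$ does, I conclude $\textbf{\textsf{K}}(W)\neq 0$.

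The main obstacle lies precisely in the first step: \emph{a priori} $\mathscr{R}(\mathbf{J}',\bm\lambda')$ is defined through its own $\bm\kappa'_{max}$-functor attached to $(\mathbf{J}',\bm\lambda')$, whereas the statement uses the single fixed $\textbf{\textsf{K}}=\textbf{\textsf{K}}_{\bm\kappa_{max}}$ coming from the ambient $(\mathbf{J},\bm\lambda)$. Identifying the two descriptions via $\phi_{\bm\kappa_{max}}$ — so that the single functor $\textbf{\textsf{K}}$ detects simple objects across \emph{all} $[\mathbf{J}',\bm\lambda']\in\mathbf{X}$ — is where proposition~10.6 of \cite{SeSt1} is genuinely used; once it is in hand, the non-vanishing of $\textbf{\textsf{K}}(W)$ is essentially built into the construction of $V[\bm\Theta,\bm\sigma']$.
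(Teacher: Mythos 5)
There is a genuine gap, and it sits exactly where you located the ``main obstacle''. First note that the paper itself does not prove this proposition: it is imported verbatim as Lemma 10.3 of \cite{SeSt1}, so any internal derivation must be checked against the other ingredients actually available. Your step 2 is fine, and for the ambient supertype $(\mathbf{J},\bm\lambda)$ itself the statement is indeed immediate from definition \ref{def:RJlambda} (any non-zero $V$ with $V=V[\bm\Theta,\bm\sigma]$ has $\textbf{\textsf{K}}(V)\neq 0$ -- note your argument never uses simplicity of $W$, which is already a sign that the content has been defined away rather than proved). The problem is step 1, for the \emph{other} classes $[\mathbf{J}',\bm\lambda']\in\mathbf{X}$: there, $\mathscr{R}(\mathbf{J}',\bm\lambda')$ is a priori defined through the maximal pair $(\mathbf{J}'^{1}_{max},\bm\eta'_{max})$ and an extension $\bm\kappa'_{max}$ built from $(\mathbf{J}',\bm\lambda')$ (in \cite{SeSt1}, equivalently through supercuspidal support), not through the single ambient functor $\textbf{\textsf{K}}=\textbf{\textsf{K}}_{\bm\kappa_{max}}$. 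The ``independence of the choice of $\bm\kappa_{max}$'' in definition \ref{def:RJlambda} only concerns the choice of extension of the \emph{fixed} $\bm\eta_{max}$ to the \emph{fixed} $\mathbf{J}_{max}$; it does not let you replace the primed maximal pair by the ambient one. That replacement needs the uniqueness up to $G$-conjugacy of $(\mathbf{J}^1_{max},\bm\eta_{max})$ given $\bm\Theta$ together with the compatibility (\ref{eq:kappamax})-type statements relating the cover data of $(\mathbf{J}',\bm\lambda')$ to $\bm\eta_{max}$ -- precisely the type-theoretic content of Lemma 10.3 -- and none of this is supplied by your formal manipulation.

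Invoking proposition 10.6 of \cite{SeSt1} does not repair this. Its first characterization of $\phi_{\bm\kappa_{max}}$ (``the supercuspidal supports of the irreducible subquotients of $\textbf{\textsf{K}}(V)$ lie in $[\mathscr{M},\bm\sigma']$'') is vacuously true when $\textbf{\textsf{K}}(V)=0$, so it cannot be the source of the non-vanishing you want; and its second characterization only concerns the decomposition $\bm\lambda'=\bm\kappa\otimes\bm\sigma'$, which does not by itself yield the categorical identity $\mathscr{R}(\mathbf{J}',\bm\lambda')=\{V\,:\,V=V[\bm\Theta,\bm\sigma']\}$ with respect to the ambient $\textbf{\textsf{K}}$. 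Worse, in \cite{SeSt1} the bijection of proposition 10.6 is established downstream of Lemma 10.3, so deriving the lemma from the bijection is circular as mathematics, even if it is consistent with the list of statements this paper quotes. In short: your reduction shows that prop \ref{prop:SS2} is formal \emph{once} one knows the block attached to any $[\mathbf{J}',\bm\lambda']\in\mathbf{X}$ is cut out by the ambient functor, but that identification is equivalent to (and in \cite{SeSt1} proved together with) the statement you are trying to prove, which is why the paper imports it rather than arguing as you propose.
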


Since $\mathbf{J}^1_{max}$ has a pro-order invertible in $R^{\times}$, the representation $\bm\eta_{max}$ is projective and so we can use notations and results of section \ref{subsec:categories}. 
We have defined the functor 
$$\mathbf{M}_{\bm\eta_{max}}:\mathscr{R}_R(G)\rightarrow \Mod-\mathscr{H}_R(G,\bm\eta_{max})$$ 
by $\mathbf{M}_{\bm\eta_{max}}(V)=\Hom_{G}(\ind_{\mathbf{J}^1_{max}}^G(\bm\eta_{max}),V)$ and $\mathbf{M}_{\bm\eta_{max}}(\phi):\varphi\mapsto \varphi\circ\phi$ for every representations $V$ and $V_1$ of $G$, $\phi\in\Hom_G(V,V_1)$ and $\varphi\in \Hom_{G}(\ind_{\mathbf{J}^1_{max}}^G(\bm\eta_{max}),V)$.

\begin{rmk}\label{rmk:forget}
Frobenius reciprocity induces a natural isomorphism between the functor $\mathbf{M}_{\bm\eta_{max}}$ composed with forget-functor $\Mod-\mathscr{H}_R(G,\bm\eta_{max})\rightarrow \Mod_R$ and the functor $\textbf{\textsf{K}}_{\bm\kappa_{max}}$ composed with the forget-functor $\mathscr{R}_R(\mathscr{G})\rightarrow \Mod_R$. 
This implies that for every representation $V$ of $G$ the subrepresentation $V[\bm\Theta]$ of $V$ is the subrepresentation $V[\bm\eta_{max}]$ defined in paragraph \ref{subsec:categories}.
\end{rmk}

\noindent
We have also defined the full subcategories $\mathscr{R}_{\bm\eta_{max}}(G)$ and $\mathscr{R}(G,\bm\eta_{max})$ of $\mathscr{R}_R(G)$. We recall that $\mathscr{R}(G,\bm\eta_{max})$ is the category of $V$ such that $V=V[\bm\Theta]$ and $\mathscr{R}_{\bm\eta_{max}}(G)$ is the category of $V$ such that  $\mathbf{M}_{\bm\eta_{max}}(V')\neq 0$ for every irreducible subquotient $V'$ of $V$. 

\begin{lemma}\label{lemma:categoryequality}
We have $\mathscr{R}(G,\bm\eta_{max})= \mathscr{R}_{\bm\eta_{max}}(G)$.
\end{lemma}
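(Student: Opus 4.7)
The inclusion $\mathscr{R}_{\bm\eta_{max}}(G)\subseteq\mathscr{R}(G,\bm\eta_{max})$ is given by remark \ref{rmk:categoryinclusion} (applied with $\mathtt{K}=\mathbf{J}^1_{max}$ and $\sigma=\bm\eta_{max}$), so the plan is to establish the reverse inclusion. Let $V$ be an object of $\mathscr{R}(G,\bm\eta_{max})$, i.e.\ $V=V[\bm\eta_{max}]$, which by remark \ref{rmk:forget} is the same as $V=V[\bm\Theta]$. I want to show that every irreducible subquotient of $V$ is hit non-trivially by $\mathbf{M}_{\bm\eta_{max}}$.

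The first step is to apply proposition \ref{prop:SS1}, which yields
\begin{equation*}
V=V[\bm\Theta]=\bigoplus_{[\mathscr{M}',\bm\sigma']\in\mathbf{Y}} V[\bm\Theta,\bm\sigma'].
\end{equation*}
For each class $[\mathscr{M}',\bm\sigma']\in\mathbf{Y}$, let $[\mathbf{J}',\bm\lambda']=\phi_{\bm\kappa_{max}}^{-1}([\mathscr{M}',\bm\sigma'])\in\mathbf{X}$ via the bijection (\ref{eq:phikappa}). By remark \ref{rmk:RJlambda}, $V[\bm\Theta,\bm\sigma']$ belongs to $\mathscr{R}(\mathbf{J}',\bm\lambda')$. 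Since by theorem \ref{thm:blocks} the category $\mathscr{R}(\mathbf{J}',\bm\lambda')$ is a block of $\mathscr{R}_R(G)$, it is in particular closed under subquotients, so every irreducible subquotient of $V[\bm\Theta,\bm\sigma']$ is a simple object of $\mathscr{R}(\mathbf{J}',\bm\lambda')$.

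Now, any irreducible subquotient $U$ of $V$ appears as an irreducible subquotient of one of the summands $V[\bm\Theta,\bm\sigma']$ (because the decomposition above is a direct sum, and an irreducible cannot straddle a direct sum decomposition). Hence $U$ is a simple object of some $\mathscr{R}(\mathbf{J}',\bm\lambda')$. By proposition \ref{prop:SS2}, $\textbf{\textsf{K}}_{\bm\kappa_{max}}(U)\neq 0$, and via the natural isomorphism of remark \ref{rmk:forget} (Frobenius reciprocity) this forces $\mathbf{M}_{\bm\eta_{max}}(U)\neq 0$. Therefore $V\in\mathscr{R}_{\bm\eta_{max}}(G)$, giving the desired equality.

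There is no real obstacle here: the proof is a direct assembly of the preceding propositions. The only point requiring minimal care is the observation that an irreducible subquotient of a direct sum $\bigoplus_i W_i$ is an irreducible subquotient of some individual $W_i$, which is immediate (take a Jordan--Hölder refinement compatible with the decomposition, or use the block decomposition to project).
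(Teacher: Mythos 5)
Your proof is correct and follows essentially the same route as the paper: both directions are handled via remark \ref{rmk:categoryinclusion}, proposition \ref{prop:SS1}, remark \ref{rmk:RJlambda}, the bijection $\phi_{\bm\kappa_{max}}$, proposition \ref{prop:SS2} and remark \ref{rmk:forget}. Your explicit appeal to theorem \ref{thm:blocks} to justify that irreducible subquotients of $V[\bm\Theta,\bm\sigma']$ remain simple objects of $\mathscr{R}(\mathbf{J}',\bm\lambda')$ merely makes precise a step the paper leaves implicit.
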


\begin{proof}
Thanks to remark \ref{rmk:categoryinclusion} it is sufficient to prove $\mathscr{R}(G,\bm\eta_{max})\subset \mathscr{R}_{\bm\eta_{max}}(G)$.
Let $V$ be a representation in $\mathscr{R}(G,\bm\eta_{max})$. 
By proposition \ref{prop:SS1} we have $V=\bigoplus_{\mathbf{Y}}V[\bm\Theta,\bm\sigma']$ and by remark \ref{rmk:RJlambda} the representation $V[\bm\Theta,\bm\sigma']$ is an object of $\mathscr{R}(\mathbf{J}',\bm\lambda')$ where $[\mathbf{J}',\bm\lambda']=\phi_{\bm\kappa_{max}}^{-1}([\mathscr{M},\bm\sigma'])\in \mathbf{X}$.   
Hence, we obtain the inclusion $\mathscr{R}(G,\bm\eta_{max})\subset \bigoplus_{\mathbf{X}}\mathscr{R}(\mathbf{J}',\bm\lambda')$.
Let now $W$ be an object of $\bigoplus_{\mathbf{X}}\mathscr{R}(\mathbf{J}',\bm\lambda')$ and $W'$ an irreducible subquotient of $W$.
Then $W'$ is an irreducible object of $\mathscr{R}(\mathbf{J}',\bm\lambda')$ for a $[\mathbf{J}',\bm\lambda']\in \mathbf{X}$ and so by proposition \ref{prop:SS2} we have $\textbf{\textsf{K}}_{\bm\kappa_{max}}(W)\neq 0$. 
Therefore, by remark \ref{rmk:forget} we have $\mathbf{M}_{\bm\eta_{max}}(W')\neq 0$ which implies $\bigoplus_{\mathbf{X}}\mathscr{R}(\mathbf{J},\bm\lambda')\subset \mathscr{R}_{\bm\eta_{max}}(G)$.
\end{proof}

\begin{rmk}\label{rmk:decblocks}
We have proved that $\mathscr{R}(G,\bm\eta_{max})=\mathscr{R}_{\bm\eta_{max}}(G)=\bigoplus_{[\mathbf{J},\bm\lambda]\in\mathbf{X}}\mathscr{R}(\mathbf{J},\bm\lambda).$
Moreover, by proposition \ref{prop:TFAE}, a representation $V$ of $G$ is in this category if and only if it verifies one of the following equivalent conditions: $V=V[\bm\Theta]$, for every subquotient $Z$ of $V$ we have $Z=Z[\bm\Theta]$, for every irreducible subquotient $U$ of $V$ we have $\mathbf{M}_{\bm\eta_{max}}(U)\neq 0$ or for every non-zero subquotient $W$ of $V$ we have $\mathbf{M}_{\bm\eta_{max}}(W)\neq 0$.
\end{rmk}

\begin{teor}\label{thm:equivalenceM}
The functor $\mathbf{M}_{\bm\eta_{max}}$ is an equivalence of categories between $\mathscr{R}(G,\bm\eta_{max})$ and $\Mod-\mathscr{H}_R(G,\bm\eta_{max})$.
\end{teor}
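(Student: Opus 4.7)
The plan is to reduce the statement directly to Theorem \ref{thm:equivalence} applied in the setting $\mathtt{G}=G$, $\mathtt{K}=\mathbf{J}^1_{max}$, and $\sigma=\bm\eta_{max}$. Once the hypotheses of that general result are verified, combined with Lemma \ref{lemma:categoryequality} which already identifies $\mathscr{R}(G,\bm\eta_{max})$ with $\mathscr{R}_{\bm\eta_{max}}(G)$, the conclusion will follow immediately.

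First, I would check that the abstract framework of paragraph 2.3 applies. The group $\mathbf{J}^1_{max}$ is a compact open subgroup of $G$ and it is a pro-$p$-group, so its pro-order is a power of $p$ and hence invertible in $R^\times$ since the characteristic of $R$ is different from $p$. This provides both the required subgroup $\mathtt{K}_0$ with invertible pro-order and the compact open subgroup $\mathtt{K}$ to which $\bm\eta_{max}$ is attached. Next, one notes that $\bm\eta_{max}$ is a finite-dimensional irreducible representation by construction (it is built, via the decomposed-pair formalism, from the Heisenberg representations $\eta_{max,j}$, each of which is finite-dimensional by paragraph \ref{subsec:Heisenberg}); in particular its underlying space is a finitely generated $R$-module. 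Finally, since $\mathbf{J}^1_{max}$ is a pro-$p$-group and $R$ has characteristic coprime to $p$, the category of smooth $R$-representations of $\mathbf{J}^1_{max}$ is semisimple, so every smooth representation, and in particular $\bm\eta_{max}$, is projective.

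With these verifications in place, Theorem \ref{thm:equivalence} gives the equivalence under the sole condition $\mathscr{R}_{\bm\eta_{max}}(G)=\mathscr{R}(G,\bm\eta_{max})$, and this is exactly the content of Lemma \ref{lemma:categoryequality}. The quasi-inverse of $\mathbf{M}_{\bm\eta_{max}}$ is then the functor $W\mapsto W\otimes_{\mathscr{H}_R(G,\bm\eta_{max})}\ind_{\mathbf{J}^1_{max}}^G\bm\eta_{max}$, as furnished by the same theorem.

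There is essentially no obstacle: every nontrivial input has been set up earlier. The only point that requires a moment of care is the projectivity of $\bm\eta_{max}$, but this reduces at once to standard semisimplicity of smooth modular representations of pro-$p$-groups in characteristic coprime to $p$; the rest of the argument is purely formal, combining Theorem \ref{thm:equivalence} with Lemma \ref{lemma:categoryequality}.
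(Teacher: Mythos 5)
Your proposal is correct and follows exactly the paper's route: the paper's proof is a one-line application of Theorem \ref{thm:equivalence} with $\mathtt{G}=G$ and $\sigma=\bm\eta_{max}$, relying on the projectivity of $\bm\eta_{max}$ (noted just before Lemma \ref{lemma:categoryequality}, since $\mathbf{J}^1_{max}$ is a pro-$p$-group with pro-order invertible in $R$) and on Lemma \ref{lemma:categoryequality} for the hypothesis $\mathscr{R}_{\bm\eta_{max}}(G)=\mathscr{R}(G,\bm\eta_{max})$. Your additional verifications (finite-dimensionality, the quasi-inverse) are consistent with what the paper sets up in section \ref{sec:preliminaries}.
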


\begin{proof}
We apply theorem \ref{thm:equivalence} with $\mathtt{G}=G$ and $\sigma=\bm\eta_{max}$.
\end{proof}

\begin{rmk}\label{rmk:level0}
We recall that a level-$0$ representation of $B_L^{\times}$ is a representation generated by its 
$K^1_L$-invariant vectors. It is equivalent to say that all irreducible subquotients have non-zero $K^1_L$-invariant vectors (see section 3 of \cite{Chin1}).
The category $\mathscr{R}(B_L^{\times},K^1_L)$ is called \emph{level-$0$ subcategory} of $\mathscr{R}_R(B_L^{\times})$. 
By section 3 of \cite{Chin1} and theorem \ref{thm:equivalence}, the $K^1_L$-invariant functor $\inv_{K^1_L}$ induces an equivalence of categories between $\mathscr{R}(B_L^{\times},K_L^1)$ and $\Mod-\mathscr{H}_R(B_L^{\times},K_L^1)$ whose quasi-inverse is
$W\mapsto W\otimes_{\mathscr{H}_R(B_L^{\times},K^1_L)}\ind_{K^1_L}^{B_L^{\times}}(1)$.
We recall that if $(\varrho,Z)$ is a representation of $B^{\times}_L$ then the action of $\Phi\in \mathscr{H}_R(B_L^{\times},K_L^1)$ on $z\in Z^{K^1_L}$ is given by $z.\Phi=\sum_{x\in K^1_L\backslash B_L^{\times}}\Phi(x)\varrho(x^{-1})z$.
\end{rmk}

\begin{corol}\label{corol:equiv}
There exists an equivalence of categories between  $\mathscr{R}(G,\bm\eta_{max})$ and $\mathscr{R}(B_L^{\times},K_L^1)$.
\end{corol}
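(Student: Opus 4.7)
The plan is simply to splice together three equivalences of categories that have already been established in the preceding sections, the isomorphism of Hecke algebras serving as the bridge.

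First, since $\mathbf{J}^1_{max}$ is a pro-$p$-group, its pro-order is invertible in $R^{\times}$, so $\bm\eta_{max}$ is a projective representation of $\mathbf{J}^1_{max}$. By Lemma \ref{lemma:categoryequality} we have $\mathscr{R}(G,\bm\eta_{max})=\mathscr{R}_{\bm\eta_{max}}(G)$, so the hypothesis of Theorem \ref{thm:equivalence} is satisfied with $\mathtt{G}=G$, $\mathtt{K}=\mathbf{J}^1_{max}$ and $\sigma=\bm\eta_{max}$. Theorem \ref{thm:equivalenceM} thus provides an equivalence
$$\mathbf{M}_{\bm\eta_{max}}:\mathscr{R}(G,\bm\eta_{max})\longrightarrow \Mod-\mathscr{H}_R(G,\bm\eta_{max}).$$

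Second, Corollary \ref{corol:isomHeckealgebras} (built on Theorem \ref{thm:isomHecke2}) yields, after choosing a non-zero intertwiner $\gamma$ and the extension $\bm\kappa_{max}$ of $\bm\eta_{max}$ to $\mathbf{J}_{max}$, an isomorphism of $R$-algebras
$$\Theta_{\gamma,\bm\kappa_{max}}:\mathscr{H}_R(B_L^{\times},K_L^1)\longrightarrow \mathscr{H}_R(G,\bm\eta_{max}).$$
Pulling back along this isomorphism gives an equivalence $\Theta_{\gamma,\bm\kappa_{max}}^{*}:\Mod-\mathscr{H}_R(G,\bm\eta_{max})\longrightarrow \Mod-\mathscr{H}_R(B_L^{\times},K_L^1)$ which sends a right $\mathscr{H}_R(G,\bm\eta_{max})$-module $W$ to the same underlying $R$-module endowed with the action $w\cdot \Phi=w\cdot\Theta_{\gamma,\bm\kappa_{max}}(\Phi)$.

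Third, Remark \ref{rmk:level0} recalls that the functor $\inv_{K_L^1}$ is an equivalence of categories between $\mathscr{R}(B_L^{\times},K_L^1)$ and $\Mod-\mathscr{H}_R(B_L^{\times},K_L^1)$, with an explicit quasi-inverse given by $W\mapsto W\otimes_{\mathscr{H}_R(B_L^{\times},K_L^1)}\ind_{K_L^1}^{B_L^{\times}}(1_{K_L^1})$.

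Composing these three equivalences, I would define
$$\mathbf{F}_{\gamma,\bm\kappa_{max}}:\mathscr{R}(G,\bm\eta_{max})\longrightarrow \mathscr{R}(B_L^{\times},K_L^1)$$
as the composite of $\mathbf{M}_{\bm\eta_{max}}$, $\Theta_{\gamma,\bm\kappa_{max}}^{*}$ and the quasi-inverse of $\inv_{K_L^1}$. On objects this reads
$$\mathbf{F}_{\gamma,\bm\kappa_{max}}(\pi,V)=\mathbf{M}_{\bm\eta_{max}}(\pi,V)\otimes_{\mathscr{H}_R(B_L^{\times},K_L^1)}\ind_{K_L^1}^{B_L^{\times}}(1_{K_L^1}),$$
matching the formula stated in the introduction, and it is an equivalence of categories as a composite of three such. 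There is no real obstacle at this stage: every substantive ingredient, most notably the algebra isomorphism $\Theta_{\gamma,\bm\kappa_{max}}$, has been constructed in Sections \ref{sec:isom} and \ref{sec:semisimpletypes}, and the categorical equivalences of both ends have already been proved. The only minor point to be careful with is that the construction depends on the auxiliary choices of $\gamma$ and $\bm\kappa_{max}$; the existence statement of the corollary is however insensitive to these choices.
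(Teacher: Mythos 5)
Your proposal is correct and follows essentially the same route as the paper: compose the equivalence $\mathbf{M}_{\bm\eta_{max}}$ of Theorem \ref{thm:equivalenceM}, the pullback along the algebra isomorphism of Corollary \ref{corol:isomHeckealgebras}, and the quasi-inverse of $\inv_{K_L^1}$ from Remark \ref{rmk:level0}. The only difference is that you spell out the resulting functor $\mathbf{F}_{\gamma,\bm\kappa_{max}}$ explicitly inside the proof, which the paper does just afterwards in the same subsection.
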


\begin{proof}
By corollary \ref{corol:isomHeckealgebras} the algebras $\mathscr{H}_R(B_L^{\times},K^1_L)$ and $\mathscr{H}_R(G,\bm\eta_{max})$ are isomorphic. We obtain an equivalence of categories between $\Mod-\mathscr{H}_R(G,\bm\eta_{max})$ and $\Mod-\mathscr{H}_R(B_L^{\times},K_L^1)$ and so between $\mathscr{R}(G,\bm\eta_{max})$ and $\mathscr{R}(B_L^{\times},K_L^1)$ by theorem \ref{thm:equivalenceM} and remark \ref{rmk:level0}.
\end{proof}

Now we want to describe the functor that induces this equivalence of categories.
We recall that we have fixed an isomorphism $B_L^{\times}\cong\prod GL_{m'^j}(D'^j)$ and an extension $\bm\kappa_{max}$ of $\bm\eta_{max}$.
We also fix a non-zero intertwining element $\gamma$ of $\bm\eta_{max}$ as in remark \ref{rmk:isomHeckealgebras}.
By corollary \ref{corol:isomHeckealgebras} we have an isomorphism 
$\Theta_{\gamma,\bm\kappa_{max}}:\mathscr{H}_R(B_L^{\times},K^1_L)\rightarrow \mathscr{H}_R(G,\bm\eta_{max})$ which 
induces an equivalence of categories $\Theta_{\gamma,\bm\kappa_{max}}^*:\Mod-\mathscr{H}_R(G,\bm\eta_{max})\rightarrow \Mod-\mathscr{H}_R(B_L^{\times},K_L^1)$. 
We obtain the diagram
\begin{equation}\label{diagram:equivalence}
\xymatrix{
\mathscr{R}(G,\bm\eta_{max})
\ar[rr]^{\text{Corollary \ref{corol:equiv}}}
\ar[d]^{\mathbf{M}_{\bm\eta_{max}}}
& &\mathscr{R}(B_L^{\times},K_L^1)
\\
\Mod-\mathscr{H}_R(G,\bm\eta_{max})
\ar[rr]^{\Theta_{\gamma,\bm\kappa_{max}}^*}
& &\Mod-\mathscr{H}_R(B_L^{\times},K_L^1).
\ar[u]_{\text{Remark \ref{rmk:level0}}}
}
\end{equation}
The functor $\mathbf{M}_{\bm\eta_{max}}:\mathscr{R}(G,\bm\eta_{max})\rightarrow \Mod-\mathscr{H}_R(G,\bm\eta_{max})$
is an equivalence of categories by theorem  \ref{thm:equivalenceM}. 
By lemma \ref{lemma:isomalgebras} the right action of  $\mathscr{H}_R(G,\bm\eta_{max})$ on $\mathbf{M}_{\bm\eta_{max}}(V)$ is given by $(m.\Psi)(f)=m(\Psi*f)$ for every $m\in \mathbf{M}_{\bm\eta_{max}}(V)$, $\Psi\in \mathscr{H}_R(G,\bm\eta_{max})$ and 
$f\in \ind^G_{\mathbf{J}^1_{max}}(\bm\eta_{max})$.
The right action of $\Phi\in \mathscr{H}_R(B_L^{\times},K_L^1)$ on a $\mathscr{H}_R(G,\bm\eta_{max})$-module $N$ is given by $N.\Phi=N.\Theta_{\gamma,\bm\kappa_{max}}(\Phi)$.
By remark \ref{rmk:level0} the functor 
$W\mapsto W\otimes_{\mathscr{H}_R(B_L^{\times},K^1_L)}\ind_{K^1_L}^{B_L^{\times}}(1)$
is a category equivalence between $\Mod-\mathscr{H}_R(B_L^{\times},K_L^1)$ and $\mathscr{R}(B_L^{\times},K_L^1)$ where, by lemma \ref{lemma:isomalgebras}, the left action of $\Phi\in\mathscr{H}_R(B_L^{\times},K^1_L)$ on $f\in\ind_{K^1_L}^{B_L^{\times}}(1)$ is given by $\Phi.f=\Phi*f$. 
Moreover, the left action of $x\in B_L^{\times}$ on $w\otimes f\in W\otimes_{\mathscr{H}_R(B_L^{\times},K^1_L)}\ind_{K^1_L}^{B_L^{\times}}(1)$ is given by $x.(w\otimes f)=w\otimes(x.f)$.

\smallskip
Composing these three functors we obtain the equivalence of categories of corollary \ref{corol:equiv} which we denote $\mathbf{F}_{\gamma,\bm\kappa_{max}}$ and that is given by 
\begin{equation}\label{eq:equiv}
\mathbf{F}_{\gamma,\bm\kappa_{max}}(\pi,V)=\mathbf{M}_{\bm\eta_{max}}(\pi,V)\otimes_{\mathscr{H}_R(B^{\times}_L,K^1_L)}\ind_{K^1_L}^{B_L^{\times}}(1_{K^1_L})
\end{equation}
for every $(\pi,V)$ in $\mathscr{R}(G,\bm\eta_{max})$, where the right action of $\Phi\in\mathscr{H}_R(B^{\times}_L,K^1_L)$ on $m\in\mathbf{M}_{\bm\eta_{max}}(\pi,V)$ is given by  
$(m.\Phi)(f)= m(\Theta_{\gamma,\bm\kappa_{max}}(\Phi)*f)$ for every $f\in\ind_{\mathbf{J}^1_{max}}^G(\bm\eta_{max})$.
We remark that if $V_1$ and $V_2$ are in $\mathscr{R}(G,\bm\eta_{max})$ and $\phi\in\Hom_G(V_1,V_2)$ then $\mathbf{F}_{\gamma,\bm\kappa_{max}}(\phi)$ maps $m\otimes f$ to $(\phi\circ m)\otimes f$ for every $m\in\mathbf{M}_{\bm\eta_{max}}(V_1)$ and $f\in\ind_{K^1_L}^{B_L^{\times}}(1_{K^1_L})$.

\subsection{Correspondence between blocks}\label{subsec:correspondence}
In this paragraph we discuss the correspondence among blocks of $\mathscr{R}(B_L^{\times},K_L^1)$ and those of $\mathscr{R}(G,\bm\eta_{max})$ induced by the equivalence of categories $\mathbf{F}_{\gamma,\bm\kappa_{max}}$ defined in (\ref{eq:equiv}).

\smallskip
We consider the functor 
$\textbf{\textsf{K}}_{K_L}:\mathscr{R}(B^{\times}_L,K^1_L)\rightarrow \mathscr{R}_R(K_L/K_L^1)=\mathscr{R}_R(\mathscr{G})$
given by $\textbf{\textsf{K}}_{K_L}(Z)=Z^{K^1_L}$
and $\textbf{\textsf{K}}_{K_L}(\phi)=\phi_{|Z^{K^1_L}}$ for every representations $(\varrho,Z)$ and $(\varrho_1,Z_1)$ of $B^{\times}_L$ and every $\phi\in\Hom_{B_L^{\times}}(Z,Z_1)$,
where $x\in K_L$ acts on $z\in Z^{K^1_L}$ by $x.z=\varrho(x)z$.
It is the functor presented in paragraph \ref{subsec:categoryJlambda} when we replace $G$ by $B^{\times}_L$ and $\bm\kappa_{max}$ by trivial representation of $K_L$.
We also consider the functor $\mathbf{H}:\Mod-\mathscr{H}_R(B_L^{\times},K^1_L)\rightarrow \mathscr{R}_R(K_L/K_L^1)$ 
given by $\mathbf{H}(W)=(\varrho',W)$ and $\mathbf{H}(\phi)=\phi$ for every $\mathscr{H}_R(B_L^{\times},K^1_L)$-modules $W$ and $W_1$ and every $\phi\in \Hom_{\mathscr{H}_R(B_L^{\times},K^1_L)}(W,W_1)$, where $\varrho'(k)w=w.f_{k^{-1}}$ for every $k\in K_L$ and $w\in W$.

\begin{rmk}\label{rmk:downarrow}
The functor $\textbf{\textsf{K}}_{K_L}$ is the composition of $\inv_{K^1_L}$ (see remark \ref{rmk:level0}) 
and the functor $\mathbf{H}$. 
Actually if $(\varrho,Z)$ is an object of $\mathscr{R}(B^{\times}_L,K^1_L)$ then 
$\mathbf{H}(\inv_{K^1_L}(Z))=\mathbf{H}(Z^{K^1_L})=(\varrho',Z^{K^1_L})$ where 
$\varrho'(k)z=z.f_{k{-1}}=\sum_{x\in K^1_L\backslash B_L^{\times}}f_{k^{-1}}(x)\varrho(x^{-1})z=\varrho(k)z$ for every $z\in Z^{K^1_L}$ and $k\in K_L$.
\end{rmk}

We obtain the diagram
\begin{equation}\label{diagram:naturalisom}
\begin{gathered}
\xymatrix{
\mathscr{R}(G,\bm\eta_{max})
\ar[rr]^{\mathbf{F}_{\gamma,\bm\kappa_{max}}}
\ar@/_0.5pc/[dr]^{\quad\Theta_{\gamma,\bm\kappa_{max}}^*\circ\mathbf{M}_{\bm\eta_{max}}}
\ar@/_1pc/[ddr]_{\textbf{\textsf{K}}_{\bm\kappa_{max}}}
&&\mathscr{R}(B_L^{\times},K_L^1)
\ar@/^1pc/[dl]_{\inv_{K^1_L}}
\ar@/^1pc/[ddl]^{\textbf{\textsf{K}}_{K_L}}
\\
&\Mod-\mathscr{H}_R(B_L^{\times},K^1_L)
\ar[d]^{\mathbf{H}}
&
\\ 
&\mathscr{R}_R(\mathscr{G})&
}
\end{gathered}
\end{equation}

\begin{prop}\label{thm:naturalisom}
There exists a natural isomorphism between $\textbf{\textsf{K}}_{K_L}\circ \mathbf{F}_{\gamma,\bm\kappa_{max}}$ and $\textbf{\textsf{K}}_{\bm\kappa_{max}}$.
\end{prop}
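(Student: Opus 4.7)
The plan is to realise both $\textbf{\textsf{K}}_{K_L}\circ\mathbf{F}_{\gamma,\bm\kappa_{max}}$ and $\textbf{\textsf{K}}_{\bm\kappa_{max}}$, at the level of underlying $R$-modules, as $\mathbf{M}_{\bm\eta_{max}}$ via Frobenius reciprocity, and then to verify that the two resulting $\mathscr{G}$-actions coincide. For every $(\pi,V)$ in $\mathscr{R}(G,\bm\eta_{max})$, Frobenius reciprocity (paragraph \ref{subsec:Hecke}) provides the natural $R$-linear isomorphism
$$\mathcal{F}_V : \mathbf{M}_{\bm\eta_{max}}(\pi,V) \xrightarrow{\sim} \Hom_{\mathbf{J}^1_{max}}(\bm\eta_{max},V), \qquad m \mapsto \bigl(v \mapsto m(i_v)\bigr),$$
whose target is the underlying $R$-module of $\textbf{\textsf{K}}_{\bm\kappa_{max}}(\pi,V)$. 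On the other hand, diagram (\ref{diagram:equivalence}) together with the fact that $W \mapsto W\otimes_{\mathscr{H}_R(B_L^{\times},K^1_L)}\ind_{K^1_L}^{B_L^{\times}}(1_{K^1_L})$ is a quasi-inverse of $\inv_{K^1_L}$ yields a natural isomorphism $\inv_{K^1_L}\circ\mathbf{F}_{\gamma,\bm\kappa_{max}} \cong \Theta^*_{\gamma,\bm\kappa_{max}}\circ\mathbf{M}_{\bm\eta_{max}}$. Composing with $\mathbf{H}$ and invoking remark \ref{rmk:downarrow} identifies $\textbf{\textsf{K}}_{K_L}\circ\mathbf{F}_{\gamma,\bm\kappa_{max}}(\pi,V)$ with $\mathbf{M}_{\bm\eta_{max}}(\pi,V)$ endowed with the $K_L$-action $\varrho'(k)m = m.\Theta_{\gamma,\bm\kappa_{max}}(f_{k^{-1}})$, which is trivial on $K^1_L$ and thus descends to $\mathscr{G}$.

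The crucial input is an explicit description of the restriction of $\Theta_{\gamma,\bm\kappa_{max}}$ to the finite subalgebra $\mathscr{H}_R(K_L,K^1_L) \subset \mathscr{H}_R(B_L^{\times},K^1_L)$. The construction of $\Theta_{\gamma,\bm\kappa_{max}}$ in section \ref{sec:isom} (lemma \ref{lemma:finitealgebras}), transported to the semisimple setting through proposition \ref{prop:isomorphismetamaxj} and corollary \ref{corol:isomHeckealgebras}, shows that this restriction lands in $\mathscr{H}_R(\mathbf{J}_{max},\bm\eta_{max})$ and is essentially ``tensor with $\bm\kappa_{max}$'': using the inclusion $K_L \subset \mathbf{J}_{max}$ with $\mathbf{J}_{max} = K_L\,\mathbf{J}^1_{max}$ and the normality of $\mathbf{J}^1_{max}$ in $\mathbf{J}_{max}$, the function $\widehat f_{k} = \Theta_{\gamma,\bm\kappa_{max}}(f_k)$ for $k \in K_L$ is supported on the single right coset $\mathbf{J}^1_{max} k\mathbf{J}^1_{max} = \mathbf{J}^1_{max} k$ and satisfies $\widehat f_k(k) = \bm\kappa_{max}(k)$. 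A direct convolution calculation using formula (\ref{eq:induced}) then yields
$$\widehat f_{k^{-1}} * i_v \;=\; k.i_{\bm\kappa_{max}(k^{-1})v} \qquad \text{in } \ind_{\mathbf{J}^1_{max}}^G(\bm\eta_{max})$$
for every $v \in V_{\bm\eta_{max}}$ and $k \in K_L$.

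Combining this with the $G$-equivariance of $m \in \mathbf{M}_{\bm\eta_{max}}(\pi,V)$, the compatibility of the two $\mathscr{G}$-actions becomes the computation
$$\mathcal{F}_V(\varrho'(k)m)(v) \;=\; m(\widehat f_{k^{-1}} * i_v) \;=\; \pi(k)\bigl(m(i_{\bm\kappa_{max}(k^{-1})v})\bigr) \;=\; \bigl(\pi(k)\circ\mathcal{F}_V(m)\circ\bm\kappa_{max}(k)^{-1}\bigr)(v),$$
which is precisely the action (\ref{eq:actionJ}) of $k \in \mathbf{J}_{max}$ on $\textbf{\textsf{K}}_{\bm\kappa_{max}}(\pi,V)$, viewed modulo $\mathbf{J}^1_{max}$ via the isomorphism $\mathscr{G} = \mathbf{J}_{max}/\mathbf{J}^1_{max} = K_L/K^1_L$. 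This proves that $\mathcal{F}_V$ is $\mathscr{G}$-equivariant, and naturality in $V$ is inherited from the naturality of Frobenius reciprocity and of the functors in diagram (\ref{diagram:naturalisom}). The main obstacle in this plan is the explicit description of $\Theta_{\gamma,\bm\kappa_{max}}(f_k)$ for $k \in K_L$ in the semisimple setting: a careful bookkeeping of the tensor-product identifications of proposition \ref{prop:isomorphismetamaxj} is needed to ensure that the componentwise application of lemma \ref{lemma:finitealgebras} through corollary \ref{corol:isomHeckealgebras} preserves the equality $\widehat f_k(k) = \bm\kappa_{max}(k)$; once this is granted, the rest of the argument reduces to the convolution calculation above and a routine diagram chase.
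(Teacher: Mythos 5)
Your proposal is correct and follows essentially the same route as the paper: reduce via remark \ref{rmk:downarrow} and diagram (\ref{diagram:equivalence}) to comparing $\mathbf{H}\circ\Theta^*_{\gamma,\bm\kappa_{max}}\circ\mathbf{M}_{\bm\eta_{max}}$ with $\textbf{\textsf{K}}_{\bm\kappa_{max}}$, take the Frobenius reciprocity map $m\mapsto(v\mapsto m(i_v))$ as the candidate isomorphism, and check $\mathscr{G}$-equivariance by the same convolution computation $\Theta_{\gamma,\bm\kappa_{max}}(f_{k^{-1}})*i_v=k.i_{\bm\kappa_{max}(k^{-1})v}$, using that $\Theta_{\gamma,\bm\kappa_{max}}(f_k)$ is supported on $\mathbf{J}^1_{max}k$ with value $\bm\kappa_{max}(k)$ there. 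The bookkeeping point you flag about transporting lemma \ref{lemma:finitealgebras} through proposition \ref{prop:isomorphismetamaxj} and corollary \ref{corol:isomHeckealgebras} is exactly the fact the paper also uses (stated without further elaboration in its proof), so there is no gap relative to the paper's own argument.
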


\begin{proof}
By remark \ref{rmk:downarrow} we have $\textbf{\textsf{K}}_{K_L}\circ \mathbf{F}_{\gamma,\bm\kappa_{max}}=\mathbf{H}\circ \inv_{K^1_L}\circ \mathbf{F}_{\gamma,\bm\kappa_{max}}$ and by diagram (\ref{diagram:equivalence}) we have a natural isomorphism between $\inv_{K^1_L}\circ \mathbf{F}_{\gamma,\bm\kappa_{max}}$ and $\Theta_{\gamma,\bm\kappa_{max}}^*\circ\mathbf{M}_{\bm\eta_{max}}$
so it is sufficient to find a natural isomorphism 
$\mathfrak{Z}:\mathbf{H}\circ\Theta_{\gamma,\bm\kappa_{max}}^*\circ\mathbf{M}_{\bm\eta_{max}}\rightarrow \textbf{\textsf{K}}_{\bm\kappa_{max}}$. 
For every object $(\pi,V)$ of $\mathscr{R}(G,\bm\eta_{max})$, let $\mathfrak{Z}_V:\mathbf{M}_{\bm\eta_{max}}(V)\rightarrow \textbf{\textsf{K}}_{\bm\kappa_{max}}(V)$ be the isomorphism of $R$-modules given by remark \ref{rmk:forget}.
The action of $x\in K_L/K_L^1\cong\mathscr{G}$ on $m\in \mathbf{M}_{\bm\eta_{max}}(\pi,V)$ is given by $x.m=m.\Theta_{\gamma,\bm\kappa_{max}}(f_{x^{-1}})=m.\widetilde{f}_{x^{-1}}$ where
$\widetilde{f}_{x^{-1}}\in \mathscr{H}_R(G,\bm\eta_{max})$ has support $x^{-1}\mathbf{J}^1_{max}$ and $\widetilde{f}_{x^{-1}}(x^{-1})=\bm\kappa_{max}(x^{-1})$ while the action of $x\in \mathbf{J}_{max}/\mathbf{J}^1_{max}\cong\mathscr{G}$ on $\varphi\in \textbf{\textsf{K}}_{\bm\kappa_{max}}(V)$ is given by (\ref{eq:actionJ}).
We have to prove that 
$\mathfrak{Z}_V(x.m)=x.\mathfrak{Z}_V(m)$ for every 
$m\in \mathbf{M}_{\bm\eta_{max}}(\pi,V)$ and $x\in \mathscr{G}$.
We recall that in paragraph \ref{subsec:Hecke} we have defined elements $i_v:\mathbf{J}^1_{max}\rightarrow V_{\bm\eta_{max}}$ with $v\in V_{\bm\eta_{max}}$, which generate  $\ind_{\mathbf{J}^1_{max}}^G(\bm\eta_{max})$ as representation of $G$, such that $m(i_v)=\mathfrak{Z}_V(m)(v)$.
Then for every $v\in V_{\bm\eta_{max}}$ we have 
$\mathfrak{Z}_V(x.m)(v)=(x.m)(i_v)=
(m.\widetilde{f}_{x^{-1}})(i_v)=m(\widetilde{f}_{x^{-1}}*i_v)$. 
The support of $\widetilde{f}_{x^{-1}}*i_v$ is $\mathbf{J}_{max}^1x^{-1}$ and 
$(\widetilde{f}_{x^{-1}}*i_v)(x^{-1})=\widetilde{f}_{x^{-1}}(x^{-1})v=\bm\kappa_{max}(x^{-1})v$. 
Hence, we obtain 
$\mathfrak{Z}_V(x.m)(v)=m(x.i_{\bm\kappa_{max}(x^{-1})v})=\pi(x)\big(m(i_{\bm\kappa_{max}(x^{-1})v})\big)=\pi(x)\big(\mathfrak{Z}_V(m)(\bm\kappa_{max}(x^{-1})v)\big)=(x.\mathfrak{Z}_V(m))(v)$.
Now, let $V_1$ and $V_2$ be two objects of $\mathscr{R}(G,\bm\eta_{max})$ and let $\phi\in\Hom_G(V_1,V_2)$.
Then for every $m\in \mathbf{M}_{\bm\eta_{max}}(V_1)$ and every $v\in V_{\bm\eta_{max}}$ we have
$\mathfrak{Z}_{V_2}\left(\mathbf{H}(\Theta_{\gamma,\bm\kappa_{max}}^*(\mathbf{M}_{\bm\eta_{max}}(\phi)))(m)\right)(v)=\mathfrak{Z}_{V_2}(\phi\circ m)(v)$ which is equal to $(\phi\circ m)(i_v)$ by Frobenius reciprocity. On the other hand we have  
$\textbf{\textsf{K}}_{\bm\kappa_{max}}(\phi)(\mathfrak{Z}_{V_1}(m))(v)=\phi(\mathfrak{Z}_{V_1}(m)(v))$ which is equal to $\phi(m(i_v))$ by Frobenius reciprocity. This shows that $\mathfrak{Z}$ is a natural isomorphism.
\end{proof}

Now we look for a block decomposition of $\mathscr{R}(B^{\times}_L,K^1_L)$.
Let $[\mathscr{M},\bm\sigma]\in \mathbf{Y}$. 
Then $\mathscr{M}=\prod_{j=1}^l\mathscr{M}_j$ and $\bm\sigma=\bigotimes_{j=1}^l\bm\sigma_j$ where 
$\mathscr{M}_j\cong\mathbf{J}_j/\mathbf{J}^1_j$ and $[\mathscr{M}_j,\bm\sigma_j]$ is class of supercuspidal pairs of $GL_{m'^j}(\frack_{D'^j})$.
For every $j\in\{1,\dots,l\}$, replacing $G$ by $B^{j\times}$ and $\bm\kappa_{max}$ by the trivial character of $U(\Lambda_{max,j})\cap B^{j\times}$ in definition \ref{def:RJlambda}, we obtain an abelian full subcategory 
$\mathscr{R}(U(\Lambda_{max,j})\cap B^{j\times},\bm\sigma_j)$ of $\mathscr{R}_R(B^{j\times})$ whose objects are representations $V_j$ of $B^{j\times}$ generated by the maximal subspace of $V_j^{U_1(\Lambda_{max,j})\cap B^{j\times}}$ for which every irreducible subquotient has supercuspidal support in $[\mathscr{M}_j,\bm\sigma_j]$. 
We obtain a full subcategory $\mathscr{R}(K_L,\bm\sigma)$ of $\mathscr{R}_R(B^{\times}_L)$ (and of $\mathscr{R}(B^{\times}_L,K^1_L)$) whose objects are representations $V$ of $B^{\times}_L$ generated by the maximal subspace of $V^{K^1_L}$ such that every irreducible subquotient has supercuspidal support in $[\mathscr{M},\bm\sigma]$. 
Theorem \ref{thm:blocks} and remark \ref{rmk:decblocks} give a block decomposition of $\mathscr{R}(B^{j\times},U_1(\Lambda_{max,j})\cap B^{j\times})$ for every $j\in\{1,\dots,l\}$ and so we obtain a block decomposition 
$$\mathscr{R}(B^{\times}_L,K^1_L)=\bigoplus_{[\mathscr{M},\bm\sigma]\in \mathbf{Y}} \mathscr{R}(K_L,\bm\sigma).$$ 
We recall that we have a block decomposition 
$\mathscr{R}(G,\bm\eta_{max})=\bigoplus_{[\mathbf{J},\bm\lambda]\in \mathbf{X}} \mathscr{R}(\mathbf{J},\bm\lambda)$ by remark \ref{rmk:decblocks} and a bijection $\phi_{\bm\kappa_{max}}:\mathbf{X}\rightarrow\mathbf{Y}$ defined in (\ref{eq:phikappa}) which depends on the choice of $\bm\kappa_{max}$.

\begin{teor}
Let $[\mathbf{J},\bm\lambda]\in\mathbf{X}$ and $[\mathscr{M},\bm\sigma]=\phi_{\bm\kappa_{max}}([\mathbf{J},\bm\lambda])\in\mathbf{Y}$.
Then $\mathbf{F}_{\gamma,\bm\kappa_{max}}$ induces an equivalence of categories between the block $\mathscr{R}(\mathbf{J},\bm\lambda)$ of $\mathscr{R}_R(G)$ and the block $\mathscr{R}(K_L,\bm\sigma)$ of $\mathscr{R}_R(B_L^{\times})$.
\end{teor}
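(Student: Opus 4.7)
The plan is to combine the abstract equivalence $\mathbf{F}_{\gamma,\bm\kappa_{max}}$ of corollary \ref{corol:equiv} with the natural isomorphism of proposition \ref{thm:naturalisom} and with the bijection $\phi_{\bm\kappa_{max}}$ of (\ref{eq:phikappa}). Since $\mathbf{F}_{\gamma,\bm\kappa_{max}}$ is an equivalence of abelian categories and both source and target decompose as finite direct sums of blocks
\[
\mathscr{R}(G,\bm\eta_{max})=\bigoplus_{[\mathbf{J}',\bm\lambda']\in \mathbf{X}}\mathscr{R}(\mathbf{J}',\bm\lambda'),\qquad
\mathscr{R}(B^{\times}_L,K^1_L)=\bigoplus_{[\mathscr{M}',\bm\sigma']\in \mathbf{Y}}\mathscr{R}(K_L,\bm\sigma'),
\]
$\mathbf{F}_{\gamma,\bm\kappa_{max}}$ must send each indecomposable summand on the left to an indecomposable summand on the right, and thus induces a bijection between these two finite sets of blocks. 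It remains only to identify which block of $\mathscr{R}(B^{\times}_L,K^1_L)$ is the image of $\mathscr{R}(\mathbf{J},\bm\lambda)$.

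The next step is to rephrase membership in the two blocks in terms of the functors $\textbf{\textsf{K}}$. On the left, combining remark \ref{rmk:decblocks} with the defining property of $\phi_{\bm\kappa_{max}}$, an object $V$ of $\mathscr{R}(G,\bm\eta_{max})$ lies in $\mathscr{R}(\mathbf{J},\bm\lambda)$ if and only if every irreducible subquotient of the $\mathscr{G}$-representation $\textbf{\textsf{K}}_{\bm\kappa_{max}}(V)$ has supercuspidal support in $[\mathscr{M},\bm\sigma]$. On the right, applying the same construction with $\bm\kappa_{max}$ replaced by the trivial character of $K_L$ shows that the level-$0$ block $\mathscr{R}(K_L,\bm\sigma)$ consists exactly of those $Z$ in $\mathscr{R}(B^{\times}_L,K^1_L)$ such that every irreducible subquotient of $\textbf{\textsf{K}}_{K_L}(Z)=Z^{K^1_L}$ has supercuspidal support in $[\mathscr{M},\bm\sigma]$.

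To match the two characterizations I would invoke proposition \ref{thm:naturalisom}, which provides a natural isomorphism $\textbf{\textsf{K}}_{K_L}\circ\mathbf{F}_{\gamma,\bm\kappa_{max}}\simeq \textbf{\textsf{K}}_{\bm\kappa_{max}}$ of functors from $\mathscr{R}(G,\bm\eta_{max})$ to $\mathscr{R}_R(\mathscr{G})$. For every $V$ the two $\mathscr{G}$-representations $\textbf{\textsf{K}}_{\bm\kappa_{max}}(V)$ and $\textbf{\textsf{K}}_{K_L}(\mathbf{F}_{\gamma,\bm\kappa_{max}}(V))$ are then isomorphic, so they share the same list of irreducible subquotients, hence the same multiset of supercuspidal supports. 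Consequently $V\in\mathscr{R}(\mathbf{J},\bm\lambda)$ if and only if $\mathbf{F}_{\gamma,\bm\kappa_{max}}(V)\in\mathscr{R}(K_L,\bm\sigma)$, and together with the bijection of blocks above this shows that $\mathbf{F}_{\gamma,\bm\kappa_{max}}$ restricts to the desired equivalence.

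The main obstacle has in fact already been overcome: the hard work lies in the construction of $\Theta_{\gamma,\bm\kappa_{max}}$ (section \ref{sec:isom} and corollary \ref{corol:isomHeckealgebras}) and in establishing proposition \ref{thm:naturalisom}; once those tools are in place the present statement reduces to unwinding the defining conditions of the relevant subcategories and tracking supercuspidal supports through the natural isomorphism.
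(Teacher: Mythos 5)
Your proposal is correct and follows essentially the same route as the paper: the paper's proof likewise applies proposition \ref{thm:naturalisom} to identify $\textbf{\textsf{K}}_{K_L}(\mathbf{F}_{\gamma,\bm\kappa_{max}}(V))$ with $\textbf{\textsf{K}}_{\bm\kappa_{max}}(V)$ and then reads off that the irreducible subquotients of $(\mathbf{F}_{\gamma,\bm\kappa_{max}}(V))^{K^1_L}$ have supercuspidal support in $[\mathscr{M},\bm\sigma]$, so that $\mathbf{F}_{\gamma,\bm\kappa_{max}}$ carries $\mathscr{R}(\mathbf{J},\bm\lambda)$ into $\mathscr{R}(K_L,\bm\sigma)$. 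The only difference is presentational: you spell out the block-to-block bijection coming from the two block decompositions, which the paper leaves implicit.
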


\begin{proof}
If $V$ is an object of $\mathscr{R}(\mathbf{J},\bm\lambda)$, by proposition \ref{thm:naturalisom} there exists an isomorphism of representations of $\mathscr{G}$ between $\textbf{\textsf{K}}_{K_L}(\mathbf{F}_{\gamma,\bm\kappa_{max}}(V))$ and $\textbf{\textsf{K}}_{\bm\kappa_{max}}(V)$. 
Then irreducible subquotients of $(\mathbf{F}_{\gamma,\bm\kappa_{max}}(V))^{K^1_L}$ have supercuspidal support in $[\mathscr{M},\bm\sigma]$ and so $\mathbf{F}_{\gamma,\bm\kappa_{max}}(V)$ is in $\mathscr{R}(K_L,\bm\sigma)$.
\end{proof}

We remark that this correspondence does not depend on the choice of the intertwining element $\gamma$ of $\bm\eta_{max}$.

\subsection{Dependence on the choice of $\bm\kappa_{max}$}
In this paragraph we discuss the dependence of results of paragraphs \ref{subsec:categoryJlambda}, \ref{subsec:categoryequivalence} and \ref{subsec:correspondence} on the choice of the extension of $\bm\eta_{max}$ to $\textbf{J}_{max}$. 

\smallskip
Let $(\mathbf{J},\bm\lambda)$ be a semisimple supertype of $G$. 
We have just seen in remark \ref{rmk:corrTheta} that the group $\mathscr{G}$ depends only on 
$\bm\Theta(\mathbf{J},\bm\lambda)$ and by remark \ref{rmk:corrTheta} and theorem \ref{thm:blocks} the $\mathscr{G}$-conjugacy class of $\mathscr{M}$ and the category $\mathscr{R}(\mathbf{J},\bm\lambda)$ do not depend on the choice of the extension of $\bm\eta_{max}$ to $\textbf{J}_{max}$. 
Moreover, the sum (\ref{eq:SS1}) does not depend on this choice because a different one permutes the terms $V[\bm\Theta,\bm\sigma']$ in $V[\bm\Theta]$. 
Then $V[\bm\Theta]$, the equalities $\mathscr{R}(G,\bm\eta_{max})=\mathscr{R}_{\bm\eta_{max}}(G)=\bigoplus_{[\mathbf{J},\bm\lambda]\in\mathbf{X}}\mathscr{R}(\mathbf{J},\bm\lambda)$ and the equivalence of theorem \ref{thm:equivalenceM} do not depend on the choice of the extension of $\bm\eta_{max}$.

\smallskip
Let $\gamma$ be a fixed non-zero intertwining element of $\bm\eta_{max}$ as in remark \ref{rmk:isomHeckealgebras}.
Using notation of paragraph \ref{subsec:etamax}, let $\bm\kappa_{max}$ and $\bm\kappa'_{max}$ be two extensions of $\bm\eta_{max}$ to $\textbf{J}_{max}$ and let $\kappa_{max}=\bigotimes_{j=1}^l\kappa_{max,j}$ and $\kappa'_{max}=\bigotimes_{j=1}^l\kappa'_{max,j}$ be the restrictions to $J_{max}$ of $\bm\kappa_{max}$ and $\bm\kappa'_{max}$   respectively. 
Then, for every $j\in\{1,\dots,l\}$, $\kappa_{max,j}$ and $\kappa'_{max,j}$ are $\beta$-extensions of $\theta_{max,j}$  and so by paragraph \ref{subsec:Heisenberg} there exists a character $\chi_j$ of $\ent_{E^j}^{\times}$ trivial on $1+\wp_{E^j}$ such that $\kappa'_{max,j}=\kappa_{max,j}\otimes (\chi_j\circ N_{B^j/E^j})$.
Let $\chi$ and $\overline\chi$ be the character $\bigotimes_{j=1}^{l}(\chi_j\circ N_{B^j/E^j})$ viewed as characters of
$\textbf{J}_{max}$ trivial on $\textbf{J}^1_{max}$ and of $\mathscr{G}$ respectively and let $\widetilde\chi=\bigotimes_{j=1}^{l}\Big(\big(\infl_{\ent_{E^j}^{\times}}^{(E^j)^{\times}}\chi_j\big)\circ N_{B^j/E^j}\Big)$ viewed as a character of $B_L^{\times}$.

\smallskip
We consider the functors $\widetilde{\mathfrak{X}}:\mathscr{R}(B_L^{\times},K_L^1)\rightarrow \mathscr{R}(B_L^{\times},K_L^1)$ and $\overline{\mathfrak{X}}:\mathscr{R}_R(\mathscr{G})\rightarrow \mathscr{R}_R(\mathscr{G})$ given by 
$\widetilde{\mathfrak{X}}(\varrho)=\varrho\otimes\widetilde{\chi}^{-1}$, $\widetilde{\mathfrak{X}}(\widetilde\phi)=\widetilde\phi$, $\overline{\mathfrak{X}}(\tau)=\tau\otimes\overline{\chi}^{-1}$ 
and $\overline{\mathfrak{X}}(\overline\phi)=\overline\phi$ for every $\varrho$, $\varrho_1$ in $\mathscr{R}(B_L^{\times},K_L^1)$, every $\widetilde\phi\in\Hom_{B_L^{\times}}(\varrho,\varrho_1)$, every representations $\tau$ and $\tau_1$ of $\mathscr{G}$ and every $\overline{\phi}\in\Hom_{\mathscr{G}}(\tau,\tau_1)$.
We consider the following diagram.
\begin{equation}\label{diagram:dependence}
\begin{gathered}
\xymatrix{
\mathscr{R}(B_L^{\times},K_L^1)
\ar[rr]^{\textbf{\textsf{K}}_{K_L}}
\ar[dd]_{\widetilde{\mathfrak{X}}}
&&
\mathscr{R}_R(\mathscr{G})
\ar[dd]^{\overline{\mathfrak{X}}}
\\
&\mathscr{R}(G,\bm\eta_{max})
\ar[ul]_{\mathbf{F}_{\gamma,\bm\kappa_{max}}}
\ar[dl]_{\mathbf{F}_{\gamma,\bm\kappa'_{max}}}
\ar[dr]^{\textbf{\textsf{K}}_{\bm\kappa'_{max}}}
\ar[ur]^{\textbf{\textsf{K}}_{\bm\kappa_{max}}}
&
\\
\mathscr{R}(B_L^{\times},K_L^1)
\ar[rr]^{\textbf{\textsf{K}}_{K_L}}
&&
\mathscr{R}_R(\mathscr{G}).
}
\end{gathered}
\end{equation}

\begin{lemma}\label{lemma:naturalisom1}
We have $\textbf{\textsf{K}}_{\bm\kappa'_{max}}=\overline{\mathfrak{X}}\circ\textbf{\textsf{K}}_{\bm\kappa_{max}}$ and so for every representation $(\pi,V)$ in $\mathscr{R}(G,\bm\eta_{max})$ we have $\pi(\bm\kappa'_{max})=\pi(\bm\kappa_{max})\otimes \overline\chi^{-1}$.
\end{lemma}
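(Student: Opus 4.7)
The plan is to reduce the statement to the identity $\bm\kappa'_{max}=\bm\kappa_{max}\otimes\chi$ of representations of $\mathbf{J}_{max}$, and then to unravel the definitions of the two functors. First I would verify the identity $\bm\kappa'_{max}=\bm\kappa_{max}\otimes\chi$ on $\mathbf{J}_{max}$. The uniqueness statement in proposition 2.33 of \cite{MS} guarantees that there is a unique representation of $\mathbf{J}_{max}$ extending $\bm\eta_{max}$ and decomposed above $(J_{max},\kappa'_{max})$, so it suffices to check that $\bm\kappa_{max}\otimes\chi$ has these three properties. Since $\chi$ is trivial on $\mathbf{J}^1_{max}$ and $\bm\kappa_{max}$ already restricts to $\bm\eta_{max}$ there, the restriction of $\bm\kappa_{max}\otimes\chi$ to $\mathbf{J}^1_{max}$ is $\bm\eta_{max}$. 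The decomposition property with respect to the various parabolics containing $M$ depends only on the support of $\chi$, which is invariant by the twist; and the restriction of $\bm\kappa_{max}\otimes\chi$ to $\mathbf{J}_{max}\cap M=J_{max}$ equals $\kappa_{max}\otimes\bigotimes_{j=1}^l(\chi_j\circ N_{B^j/E^j})=\kappa'_{max}$ by construction of $\chi$. Hence $\bm\kappa'_{max}=\bm\kappa_{max}\otimes\chi$.

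Next I would compute the action on objects. Let $(\pi,V)$ be in $\mathscr{R}(G,\bm\eta_{max})$; both $\textbf{\textsf{K}}_{\bm\kappa_{max}}(\pi)$ and $\textbf{\textsf{K}}_{\bm\kappa'_{max}}(\pi)$ share the underlying $R$-module $\Hom_{\mathbf{J}^1_{max}}(\bm\eta_{max},\pi)$, so it is enough to compare the two $\mathbf{J}_{max}$-actions, both trivial on $\mathbf{J}^1_{max}$. For $\varphi$ in this Hom-space and $x\in\mathbf{J}_{max}$, formula~(\ref{eq:actionJ}) combined with the identity $\bm\kappa'_{max}(x)^{-1}=\chi(x)^{-1}\bm\kappa_{max}(x)^{-1}$ gives
\[
x\cdot_{\bm\kappa'_{max}}\varphi
=\pi(x)\circ\varphi\circ\bm\kappa'_{max}(x)^{-1}
=\chi(x)^{-1}\bigl(\pi(x)\circ\varphi\circ\bm\kappa_{max}(x)^{-1}\bigr)
=\chi(x)^{-1}\,x\cdot_{\bm\kappa_{max}}\varphi.
\]
Since $\chi$ is trivial on $\mathbf{J}^1_{max}$ and descends through $\mathbf{J}_{max}/\mathbf{J}^1_{max}\cong\mathscr{G}$ to the character $\overline{\chi}$, this exactly says $\pi(\bm\kappa'_{max})=\pi(\bm\kappa_{max})\otimes\overline{\chi}^{-1}=\overline{\mathfrak{X}}\bigl(\pi(\bm\kappa_{max})\bigr)$.

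Finally I would check agreement on morphisms. If $\phi\in\Hom_G(V_1,V_2)$, then both $\textbf{\textsf{K}}_{\bm\kappa_{max}}(\phi)$ and $\textbf{\textsf{K}}_{\bm\kappa'_{max}}(\phi)$ are the map $\varphi\mapsto\phi\circ\varphi$, independently of the chosen extension; and $\overline{\mathfrak{X}}$ acts as the identity on morphisms. Hence the two composites agree on morphisms as well, which upgrades the objectwise equality to the functor equality $\textbf{\textsf{K}}_{\bm\kappa'_{max}}=\overline{\mathfrak{X}}\circ\textbf{\textsf{K}}_{\bm\kappa_{max}}$. The only non-formal step is the uniqueness argument identifying $\bm\kappa_{max}\otimes\chi$ with $\bm\kappa'_{max}$, which is where the decomposition/uniqueness results from \cite{MS} do the actual work.
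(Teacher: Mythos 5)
Your argument is correct and rests on the same structural facts as the paper's proof, but it packages the key step differently. The paper never asserts the global identity $\bm\kappa'_{max}=\bm\kappa_{max}\otimes\chi$ on $\mathbf{J}_{max}$: it writes each $x\in\mathbf{J}_{max}$ as $x=x_1x_2x_3$ with $x_1\in\mathbf{J}_{max}\cap N^-$, $x_2\in J_{max}$, $x_3\in\mathbf{J}_{max}\cap N$ for the standard parabolic $Q=LN$, uses that both pairs are decomposed above $(J_{max},\kappa_{max})$ and $(J_{max},\kappa'_{max})$ to reduce $\bm\kappa_{max}(x^{-1})$ and $\bm\kappa'_{max}(x^{-1})$ to $\kappa_{max}(x_2^{-1})$ and $\kappa'_{max}(x_2^{-1})=\kappa_{max}(x_2^{-1})\chi(x_2^{-1})$, and then uses $\mathbf{J}_{max}\cap N=\mathbf{J}^1_{max}\cap N$ and $\mathbf{J}_{max}\cap N^-=\mathbf{J}^1_{max}\cap N^-$ to replace $\chi(x_2)$ by $\chi(x)$. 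You instead promote the comparison to the identity of representations $\bm\kappa'_{max}=\bm\kappa_{max}\otimes\chi$, obtained from the uniqueness clause of proposition 2.33 of \cite{MS} after checking that $\bm\kappa_{max}\otimes\chi$ is decomposed above $(J_{max},\kappa'_{max})$; the pointwise comparison of the two $\mathscr{G}$-actions and the check on morphisms are then immediate and agree with the paper. This is a legitimate and arguably cleaner route, and the identity is reusable elsewhere, but note that the verification of the decomposed-above conditions for the twist is exactly where the paper's ingredient reappears: you need $\chi$ to be trivial on $\mathbf{J}_{max}\cap N$ and $\mathbf{J}_{max}\cap N^-$ for every parabolic with Levi component $L$, i.e. the containments $\mathbf{J}_{max}\cap N\subset\mathbf{J}^1_{max}$ and $\mathbf{J}_{max}\cap N^-\subset\mathbf{J}^1_{max}$; your phrase about the twist leaving the ``support of $\chi$'' invariant glosses this, and ``parabolics containing $M$'' should read ``parabolic subgroups with Levi component $L$'', since the decomposition is above $(J_{max},\kappa'_{max})$ with $J_{max}\subset L$. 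With these two points made explicit, your proof is complete.
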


\begin{proof}
The space of $\textbf{\textsf{K}}_{\bm\kappa'_{max}}(V)$ and of $\overline{\mathfrak{X}}(\textbf{\textsf{K}}_{\bm\kappa_{max}}(V))$ is 
$\Hom_{\mathbf{J}^1_{max}}(\bm\eta_{max},V)$.
Let $\varphi$ in this space and $x\in \mathbf{J}_{max}$.
Let $Q$ be the standard parabolic subgroup of $G$ with Levi component $L$, let $N$ be the unipotent radical of $Q$ such that $Q=LN$ and let $N^-$ be the unipotent radical opposite to $N$. 
We choose $x_1\in \mathbf{J}_{max}\cap N^-$, $x_2\in J_{max}$ and $x_3\in \mathbf{J}_{max}\cap N$ such that $x=x_1x_2x_3$.
Since $(\bm\kappa_{max},\mathbf{J}_{max})$ and $(\bm\kappa'_{max},\mathbf{J}_{max})$ are decomposed above $(\kappa_{max},J_{max})$ and $(\kappa'_{max},J_{max})$ respectively, we obtain
$\pi(\bm\kappa'_{max})(x)(\varphi)=\pi(x)\circ\varphi\circ \bm\kappa'_{max}(x^{-1})=\pi(x)\circ\varphi\circ \kappa'_{max}(x_2^{-1})=\pi(x)\circ\varphi\circ \kappa_{max}(x_2^{-1})\chi(x_2^{-1})=\pi(\bm\kappa_{max})(x)(\varphi)\chi(x_2)^{-1}$. Since $\mathbf{J}_{max}\cap N=\mathbf{J}^1_{max}\cap N$ and $\mathbf{J}_{max}\cap N^-=\mathbf{J}^1_{max}\cap N^-$ we obtain $\chi(x_2)^{-1}=\chi(x)^{-1}$.
Now, let $V_1$ and $V_2$ be two objects of $\mathscr{R}(G,\bm\eta_{max})$ and let $\phi\in\Hom_G(V_1,V_2)$. Then for every $\varphi\in\Hom_{\mathbf{J}^1_{max}}(\bm\eta_{max},V_1)$
we have $\textbf{\textsf{K}}_{\bm\kappa'_{max}}(\phi)(\varphi)=\phi\circ\varphi=\overline{\mathfrak{X}}(\textbf{\textsf{K}}_{\bm\kappa_{max}}(\phi))(\varphi)$.
\end{proof}

\begin{lemma}\label{lemma:naturalisom2}
We have $\textbf{\textsf{K}}_{K_L}\circ\widetilde{\mathfrak{X}}=\overline{\mathfrak{X}}\circ\textbf{\textsf{K}}_{K_L}$.
\end{lemma}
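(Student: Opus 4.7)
The plan is to verify the equality of functors by unwinding both sides on objects and on morphisms, the main point being that $\widetilde{\chi}$ is trivial on $K_L^1$ and its restriction to $K_L$ descends through the identification $K_L/K_L^1\cong\mathscr{G}$ to exactly $\overline\chi$.

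First I would check the triviality claim: for each $j\in\{1,\dots,l\}$ the character $\chi_j$ is trivial on $1+\wp_{E^j}$, and the reduced norm $N_{B^j/E^j}$ carries $U_1(\Lambda_{max,j})\cap B^{j\times}$, which corresponds under the identification to $\mathbb{I}_{m'^j}+M_{m'^j}(\wp_{D'^j})$, into $1+\wp_{E^j}$. Hence each factor $\big(\infl\chi_j\big)\circ N_{B^j/E^j}$ is trivial on $U_1(\Lambda_{max,j})\cap B^{j\times}$, and therefore $\widetilde\chi$ is trivial on $K_L^1=\prod_j(U_1(\Lambda_{max,j})\cap B^{j\times})$. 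Next, comparing the definitions of $\widetilde\chi$ and $\overline\chi$, the restriction $\widetilde\chi|_{K_L}$ factors through $K_L/K_L^1$ and, via the canonical isomorphism $K_L/K_L^1\cong\mathscr{G}$ recalled in paragraph \ref{subsec:etamax}, coincides with $\overline\chi$.

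Given these two facts, I would unwind the two functors on an object $(\varrho,Z)$ of $\mathscr{R}(B_L^{\times},K_L^1)$. On the left, $\widetilde{\mathfrak{X}}(\varrho,Z)=(\varrho\otimes\widetilde\chi^{-1},Z)$, where the action of $g\in B_L^\times$ is $z\mapsto\widetilde\chi(g)^{-1}\varrho(g)z$; since $\widetilde\chi$ is trivial on $K_L^1$, the $K_L^1$-invariants coincide, as an $R$-module, with $Z^{K_L^1}$, and the residual $K_L/K_L^1\cong\mathscr{G}$-action reads $\bar k\cdot z=\overline\chi(\bar k)^{-1}\varrho(k)z$. On the right, $\textbf{\textsf{K}}_{K_L}(\varrho,Z)=Z^{K_L^1}$ with the natural $\mathscr{G}$-action $\bar k\cdot z=\varrho(k)z$, and tensoring with $\overline\chi^{-1}$ by $\overline{\mathfrak{X}}$ produces precisely the same action $\bar k\cdot z=\overline\chi(\bar k)^{-1}\varrho(k)z$ on the same underlying module. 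So the two objects agree on the nose.

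Finally, for a morphism $\phi\in\Hom_{B_L^\times}(Z_1,Z_2)$, each of the three functors $\widetilde{\mathfrak{X}}$, $\textbf{\textsf{K}}_{K_L}$ and $\overline{\mathfrak{X}}$ acts on $\phi$ by (eventually) restriction to the invariant subspace, with no change in the underlying $R$-linear map; therefore both $\textbf{\textsf{K}}_{K_L}(\widetilde{\mathfrak{X}}(\phi))$ and $\overline{\mathfrak{X}}(\textbf{\textsf{K}}_{K_L}(\phi))$ are equal to $\phi|_{Z_1^{K_L^1}}$. The only non-formal point is the verification that the reduced norm sends $\mathbb{I}+M_{m'^j}(\wp_{D'^j})$ into $1+\wp_{E^j}$, which is classical and so does not constitute a real obstacle; the rest is bookkeeping of definitions.
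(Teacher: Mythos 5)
Your proposal is correct and follows essentially the same route as the paper: both proofs simply unwind the two functors on objects and morphisms, the whole content being that $\widetilde{\chi}$ is trivial on $K_L^1$ and descends through $K_L/K_L^1\cong\mathscr{G}$ to $\overline{\chi}$, so that the two $\mathscr{G}$-actions $\bar{x}\cdot z=\widetilde{\chi}(x^{-1})\varrho(x)z$ and $\bar{x}\cdot z=\overline{\chi}(\bar{x}^{-1})\varrho(x)z$ on $Z^{K_L^1}$ coincide. You merely make explicit (via the reduced norm sending $\mathbb{I}+M_{m'^j}(\wp_{D'^j})$ into $1+\wp_{E^j}$) a compatibility the paper's proof leaves implicit, which is fine.
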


\begin{proof}
Let $(\varrho,Z)$ be in $\mathscr{R}(B_L^{\times},K_L^1)$. The space of $\textbf{\textsf{K}}_{K_L}(\widetilde{\mathfrak{X}}(Z))$ and of $\overline{\mathfrak{X}}(\textbf{\textsf{K}}_{K_L}(Z))$ is $Z^{K^1_L}$. 
Let $x\in K_L$ and let $\overline{x}$ the projection of $x$ in $K_L/K^1_L\cong\mathscr{G}$.
For every $z\in Z^{K^1_L}$ we have $\textbf{\textsf{K}}_{K_L}(\widetilde{\mathfrak{X}}(\varrho))(\overline{x})(z)=\widetilde{\chi}(x^{-1})\varrho(x)v$ while 
$\overline{\mathfrak{X}}(\textbf{\textsf{K}}_{K_L}(\varrho))(\overline{x})(z)=\overline{\chi}(\overline{x}^{-1})\varrho(x)v$.
Now, let $Z_1$ and $Z_2$ be two objects of $\mathscr{R}(B_L^{\times},K_L^1)$ and let $\phi\in\Hom_{B_L^{\times}}(Z_1,Z_2)$. 
Then we have 
$\textbf{\textsf{K}}_{K_L}(\widetilde{\mathfrak{X}}(\phi))=\phi_{|Z_1^{K^1_L}}=\overline{\mathfrak{X}}(\textbf{\textsf{K}}_{K_L}(\phi))$.
\end{proof}

We remark that by proposition \ref{thm:naturalisom}, 
lemma \ref{lemma:naturalisom1} and lemma \ref{lemma:naturalisom2}, the functor $\textbf{\textsf{K}}_{K_L}\circ \mathbf{F}_{\gamma,\bm\kappa'_{max}}$ is naturally isomorphic to $\textbf{\textsf{K}}_{\bm\kappa'_{max}}$ which 
is equal to $\overline{\mathfrak{X}}\circ\textbf{\textsf{K}}_{\bm\kappa_{max}}$ which is naturally isomorphic to 
$\overline{\mathfrak{X}}\circ\textbf{\textsf{K}}_{K_L}\circ \mathbf{F}_{\gamma,\bm\kappa_{max}}$ which is equal to
$\textbf{\textsf{K}}_{K_L}\circ \widetilde{\mathfrak{X}}\circ\mathbf{F}_{\gamma,\bm\kappa_{max}}$.

\begin{prop}
There exists a natural isomorphism between 
$\mathbf{F}_{\gamma,\bm\kappa'_{max}}$ and 
$\widetilde{\mathfrak{X}}\circ\mathbf{F}_{\gamma,\bm\kappa_{max}}$.
\end{prop}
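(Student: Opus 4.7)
Since $\mathbf{M}_{\bm\eta_{max}}$ does not depend on the choice of extension of $\bm\eta_{max}$, the two functors $\mathbf{F}_{\gamma, \bm\kappa_{max}}$ and $\mathbf{F}_{\gamma, \bm\kappa'_{max}}$ differ only in how the $\mathscr{H}_R(B_L^\times, K_L^1)$-action on $\mathbf{M}_{\bm\eta_{max}}(V)$ is balanced against the one on $\ind_{K_L^1}^{B_L^\times}(1_{K_L^1})$. The plan is to construct a twist on the right-hand factor absorbing this discrepancy and then to verify $B_L^\times$-equivariance and naturality.

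First I would identify precisely how the two Hecke algebra isomorphisms differ. Since $\widetilde\chi$ is a character of $B_L^\times$ trivial on $K_L^1$ (the latter because $N_{B^j/E^j}$ sends $U_1(\Lambda_{max,j}) \cap B^{j\times}$ into $1+\wp_{E^j}$ on which $\chi_j$ is trivial), the pointwise map $\tau_{\widetilde\chi}(\Phi)(g) = \widetilde\chi(g)\Phi(g)$ defines an $R$-algebra automorphism of $\mathscr{H}_R(B_L^\times, K_L^1)$; multiplicativity is immediate from the convolution formula using $\widetilde\chi(g) = \widetilde\chi(x)\widetilde\chi(x^{-1}g)$. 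Applying remark \ref{rmk:kappa'} in each simple factor and invoking proposition \ref{prop:isomorphismetamaxj} together with remark 1.5 of \cite{Chin1} on the $B_L^\times$-side, one obtains $\Theta_{\gamma, \bm\kappa'_{max}} = \Theta_{\gamma, \bm\kappa_{max}} \circ \tau_{\widetilde\chi}$.

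Next I would introduce the companion twist $\xi : \ind_{K_L^1}^{B_L^\times}(1_{K_L^1}) \to \ind_{K_L^1}^{B_L^\times}(1_{K_L^1})$, $\xi(f)(g) = \widetilde\chi(g)f(g)$. Triviality of $\widetilde\chi$ on $K_L^1$ makes $\xi$ well defined; it is an $R$-linear bijection with inverse $f \mapsto \widetilde\chi^{-1}f$. Short convolution and translation calculations yield the identities
\[
\xi(\Phi * f) = \tau_{\widetilde\chi}(\Phi) * \xi(f), \qquad x.\xi(f) = \widetilde\chi(x)\,\xi(x.f),
\]
for every $\Phi \in \mathscr{H}_R(B_L^\times, K_L^1)$, $f \in \ind_{K_L^1}^{B_L^\times}(1_{K_L^1})$ and $x \in B_L^\times$.

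Finally I would set $\mathfrak{N}_V(m \otimes f) = m \otimes \xi(f)$. The first identity above, together with $\Theta_{\gamma, \bm\kappa'_{max}} = \Theta_{\gamma, \bm\kappa_{max}} \circ \tau_{\widetilde\chi}$, shows that $\mathfrak{N}_V$ respects the balancing relations of the two relative tensor products and is therefore well defined. The second identity, together with $\widetilde{\mathfrak{X}}(\varrho) = \varrho \otimes \widetilde\chi^{-1}$, gives $B_L^\times$-equivariance. Bijectivity is inherited from $\xi$, and naturality in $V$ is immediate because $\mathbf{M}_{\bm\eta_{max}}$ acts on morphisms by post-composition while $\mathfrak{N}$ only touches the right tensor factor. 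The main technical burden is the first step: tracing the tensor-product decompositions of both Hecke algebras to confirm that the componentwise twists $\chi_j \circ N_{B^j/E^j}$ really reassemble to the single character $\widetilde\chi$ of $B_L^\times$ underlying $\tau_{\widetilde\chi}$.
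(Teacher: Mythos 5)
Your proposal is correct and follows essentially the same route as the paper: the paper's natural isomorphism is exactly your map $m\otimes f\mapsto m\otimes\widetilde\chi f$, its well-definedness rests on the same consequence of remark \ref{rmk:kappa'} (namely $\Theta_{\gamma,\bm\kappa'_{max}}=\Theta_{\gamma,\bm\kappa_{max}}$ composed with the pointwise twist by $\widetilde\chi$) together with the identity $(\widetilde\chi\Phi)*(\widetilde\chi f)=\widetilde\chi(\Phi*f)$, and equivariance and naturality are checked exactly as you outline.
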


\begin{proof}
For every object $(\pi,V)$ in $\mathscr{R}(G,\bm\eta_{max})$, the space of $\mathbf{F}_{\gamma,\bm\kappa'_{max}}(V)$ and of $\widetilde{\mathfrak{X}}(\mathbf{F}_{\gamma,\bm\kappa_{max}}(V))$ is $\mathbf{M}_{\bm\eta_{max}}(V)\otimes_{\mathscr{H}_R(B^{\times}_L,K^1_L)}\ind_{K^1_L}^{B_L^{\times}}(1_{K^1_L})$. 
If $m\in\mathbf{M}_{\bm\eta_{max}}(V)$ and $f\in\ind_{K^1_L}^{B_L^{\times}}(1_{K^1_L})$, in the first case the right action of $\Phi\in\mathscr{H}_R(B^{\times}_L,K^1_L)$ on $m$ and the left action of $x\in B_L^{\times}$ on $m\otimes f$ are given by $m\star'\Phi=m.\Theta_{\gamma,\bm\kappa'_{max}}(\Phi)$ and $x\diamond'(m\otimes f)=m\otimes x.f$ while in the second case they are given by $m\star\Phi=m.\Theta_{\gamma,\bm\kappa_{max}}(\Phi)$ and $x\diamond(m\otimes f)=\widetilde{\chi}(x^{-1})m\otimes x.f$. 
Let $\mathfrak{Z}_V$ be the $R$-automorphism of $\mathbf{M}_{\bm\eta_{max}}(V)\otimes_{\mathscr{H}_R(B^{\times}_L,K^1_L)}\ind_{K^1_L}^{B_L^{\times}}(1_{K^1_L})$ that maps $m\otimes f$ to $m\otimes \widetilde{\chi}f$ for every $m\in\mathbf{M}_{\bm\eta_{max}}(V)$ and $f\in\ind_{K^1_L}^{B_L^{\times}}(1_{K^1_L})$. 
By remark \ref{rmk:kappa'} we have $m\star'\Phi=m\star\widetilde{\chi}\Phi$ and then
$\mathfrak{Z}_V(m\star'\Phi\otimes f)=
(m\star'\Phi)\otimes(\widetilde{\chi} f)=
(m\star\widetilde{\chi}\Phi)\otimes(\widetilde{\chi} f)=
m\otimes((\widetilde{\chi}\Phi)*(\widetilde{\chi}f))=
m\otimes\widetilde{\chi}(\Phi*f)=\mathfrak{Z}_V(m\otimes (\Phi*f))$ and so $\mathfrak{Z}_V$ is well-defined.
Moreover, for every $x\in B_L^{\times}$ we have
$\mathfrak{Z}_V(x\diamond'(m\otimes f))=m\otimes\widetilde{\chi}(x.f)=\widetilde{\chi}(x^{-1})m\otimes x.(\widetilde{\chi}f)=x\diamond \mathfrak{Z}_V(m\otimes f)$ and so $\mathfrak{Z}_V$ is an isomorphism of representations of $B_L^{\times}$.
Now, let $V_1$ and $V_2$ be two objects of $\mathscr{R}(G,\bm\eta_{max})$ and let $\phi\in\Hom_G(V_1,V_2)$. Then for every $m\in\mathbf{M}_{\bm\eta_{max}}(V_1)$ and $f\in\ind_{K^1_L}^{B_L^{\times}}(1_{K^1_L})$ we have
$\mathfrak{Z}_{V_2}(\mathbf{F}_{\gamma,\bm\kappa'_{max}}(\phi)(m\otimes f))=
\mathfrak{Z}_{V_2}((\phi\circ m)\otimes f)
=(\phi\circ m)\otimes \widetilde\chi f
=\widetilde{\mathfrak{X}}(\mathbf{F}_{\gamma,\bm\kappa_{max}}(\phi))(m\otimes\widetilde\chi f)=
\widetilde{\mathfrak{X}}(\mathbf{F}_{\gamma,\bm\kappa_{max}}(\phi))(\mathfrak{Z}_{V_1}(m\otimes f))
$.
\end{proof}

By remark \ref{rmk:choix2}, the representations $\bm\kappa_{max}$ and $\bm\kappa'_{max}$ determine two decompositions $\bm\lambda=\bm\kappa\otimes \bm\sigma$ and $\bm\lambda=\bm\kappa'\otimes \bm\sigma'$ where $\bm\sigma$ 
and $\bm\sigma'$
are supercuspidal representations of $\mathscr{M}$
viewed as irreducible representations of $\mathbf{J}_L$ trivial on $\mathbf{J}^1_L$. 
Hence, the bijection $\phi_{\bm\kappa'_{max}}\circ\phi^{-1}_{\bm\kappa_{max}}$ permutes the elements of $\mathbf{Y}$ and it maps $[\mathscr{M},\bm\sigma]$ to $[\mathscr{M},\bm\sigma']$.
Let $\bm\kappa_L$ and $\bm\kappa'_L$ be the restrictions to $\mathbf{J}_L$ of $\bm\kappa$ and $\bm\kappa'$ respectively.
By (\ref{eq:kappamax}) and by (2.20) of \cite{MS} for every $j\in\{1,\dots,l\}$ we have 
$\bm\kappa'_L=\bm\kappa_L\otimes \chi$ and so 
$\bm\sigma'=\bm\sigma\otimes \overline{\chi}^{-1}$.

\bibliographystyle{alpha}
\bibliography{Chinello2}

\end{document}